

\documentclass[reqno,10pt, centertags,draft]{amsart}
\usepackage{amsmath,amsthm,amscd,amssymb,latexsym,upref}
\usepackage{mathrsfs}


\makeatletter
\def\theequation{\@arabic\c@equation}

\newcommand{\e}{\hbox{\rm e}}

\newcommand{\gaD}{\gamma_{{}_D}}
\newcommand{\gaN}{\gamma_{{}_N}}
\newcommand{\aT}{\mathfrak{a}_\Theta}

\newcommand{\Mf}{\mathfrak{m}}
\newcommand{\Mas}{\operatorname{Mas}}

\newcommand{\dist}{\operatorname{dist}}

\newcommand{\bbR}{{\mathbb{R}}}

\newcommand{\bbC}{{\mathbb{C}}}

\newcommand{\cB}{{\mathcal B}}

\newcommand{\cD}{{\mathcal D}}

\newcommand{\cF}{{\mathcal F}}
\newcommand{\cG}{{\mathcal G}}
\newcommand{\cH}{{\mathcal H}}

\newcommand{\cK}{{\mathcal K}}
\newcommand{\cL}{{\mathcal L}}
\newcommand{\cM}{{\mathcal M}}
\newcommand{\cN}{{\mathcal N}}

\newcommand{\cP}{{\mathcal P}}

\newcommand{\cS}{{\mathcal S}}

\newcommand{\cU}{{\mathcal U}}

\newcommand{\cX}{{\mathcal X}}

\newcommand{\no}{\nonumber}
\newcommand{\lb}{\label}

\newcommand{\ol}{\overline}

\newcommand{\Om}{\Omega}
\newcommand{\dOm}{{\partial\Omega}}

\newcommand{\tr}{\operatorname{Tr}}
\newcommand{\gr}{\operatorname{Gr}}

\newcommand{\ran}{\text{\rm{ran}}}

\newcommand{\codim}{\text{\rm{codim}}}
\newcommand{\dom}{\text{\rm{dom}}}

\newcommand{\bi}{\bibitem}
\newcommand{\sgn}{\operatorname{sign}}

\renewcommand{\Re}{\operatorname{Re}}
\renewcommand{\Im}{\operatorname{Im}}

\numberwithin{equation}{section}

\newtheorem{theorem}{Theorem}[section]
\newtheorem{lemma}[theorem]{Lemma}
\newtheorem{proposition}[theorem]{Proposition}
\newtheorem{corollary}[theorem]{Corollary}
\theoremstyle{definition}
\newtheorem{definition}[theorem]{Definition}
\newtheorem{claim}[theorem]{Claim}
\newtheorem{hypothesis}[theorem]{Hypothesis}
\newtheorem{remark}[theorem]{Remark}
\newtheorem{example}[theorem]{Example}

\newcommand{\beq}{\begin{equation}}
\newcommand{\enq}{\end{equation}}
\newcommand{\bbox}[1]{\vspace{20pt}\fbox{\parbox{300pt}{{\bf #1}}}\vspace{20pt}}
\newcommand{\vertiii}[1]{{\left\vert\kern-0.25ex\left\vert\kern-0.25ex\left\vert #1
    \right\vert\kern-0.25ex\right\vert\kern-0.25ex\right\vert}}
    
    \newcommand{\Fr}{\operatorname{F_{\rm{red}}}}
     \newcommand{\FLr}{\operatorname{F\Lambda_{\rm{red}}}}
       \newcommand{\mor}{\operatorname{Mor}}
        \newcommand{\mas}{\operatorname{Mas}}
        \newcommand{\Sp}{\operatorname{Sp}}

\begin{document}

\title[The Morse and Maslov indices]{The Morse and Maslov indices for\\  multidimensional Schr\"odinger operators with matrix-valued potentials}

\allowdisplaybreaks

\author[G. Cox]{Graham Cox}
\address{Mathematics Department,
The University of North Carolina at Chapel Hill Chapel Hill, NC 27599, USA}
\email{ghcox@email.unc.edu}
\author[C.\ Jones]{Christopher K.\ R.\ T. Jones}
\address{Mathematics Department,
The University of North Carolina at Chapel Hill Chapel Hill, NC 27599, USA}
\email{ckrtj@email.unc.edu}
\author[Y. Latushkin]{Yuri Latushkin}
\address{Department of Mathematics,
The University of Missouri, Columbia, MO 65211, USA}
\email{latushkiny@missouri.edu}
\author[A. Sukhtayev]{Alim Sukhtayev}
\address{Department of Mathematics, Texas A\&M University, \\
College Station, TX 77843-3368, USA}
\email{alim@math.tamu.edu}
\thanks{Partially supported by the grants NSF DMS-0754705, DMS-1067929, DMS-0410267, DMS-1312906 and ONR N00014-05-1-0791, and by the Research Council and the Research
Board of the University of Missouri. Yuri Latushkin sincerely thanks Lai-Sang Young for the opportunity to spend his sabbatical at the Courant Institute where this paper was completed}

\begin{abstract}
We study the Schr\"odinger operator $L=-\Delta+V$ on a star-shaped domain $\Om$ in $\bbR^d$ with Lipschitz boundary $\dOm$. The operator is equipped with quite general Dirichlet- or Robin-type boundary conditions induced by operators between $H^{1/2}(\dOm)$ and $H^{-1/2}(\dOm)$, and the potential takes values in the set of symmetric $N\times N$ matrices. By shrinking the domain and rescaling the operator we obtain a path in the Fredholm--Lagrangian Grassmannian of the subspace of $H^{1/2}(\dOm)\times H^{-1/2}(\dOm)$ corresponding to the given boundary condition. The path is formed by computing the Dirichlet and Neumann traces of weak solutions to the rescaled eigenvalue equation. We prove a formula relating  the number of negative eigenvalues of $L$ (the Morse index), the signed crossings of the path (the Maslov index), the number of negative eigenvalues of the potential matrix evaluated at the center of the domain, and the number of negative eigenvalues of a bilinear form related to the boundary operator.
\end{abstract}

\subjclass{Primary 53D12, 34L40; Secondary 37J25, 70H12}
\date{\today}
\keywords{Schr\"odinger equation, Hamiltonian systems,  eigenvalues, stability, differential operators}

\maketitle

{\scriptsize{\tableofcontents}}
\normalsize

\section{Introduction}

The foundational Morse Index Theorem, a generalization of a classical result of Sturm, relates the Morse and Maslov  indices of selfadjoint ordinary differential operators. The Morse index counts the negative eigenvalues of the operator together with their multiplicities while the Maslov index counts the conjugate points of the respective differential equation. This theorem and its various generalizations play a fundamental role in many questions from the calculus of variations \cite{M63} to geometry 
\cite{SW08} and the stability of traveling waves \cite{CDB06,CDB09,CDB11,J88,SS08}, and have attracted much attention over the years \cite{B56,D76,CLM94,FJN03,rs93,RoSa95,U73}. 
Another remarkable generalization of the Sturm Theorem was given by Arnold \cite{arnold67,Arn85} who considered the case of {\em systems}, that is, matrix-valued ordinary differential equations.
Although the first version of the Morse Index Theorem for partial differential equations was given by Smale \cite{S65} in 1965, surprisingly little is currently known, even for scalar equations \cite{DJ11,J,Sw}. In particular, we are not aware of any papers on the Morse Index Theorem for systems of partial differential equations beyond the Dirichlet case treated by Smale. 

In the current work we fill this gap. In particular, we continue the work initiated in \cite{DJ11} for  scalar-valued potentials and smooth domains, and describe a relation between the Morse index and the Maslov index for matrix-valued Schr\"odinger operators on multidimensional, star-shaped, Lipschitz domains. We pay special attention to the role of the Dirichlet-to-Neumann and Neumann-to-Dirichlet maps and the weak Neumann trace operators, and fully develop the functional analytic aspects of the theory, refining and adapting to the problem at hand some methods from perturbation theory \cite{Kato}.


The original purpose of this work 
was to complete some arguments in \cite{DJ11} and the companion papers \cite{DN06,DN08}; see the list provided at the end of the Introduction for details. Along the way, we observed new matrix-valued contributions to the Morse Index Theorem, such as the 
Morse index of the potential matrix which appears in Theorem \ref{tim:Nbased}.


For scalar-valued Schr\"odinger operators on a smooth,
star-shaped domain $\Om$, the main idea in \cite{DJ11} was to shrink $\Om$ to its subdomains $\Om_t = \{tx : x \in \Omega\}$, and
thus obtain a path $t\mapsto\Upsilon(t)$ in the set of Lagrangian planes of the symplectic Hilbert space 
$H^{1/2}(\dOm)\times H^{-1/2}(\dOm)$, formed by the boundary traces of weak $H^1(\Om_t)$-solutions to the respective homogenous partial differential equations. The Malsov index of this path was related in \cite{DJ11} to the Morse index of the original Schr\"odinger operator on $L^2(\Om)$. This result has generated a recent flurry of activity on the subject of Morse and Maslov indices for differential operators.

In \cite{JLM13}  a similar approach was used to relate the Morse and Maslov indices of a one-dimensional Schr\"odinger operator with periodic potential and $\theta$-periodic boundary conditions. In \cite{DP} the results of \cite{DJ11} were extended to general second-order, scalar-valued elliptic operators on star-shaped domains.  The paper \cite{CJM} 
dealt with 
the case when $\Om$ is not necessarily star-shaped, and the domains $\Om_t$ are obtained from $\Om$ using a general family of diffeomorphisms instead of the linear scaling $x \mapsto tx$. 
In \cite{PW} and \cite{LSS} a different symplectic construction was given, utilizing the ``space of abstract boundary values" for a given symmetric operator, in which Lagrangian subspaces correspond to selfadjoint extensions---see \cite{BF98} and the literature cited therein.  This formulation allowed \cite{PW} to study the more involved case (on a star-shaped domain) when the underlying differential operator  is not bounded from below, in which case the Morse index must be replaced by the spectral flow.  For the semi-bounded case \cite{LSS} proved a general theorem relating the Morse and Maslov indices for a family of abstract self-adjoint operators, which contains as a special case Schr\"odinger operators on $\bbR^d$ with periodic potentials and 
quasi-periodic boundary conditions.


The current paper, while closely related to the recent work cited above, is distinguished by its emphasis on the functional analytic (as opposed to geometric or variational) aspects of the problem, and hence provides a view of the developments in \cite{DJ11} complementary to those found in \cite{CJM} and \cite{LSS}.


Let $\Omega\subset\bbR^d$ be a star-shaped domain with Lipschitz boundary $\partial\Omega$, and consider the following eigenvalue problem
\begin{align}\label{BVP1}
-\Delta u&+V(x)u=\lambda u,\,x\in\Omega,\lambda\in\bbR,\\
&\tr u\in\cG.\label{BVP2}
\end{align}
Here $u:\Omega\to\bbR^N$ is a vector-valued function\footnote{Throughout the paper we suppress vector notations as much as possible, and consistently write $L^2(\Omega)$ instead of $L^2(\Omega;\bbR^N)=\big(L^2(\Omega)\big)^N$, $C^0(\Omega)$ instead of $C^0(\Omega;\bbR^{N\times N})$ etc.} in the real Sobolev space $H^1(\Omega)=H^1(\Omega;\bbR^N)$, the potential $V=V(\cdot)\in C^0(\overline{\Omega};\bbR^{N\times N})$ is a continuous, matrix-valued function having symmetric values, $V(x)^\top=V(x)$, and $\Delta=\partial_{x_1}^2+\dots+\partial_{x_d}^2$ is the Laplacian. We denote by $\tr$ the trace operator acting from $H^1(\Om)$ into the boundary space $\cH$,
\begin{equation}\label{dfTr}
\tr\colon H^1(\Omega)\to \cH, \,
u\mapsto (\gaD u,\gaN u),\quad \cH=H^{1/2}(\partial\Omega)\times H^{-1/2}(\partial\Omega),
\end{equation}
where $\gaD u=u\big|_{\partial\Omega}$ is the Dirichlet trace and $\gaN u=\nu\cdot\nabla u\big|_{\partial\Omega}$ is the (weak) Neumann trace, which will be properly defined in Section
\ref{sec:EllPr}.
Throughout, the Laplacian is understood in the weak sense, i.e. as a map from $H^1(\Om)$ into $H^{-1}(\Om)=\big(H^1_0(\Om)\big)^\ast$.

The boundary condition in \eqref{BVP2} is determined by a given closed linear subspace $\cG$ of the boundary space $\cH=H^{1/2}(\partial\Omega)\times H^{-1/2}(\partial\Omega)$. 
We will assume that $\cG$ is Lagrangian with respect to the symplectic form $\omega$ defined on $\cH$ by
\begin{equation}\label{dfnomega}
\omega\big((f_1,g_1),(f_2,g_2)\big)=\langle g_2,f_1\rangle_{1/2}-\langle g_1,f_2\rangle_{1/2},
\end{equation} where $\langle g,f\rangle_{1/2}$ denotes the action of the functional $g\in H^{-1/2}(\partial\Omega)$ on the function $f\in H^{1/2}(\partial\Omega)$. 
As in  \cite{DJ11}, we will use the following terminology.
\begin{definition}\label{defDiNeB}
We say that the subspace $\cG$ or the boundary condition in \eqref{BVP2} is {\em Dirichlet-based} if $\cG$ is the inverse  graph \[\gr'(\Theta')=\big\{
(\Theta' g,g)\in\cH: g\in H^{-1/2}(\partial\Omega)\big\}\] of a compact, selfadjoint  operator $\Theta'\colon H^{-1/2}(\partial\Omega)\to H^{1/2}(\partial\Omega)$. In particular, $\Theta'=0$ yields the Dirichlet boundary condition with $\cG=\cH_D$, where we denote
\[\cH_D=\big\{(0,g)\in\cH: g\in H^{-1/2}(\dOm)\big\}.\]
 We say that the subspace $\cG$ or the boundary condition in \eqref{BVP2} is {\em Neumann-based} if $\cG$ is
the graph \[\gr(\Theta)=\big\{
(f,\Theta f)\in\cH: f\in H^{1/2}(\partial\Omega)\big\}\] of a compact, selfadjoint operator $\Theta\colon H^{1/2}(\partial\Omega)\to H^{-1/2}(\partial\Omega)$. In particular, $\Theta=0$ yields the Neumann boundary condition with $\cG=\cH_N$, where we denote
\[\cH_N=\big\{(f,0)\in\cH: f\in H^{1/2}(\dOm)\big\}.\]\hfill$\Diamond$
\end{definition}
The Neumann-based boundary conditions are usually called generalized Robin boundary conditions since \eqref{BVP2} for $\cG=\gr(\Theta)$ can be written as $\gaN u-\Theta\gaD u=0$ in $H^{-1/2}(\dOm)$. If $\Theta$ is the operator of multiplication by a real-valued function $\theta\in L^\infty(\dOm)$ composed with the embedding $H^{1/2}(\dOm)\hookrightarrow H^{-1/2}(\dOm)$, then one obtains the usual Robin boundary conditions, cf., e.g., \cite[Corollary 2.8]{GM08} and the literature reviewed in \cite[Remark 2.9]{GM08}. We refer to Section \ref{sub4.1} for a discussion of further assumptions on the operators $\Theta$ and $\Theta'$ and their consequences.   

The main results of this paper are for generalized Robin (i.e. Neumann-based) boundary conditions. We also prove some results for Dirichlet-based boundary conditions assuming in addition that the operator $\Theta'$ is nonpositive. The precise relation between the Morse and the Maslov indices in the general Dirichlet-based case is still unknown.

We will view the boundary value problem \eqref{BVP1} and \eqref{BVP2} as the eigenvalue problem for the 
Schr\"odinger operator defined on $L^2(\Om)$ as
\beq\lb{dfLG}
L_\cG=-\Delta+V,\quad \dom(L_\cG)=\big\{u\in H^1(\Om): \Delta u\in L^2(\Om), \, \tr u\in\cG\big\}.\enq In particular, $L_{\cH_D}$ and $L_{\cH_N}$ denote the operators equipped with Dirichlet and Neumann boundary conditions, respectively.

We will assume that $L_{\cG}$ has compact resolvent, and thus has no essential spectrum. In this case one defines the Morse index $\mor(L_\cG)$ to be the number of negative eigenvalues of $L_\cG$, counted with multiplicity. 
In Section \ref{sec4} we will give a Lagrangian characterization of the eigenvalues of $L_\cG$ in terms of conjugate times, or crossings. These are points where a nontrivial intersection occurs between $\cG$ and a certain path of subspaces in the Fredholm--Lagrangian Grassmannian of $\cG$ (see Definition \ref{dfnFLG} below). The signed count of these crossings is called the Maslov index; we will recall the precise definition in Section \ref{secMindFrL}. The main objective of the current paper in the Neumann-based case is to relate the Morse index of $L_\cG$, the Morse index of the potential matrix $V(0)$ at the center of the domain, the Morse index of a certain matrix associated with the boundary operator $\Theta$, and the  Maslov index of the above-described path with respect to $\cG$. We also treat the Dirichlet case, which is easier as the Morse index is related merely to the Maslov index, and does not depend on $V(0)$.

Following the strategy of \cite{DJ11}, the path of Lagrangian subspaces will be formed by shrinking the domain $\Om$ and rescaling the boundary value problem \eqref{BVP1} and \eqref{BVP2} accordingly.
Since $\Omega$ is star-shaped, without loss of generality we assume that $0\in\Omega$, and for each $x\in\Omega$ there exist a unique $t\in(0,1]$ and $y\in\partial\Omega$ such that $x=ty$. For each $ t\in(0,1]$ we define a subdomain $\Om_t$ and trace operator $\tr_t:H^1(\Omega)\to\cH$ by
\begin{equation}\label{defOmt}
\Omega_t=\{x\in\Omega: x=t' y\,\text{ for }\, t'\in[0,t),\,y\in\dOm\},\,
\tr_tu:= (\gaD  u,t^{-1}\gaN  u).
\end{equation}
Using the rescaled operator $\tr_t$ and the subspace $\cG$ from \eqref{BVP2}, we will consider the following family of boundary value problems on $\Om$,
\begin{align}\label{tBVP1}
-\Delta u&+t^2V(tx)u=t^2\lambda u,\,x\in\Omega,\,\lambda\in\bbR,\, t\in(0,1],\\
&\tr_tu\in\cG.\label{tBVP2}
\end{align}
The boundary value problem \eqref{tBVP1} and \eqref{tBVP2} appears quite naturally, since passing from functions $u(\cdot)$ on $\Om_t$ to functions $x\mapsto u(tx)$ on $\Om$ allows one to rescale boundary value problems on $\Om_t$ to boundary value problems on $\Om$. Indeed, given a subspace $\cG$ as in \eqref{BVP2}, consider the subspace $\cG_t$ of the boundary space on $\dOm_t$ obtained from $\cG$ by the same rescaling. Then the solutions of \eqref{BVP1} on $\Om_t$ whose traces belong to $\cG_t$ are exactly the rescaled solutions of \eqref{tBVP1} whose traces belong to $\cG$; see Lemma \ref{lemResc} below for more details.

Let $\Sigma=[a,b]$ be a parameter set and let
\begin{equation}\label{defGam}
\Gamma=\big\{(\lambda(s),t(s)): s\in\Sigma\big\}\subset (-\infty,0]\times(0,1]
\end{equation}
 denote the boundary of the square $[-\Lambda,0]\times[\tau , 1]$ for some  (small) $\tau >0$ and (large) $\Lambda=\Lambda(\tau)>0$,
 so that $\Gamma=\cup_{j=1}^4\Gamma_j$ is a continuous, piecewise $C^1$ curve, see \eqref{dfnSigmaj}, \eqref{dfnlambdat} and Figure \ref{fig1}. 
\begin{figure}
\begin{picture}(100,100)(-20,0)
\put(2,0){$-\Lambda$}
\put(10,8){\line(0,1){4}}
\put(80,5){\vector(0,1){95}}
\put(10,20){\line(0,1){60}}
\put(10,80){\vector(0,-1){40}}
\put(5,10){\vector(1,0){95}}
\put(71,40){\text{\tiny $\Gamma_2$}}
\put(12,60){\text{\tiny $\Gamma_4$}}
\put(-32,54){\text{\tiny no conjugate}}
\put(-22,50){\text{\tiny points}}
\put(84,54){\text{\tiny conjugate}}
\put(84,50){\text{\tiny points}}
\put(45,73){\text{\tiny $\Gamma_3$}}
\put(43,14){\text{\tiny $\Gamma_1$}}
\put(100,12){$\lambda$}
\put(83,98){$t$}
\put(80,20){\vector(0,1){30}}
\put(78,0){$0$}
\put(10,20){\line(1,0){70}}
\put(10,20){\vector(1,0){50}}
\put(10,80){\line(1,0){70}}
\put(80,80){\vector(-1,0){55}}
\put(82,78){$1$}
\put(82,18){$\tau $}
\put(70,20){\circle*{4}}
\put(80,60){\circle*{4}}
\put(80,70){\circle*{4}}
\put(20,80){\circle*{4}}
\put(40,80){\circle*{4}}
\put(60,80){\circle*{4}}
\put(18,87){{\tiny \text{$L_\cG$-eigenvalues}}}
\put(15,24){{\tiny \text{$V(0),B$-eigenvalues}}}
\end{picture}
\caption{A possible position of the conjugate times (crossings) in Theorem \ref{tim:Nbased} when $\tau >0$ is small enough for the Neumann-based case. The crossings on a horizontal line $t=\text{const}$ are the eigenvalues of the operator $L_{s,\cG}(\tau)$ for some $s\in\Sigma$ (and in particular of the operator $L_\cG$). For $\tau$ small enough, the number of the negative eigenvalues of the operator $L_{0,\cG}(\tau)$ corresponding to the line $t=\tau$ is computed in Theorem \ref{tim:Nbased} via the number of negative squares of the form $\mathfrak{b}$ induced by the boundary operator $\Theta$ on the space of constant-valued vector functions and via the number of the negative eigenvalues of the value of the potential $V(0)$ restricted to the null space of the form $\mathfrak{b}$.}\label{fig1}
\end{figure}
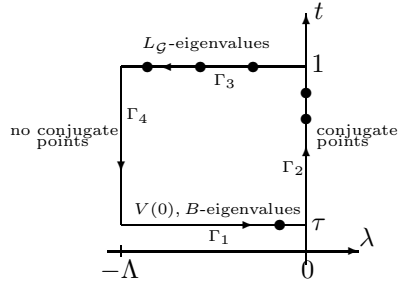 
Throughout, we will use the following notations:
 \begin{align}\label{defVsLs}\begin{split}
 V_s(x)&=t^2(s)V(t(s)x)-t^2(s)\lambda(s),\, x\in\Omega,\\
  L_su&=-\Delta u+V_su,\,u\in H^1(\Omega),\end{split}\\
  T_su&=(\gaD u,(t(s))^{-1}\gaN u),\,s\in\Sigma,\label{defTs}
 \end{align}
where the operator $-\Delta$ is understood in the weak sense, that is, as a map from $H^1(\Omega)$ into $H^{-1}(\Omega)=\big(H^1_0(\Omega)\big)^*$. For $\tau\in(0,1]$ and $s\in\Sigma$ we let $L_{s,\cG}(\tau)$ denote the operator $L_{s,\cG}(\tau)u=L_su$ on $L^2(\Om)$ with the boundary condition $T_su\in\cG$; see \eqref{dfnLst01} and the discussion surrounding this formula for more details. We parametrize $\Gamma$ so that $L_{0,\cG}(1)$ is equal to the operator $L_\cG$ from \eqref{dfLG}.

Our main standing assumptions are summarized as follows; see also Section \ref{sec4}, where we articulate them in more details.
\begin{hypothesis} \lb{h1}
We assume throughout the paper that:
\begin{itemize}\item[(i)]  $\Omega\subset{\bbR}^d$, $d\ge2$, is a nonempty, open, bounded, star-shaped, Lipschitz domain;
\item[(ii)] the subspace $\cG$ in \eqref{BVP2} is Lagrangian with respect to the symplectic form \eqref{dfnomega}, and is either Dirichlet- or Neumann-based;
\item[(iii)] the operators $\Theta$ and $\Theta'$ in Definition \ref{defDiNeB} are compact and selfadjoint, and 
the resulting operators $L_{s,\cG}(\tau)$ (and in particular $L_\cG$) on $L^2(\Om)$ defined in \eqref{dfnLst01}  are selfadjoint, have compact resolvents and, for each $\tau > 0$, are semibounded from below uniformly for $s\in\Sigma$;
\item[(iv)]  the potential $V$ is continuous, $V\in C^0(\overline{\Omega}; \bbR^N\times\bbR^N)$, and $V(x)$ is a symmetric $(N\times N)$ matrix for each $x\in\Omega$.
\end{itemize}
Sometimes assumption (i) will be replaced by a stronger assumption:
\begin{itemize}\item[(i')]  $\Omega\subset{\bbR}^d$, $d\ge2$, is a nonempty, open, bounded, star-shaped domain with $C^{1,r}$ boundary for some $1/2<r<1$.
\end{itemize}
Sometimes we will impose additional assumptions on $\Theta$ and $\Theta'$:
\begin{itemize}\item[(ii')] assumption (ii) holds and the operators $\Theta'$ and $\Theta$ are nonpositive.
\end{itemize}
When needed, in addition to (i) or (i') and (ii) or (ii'), we will also assume that
\begin{itemize}\item[(iii')] assumption (iii) holds and $\dom(L_\cG)$ and $\dom(L_{s,\cG}(\tau))$ are subsets of  $H^2(\Om)$.
\end{itemize}
When needed we will assume that
\begin{itemize}\item[(iv')] assumption (iv) holds and $V\in C^1(\overline{\Omega}; \bbR^N\times\bbR^N)$ is continuously differentiable.
\hfill$\Diamond$
\end{itemize}
\end{hypothesis}

In particular, if Hypothesis \ref{h1}(iii') holds, then each weak solution $u$ to \eqref{BVP1} satisfying the boundary condition \eqref{BVP2} is automatically a strong solution. In Section \ref{sec4} we will give some sufficient conditions on $\Theta$ and $\Theta'$ such that Hypothesis \ref{h1}(iii) or (iii') holds (see  Remarks \ref{sbb} and \ref{DC4.2} and Theorem \ref{GMthmH2}).
 
Let $\tr_tu=(\gaD u,t^{-1}\gaN u)$ be the rescaled trace map and $\cK_t$ be the subspace in $H^1(\Om)$ of weak solutions to the equation $(-\Delta +t^2V(tx))u=0$ for $t\in[\tau ,1]$, with $\tau \in(0,1]$ fixed.
Observe that no boundary conditions are imposed on the functions $u\in\cK_t$. 


As we will see below, our hypotheses imply that the subspaces $\Upsilon(t)=\tr_t(\cK_t)$ form a smooth path in the Fredholm--Lagrangian Grassmanian of the boundary space $\cG$. Therefore, one can define the Maslov index of $\Upsilon(t)$ with respect to $\cG$. Intuitively this is a signed count of the times at which $\Upsilon(t)$ and $\cG$ intersect nontrivially, where the sign depends on the manner in which $\Upsilon(t)$ passes through $\cG$ as $t$ increases.


We are now ready to formulate our main result for the Dirichlet-based case.
\begin{theorem}\label{tim:Dbased} Let $\cG=\gr'(\Theta')$ be a Dirichlet-based subspace of the boundary space $\cH=H^{1/2}(\dOm; \bbR^N)\times H^{-1/2}(\dOm; \bbR^N)$. Assume Hypothesis \ref{h1} (i), (ii'), (iii), (iv), in particular,  that the operator $\Theta'\in\cB(H^{-1/2}(\dOm; \bbR^N), H^{1/2}(\dOm; \bbR^N))$ is nonpositive. 
Then the Morse index of the operator $L_{\cG}$ on $L^2(\Om; \bbR^N)$ defined by $L_{\cG} u=(-\Delta +V(x))u$, $(\gaD u,\gaN u)\in\cG$ and the Maslov index of the path $\Upsilon\colon [\tau ,1]\to F\Lambda(\cG)$ defined by $ t\mapsto \tr_t(\cK_t)$ for $t\in[\tau ,1]$, with $\tau \in(0,1]$ sufficiently small, are related as follows:
\beq\lb{MasvsMorDir}
\mor(L_\cG)=-\mas(\Upsilon).
\enq
Moreover, if Hypothesis \ref{h1} (i'), (iv') hold and $\Theta'=0$ (i.e. we are considering the Dirichlet problem, $\cG=\cH_D$), then
\beq\label{Dfor}
\mor(L_{\cH_D})=\sum_{t\in[\tau,1)}\dim_\bbR\ker\big(-\Delta_{\cH_D}+t^2V(tx)\big),
\enq
where $-\Delta_{\cH_D}$ is the Dirichlet Laplacian.
\end{theorem}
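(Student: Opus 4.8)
The plan is to run the Lagrangian loop argument of \cite{DJ11}. Using the notation of \eqref{defVsLs}--\eqref{defTs}, extend the path of the theorem to all of $\Gamma$ by putting $\Upsilon(s)=T_s(\ker L_s)$ for $s\in\Sigma$, where $\ker L_s\subset H^1(\Om)$ is the space of weak solutions of $L_su=0$; when $\lambda(s)=0$ this is exactly $\tr_{t(s)}(\cK_{t(s)})$, so the restriction $\Upsilon|_{\Gamma_2}$, with $\Gamma_2=\{(\lambda,t):\lambda=0,\ t\in[\tau,1]\}$, is the path in the statement. By the rescaling Lemma \ref{lemResc}, $\Upsilon(s)\cap\cG\ne\{0\}$ precisely when $0\in\ker L_{s,\cG}(\tau)$, i.e.\ when $t(s)^2\lambda(s)$ is an eigenvalue of $-\Delta+t(s)^2V(t(s)x)$ equipped with the rescaled boundary condition. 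First I would invoke the functional-analytic framework of the earlier sections --- well-posedness of the boundary value problem attached to the compact selfadjoint $\Theta'$, continuity of $s\mapsto\ker L_s$ in $H^1(\Om)$, and the Lagrangian property of boundary traces --- to conclude that the resulting family of subspaces depends continuously on $(\lambda,t)\in[-\Lambda,0]\times[\tau,1]$, taking values in $F\Lambda(\cG)$, and real-analytically on $\lambda$ for fixed $t$. Since $\Gamma$ is the contractible boundary of this rectangle, homotopy invariance of the Maslov index gives $\sum_{j=1}^4\mas(\Upsilon|_{\Gamma_j})=0$.

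Next I would compute the four terms, with the sign and endpoint conventions of the section on the Maslov index. Along the top side $\Gamma_3$ ($t=1$, $\lambda$ running from $0$ down to $-\Lambda$) the crossings are the negative eigenvalues of $L_{0,\cG}(1)=L_\cG$, and the crossing form is sign-definite because the eigenvalues of $L_\cG-\lambda$ move in $\lambda$ with constant speed $-1$; hence $\mas(\Upsilon|_{\Gamma_3})=\mor(L_\cG)$ once $\Lambda$ is chosen with $-\Lambda\notin\spec(L_\cG)$. Along the left side $\Gamma_4$ ($\lambda=-\Lambda$) there are no crossings at all for $\Lambda$ large, by the uniform-in-$s$ semiboundedness of $L_{s,\cG}(\tau)$ in Hypothesis \ref{h1}(iii). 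The decisive step is the bottom side $\Gamma_1$ ($t=\tau$): the boundary condition $T_0u\in\gr'(\Theta')$ reads $\gaD u=\tau^{-1}\Theta'\gaN u$ with $\tau^{-1}\Theta'\le0$, so Green's identity yields $\langle(-\Delta+\tau^2V(\tau x)-\tau^2\lambda)u,u\rangle\ge\|\nabla u\|^2+\tau^2(|\lambda|-\|V\|_\infty)\|u\|^2$ for $\lambda\le0$; transporting this to the shrunken domain $\Omega_\tau$ via Lemma \ref{lemResc} and using that the corresponding nonnegative operator on $\Omega_\tau$ has no nonpositive spectrum once $\tau$ is small shows that $0\notin\ker L_{s,\cG}(\tau)$ for every $s\in\Gamma_1$, so $\mas(\Upsilon|_{\Gamma_1})=0$. (When $\Theta'=0$ this is immediate, the Dirichlet eigenvalues of $-\Delta$ on $\Omega_\tau$ being of order $\tau^{-2}$.) Summing the four terms gives $0=\mas(\Upsilon)+\mor(L_\cG)$, which is \eqref{MasvsMorDir}.

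For the refined Dirichlet formula \eqref{Dfor} I would take $\Theta'=0$ in \eqref{MasvsMorDir}, so that $\mor(L_{\cH_D})=-\mas(\Upsilon)$, and then identify $-\mas(\Upsilon)$ with the right-hand side by showing that every crossing $t_0\in(\tau,1)$ of $\Upsilon$ along $\Gamma_2$ contributes exactly $-\dim_\bbR\ker(-\Delta_{\cH_D}+t_0^2V(t_0x))$, i.e.\ that the crossing form at $t_0$ is negative definite. Under the stronger Hypothesis \ref{h1}(i'),(iv') elliptic regularity on the $C^{1,r}$ domain and $V\in C^1$ place the solutions in $\cK_t$ in $H^2(\Om)$ and make $t\mapsto\cK_t$ differentiable, so the crossing form is well defined; its sign equals that of $\tfrac{d}{dt}\lambda_k(\Omega_t)$ at $t_0$, where $\lambda_k(\Omega_t)$ denotes the $k$-th Dirichlet eigenvalue of $-\Delta+V$ on the nested domain $\Omega_t$, and this derivative is negative by strict domain monotonicity (domain monotonicity together with unique continuation for $-\Delta+V$). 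Consequently each negative eigenvalue $\lambda_k(L_{\cH_D})=\lambda_k(\Omega_1)$ passes through $0$ exactly once as $t$ increases through $(\tau,1)$, and never at $t=\tau$; hence $-\mas(\Upsilon)$ is the number of these crossings counted with multiplicity, which is $\sum_{t\in[\tau,1)}\dim_\bbR\ker(-\Delta_{\cH_D}+t^2V(tx))$, as claimed.

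The main obstacle is the bottom-side vanishing in the genuinely Dirichlet-based case: although $\Theta'\le0$ makes $L_{s,\cG}(\tau)$ nonnegative through Green's identity, one must keep its spectrum uniformly bounded away from $(-\infty,0]$ after the $\tau$-dependent rescaling of the boundary operator, and it is the absence of such a uniform bound for general $\Theta'$ that both forces the nonpositivity hypothesis and leaves the full Dirichlet-based case open. A second, milder, difficulty, needed only for \eqref{Dfor}, is the sign-definiteness of the crossing form along $\{\lambda=0\}$ --- the Dirichlet analogue of Sturm's monotonicity --- which rests on strict domain monotonicity (hence on a unique continuation argument) and on enough regularity ($C^{1,r}$ boundary, $C^1$ potential) for $t\mapsto\Upsilon(t)$ to be differentiable. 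Everything else --- that $\Upsilon$ is a genuine path in $F\Lambda(\cG)$, homotopy invariance of the Maslov index, and the constant-speed monotonicity along $\Gamma_3$ --- is supplied by the framework set up earlier in the paper.
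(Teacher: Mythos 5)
Your proposal is correct and runs through essentially the same loop argument as the paper's proof: homotopy invariance around the rectangle boundary $\Gamma$, no crossings on $\Gamma_4$ (Lemma \ref{lem:Gamma4}) and on $\Gamma_1$ (Lemma \ref{lem:Gamma4Dir}), and $\mas(\Upsilon|_{\Sigma_3})=\mor(L_\cG)$ via the sign-definite $\lambda$-crossing form (Lemma \ref{lambdaMon}), leaving $\mor(L_\cG)=-\mas(\Upsilon|_{\Sigma_2})$. Two minor points of divergence: for the $\Gamma_1$ step the paper's Lemma \ref{lem:Gamma4Dir} does not stop at the Green's-identity nonnegativity but first shows $0\notin\Sp(-\Delta_{s,\cG}(\tau))$ using the boundary unique continuation Lemma \ref{UCP} (if $\nabla u=0$ then $\gaN u=0$, hence $\gaD u=(t(s))^{-1}\Theta'\gaN u=0$), obtaining a strict lower bound $c>0$ before absorbing the $\tau^2V(\tau x)$ perturbation; and for the refined Dirichlet identity \eqref{Dfor} the paper derives negative definiteness of the $\Gamma_2$ crossing form by direct computation (Corollary \ref{tmon} yields the Rayleigh--Hadamard expression \eqref{henF}, which is negative since $\nu\cdot x>0$ on a star-shaped boundary), whereas you appeal to strict Dirichlet domain monotonicity plus unique continuation -- equivalent conclusions, but the paper stays inside the crossing-form machinery of Lemmas \ref{prelmon}--\ref{preltmon} rather than invoking an external spectral-theoretic monotonicity fact.
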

This is a generalization of the celebrated Morse Index Theorem of Smale \cite{S65} (see also \cite{U73}) to Lipschitz domains. It also generalizes \cite[Theorem 2.4]{DJ11} to the matrix-valued case, while doing away with the assumption in \cite{DJ11} that $V(0)$ is sign definite. The last assertion in Theorem \ref{tim:Dbased} is a consequence of general formulas given in Subsection \ref{ss:g2} for the Maslov crossing form at the crossings on $\Gamma_2$, which may be of independent interest.

We now formulate our main result for the Neumann-based case.
 Let $\cS$ denote the $N$-dimensional subspace of $H^{1/2}(\dOm; \bbR^N)$ consisting of boundary values of constant vector-valued  functions in $H^1(\Om; \bbR^N)$. Given a bounded, selfadjoint operator $\Theta$ as in Definition \ref{defDiNeB}, we
let $B$ denote the $(N\times N)$ matrix associated with the quadratic form $\mathfrak{b}(p,q)=\langle \Theta p,q\rangle_{1/2}$ defined for $p,q\in\cS$. Let $Q_0$ denote the orthogonal projection  in $\cS$ on $\ker(B)$.
\begin{theorem}\lb{tim:Nbased}  Let $\cG=\gr(\Theta)$ be a  Neumann-based subspace in the boundary space $\cH=H^{1/2}(\dOm; \bbR^N)\times H^{-1/2}(\dOm; \bbR^N)$.
Assume Hypothesis \ref{h1} (i), (ii), (iii), (iv) and also assume that the quadratic form $\mathfrak{v}(p,q)=\langle V(0)p,q\rangle_{\bbR^N}$ for $p,q\in\ker(B)$ is nondegenerate on $\ran(Q_0)=\ker(B)$ where we use notations introduced in the paragraph preceding the theorem.
 Then the Morse index of the operator $L_{\cG}$ on $L^2(\Om; \bbR^N)$ defined by $L_{\cG} u=(-\Delta +V(x))u$, $(\gaD u,\gaN u)\in\cG$
is given by
\beq\lb{MasvsMor}
\mor(L_\cG)=-\mas(\Upsilon)+\mor(-B)+\mor\big(Q_0V(0)Q_0\big),
\enq
where $\Upsilon\colon [\tau ,1]\to F\Lambda(\cG)$ denotes the path $t\mapsto \tr_t(\cK_t)$ for $t\in[\tau ,1]$ and $V(0)$ is the potential matrix evaluated at $x=0$.
\end{theorem}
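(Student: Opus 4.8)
The plan is to run the homotopy-invariance argument for the Maslov index around the boundary $\Gamma=\bigcup_{j=1}^4\Gamma_j$ of the rectangle $[-\Lambda,0]\times[\tau,1]$, exactly as in the scalar case of \cite{DJ11}, but accounting carefully for the three new matrix-valued contributions. Since the path $s\mapsto\Upsilon(s)=T_s(\cK_{t(s)})$ is a loop in the Fredholm--Lagrangian Grassmannian $F\Lambda(\cG)$ (closed up by a homotopy through $\Lambda=\Lambda(\tau)$ large enough that $\Gamma_3$, the segment $\lambda=-\Lambda$, contributes no crossings -- this is where the uniform semiboundedness in Hypothesis \ref{h1}(iii) is used), the total Maslov index vanishes: $\mas(\Upsilon|_{\Gamma_1})+\mas(\Upsilon|_{\Gamma_2})+\mas(\Upsilon|_{\Gamma_3})+\mas(\Upsilon|_{\Gamma_4})=0$. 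I would then identify each side. On $\Gamma_4$ (the segment $t=1$, $\lambda$ running from $0$ to $-\Lambda$) the crossings are precisely the negative eigenvalues of $L_\cG=L_{0,\cG}(1)$, counted with multiplicity, and a monotonicity computation of the Maslov crossing form -- differentiating in $\lambda$ and using that the crossing form is $-t^2\langle u,u\rangle_{L^2}$ on the relevant eigenspace -- shows all these crossings have the same sign, giving $\mas(\Upsilon|_{\Gamma_4})=-\mor(L_\cG)$ (up to the orientation conventions fixed in Section \ref{secMindFrL}). On $\Gamma_2$ (the segment $\lambda=0$, $t$ running from $1$ down to $\tau$) a similar monotonicity in $t$ holds; this is the content of the formulas in Subsection \ref{ss:g2}, and it identifies $\mas(\Upsilon|_{\Gamma_2})$ with $\mas(\Upsilon)$, the Maslov index of the path $t\mapsto\tr_t(\cK_t)$ appearing in the statement (with the appropriate sign, since $\Gamma_2$ traverses $t$ in the decreasing direction).

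The crux is $\Gamma_1$: the segment $t=\tau$, $\lambda$ running from $-\Lambda$ to $0$. Here the crossings are the negative eigenvalues of the rescaled operator $L_{0,\cG}(\tau)$, i.e. of $-\Delta+\tau^2V(\tau x)$ on $\Omega$ with boundary condition $\tr_\tau u\in\gr(\Theta)$. The key step is an asymptotic analysis as $\tau\to0^+$: after the rescaling, the operator is a small perturbation of $-\Delta$ with a rescaled Robin-type boundary condition, and one shows that for $\tau$ sufficiently small its negative eigenvalues are governed entirely by the behaviour on constant (or nearly constant) vector fields. Concretely, I expect to test the quadratic form $\int_\Omega|\nabla u|^2+\tau^2\int_\Omega\langle V(\tau x)u,u\rangle-\tau\langle\Theta\gaD u,\gaD u\rangle_{1/2}$ on the $N$-dimensional space $\cS$ of constant vector fields: there the Dirichlet term vanishes, the boundary term is $-\tau\,\mathfrak{b}(u,u)$, and the potential term is $\tau^2\langle V(0)u,u\rangle+o(\tau^2)$. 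Thus on the complement of $\ker B$ in $\cS$ the form has the sign of $-\mathfrak{b}$, contributing $\mor(-B)$ negative directions (at order $\tau$), while on $\ker B$ the leading term is $\tau^2\langle V(0)u,u\rangle$, contributing $\mor(Q_0V(0)Q_0)$ (at order $\tau^2$); the nondegeneracy hypothesis on $\mathfrak{v}$ on $\ker B$ is exactly what is needed to make this count robust. A Courant--Fischer / min-max argument, together with a coercivity estimate on the orthogonal complement of $\cS$ in $H^1(\Omega)$ showing that $-\Delta$ there stays uniformly positive while the $\tau$-dependent terms are $O(\tau)$, then yields $\mor\big(L_{0,\cG}(\tau)\big)=\mor(-B)+\mor\big(Q_0V(0)Q_0\big)$ for all small $\tau$, hence $\mas(\Upsilon|_{\Gamma_1})=-\mor(-B)-\mor(Q_0V(0)Q_0)$ with the same sign convention as on $\Gamma_4$.

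Assembling the four pieces in $\mas(\Upsilon|_{\Gamma_1})+\mas(\Upsilon|_{\Gamma_2})+\mas(\Upsilon|_{\Gamma_4})=0$ (with $\Gamma_3$ contributing nothing) gives $-\mor(-B)-\mor(Q_0V(0)Q_0)-\mas(\Upsilon)-\mor(L_\cG)=0$, which rearranges to \eqref{MasvsMor}. Two technical points deserve attention and I would address them before the main argument: first, that $\Upsilon$ is genuinely a $C^1$ path in $F\Lambda(\cG)$ -- this requires the regular dependence of weak solutions of $(-\Delta+t^2V(tx))u=0$ on the parameter $t$ and the Fredholm property of the associated trace subspaces, which follows from the analysis in Sections \ref{sec:EllPr}--\ref{sec4} and the mapping properties of the Dirichlet-to-Neumann operator; second, that $\Lambda$ can be chosen uniformly in $\tau$ on a neighbourhood of $0$, which again rests on Hypothesis \ref{h1}(iii). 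The main obstacle is unquestionably the $\Gamma_1$ asymptotics: one must control the two different orders ($\tau$ from the boundary form, $\tau^2$ from the potential) simultaneously and on the correct subspaces, and show no ``spurious'' negative eigenvalues arise from the $H^1$-directions transverse to $\cS$ -- this is the place where the hypotheses genuinely interact and where the matrix-valued features $\mor(-B)$ and $\mor(Q_0V(0)Q_0)$ are born.
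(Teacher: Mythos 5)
Your global strategy mirrors the paper's: contract $\Gamma$ to a point to kill the total Maslov index, use monotonicity in $\lambda$ on the two horizontal segments (Lemma \ref{lambdaMon}) to identify $\mas(\Upsilon|_{\Sigma_3})$ with $\mor(L_\cG)$ and $-\mas(\Upsilon|_{\Sigma_1})$ with $\mor(L_{0,\cG}(\tau))$, rule out crossings on $\Sigma_4$ for $\Lambda$ large (Lemma \ref{lem:Gamma4}), and reduce the theorem to showing $\mor(L_{0,\cG}(\tau))=\mor(-B)+\mor(Q_0V(0)Q_0)$ for small $\tau$. (Incidentally, your labels $\Gamma_3/\Gamma_4$ are swapped relative to \eqref{dfnlambdat}, and your final rearrangement ``$-\mor(-B)-\mor(Q_0V(0)Q_0)-\mas(\Upsilon)-\mor(L_\cG)=0$'' gives $\mor(L_\cG)=-\mas(\Upsilon)-\mor(-B)-\mor(Q_0V(0)Q_0)$, with the \emph{wrong} sign on the last two terms; the sign of the crossing form on the $t=1$ segment is positive, opposite to that on $t=\tau$, cf.\ Lemma \ref{lambdaMon}.)

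The genuine gap is in the $\Gamma_1$ asymptotics. You propose a Courant--Fischer argument: evaluate the form \eqref{dfnlGt} on the $N$-dimensional subspace $\cS$ of constants (getting $-\tau\mathfrak{b}$ at order $\tau$, $\tau^2\mathfrak{v}$ on $\ker B$ at order $\tau^2$), and couple this with uniform positivity of $-\Delta$ on a complement of $\cS$. But this does not close: the form couples $\cS$ to its complement through the boundary term $-\tau\langle\Theta\gaD p,\gaD v\rangle_{1/2}$, and for $p\in\ker B$ this cross-term is $O(\tau)$ while the diagonal term on $\ker B$ is only $O(\tau^2)$. In such a two-scale block form $\bigl(\begin{smallmatrix}\tau^2 A&\tau C\\ \tau C^\top&D\end{smallmatrix}\bigr)$ with $D$ uniformly positive, the effective form on the small block at order $\tau^2$ is the Schur complement $A-CD^{-1}C^\top$, not $A$; the coupling cannot be dropped. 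This is exactly the correction captured by the second-order term $T^{(2)}_2=-[\gaD P]^*\Theta\gaD[\gaD S]^*\Theta\gaD P$ in Lemma \ref{lem:simile}, and the paper's reduction to $\mor(Q_0V(0)Q_0)$ rests on the further observation \eqref{zeroev} that this correction vanishes on the zero eigenspace of $T^{(1)}$. A min-max proof would have to reproduce both the Schur complement and the verification that it drops out on $\ker B$; as written your sketch tests only the raw diagonal of the form on $\cS$ and would therefore be unable to rule out additional (or disallowed) negative directions produced by the coupling. The paper avoids this hand-counting entirely by localizing to the spectral subspace $\ran(P(\tau))$ via the transformation operators $U(\tau)$ and invoking the full Kato reduction process to second order.
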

The nondegeneracy assumption on the form $\mathfrak{v}(p,q)$ can potentially be removed. However, this requires the use of higher power asymptotic expansions for the eigenvalues of a finite-dimensional part the operator $L_{0,\cG}(\tau)$ as $\tau\to0$, cf.\ formula \eqref{eq1.84} below. Although these expansions are available in the abstract perturbation theory \cite[Section II.5]{Kato}, the formulas are cumbersome and thus we do not pursue this topic here.
In the Neumann case $\Theta=0$, \eqref{MasvsMor} simplifies to
\[\mor(L_\cG)=-\mas(\Upsilon)+\mor\big(V(0)\big)\,\text{ provided $0\notin\Sp(V(0))$.}\]
In the scalar ($N=1$) Neumann-based case  one has $B=\langle \Theta {\mathbf 1}_{\dOm}, {\mathbf 1}_{\dOm}\rangle_{1/2}$, where ${\mathbf 1}_{\dOm}(x)=1$ for $x\in\dOm$, and the nondegeneracy condition in the theorem becomes $V(0)\neq0$ provided $B=0$. One derives from \eqref{MasvsMor} with $N=1$ a  version of \cite[Theorem 2.5]{DJ11} for Lipschitz domains: if $\tau\in(0,1]$ is small enough, then
\begin{equation*}\begin{split}\mor(L_\cG)=\begin{cases}-\mas(\Upsilon)\,&\text{ if $B<0$, or $B=0$ and $V(0)>0$, }\\
-\mas(\Upsilon)+1\,&\text{ if $B>0$, or $B=0$ and $V(0)<0$.}
\end{cases}
\end{split}\end{equation*}

An essential part of the proof of both Theorems \ref{tim:Dbased} and
\ref{tim:Nbased} is the $C^k$ regularity of the path $\Upsilon\colon t\mapsto \tr_t(\cK_t)\in F\Lambda(\cG)$. Our proof of this fact in Proposition \ref{smoothinFLG} is quite elementary and does not use the Calderon projection (see \cite[Chapter 11]{G09} or  \cite[Chapter 12]{BW93}), relying instead on the abstract perturbation Lemmas \ref{lemL6} and \ref{lemAF}. Additionally, in Subsection \ref{sec:DNNDO} we prove that the subspaces $\tr_t(\cK_t)$ can be described as graphs of the Dirichlet-to-Neumann and Neumann-to-Dirichlet maps whenever these are defined.  Another essential part of the proof of Theorem \ref{tim:Nbased} is the asymptotic perturbation theory for the operators $L_{0,\cG}(\tau)$ as $\tau\to0$, corresponding to the lower right corner of the rectangle $\Gamma$ in Figure \ref{fig1}. This theory is fully developed in Section \ref{sec:mainres} and is based on ideas from \cite[Chapters II, VII]{Kato}. Finally, in Section \ref{sec5} we prove several quite explicit formulas for the Maslov crossing form on the right vertical boundary of $\Gamma$.



We conclude the Introduction by listing the results in 
\cite{DJ11} and \cite{DN06,DN08} that required additional analytical arguments, and indicate where these arguments are given. (Note that the major conclusions of \cite{DJ11} are all
correct, as confirmed by the current paper.) 
We note that \cite{DP} also depends on the arguments in \cite{DJ11} (and consequently \cite{DN06,DN08}).

{\bf (i)}\, We reprove Lemmas 3.3 and 3.4 of \cite{DJ11} in Lemma \ref{lem3433}.

{\bf (ii)}\, The full proof of Proposition 4.1 of \cite{DJ11}
(including Lemma 6 of \cite{DN06} and Lemmas 2 and 6 of \cite{DN08}) and Proposition 4.3 of \cite{DJ11} (including Remark 7 in \cite{DN06}) is given in Proposition \ref{smoothinFLG}. This requires several
preliminary steps, which appear in Lemmas \ref{lem:propFG}, \ref{lemL6} and \ref{lemAF}. In fact, the proof of the regularity of the path $\Upsilon\colon [\tau,1]\to F\Lambda(\cG)$ is one of the main technical accomplishments of the current paper.

{\bf (iii)}\, In Section \ref{sec5} we give a detailed proof of Lemma 4.9 (including Claim 4.10) and Proposition 5.3 of \cite{DJ11}.

{\bf (iv)}\, The perturbation results in Lemmas 5.5 and 5.6 and Claim 5.8 of \cite{DJ11} are recovered from the detailed asymptotic analysis in Section \ref{sec:mainres}.

{\bf (v)}\, We observe that the relation between the Morse and Maslov indices in the Neumann-based case requires more careful assumptions than are given in Definition 2.2 of \cite{DJ11}, as demonstrated by Example \ref{ex:nostr} below. In particular, it is necessary to either interpret the Morse index in a weak (that is, $H^1(\Om)$) sense, or impose an additional assumption, such as Hypothesis \ref{h3}, on the boundary operator $\Theta$ in Definition \ref{defDiNeB} to ensure that the domain of the Schr\"odinger operator is contained in $H^2(\Om)$.



{\bf Notations.} Throughout the paper we use the following basic notations.  We denote by $\cB(\cX_1,\cX_2)$ and $\cB_\infty(\cX_1,\cX_2)$ the set of bounded linear operators and the set of compact operators from a Hilbert space $\cX_1$ into a Hilbert space $\cX_2$ (real or complex), and abbreviate these as $\cB(\cX)$ and $\cB_\infty(\cX)$ when $\cX=\cX_1=\cX_2$. We denote by $I_\cX$ the identity operator on $\cX$. Calligraphic letters are used to denote various subspaces. Given two closed linear subspaces $\cL,\cM\subset\cX$, we denote by $\cL+\cM$ their (not necessarily direct) sum, by $\cL\dot{+}\cM$ their direct sum (which need not be orthogonal), and by $\cL\oplus\cM$ their orthogonal sum. For a linear operator $T$ on a Banach space $\cX$ we denote by $T^{-1}$ its (bounded) inverse, by $\ker (T)$ its null space, by $\ran (T)$ its range, by $\Sp(T)$ its spectrum, by $T^*$ its adjoint (or transpose, when the space is real), by $T|_\cL$ its restriction to a subspace $\cL\subset\cX$, and by $T(\cL)=\{Tx:x\in\cL\}$ the range of the restriction. If $\cX$ is a Banach space and $\cX^\ast$ is its adjoint then ${}_{\cX^*}\langle v,u\rangle_{\cX}$ denotes the action of a functional $v\in\cX^*$ on $u\in \cX$. We abbreviate 
\beq\lb{Abb12}\begin{split}
\langle g,f\rangle_{1/2}&={}_{H^{-1/2}(\dOm;\bbR^N)}\langle (g_n)_{n=1}^N,(f_n)_{n=1}^N\rangle_{H^{1/2}(\dOm;\bbR^N)}\\
&\qquad=\sum_{n=1}^N{}_{H^{-1/2}(\dOm)}\langle g_n, f_n\rangle_{H^{1/2}(\dOm)},
\end{split} \enq
and also write 
\begin{equation*}
\langle u,v\rangle_{L^2(\Omega)}= \sum_{n=1}^N\int_\Om u_n(x)v_n(x)\,dx,\, u=(u_n)_{n=1}^N, v=(v_n)_{n=1}^N\in L^2(\Om;\bbR^N),
\end{equation*}
for the scalar product in $L^2(\Omega)$. We let $\top$ denote the transposition.  A generic constant possibly different from one estimate to another is denoted by $c$.

Throughout, we suppress vector notations by writing $L^2(\Om)$ instead of $L^2(\Om;\bbR^N)$, etc. Similarly, for a vector-valued function $u=(u_n)_{n=1}^N:\Om\to\bbR^N$ we write $\nabla u$ applying the gradient to each component $u_n$ of $u$, and analogously write $\Delta u=(\Delta u_n)_{n=1}^N$, etc. Given two vector-valued functions $u=(u_n)_{n=1}^N$ and $v=(v_n)_{n=1}^N$, we write $uv=(u_nv_n)_{n=1}^N$ for their componentwise product. We often use ``$\cdot$'' to denote the scalar product in $\bbR^d$ and write 
\begin{equation}\label{Abb1}
\nabla u\cdot\nabla v=\sum_{n=1}^N\langle\nabla u_n,\nabla v_n\rangle_{\bbR^d},\, \langle \nabla u,\nabla v\rangle_{L^2}=\sum_{n=1}^N\int_{\Om}\langle\nabla u_n(x),\nabla v_n(x)\rangle_{\bbR^d}\,dx.
\end{equation} 
More general, if $U=(U_n)_{n=1}^N$ and $V=(V_n)_{n=1}^N$ are vector-valued functions with components $U_n,V_n\in\bbR^d$ then we
write
\beq\lb{Abb2}
U\cdot V=\sum_{n=1}^N\langle U_n, V_n\rangle_{\bbR^d},\, \langle U,V\rangle_{L^2(\Om)}=\sum_{n=1}^N\int_{\Om}\langle U_n(x), V_n(x)\rangle_{\bbR^d}\,dx.
\enq
 We denote by $\cH=H^{1/2}(\dOm)\times H^{-1/2}(\dOm)$ the boundary space, by $H^1_\Delta(\Om)$ the subspace of weakly harmonic functions in $H^1(\Om)$, by $\gaD$ the Dirichlet and by $\gaN$ the weak Neumann trace, and write $\tr u=(\gaD u,\gaN u)$.
 
 We use the notation $L=-\Delta+V$, $L_s=-\Delta+V_s$ etc. for the operators acting from $H^1(\Om)$ into $H^{-1}(\Om)$, and $L_\cG$, $L_{s,\cG}$ etc. for the respective operators on $L^2(\Om)$ equipped with  boundary conditions associated with a given subspace $\cG$ in $\cH$.

\section{Preliminaries on elliptic problems and associated operators}\label{sec:EllPr}

For the reader's convenience, in this section we collect several well-known facts from \cite{E10,GM08,Gr,G71,G09,R96,T11,T96} concerning trace maps, Neumann operators, Dirichlet-to-Neumann and Neumann-to-Dirichlet operators and elliptic problems on Lipschitz domains that will be needed in the sequel (Sections \ref{ss:DaNt}--\ref{ss:EE}). Our main topic, however, is a description of the set of weak solutions to the homogenous equation associated with a Schr\"odinger operator via certain Birman--Schwinger type operators (Section \ref{ss:wsbso}). 

Throughout, we assume that Hypothesis \ref{h1} (i) holds.
 Unless otherwise specified, the operator $-\Delta$ is understood in the weak sense, that is, as a map between the spaces $H^1_0(\Omega)$ and $H^{-1}(\Omega)=\big(H^1_0(\Omega)\big)^*$ defined by means of the Riesz Lemma:
\[{}_{H^{-1}(\Omega)}\langle -\Delta u,\Phi\rangle_{H^1_0(\Omega)}=
\langle \nabla u,\nabla\Phi\rangle_{L^2(\Omega)},\quad u, \Phi\in H^1_0(\Omega).\]
We stress that $-\Delta\in\cB\big(H^1_0(\Om),H^{-1}(\Om)\big)$ is an isomorphism and denote its inverse by $(-\Delta)^{-1}\in\cB\big(H^{-1}(\Omega),H^1_0(\Omega)\big)$.
We will denote by \beq\label{dfnpi12}
\pi_1\colon H^1(\Omega)\to H^1_0(\Omega)\,\text{ and }\, \pi_2\colon H^1(\Om)\to H^1_\Delta(\Omega)\enq the complementary bounded   projections associated with the direct sum decomposition $H^1(\Omega)=H^1_0(\Omega)\dot{+} H^1_\Delta(\Om)$, where \begin{equation}\label{dfnHar}
H^1_\Delta(\Om)=\big\{u\in H^1(\Om): \langle \nabla u,\nabla\Phi\rangle_{L^2(\Omega)}=0\,\text{ for all }\,\Phi\in H^1_0(\Om)\big\}\end{equation} denotes the set of weakly harmonic functions. We will also use the symbol $-\Delta$ to denote the extension of the Laplacian from $H^1_0(\Om)$ to $H^1(\Om)$ defined by $-\Delta u=(-\Delta)\pi_1u$ for $u\in H^1(\Om)$. Given a bounded potential $V\in L^\infty(\Om)$, we will consider the Sch\"odinger operator $L=-\Delta+V$ as an operator from $H^1(\Om)$ into $H^{-1}(\Om)$. 
We denote by $\cK_L$ the set of weak solutions to the equation $Lu=0$, that is, we let
\begin{equation}\label{dfncK}\begin{split}
\cK_L&=\big\{u\in H^1(\Om): Lu=0\,\text{ in } H^{-1}(\Om)\big\}\\&=
\big\{u\in H^1(\Om): \langle \nabla u,\nabla\Phi\rangle_{L^2(\Omega)}+\langle V u,\Phi\rangle_{L^2(\Omega)}=0\text{ for all }\Phi\in H^1_0(\Om)\big\}.\end{split}
\end{equation}

\subsection{The Dirichlet and Neumann traces}\label{ss:DaNt} Our main objective in this subsection is to introduce the (weak) trace map $\tr u=(\gaD u,\gaN u)$, defined on a dense subset $\cD$ in $H^1(\Om)$ and mapping into the boundary space $\cH=H^{1/2}(\dOm)\times H^{-1/2}(\dOm)$.

First, we define the strong trace operators.
 Let us introduce the boundary trace operator $\gaD^0$ (the Dirichlet
trace) by
\begin{equation}\lb{2.4}
\gaD^0\colon C^0(\ol{\Om})\to C^0(\dOm), \quad
\gaD  ^0 u = u|_\dOm .
\end{equation}
By the standard trace theorem, see, e.g., \cite[Proposition 4.4.5]{T11}, or \cite[Theorem 3.38]{McL}, there exists a bounded, surjective Dirichlet
trace operator 
\begin{equation}
\gaD  \colon H^{r}(\Om)\to H^{r-1/2}(\dOm) \hookrightarrow
L^2(\dOm), \quad 1/2<r<3/2.
\lb{6.1}
\end{equation}
Furthermore, the map has a bounded right inverse, i.e., given any $f\in H^{r-1/2}(\dOm)$ there exists $u\in H^{r}(\Om)$ such that $\gaD   u=f$ and $\|u\|_{H^{r}(\Om)}\leq C\|f\|_{H^{r-1/2}(\dOm)}$.

The following existence and uniqueness result for the Dirichlet boundary value problems is well known, see, e.g., \cite[Theorem 2.1]{G71} and also \cite{Gr} and the literature therein.

\begin{theorem}\lb{thm1} Let $1\leq r\leq3/2$ and temporarily assume that $\dOm$ is $C^\infty$. Then the map $$D\colon H^{r}(\Om)\rightarrow H^{r-2}(\Om)\times H^{r-1/2}(\dOm)$$ defined by $Du=(-\Delta u, \gaD   u)$ is an isomorphism. That is, for any $v\in H^{r-2}(\Om)$ and $f\in H^{r-1/2}(\dOm)$ there exists a unique $u\in H^{r}(\Om)$ so that $Du=(v,f)$, and $u$ satisfies the estimate $\|u\|_{H^{r}(\Om)}\leq c(\|v\|_{H^{r-2}(\Om)}+\|f\|_{H^{r-1/2}(\dOm)})$. If Hypothesis \ref{h1}{\rm (i)} is satisfied then the statement in the theorem holds for $r=1$.
\end{theorem}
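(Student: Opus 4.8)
The plan is to deduce the statement for $C^\infty$ boundary from the classical elliptic theory, and then obtain the Lipschitz case $r=1$ by a separate variational argument. For the smooth case, I would proceed as follows. First, the operator $D$ is bounded: $-\Delta\colon H^r(\Om)\to H^{r-2}(\Om)$ is bounded for all real $r$, and $\gaD\colon H^r(\Om)\to H^{r-1/2}(\dOm)$ is bounded by the trace theorem \eqref{6.1} (with the appropriate extension to the endpoint $r=3/2$ in the smooth case). Injectivity follows from the uniqueness of weak solutions to the homogeneous Dirichlet problem: if $-\Delta u=0$ in $H^{r-2}(\Om)$ and $\gaD u=0$, then in particular $u\in H^1_0(\Om)$ solves $-\Delta u=0$ weakly, so $\langle\nabla u,\nabla\Phi\rangle_{L^2(\Om)}=0$ for all $\Phi\in H^1_0(\Om)$; taking $\Phi=u$ gives $\nabla u=0$, hence $u=0$ since $u\in H^1_0(\Om)$. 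Surjectivity (with the estimate) is the substantive point: I would invoke the standard elliptic regularity theory for the Dirichlet Laplacian on smooth domains, i.e., reduce to homogeneous boundary data by lifting $f$ via the bounded right inverse of $\gaD$, and then solve $-\Delta u=\tilde v$ with $u\in H^1_0(\Om)$ using the Lax--Milgram theorem (giving $H^1$ solvability) followed by the shift theorem $(-\Delta)^{-1}\colon H^{r-2}(\Om)\to H^r(\Om)\cap H^1_0(\Om)$ boundedly for $1\le r\le 3/2$; the open mapping theorem then upgrades bijectivity of the bounded map $D$ to the two-sided estimate. This is exactly \cite[Theorem 2.1]{G71}, so I would simply cite it rather than reproduce the proof.

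For the Lipschitz case with $r=1$, the shift theorem is no longer available, so I would argue directly at the level of $H^1$. Here $-\Delta u\in H^{-1}(\Om)$ is the weak Laplacian defined on $H^1(\Om)$ as in the text preceding the theorem, and $\gaD u\in H^{1/2}(\dOm)$ via \eqref{6.1} with $r=1$. Given $(v,f)\in H^{-1}(\Om)\times H^{1/2}(\dOm)$, use the bounded right inverse of $\gaD$ to pick $w\in H^1(\Om)$ with $\gaD w=f$ and $\|w\|_{H^1(\Om)}\le c\|f\|_{H^{1/2}(\dOm)}$; then seek $u=w+u_0$ with $u_0\in H^1_0(\Om)$ solving $-\Delta u_0=v+\Delta w$ in $H^{-1}(\Om)$. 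Since $-\Delta\colon H^1_0(\Om)\to H^{-1}(\Om)$ is an isomorphism (stated explicitly in the excerpt, via the Riesz lemma / Lax--Milgram), $u_0$ exists, is unique, and satisfies $\|u_0\|_{H^1(\Om)}\le c\|v+\Delta w\|_{H^{-1}(\Om)}\le c(\|v\|_{H^{-1}(\Om)}+\|w\|_{H^1(\Om)})$. Combining the two estimates yields $\|u\|_{H^1(\Om)}\le c(\|v\|_{H^{-1}(\Om)}+\|f\|_{H^{1/2}(\dOm)})$. Uniqueness is the injectivity argument already given. This shows $D$ is an isomorphism for $r=1$ on Lipschitz domains.

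The main obstacle is really just bookkeeping about which endpoint of the range $1\le r\le 3/2$ is admissible and in what regularity class: the surjectivity/regularity estimate at $r=3/2$ on smooth domains and at $r=1$ on Lipschitz domains both sit at the boundary of the standard theory, and one must be careful that the trace operator and its right inverse behave correctly there (which they do, by \cite{T11,G71,Gr}). Beyond that, no genuinely new work is needed: the smooth case is a citation, and the Lipschitz $r=1$ case is a two-line Lax--Milgram-plus-trace-lifting argument using the isomorphism property of the weak Dirichlet Laplacian already recorded in the text.
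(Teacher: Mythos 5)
The paper does not actually prove Theorem \ref{thm1}; it states it as a well-known fact and points to \cite[Theorem~2.1]{G71} and \cite{Gr} for the smooth case. Your proposal is therefore entirely consistent with the paper's approach: cite the classical elliptic regularity result for the $C^\infty$ case, and supply an explicit Lax--Milgram plus trace-lifting argument for the Lipschitz endpoint $r=1$. That latter argument is correct and self-contained: injectivity follows from $\gaD u=0\Rightarrow u\in H^1_0(\Om)$ (which holds on Lipschitz domains, as the paper notes via \cite[Corollary~1.5.1.6]{Gr}) together with the weak formulation $\langle\nabla u,\nabla u\rangle_{L^2(\Om)}=0$; surjectivity with the estimate follows by lifting $f$ via the bounded right inverse of $\gaD$ from \eqref{6.1} and then inverting $-\Delta\in\cB(H^1_0(\Om),H^{-1}(\Om))$, both of which the paper already records. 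In short, you have correctly identified the theorem as a citation rather than something requiring original work, and your Lipschitz $r=1$ argument fills in exactly the detail the paper leaves implicit.
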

\noindent In other words, for any given $v\in H^{r-2}(\Om)$ and $f\in H^{r-1/2}(\dOm)$ there exists a unique solution $u\in H^{r}(\Om)$ to the Dirichlet problem
$ -\Delta u=v,\, \gaD u=f$.

We will now define the strong Neumann trace operator $\gaN   ^{\rm s}$ by
  \beq
\gaN   ^{\rm s} = \nu\cdot\gaD  \nabla \colon H^{r+1}(\Om)\to 
L^2(\dOm),\, 1/2<r<3/2, \lb{6.2}
\enq
where $\nu$ denotes the outward-pointing unit normal vector to $\partial\Om$.
With the notation just introduced, the following Green's formula holds:
\beq\label{GrF}\begin{split}
\int_{\Omega}\nabla &u\cdot \nabla \Phi\,dx \\&=-\int_{\Omega} \langle \Delta u, \Phi \rangle_{\bbR^N} dx+\int_{\dOm} \langle \gaN^{\rm s}u, \gaD  \Phi \rangle_{\bbR^N} dy,\, u\in H^{2}(\Om), \Phi\in H^{1}(\Om)\end{split}\enq
where $u=(u_n)_{n=1}^N$ and $\Phi=(\Phi_n)_{n=1}^N$.

Our next task is to define the weak Neumann trace operator $\gaN $,
an unbounded operator with dense domain $\cD \subset H^1(\Om)$ and range in $H^{-1/2}(\dOm)$, the dual space to $H^{1/2}(\dOm)$.
First, we  define the weak operator of multiplication by the normal vector. We say that $u\in H^{1}(\Om)$ has $\Delta u\in L^2(\Om)$ if there exists $v\in L^2(\Om)$ such that $\Delta u=v$ in $H^{-1}(\Om)$.
We define the unbounded operator $\nu\cdot*: w\mapsto \nu\cdot w$ as follows:
\beq\label{dfnwnu}\begin{split}
&\nu\cdot* \colon \dom(\nu\cdot*)\subset L^2(\Omega)\to H^{-1/2}(\partial\Omega),\\
&\dom(\nu\cdot*)=\{w\in
L^2(\Omega)\,|\,{\rm div } (w) \in L^2(\Omega)\},\\
&\langle\nu\cdot
w,\phi\rangle_{1/2}=\langle w, \nabla\Phi\rangle_{L^2(\Om)} +
\langle{\rm div } (w), \Phi\rangle_{L^2(\Om)},
\end{split}\enq
whenever $\phi\in H^{1/2}(\partial\Omega)$ and $\Phi\in
H^{1}(\Omega)$ are such that $\gaD  \Phi=\phi$. Here $w=(w_n)_{n=1}^N$, with $w_n(x)\in\bbR^d$, ${\rm div } (w)=({\rm div  } (w_n))_{n=1}^N$ and we recall the notation in \eqref{Abb12} and \eqref{Abb2}. The last pairing is compatible
with the distributional pairing on $\partial\Om$ and the above definition is independent of the particular extension
$\Phi\in H^{1}(\Omega)$ of $\phi$, see \cite[(A12) -- (A14)]{GLMZ05}. 

We introduce the (weak) Neumann trace operator
$\gaN $ as follows. Let
\beq\label{dfcD}
\cD=\{u\in
H^1(\Omega)\,\big|\,\Delta u\in L^2(\Omega)\,\text{ in $H^{-1}(\Om)$}\},
\enq
and define $\dom(\gaN)=\cD$ and
\begin{align}\label{dfngaN}
&\gaN \colon \dom(\gaN) \subset H^1(\Omega)\to H^{-1/2}(\partial\Omega),\quad \gaN u=\nu\cdot\nabla u,\end{align}
where $\nu\cdot*$ is the weak operator of multiplication by the normal vector defined in \eqref{dfnwnu}.
We recall \eqref{Abb12} and \eqref{Abb1} and the following Green's formula:
\begin{equation}
\langle\gaN u,\gaD  \Phi\rangle_{1/2} = \langle{\nabla u}, \nabla
\Phi\rangle_{L^2(\Om)} + \langle \Delta u,\Phi\rangle_{L^2(\Om)},\, u\in\cD, \Phi\in H^1(\Om).
\lb{wGreen}\end{equation}
The set $\cD$ can be equipped with the following ``graph norm" for the Laplacian:
\beq\label{dfncDn}
\|u\|_\cD=\|u\|_{H^1(\Om)}+\|\Delta u\|_{L^2(\Om)},\quad u\in\cD.
\enq
We note that $\gaN $ is not bounded from $\dom(\gaN )\subset H^1(\Omega)$ to $H^{-1/2}(\partial\Omega)$ when $\dom(\gaN)$ is equipped with the $H^1(\Omega)$ norm, and in fact is not even closed, though it is densely defined. However, as the following elementary lemma shows, $\gaN $ is bounded when $\dom(\gaN) =\cD$ is equipped with the norm \eqref{dfncDn}.

\begin{lemma}\lb{claim1}
Assume Hypothesis \ref{h1} (i). Then

{\rm (i)}\,
$\|\gaN u\|_{H^{-1/2}(\dOm)}\leq c(\|u\|_{H^{1}(\Om)}+\|\Delta u\|_{L^2(\Om)})$ for all $u\in\cD=\dom(\gaN)$, and

{\rm (ii)}\, the space $\cD=\dom(\gaN)$ is complete in the norm \eqref{dfncDn}.
\end{lemma}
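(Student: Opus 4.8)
The plan is to obtain both assertions directly from the weak Green's formula \eqref{wGreen}, together with the boundedness of the right inverse of the Dirichlet trace supplied by \eqref{6.1}.

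For part (i), I would fix $u\in\cD$ and test against an arbitrary $f\in H^{1/2}(\dOm)$. Using \eqref{6.1} with $r=1$ (valid under Hypothesis \ref{h1}(i)), choose $\Phi\in H^1(\Om)$ with $\gaD\Phi=f$ and $\|\Phi\|_{H^1(\Om)}\le c\|f\|_{H^{1/2}(\dOm)}$. Substituting this $\Phi$ into \eqref{wGreen} and applying Cauchy--Schwarz gives
\[
|\langle\gaN u,f\rangle_{1/2}|\le\|\nabla u\|_{L^2(\Om)}\|\nabla\Phi\|_{L^2(\Om)}+\|\Delta u\|_{L^2(\Om)}\|\Phi\|_{L^2(\Om)}\le c\big(\|u\|_{H^1(\Om)}+\|\Delta u\|_{L^2(\Om)}\big)\|f\|_{H^{1/2}(\dOm)}.
\]
Since $H^{-1/2}(\dOm)$ is the dual of $H^{1/2}(\dOm)$, taking the supremum over $f$ with $\|f\|_{H^{1/2}(\dOm)}\le1$ yields the stated estimate. (The value $\langle\gaN u,f\rangle_{1/2}$ does not depend on the chosen extension $\Phi$, which is exactly what makes $\gaN u\in H^{-1/2}(\dOm)$ well defined; this was already recorded after \eqref{dfnwnu}, so no further argument is needed.)

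For part (ii), I would take a sequence $(u_k)$ that is Cauchy in the norm \eqref{dfncDn}. Then $(u_k)$ is Cauchy in $H^1(\Om)$ and $(\Delta u_k)$ is Cauchy in $L^2(\Om)$, so by completeness of these spaces there exist $u\in H^1(\Om)$ and $v\in L^2(\Om)$ with $u_k\to u$ in $H^1(\Om)$ and $\Delta u_k\to v$ in $L^2(\Om)$. It remains to identify $v$ with $\Delta u$ in $H^{-1}(\Om)$: since $-\Delta\in\cB\big(H^1(\Om),H^{-1}(\Om)\big)$ (the extension of the weak Laplacian introduced before \eqref{dfncK}), we have $\Delta u_k\to\Delta u$ in $H^{-1}(\Om)$; and since the embedding $L^2(\Om)\hookrightarrow H^{-1}(\Om)$ is continuous, $\Delta u_k\to v$ in $H^{-1}(\Om)$ as well. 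Uniqueness of limits in $H^{-1}(\Om)$ forces $\Delta u=v\in L^2(\Om)$, hence $u\in\cD$ and $\|u_k-u\|_\cD=\|u_k-u\|_{H^1(\Om)}+\|\Delta u_k-v\|_{L^2(\Om)}\to0$, proving completeness.

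I do not expect any serious obstacle; the only point requiring care is bookkeeping about the several meanings of ``$\Delta$'' in play (the weak Laplacian on $H^1_0(\Om)$, its extension to $H^1(\Om)$, and the condition ``$\Delta u\in L^2(\Om)$''), and in particular checking that the $L^2$-limit of $\Delta u_k$ genuinely coincides with the $H^{-1}$-valued weak Laplacian of the $H^1$-limit $u$ --- which is precisely what the continuity of $L^2(\Om)\hookrightarrow H^{-1}(\Om)$ guarantees.
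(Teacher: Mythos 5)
Your proof is correct and follows essentially the same route as the paper: for (i) you apply Green's formula \eqref{wGreen} to an extension $\Phi$ of bounded $H^1$-norm (you take any bounded right inverse of $\gaD$ from \eqref{6.1}, the paper takes the harmonic extension from Theorem~\ref{thm1}, but the harmonicity is never used, so the two are interchangeable); for (ii) you use the $H^1$ and $L^2$ Cauchy limits and identify $v=\Delta u$ in $H^{-1}(\Om)$, which the paper does by writing out the weak pairing against $\phi\in H^1_0(\Om)$ and you do by invoking boundedness of $-\Delta\colon H^1(\Om)\to H^{-1}(\Om)$, continuity of $L^2(\Om)\hookrightarrow H^{-1}(\Om)$, and uniqueness of limits --- the same argument packaged a little more abstractly.
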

\begin{proof}
To begin the proof of (i), we fix $\phi\in H^{1/2}(\dOm)$ and let $\Phi$ denote the solution to the boundary value problem $-\Delta\Phi=0$, $\gaD\Phi=\phi$; by Theorem \ref{thm1} with $r=1$ such a function necessarily exists, and satisfies $\|\Phi\|_{H^1(\Om)}\le c\|\phi\|_{H^{1/2}(\dOm)}$.
Then Green's formula \eqref{wGreen} yields
\begin{align*}
\big|\langle\gaN u,\phi\rangle_{1/2}\big|
&\leq\|u\|_{H^{1}(\Om)}\|\Phi\|_{H^{1}(\Om)}+
\|\Delta u\|_{L^2(\Om)}\|\Phi\|_{H^{1}(\Om)}\\
&\leq c\|\phi\|_{H^{1/2}(\dOm)}\big(\|u\|_{H^{1}(\Om)}+\|\Delta u\|_{L^2(\Om)}\big)\end{align*}
as required.
To show (ii), let $\{u_n\}$ be a Cauchy sequence in $\big(\cD,\|\cdot\|_\cD\big)$. Then $\{u_n\}$ is a Cauchy sequence in $\big(H^1(\Omega),\|\cdot\|_{H^{1}(\Om)}\big)$ and $\{\Delta u_n\}$ is a Cauchy sequence in $\big(L^2(\Omega),\|\cdot\|_{L^2(\Omega)}\big)$. Therefore,
there exist $u\in H^{1}(\Om)$ and $v\in L^2(\Omega)$ so that $u_n\to u$ in $H^{1}(\Om)$ and $\Delta u_n\to v$ in  $L^2(\Omega)$ as $n\to\infty$. Then, using twice  the definition of the (weak) Laplacian, for any $\phi\in H^1_0(\Om)$ we infer:
\begin{align*}
{}_{H^{-1}(\Om)}\langle-\Delta u,\phi\rangle_{H^{1}_0(\Om)}&=\langle\nabla u,\nabla\phi\rangle_{L^2(\Omega)}=\lim_{n\to\infty}\langle\nabla u_n,\nabla\phi\rangle_{L^2(\Omega)}\\&=\lim_{n\to\infty}\langle-\Delta u_n,\phi\rangle_{L^2(\Omega)}=\langle-v,\phi\rangle_{L^2(\Omega)}.
\end{align*}
This means that $\Delta u=v\in L^2(\Omega)$ and thus $u\in\cD$ and $u_n\to u$ in $\|\cdot\|_\cD$.
\end{proof}

We are now ready to define the (weak) trace map, $\tr$, as an unbounded operator having domain $\dom(\tr)=\cD\subset H^1(\Om)$, with $\cD$ as in \eqref{dfcD}, mapping into the boundary space $\cH=H^{1/2}(\dOm)\times H^{-1/2}(\dOm)$ as follows:
\beq\label{dfnTr}
\tr\colon\dom(\tr)\subset H^1(\Omega)\to H^{1/2}(\dOm)\times H^{-1/2}(\dOm),\,
\tr u=(\gaD  u,\gaN u).\enq
By Lemma \ref{claim1}, $\tr$ is a bounded operator when the space $\cD$ is equipped with the graph norm \eqref{dfncDn}. Also, since the space of weakly harmonic functions $H^1_\Delta(\Om)$ is a subset of $\cD$ on which the norms $\|\cdot\|_\cD$ and $\|\cdot\|_{H^1(\Om)}$ coincide, the restriction $\tr|_{H^1_\Delta(\Om)} \colon H^1_\Delta(\Om)\to H^{1/2}(\dOm)\times H^{-1/2}(\dOm)$ is a bounded operator.

We will also need the following assertion, known as the boundary unique continuation property (see, e.g., \cite[Proposition 2.5]{BB12} or \cite[Theorem 3.2.2]{I06}).
\begin{lemma}\label{UCP}
Assume Hypothesis \ref{h1}(i) and let $L=-\Delta+V$ for a potential $V\in L^\infty(\Om)$. If $u\in H^1(\Om)$ is a weak solution to the equation $Lu=0$ that satisfies both boundary conditions $\gaD u=0$ and $\gaN u=0$, then $u=0$.
\end{lemma}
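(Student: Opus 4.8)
The statement to prove is Lemma~\ref{UCP}, the boundary unique continuation property: if $u\in H^1(\Om)$ solves $Lu=0$ weakly with $\gaD u=0$ and $\gaN u=0$, then $u\equiv 0$.

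The plan is to reduce the statement to a known interior unique continuation theorem by extending $u$ by zero across the boundary. First I would observe that $\gaD u = 0$ means $u\in H^1_0(\Om)$, so the zero-extension $\widetilde u$ of $u$ to a ball $B\supset\overline{\Om}$ lies in $H^1(B)$. Next I would compute the weak Laplacian of $\widetilde u$ on $B$: for any test function $\Phi\in C_c^\infty(B)$, Green's formula \eqref{wGreen} applied on $\Om$ gives
\[
\langle \nabla\widetilde u,\nabla\Phi\rangle_{L^2(B)}
=\langle\nabla u,\nabla\Phi\rangle_{L^2(\Om)}
=\langle\gaN u,\gaD\Phi\rangle_{1/2}-\langle\Delta u,\Phi\rangle_{L^2(\Om)},
\]
and since $\gaN u=0$ and $\Delta u = Vu$ (because $Lu=0$ weakly, i.e. $-\Delta u + Vu = 0$ in $H^{-1}(\Om)$, which here forces $\Delta u\in L^2(\Om)$), this equals $-\langle Vu,\Phi\rangle_{L^2(\Om)}=-\langle \widetilde{Vu},\Phi\rangle_{L^2(B)}$. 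Hence $\widetilde u\in H^1(B)$ satisfies $-\Delta\widetilde u + \widetilde V\,\widetilde u = 0$ weakly on $B$, where $\widetilde V\in L^\infty(B)$ is any bounded extension of $V$, and $\widetilde u$ vanishes on the nonempty open set $B\setminus\overline\Om$.

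Then I would invoke the strong unique continuation property (or even the weak version suffices here, since $\widetilde u$ vanishes on an open subset of $B$) for the Schrödinger operator $-\Delta + \widetilde V$ with $\widetilde V\in L^\infty$: a classical result, going back to Aronszajn and Cordes, and applicable to $H^1_{\mathrm{loc}}$ weak solutions with bounded (indeed $L^{d/2}_{\mathrm{loc}}$) potential. Since $\widetilde u$ vanishes on an open set and $B$ is connected, this yields $\widetilde u\equiv 0$ on $B$, hence $u\equiv 0$ on $\Om$.

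The only delicate point is making sure that $\Delta u\in L^2(\Om)$ so that Green's formula \eqref{wGreen} is legitimately applicable — but this is immediate, since $Lu=0$ in $H^{-1}(\Om)$ means $-\Delta u = -Vu$ with $Vu\in L^2(\Om)$ because $V\in L^\infty(\Om)$ and $u\in H^1(\Om)\subset L^2(\Om)$; thus $u\in\cD=\dom(\gaN)$, which is also what makes the hypothesis $\gaN u=0$ meaningful in the first place. The main (and essentially only nontrivial) ingredient is the citation of the interior unique continuation theorem; everything else is the routine zero-extension argument. Accordingly, I would present the proof as: (1) $u\in H^1_0(\Om)$ so the zero extension $\widetilde u\in H^1(B)$; (2) compute, via \eqref{wGreen} and the hypotheses, that $\widetilde u$ weakly solves $-\Delta\widetilde u+\widetilde V\widetilde u=0$ on $B$; (3) apply unique continuation on the connected open set $B$, using that $\widetilde u\equiv 0$ on the open set $B\setminus\overline\Om$, to conclude $\widetilde u\equiv 0$ and hence $u\equiv 0$.
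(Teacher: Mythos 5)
The paper states Lemma~\ref{UCP} without proof, citing Behrndt--Rohleder \cite{BB12} and Isakov \cite{I06}; your zero-extension argument reducing to interior unique continuation for $-\Delta+\widetilde V$ with $\widetilde V\in L^\infty$ is precisely the standard proof found in those references, and it is correct. You have correctly identified the two key ingredients: that $\gaD u=0$ on a Lipschitz domain gives $u\in H^1_0(\Om)$ (so the zero extension lies in $H^1(B)$), and that $\gaN u=0$ combined with Green's formula \eqref{wGreen} kills the boundary term and shows the extension is a weak solution across $\dOm$.
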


\subsection{The Neumann, Dirichlet-to-Neumann and Neumann-to-Dirichlet operators}\label{ss:NOaDtNNtD}
In this subsection we define the Neumann operator $N_{-\Delta} $ mapping the Dirichlet boundary
values of harmonic functions into their Neumann boundary values, and the generalizations of this operator, $N_{L-\lambda}$ and $M_{L-\lambda}=-N_{L-\lambda}^{-1}$, when the Laplacian is replaced by  the Schr\"odinger operator $L-\lambda I_{H^1(\Om)}=-\Delta+V-\lambda I_{H^1(\Om)}$. 

As a motivation, we begin with the strong Neumann operator temporarily assuming that $\dOm$ is $C^\infty$. Let $f\in H^{3/2}(\dOm)$. By Theorem \ref{thm1} with $r=2$ there exists a unique solution $u_D\in H^{2}(\Om)$ to the boundary value problem
$-\Delta u_D=0$ in  $\Om$, 
    $\gaD   u_D=f$ on $\dOm$ so that $\|u_D\|_{H^2(\Om)}\le c\|f\|_{H^{1/2}(\dOm)}$.
We define the strong Neumann operator $N^{\rm s}_{-\Delta}$ by
 \begin{equation}\label{dfDNs}
 N^{\rm s}_{-\Delta} \colon H^{3/2}(\dOm)\to H^{1/2}(\dOm),\, N_{-\Delta} ^{\rm s} f=-\gaN u_D,
 \end{equation}
 see, e.g., \cite[Section 7.11]{T96}.
 Sometimes $N^{\rm s}_{-\Delta} $ is called the Dirichlet-to-Neumann operator, see, e.g. \cite{GM08,GM11} and the literature therein, but we will reserve the longer name for the situation when the Laplacian is replaced by the Schr\"odinger operator. Invoking the strong trace map $\tr^{\rm s}$, cf.\ \eqref{6.2},
 \beq\label{dfntrs}
 \tr^{\rm s}\colon H^{2}(\Om)\to H^{3/2}(\dOm)\times H^{1/2}(\dOm),\quad
\tr^{\rm s}u=(\gaD  u,\gaN^{\rm s}  u),\enq
we remark that the restriction of $\tr^{\rm s}$ to the space $\{u\in H^2(\Om): -\Delta u=0 \text{ in } L^2(\Om)\}$ of strongly harmonic functions satisfies the identity 
\beq\label{nop}
\tr^{\rm s} u=(f,- N^{\rm s}_{-\Delta}  f) \text{ with } f=\gaD u.
\enq
By Green's formula \eqref{GrF} we then have, for $\Phi\in H^{1}(\Om)$,
\beq\label{grfcon}
\langle N^{\rm s}_{-\Delta} f,  \gaD  \Phi\rangle_{L^2(\dOm)}
=-\langle\gaN^{\rm s}   u_D, \gaD\Phi\rangle_{L^2(\dOm)}
=-\langle\nabla u_D, \nabla\Phi\rangle_{L^2(\Om)}.\enq 
We are ready to define the weak Neumann operator assuming Hypothesis \ref{h1} (i).
\begin{definition}\label{DNO} Let $f\in H^{1/2}(\dOm)$, with  $u_D\in H^1(\Om)$ the weak solution to the boundary value problem $-\Delta u_D=0$ in $\Omega$ and $\gaD u_D=f$ on $\dOm$ whose existence is guaranteed by Theorem \ref{thm1} with $r=1$.
The weak Neumann operator $N_{-\Delta}$ is defined by
\beq\label{dfnwNo}
N_{-\Delta} \colon H^{1/2}(\dOm)\to H^{-1/2}(\dOm),\, N_{-\Delta}  f=-\gaN u_D,
\enq
 where $\gaN$ is the weak Neumann trace operator defined in \eqref{dfngaN}.
\hfill$\Diamond$
\end{definition}

We summarize the properties of the weak Neumann and trace operators.
\begin{lemma}\label{lem:propNO}
{\bf (i)}\, The weak Neumann
operator $N_{-\Delta} \in\cB\big(H^{1/2}(\dOm), H^{-1/2}(\dOm)\big)$ defined in \eqref{dfnwNo} is a bounded extension of the strong Neumann
operator $N^{\rm s}_{-\Delta} \in\cB\big(H^{3/2}(\dOm), H^{1/2}(\dOm)\big)$ defined in \eqref{dfDNs}. 

{\bf (ii)}\, The restriction of the weak trace map $\tr$ defined in \eqref{dfnTr} to the space $H^1_\Delta(\Om)$ of weakly harmonic functions satisfies the identity 
\beq\label{wnop}
\tr u=(\gaD  u,\gaN  u)=(f,- N_{-\Delta}  f) \text{ for } u \in H^1_\Delta(\Om), \, f=\gaD u;\enq
in other words, $\tr(H^1_\Delta(\Om))=\gr(-N_{-\Delta})$ in $H^{1/2}(\dOm)\times H^{-1/2}(\dOm)$.
\end{lemma}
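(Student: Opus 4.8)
The plan is to read everything off the definition of the weak Neumann operator in Definition \ref{DNO}, combined with the a priori estimate in Theorem \ref{thm1} (applicable with $r=1$ under Hypothesis \ref{h1}(i)) and the boundedness of the weak Neumann trace from Lemma \ref{claim1}. For part (i), I would first note that, by the uniqueness assertion of Theorem \ref{thm1} with $r=1$, the map $f\mapsto u_D$ assigning to $f\in H^{1/2}(\dOm)$ the weak harmonic extension $u_D\in H^1(\Om)$ is well defined and linear; since $\gaN$ is linear, so is $N_{-\Delta}$. For boundedness, observe that $u_D$ is weakly harmonic, hence $\Delta u_D=0\in L^2(\Om)$ and therefore $u_D\in\cD=\dom(\gaN)$; Lemma \ref{claim1}(i) then gives $\|N_{-\Delta}f\|_{H^{-1/2}(\dOm)}=\|\gaN u_D\|_{H^{-1/2}(\dOm)}\le c\big(\|u_D\|_{H^1(\Om)}+\|\Delta u_D\|_{L^2(\Om)}\big)=c\|u_D\|_{H^1(\Om)}\le c\|f\|_{H^{1/2}(\dOm)}$, the last step by the estimate in Theorem \ref{thm1}. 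To see that $N_{-\Delta}$ extends $N^{\rm s}_{-\Delta}$ (with $\dOm$ temporarily $C^\infty$), take $f\in H^{3/2}(\dOm)$; by Theorem \ref{thm1} with $r=2$ the corresponding $u_D$ lies in $H^2(\Om)$, and by uniqueness of the $H^1(\Om)$ solution it coincides with the $u_D$ of Definition \ref{DNO}. It then remains to check that $\gaN$ restricted to $H^2(\Om)$ agrees with $\gaN^{\rm s}=\nu\cdot\gaD\nabla$: subtracting the strong Green's formula \eqref{GrF} from the weak Green's formula \eqref{wGreen} yields $\langle\gaN u,\gaD\Phi\rangle_{1/2}=\langle\gaN^{\rm s}u,\gaD\Phi\rangle_{L^2(\dOm)}$ for every $\Phi\in H^1(\Om)$, and since $\gaD\colon H^1(\Om)\to H^{1/2}(\dOm)$ is surjective (Theorem \ref{thm1}/the trace theorem) this forces $\gaN u=\gaN^{\rm s}u$. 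Hence $N_{-\Delta}f=-\gaN u_D=-\gaN^{\rm s}u_D=N^{\rm s}_{-\Delta}f$.

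For part (ii), let $u\in H^1_\Delta(\Om)$ and set $f:=\gaD u$. By the definition \eqref{dfnHar} of $H^1_\Delta(\Om)$, $u$ is a weak solution of $-\Delta u=0$ with $\gaD u=f$, so uniqueness in Theorem \ref{thm1} with $r=1$ identifies $u$ with the function $u_D$ of Definition \ref{DNO}; therefore $\gaN u=\gaN u_D=-N_{-\Delta}f$, i.e. $\tr u=(\gaD u,\gaN u)=(f,-N_{-\Delta}f)$. This proves $\tr(H^1_\Delta(\Om))\subseteq\gr(-N_{-\Delta})$. Conversely, for any $f\in H^{1/2}(\dOm)$ the solution $u_D$ of Definition \ref{DNO} is weakly harmonic by construction, so $u_D\in H^1_\Delta(\Om)$ and $\tr u_D=(f,-N_{-\Delta}f)$, giving $\gr(-N_{-\Delta})\subseteq\tr(H^1_\Delta(\Om))$. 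The two inclusions together yield $\tr(H^1_\Delta(\Om))=\gr(-N_{-\Delta})$ and formula \eqref{wnop}.

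I do not expect a genuine obstacle here; the only two points needing care are the identification of the weak Neumann trace $\gaN$ with the classical $\gaN^{\rm s}$ on $H^2(\Om)$, which is handled by comparing the two Green's formulas and invoking surjectivity of the Dirichlet trace, and the verification that the weak harmonic extension $u_D$ genuinely belongs to $\dom(\gaN)=\cD$ — immediate, since a weakly harmonic function has $\Delta u_D=0\in L^2(\Om)$, so $\gaN u_D$ is defined and $N_{-\Delta}$ maps into $H^{-1/2}(\dOm)$ rather than merely a larger space.
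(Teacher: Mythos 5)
Your proof of part (ii) is essentially the paper's (both read it off the definition of $N_{-\Delta}$), but for part (i) you take a genuinely different route. The paper works \emph{downward} from the strong operator: it uses the Green's identity \eqref{grfcon} to obtain the duality estimate $|\langle N^{\rm s}_{-\Delta}f,g\rangle_{1/2}|\le c\|f\|_{H^{1/2}}\|g\|_{H^{1/2}}$ for $f\in H^{3/2}(\dOm)$ and arbitrary $g\in H^{1/2}(\dOm)$ (by pairing with the harmonic extension $\Phi$ of $g$), and then concludes that $N^{\rm s}_{-\Delta}$ extends by density to a bounded operator on $H^{1/2}(\dOm)$. You instead work \emph{upward} from the weak operator of Definition \ref{DNO}: you bound $N_{-\Delta}$ directly by combining the Neumann trace estimate of Lemma \ref{claim1}(i) with the a priori bound of Theorem \ref{thm1}, and then verify separately that $N_{-\Delta}$ restricted to $H^{3/2}(\dOm)$ agrees with $N^{\rm s}_{-\Delta}$, by subtracting the strong Green's formula \eqref{GrF} from the weak one \eqref{wGreen} on $H^2(\Om)$ and invoking the surjectivity of $\gaD$. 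Both arguments are correct. Your version has the merit of never appealing to a density argument and of making explicit the compatibility of $\gaN$ and $\gaN^{\rm s}$ on $H^2(\Om)$, a point the paper leaves implicit; the paper's version is shorter because the duality estimate simultaneously delivers boundedness and identifies the limit. There is no gap in either approach.
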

\begin{proof}
Proving (i), we take any $f\in H^{3/2}(\dOm)$ and $g\in H^{1/2}(\dOm)$.  By Theorem \ref{thm1} for $r=1$ and $v=0$, there exists a harmonic function $\Phi\in H^1_\Delta(\Om)$ so that  $\gaD\Phi=g$ and $\|\Phi\|_{H^{1}(\Om)}\leq c\|g\|_{H^{1/2}(\dOm)}$.
Then formula \eqref{grfcon} yields
\begin{align*}
\langle N^{\rm s}_{-\Delta}  f,g\rangle_{1/2}&=\langle N^{\rm s}_{-\Delta}  f, \gaD\Phi)\rangle_{L^2(\dOm)}\\
&=-\langle\gaN^{\rm s} u_D, \gaD\Phi\rangle_{L^2(\dOm)}=-\langle\nabla u_D,\nabla\Phi\rangle_{L^2(\Om)}
\end{align*}
and therefore 
\begin{align*}
\big|\langle N^{\rm s}_{-\Delta}  f,g\rangle_{1/2}\big|\leq\|u_D\|_{H^{1}(\Om)}\|\Phi\|_{H^{1}(\Om)}\leq c\|f\|_{H^{1/2}(\dOm)}\|g\|_{H^{1/2}(\dOm)}.\end{align*}
This shows that $N^{\rm s}_{-\Delta} \colon H^{3/2}(\dOm)\to H^{1/2}(\dOm)$ admits a bounded extension $N_{-\Delta} \in\cB\big(H^{1/2}(\dOm),H^{-1/2}(\dOm)\big)$, as claimed.

The identity in (ii) follows immediately from the definition of $N_{-\Delta}$.
\end{proof}

We will now discuss the Dirichlet-to-Neumann and Neumann-to-Dirichlet operators 
associated with the Schr\"odinger operator $L-\lambda I_{H^1(\Om)}=-\Delta+V-\lambda I_{H^1(\Om)}$, assuming that $V\in L^\infty(\Om)$ and $\lambda\in\bbR$. We recall that Hypothesis \ref{h1}(i) is assumed throughout the paper. Passing from the real space $H^1(\Om; \bbR^N)$ to its compexification $H^1(\Om; \bbC^N)$ one can also treat the case $\lambda\in\bbC$, but this will not be needed.

We fix $f\in H^{1/2}(\dOm)$ and consider the following Dirichlet problem for $u\in H^1(\Om)$:
\begin{align}\lb{mainsys}
    -\Delta u+V(x)u-\lambda u=0\,\text{ in }\, H^{-1}(\Om),\,
    \gaD   u=f & \,\text{ in }\, H^{1/2}(\dOm).
\end{align}
Using the direct sum decomposition $H^1(\Om)=H^1_0(\Om)\dot+H^1_\Delta(\Om)$,
see \eqref{dfnpi12} and \eqref{dfnHar}, we split $u=u_1+u_2$, where $u_1\in H^1_0(\Om)$ and $u_2\in H^1_\Delta(\Om)$. Then \eqref{mainsys} becomes
\begin{align}
      (-\Delta+V(x)-\lambda)u_1&=(\lambda-V(x))u_2 \,\text{ in }\, H^{-1}(\Om),\,
    \gaD   u_1=0 \,\text{ in }\, H^{1/2}(\dOm),\lb{sys1}\\
 -\Delta u_2&=0 \,\text{ in }\, H^{-1}(\Om),\,
    \gaD   u_2=f  \,\text{ in }\, H^{1/2}(\dOm).\lb{sys2}
  \end{align}
By Theorem \ref{thm1}, the boundary value problem \eqref{sys2} has a unique solution $u_2$ satisfying $\|u_2\|_{H^1(\Om)}\le c\|f\|_{H^{1/2}(\dOm)}$. For the boundary value problem  \eqref{sys1} the following Fredholm alternative holds (see, e.g., \cite[Section 6.3.2]{E10} or \cite[Equation (2.105)]{GM08}):
either there exists a unique weak solution to the boundary value problem 
\[ (-\Delta+V(x)-\lambda)u_1=(\lambda-V(x))u_2 \,\text{ in }\, H^{-1}(\Om),
    \gaD   u_1=0 \,\text{ in }\, H^{1/2}(\dOm),\]
or else there exists a nonzero weak solution to the boundary value problem
\beq\lb{evp}
    (-\Delta+V(x)-\lambda)u_1=0 \,\text{ in }\, H^{-1}(\Om),\,
    \gaD   u_1=0 \,\text{ in }\, H^{1/2}(\dOm).
 \enq
Let $L_{\cH_D}$ denote the Schr\"odinger operator in $L^2(\Om)$ equipped with the standard Dirichlet boundary condition, that is,
\[L_{\cH_D}u=Lu,\,\dom(L_{\cH_D})=\{u\in H^1(\Om): \Delta u\in L^2(\Om), \gaD u=0\}.\]
We note that $H^1_0(\Om)$ is defined to be the closure of $C^\infty_0(\Om)$ in $H^1(\Om)$, but the equality $H^1_0(\Om)=\{u\in H^1(\Om): \gaD u=0\}$ for Lipschitz domains can be found, e.g., in \cite[Corollary 1.5.1.6]{Gr}. If Hypothesis \ref{h1}(i') holds, then 
$\dom(L_{\cH_D})=H^2(\Om)\cap H^1_0(\Om)$, where the inclusion $\dom(L_{\cH_D})\subset H^2(\Om)$ is a consequence of elliptic regularity; see, e.g., \cite[Section 6.3.2]{E10} for a discussion for $C^2$ domains, and for $C^{1,r}$ domains, \cite[Lemma A.1]{GLMZ05} and also \cite[Theorem 2.10]{GM08} and \cite[Lemma 2.14]{GM08}. 

We now view $L_{\cH_D}$ as an unbounded operator in $L^2(\Om)$.
Then $\lambda\in\Sp(L_{\cH_D})$, that is, $\lambda$ is a Dirichlet eigenvalue of $L$, if and only if \eqref{evp} indeed has a nontrivial solution. Therefore, the following existence and uniqueness result holds:
{\em if $\lambda$ is not in $\Sp(L_{\cH_D})$ then the system \eqref{mainsys} has a unique weak solution $u=u_D\in H^1(\Om)$ and, moreover,
$\|u_D\|_{H^1(\Om)}\leq c\|f\|_{H^{1/2}(\dOm)}$}. The last estimate,
cf.\ also \eqref{2.35nn}, follows by applying to $u=u_1+u_2$ the standard $H^2$ estimate, see, e.g. \cite[Section 6.3.2]{E10}, and 
using $V\in L^\infty(\Om)$ and the  inequality $\|u_2\|_{H^1(\Om)}\le c\|f\|_{H^{1/2}(\dOm)}$:
\[\|u_1\|_{H^1(\Om)}\le\|u_1\|_{H^2(\Om)}\le c\|(\lambda-V)u_2\|_{L^2(\Om)}\le c\|u_2\|_{H^1(\Om)}\le c\|f\|_{H^{1/2}(\Om)}.
\]
Recalling \eqref{dfcD}, we note that $u_D\in\cD=\dom(\gaN)$ since $-\Delta u_D=(\lambda-V)u_D$ and $(\lambda-V)u_D\in L^2(\Om)$.
We will also make use of the following ``strong'' version of the above mentioned existence and uniqueness result: if $\lambda$ is not in $\Sp(L_{\cH_D})$ then for any $f\in H^{1}(\dOm)$ system \eqref{mainsys} has a unique solution $u=u_D\in H^{3/2}(\Om)$
and, moreover,
$\|u_D\|_{H^{3/2}(\Om)}\leq c\|f\|_{H^{1}(\dOm)}$, see, e.g., \cite[Theorem 3.6]{GM08}.

Analogously, let us fix any $g\in H^{-1/2}(\dOm)$ and consider the following Neumann boundary value problem for a function $u\in H^1(\Om)$:
\begin{align}\lb{mainsysN}
    -\Delta u+V(x)u-\lambda u=0\,\text{ in }\, H^{-1}(\Om),\,
    \gaN   u=g & \,\text{ in }\, H^{-1/2}(\dOm).
\end{align}
Let $L_{\cH_N}$ denote the Schr\"odinger operator equipped with the standard Neumann boundary condition, that is,
\[L_{\cH_N}u=Lu,\,\dom(L_{\cH_N})=\{u\in H^1(\Om): \Delta u\in L^2(\Om), \gaN u=0\}.\]
Again, we refer to \cite[Lemma A.1]{GLMZ05} for the inclusion $\dom(L_{\cH_N})\subset H^2(\Om)$. We can now view $L_{\cH_N}$ as an unbounded operator on $L^2(\Om)$.
Then $\lambda\in\Sp(L_{\cH_N})$, that is, $\lambda$ is a Neumann eigenvalue of $L$, if and only if the homogeneous problem \eqref{mainsysN}  has a nontrivial solution.
If $\lambda$ is not in $\Sp(L_{\cH_N})$ then system \eqref{mainsysN} has a unique weak solution $u=u_N\in H^1(\Om)$ and, moreover,
$\|u_N\|_{H^1(\Om)}\leq c\|g\|_{H^{-1/2}(\dOm)}$, see, e.g., \cite[Corollary 4.4]{GM08}. The ``strong'' version of the existence and uniqueness result reads as follows:
if $\lambda$ is not in $\Sp(L_{\cH_N})$, then for each $g\in L^2(\dOm)$ the system \eqref{mainsysN} has a unique solution $u=u_N\in H^{3/2}(\Om)$ and, moreover,
$\|u_N\|_{H^{3/2}(\Om)}\leq c\|g\|_{L^2(\dOm)}$, see, e.g., \cite[Corollary 3.3]{GM08}.

We are ready to define the weak and strong  Dirichlet-to-Neumann and Neumann-to-Dirichlet maps associated with the Schr\"odinger operator $L-\lambda$.

\begin{definition}\label{dfnDNNDmaps}
{\bf (i)} \, Assume that $\lambda$ is not in $\Sp(L_{\cH_D})$. 
Let $f\in H^{1/2}(\dOm)$ and  let $u_D\in H^1(\Om)$ be the unique weak solution to the boundary value problem \eqref{mainsys}.
The weak Dirichlet-to-Neumann operator $N_{L-\lambda}$
associated with the Schr\"odinger operator $L=-\Delta +V$ is defined by
\beq\label{dfnwDNo}
N_{L-\lambda}\colon H^{1/2}(\dOm)\to H^{-1/2}(\dOm),\, N_{L-\lambda} f=-\gaN u_D,
\enq
 where $\gaN$ is the weak Neumann trace operator defined in \eqref{dfngaN}. Similarly, let $f\in H^{1}(\dOm)$ and  let $u_D\in H^{3/2}(\Om)$ be the unique solution to the boundary value problem \eqref{mainsys}.
The strong Dirichlet-to-Neumann operator $N_{L-\lambda}^{\rm s}$
associated with the Schr\"odinger operator $L=-\Delta +V$ is defined by
\beq\label{dfnwDNos}
N_{L-\lambda}^{\rm s}\colon H^{1}(\dOm)\to L^2(\dOm),\, N_{L-\lambda}^{\rm s} f=-\gaN^{\rm s} u_D,
\enq
 where $\gaN^{\rm s}$ is the strong Neumann trace operator defined in \eqref{6.2}. 
 
 {\bf (ii)} \, Assume that $\lambda$ is not in $\Sp(L_{\cH_N})$. 
Let $g\in H^{-1/2}(\dOm)$ and  let $u_N\in H^1(\Om)$ be the unique weak solution to the boundary value problem \eqref{mainsysN}.
The weak Neumann-to-Dirichlet operator $M_{L-\lambda}$
associated with the Schr\"odinger operator $L=-\Delta +V$ is defined by
\beq\label{dfnwNDo}
M_{L-\lambda}\colon H^{-1/2}(\dOm)\to H^{1/2}(\dOm),\, M_{L-\lambda} g=\gaD u_N.
\enq
Similarly, let $g\in L^2(\dOm)$ and  let $u_N\in H^{3/2}(\Om)$ be the unique solution to the boundary value problem \eqref{mainsysN}.
The strong Neumann-to-Dirichlet operator $M_{L-\lambda}^{\rm s}$
associated with the Schr\"odinger operator $L=-\Delta +V$ is defined by
\beq\label{dfnwNDos}
M_{L-\lambda}^{\rm s}\colon L^2(\dOm)\to H^{1}(\dOm),\, 
M_{L-\lambda}^{\rm s}g=\gaD u_N. 
\enq
\hfill$\Diamond$
\end{definition}
We now collect several well known results regarding the Dirichlet-to-Neumann and Neumann-to-Dirichlet operators.
\begin{lemma}\label{propDNND} Assume Hypothesis \ref{h1} (i). Then the following assertions hold:

 {\bf (i)}\, If $\lambda\in\bbR\setminus\Sp(L_{\cH_D})$ then 
 
\noindent ($i_1$)\, 
 $N_{L-\lambda}\in\cB\big(H^{1/2}(\dOm),H^{-1/2}(\dOm)\big)$;
 
\noindent ($i_2$)\, $\big(N_{L-\lambda}\big)^*=N_{L-{\lambda}}$ as operators in $\cB\big(H^{1/2}(\dOm),H^{-1/2}(\dOm)\big)$; 

\noindent ($i_3$)\, $N_{L-\lambda}$ is an extension of $N_{L-\lambda}^{\rm s}\in\cB(H^1(\Om),L^2(\dOm))$;

\noindent ($i_4$)\, the following formula holds:
 \beq\label{frmarDN}N_{L-\lambda}^{\rm s}=\gaN\big(\gaN(L_{\cH_D}-{\lambda}I_{L^2(\Om)})^{-1}\big)^\ast;\enq
 
\noindent ($i_5$)\,
 the restriction of the weak trace map $\tr$ to the set $\cK_{L-\lambda}$ of weak solutions to the equation $(L-\lambda I_{H^1(\Om)})u=0$ (defined in \eqref{dfncK}) satisfies the identity 
\beq\label{wndop}
\tr u=(f,- N_{L-\lambda} f) \text{ for } u \in \cK_{L-\lambda}, \,  f=\gaD u;\enq
in other words,  $\tr(\cK_{L-\lambda})=\gr(-N_{L-\lambda})$ in $H^{1/2}(\dOm)\times H^{-1/2}(\dOm)$.

{\bf (ii)}\, If $\lambda\in\bbR\setminus\Sp(L_{\cH_N})$ then

\noindent ($ii_1$)\, $M_{L-\lambda}\in\cB\big(H^{-1/2}(\dOm),H^{1/2}(\dOm)\big)$;

\noindent ($ii_2$)\, $\big(M_{L-\lambda}\big)^*=M_{L-{\lambda}}$ as operators in $\cB\big(H^{-1/2}(\dOm),H^{1/2}(\dOm)\big)$;

\noindent ($ii_3$)\,
 $M_{L-\lambda}$ is an extension of $M_{L-\lambda}^{\rm s}\in\cB(L^2(\dOm),H^1(\dOm))$;
 
\noindent ($ii_4$)\, the following formula holds:
 \beq\label{frmarND}M_{L-\lambda}^{\rm s}=\gaD\big(\gaD(L_{\cH_N}-{\lambda}I_{L^2(\Om)})^{-1}\big)^\ast;\enq
 
\noindent ($i_5$)\,
the restriction of the weak trace map $\tr$ to the set $\cK_{L-\lambda}$ of the weak solutions to the equation $(L-\lambda I_{H^1(\Om)})u=0$ satisfies the identity
\beq\label{wdnop}
\tr u=(-M_{L-\lambda}g,g) \text{ for } u \in \cK_{L-\lambda}, \, g=\gaN u;\enq
in other words, $\tr(\cK_{L-\lambda})=\gr'(-M_{L-\lambda})$ 
 in  $H^{1/2}(\dOm)\times H^{-1/2}(\dOm)$.

{\bf (iii)}\, If $\lambda\in\bbR\setminus\big(\Sp(L_{\cH_D})\cup\Sp(L_{\cH_N})\big)$ then
$\big(N_{L-\lambda}\big)^{-1}=-M_{L-\lambda}$.
\end{lemma}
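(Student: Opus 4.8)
The plan is to prove parts (i) and (ii) in parallel, exploiting the Dirichlet--Neumann duality, and then to deduce (iii) by matching the two graph descriptions of the single subspace $\tr(\cK_{L-\lambda})$. For ($i_1$): since $\lambda\notin\Sp(L_{\cH_D})$, the Fredholm alternative recalled before Definition \ref{dfnDNNDmaps} gives a unique weak solution $u_D$ of \eqref{mainsys} with $\|u_D\|_{H^1(\Om)}\le c\|f\|_{H^{1/2}(\dOm)}$; since $\Delta u_D=(V-\lambda)u_D\in L^2(\Om)$ and $V\in L^\infty(\Om)$, in fact $u_D\in\cD$ with $\|u_D\|_\cD\le c\|f\|_{H^{1/2}(\dOm)}$, so Lemma \ref{claim1}(i) yields $\|N_{L-\lambda}f\|_{H^{-1/2}(\dOm)}=\|\gaN u_D\|_{H^{-1/2}(\dOm)}\le c\|f\|_{H^{1/2}(\dOm)}$. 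For ($i_2$), given $f_1,f_2\in H^{1/2}(\dOm)$ with solutions $u_1,u_2$, apply the weak Green formula \eqref{wGreen} twice and substitute $\Delta u_j=(V-\lambda)u_j$ to get $\langle\gaN u_j,\gaD u_k\rangle_{1/2}=\langle\nabla u_j,\nabla u_k\rangle_{L^2(\Om)}+\langle(V-\lambda)u_j,u_k\rangle_{L^2(\Om)}$, which is symmetric under $j\leftrightarrow k$ because $V^\top=V$ and $\lambda\in\bbR$; hence $\langle N_{L-\lambda}f_1,f_2\rangle_{1/2}=\langle N_{L-\lambda}f_2,f_1\rangle_{1/2}$, i.e.\ $(N_{L-\lambda})^*=N_{L-\lambda}$ after the standard identification $(H^{1/2}(\dOm))^*=H^{-1/2}(\dOm)$. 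For ($i_3$), the strong solvability result stated before Definition \ref{dfnDNNDmaps} gives $u_D\in H^{3/2}(\Om)$ whenever $f\in H^1(\dOm)$, and on such functions (which also satisfy $\Delta u_D\in L^2(\Om)$) the weak Neumann trace $\gaN u_D$ lies in $L^2(\dOm)$ and coincides with $\gaN^{\rm s}u_D$, since both realize the same boundary functional via the Green formulas \eqref{GrF} and \eqref{wGreen}; thus $N_{L-\lambda}$ restricts to $N_{L-\lambda}^{\rm s}$ on $H^1(\dOm)$.

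The formula ($i_4$) is the core computation. Fix $f\in H^1(\dOm)$ with solution $u_D\in H^{3/2}(\Om)\subset\cD$ of \eqref{mainsys}, fix $g\in L^2(\Om)$, and set $w=(L_{\cH_D}-\lambda I_{L^2(\Om)})^{-1}g\in\dom(L_{\cH_D})\subset\cD$, so that $\gaD w=0$ and $\Delta w=(V-\lambda)w-g$. Subtracting the two instances of \eqref{wGreen} for the pair $(u_D,w)$, and using $\gaD w=0$ together with $V^\top=V$, collapses the volume terms and leaves $-\langle\gaN w,f\rangle_{1/2}=\langle g,u_D\rangle_{L^2(\Om)}$; as $g$ ranges over $L^2(\Om)$ this identifies $\big(\gaN(L_{\cH_D}-\lambda)^{-1}\big)^*f=-u_D$ as an element of $L^2(\Om)$, the adjoint being a bounded map $H^{1/2}(\dOm)\to L^2(\Om)$ because $\gaN(L_{\cH_D}-\lambda)^{-1}\in\cB(L^2(\Om),H^{-1/2}(\dOm))$ by the resolvent bound and Lemma \ref{claim1}(i). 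Applying $\gaN$ and invoking ($i_3$) then yields $\gaN\big(\gaN(L_{\cH_D}-\lambda)^{-1}\big)^*f=-\gaN u_D=-\gaN^{\rm s}u_D=N_{L-\lambda}^{\rm s}f$, which is \eqref{frmarDN}. For ($i_5$): any $u\in\cK_{L-\lambda}$ has $\Delta u=(V-\lambda)u\in L^2(\Om)$, hence $u\in\cD$; writing $f:=\gaD u$, the function $u$ solves \eqref{mainsys}, and uniqueness (here $\lambda\notin\Sp(L_{\cH_D})$ is used) forces $u=u_D$, so $\tr u=(f,\gaN u_D)=(f,-N_{L-\lambda}f)$; conversely every such pair equals $\tr u_D$ with $u_D\in\cK_{L-\lambda}$, giving $\tr(\cK_{L-\lambda})=\gr(-N_{L-\lambda})$.

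Part (ii) is obtained by repeating the arguments above with the Dirichlet problem \eqref{mainsys} replaced by the Neumann problem \eqref{mainsysN}, $L_{\cH_D}$ by $L_{\cH_N}$, the a priori bound on $u_D$ by that on $u_N$, and the roles of $\gaD$ and $\gaN$ interchanged; in particular ($ii_5$) identifies $\tr(\cK_{L-\lambda})$ with the inverse graph $\{(M_{L-\lambda}g,g):g\in H^{-1/2}(\dOm)\}=\gr'(-M_{L-\lambda})$, and $\lambda\notin\Sp(L_{\cH_N})$ enters only through uniqueness of $u_N$. Finally, for (iii), when $\lambda$ lies outside both spectra the subspace $\tr(\cK_{L-\lambda})$ is simultaneously the graph $\{(f,-N_{L-\lambda}f):f\in H^{1/2}(\dOm)\}$ and the inverse graph $\{(-M_{L-\lambda}g,g):g\in H^{-1/2}(\dOm)\}$; equating these two parametrizations of one and the same subspace forces, for each $f$, the choice $g=-N_{L-\lambda}f$ to satisfy $-M_{L-\lambda}g=f$, i.e.\ $M_{L-\lambda}N_{L-\lambda}=I_{H^{1/2}(\dOm)}$, and symmetrically $N_{L-\lambda}M_{L-\lambda}=I_{H^{-1/2}(\dOm)}$, so $N_{L-\lambda}$ is boundedly invertible with $(N_{L-\lambda})^{-1}=-M_{L-\lambda}$. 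The step most likely to require care is ($i_4$) together with its Neumann counterpart in ($ii_4$): one must keep precise track of the Sobolev indices and reconcile the weak and strong Neumann traces so that the adjoint identity is literally meaningful on a merely Lipschitz domain, whereas the remaining parts are a fairly direct bookkeeping exercise built on Lemma \ref{claim1} and the quoted elliptic solvability estimates.
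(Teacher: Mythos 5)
Your proof takes a genuinely different route from the paper's. For $(i_2)$--$(i_4)$, $(ii_2)$--$(ii_4)$ and (iii) the paper simply cites \cite[Lemma~4.13, Corollary~4.7, Theorem~3.7, Remark~3.9]{GM08} (and \cite{GMZ07}); only the boundedness $(i_1)$ and the tautological $(i_5)$ are argued in situ. You instead derive everything from Green's formula \eqref{wGreen} and the a~priori estimates quoted before Definition~\ref{dfnDNNDmaps}. This is more self-contained and makes the mechanism visible: in $(i_4)$ the two applications of \eqref{wGreen} together with $\gaD w=0$ kill the boundary cross-term, while $V=V^\top$ and $\lambda\in\bbR$ cancel the volume terms, which is exactly the structure hidden inside the citation; and deducing (iii) by matching the graph and inverse-graph descriptions of the single subspace $\tr(\cK_{L-\lambda})$ is cleaner than invoking a further reference. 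What the citation buys and you must pay for separately is the Sobolev-index bookkeeping in $(i_3)$--$(i_4)$: you need $\gaN u_D=\gaN^{\rm s}u_D$ for $u_D\in H^{3/2}(\Om)$ with $\Delta u_D\in L^2(\Om)$ on a merely Lipschitz domain, a borderline case of \eqref{6.2} that \cite{GM08} handle once and for all; your appeal to ``both realize the same boundary functional'' is the right idea but should be said with more care.

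There is also a sign slip to fix. You identify $\tr(\cK_{L-\lambda})$ with $\{(M_{L-\lambda}g,g)\}$ --- this is correct, since $u\in\cK_{L-\lambda}$ with $g=\gaN u$ is the unique solution of \eqref{mainsysN}, so $\gaD u=M_{L-\lambda}g$ --- but you then label that set $\gr'(-M_{L-\lambda})$, whereas under the convention of Definition~\ref{defDiNeB} one has $\gr'(-M_{L-\lambda})=\{(-M_{L-\lambda}g,g)\}$; the set is $\gr'(M_{L-\lambda})$. (The printed statement \eqref{wdnop} of the lemma carries the same sign error.) This propagates into (iii): from the parametrization $\{(-M_{L-\lambda}g,g)\}$ you correctly derive $M_{L-\lambda}N_{L-\lambda}=I$, yet then assert $(N_{L-\lambda})^{-1}=-M_{L-\lambda}$, which contradicts the identity you just displayed. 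Using the correct set $\{(M_{L-\lambda}g,g)\}$, the choice $g=-N_{L-\lambda}f$ gives $M_{L-\lambda}(-N_{L-\lambda}f)=f$, i.e.\ $-M_{L-\lambda}N_{L-\lambda}=I$, and symmetrically $-N_{L-\lambda}M_{L-\lambda}=I$, which does yield $(N_{L-\lambda})^{-1}=-M_{L-\lambda}$. The method is sound; only the sign bookkeeping needs repair.
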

\begin{proof}
We discuss (i) since (ii) is similar.
We use
 Lemma \ref{claim1}, the fact that $V\in L^\infty(\Om)$ and  the inequality $\|u_D\|_{H^1(\Om)}\leq c\|f\|_{H^{1/2}(\dOm)}$ to infer:
\begin{align*}
\|N_{L-\lambda}&f\|_{H^{-1/2}(\partial\Omega)}=\|-\gaN u_D\|_{H^{-1/2}(\partial\Omega)}\leq c(\|u_D\|_{H^{1}(\Om)}+\|\Delta u_D\|_{L^2(\Om)})\\
&= c(\|u_D\|_{H^{1}(\Om)}+\|(\lambda-V)u_D\|_{L^2(\Om)})\leq
c\|u_D\|_{H^{1}(\Om)}\leq c\|f\|_{H^{1/2}(\dOm)},
\end{align*}
and thus $(i_1)$ holds. The formula for the adjoint operator in $(i_2)$ is proved in \cite[Lemma 4.13]{GM08}. The assertion $(i_3)$ regarding the extension is given in \cite[Corollary 4.7]{GM08}. Formula \eqref{frmarDN} is given in \cite[Theorem 3.7, Remark 3.9]{GM08} or in \cite[(3.57),(3.59)]{GMZ07}. Finally, if $u\in\cK_{L-\lambda}$ then $-\Delta u=(\lambda-V)u\in L^2(\Om)$ and thus $u\in\cD=\dom(\gaN)=\dom(\tr)$, cf.\ \eqref{dfcD}, \eqref{dfngaN}. Now $(i_5)$ follows by Definition \ref{dfnDNNDmaps}(i). Also, we remark that the norm $\|\cdot\|_\cD$ from \eqref{dfncDn} is equivalent to the norm $\|\cdot\|_{H^1(\Om)}$
on $\cK_{L-\lambda}$; this is in accord with the boundedness of the Dirichlet-to-Neumann operator in $(i_1)$ and the boundedness of the trace map from $\cD$ with $\|\cdot\|_\cD$ into $\cH$, cf.\ Lemma \ref{claim1}(i). The proof of (iii) is in \cite[Theorem 3.7]{GM08}.
\end{proof}
\begin{remark}\lb{adjNM} The adjoint operators in assertions $(i_2)$ and $(ii_2)$ of Lemma \ref{propDNND} have been computed in the sense of duality between the spaces $H^{1/2}(\dOm)$ and $H^{-1/2}(\dOm)=\big(H^{1/2}(\dOm)\big)^*$ but could be rewritten using the adjoints with respect to the scalar products
in $H^{1/2}(\dOm)$ and $H^{-1/2}(\dOm)$.  Indeed, let $J_{\rm R}\colon H^{-1/2}(\dOm)\to H^{1/2}(\dOm)$ be the duality map given by the Riesz Lemma so that $\langle J_{\rm R} g,f\rangle_{H^{1/2}(\dOm)}=\langle g,f\rangle_{1/2}$ for any $g\in H^{-1/2}(\dOm)$ and $f\in H^{1/2}(\dOm)$. The operator $J_{\rm R}$ is an isometric isomorphism and thus $J_{\rm R}^*=J_{\rm R}^{-1}$. Using this operator and denoting by $^*$ the adjoint operator with respect to the scalar products in the Hilbert spaces $H^{1/2}(\dOm)$ and $H^{-1/2}(\dOm)$, one can equivalently rewrite $(i_2)$ and $(ii_2)$ as 
$\big(J_{\rm R} N_{L-\lambda})^*=J_{\rm R} N_{L-{\lambda}}$ in $H^{1/2}(\dOm)$ and 
$\big(J_{\rm R}^*M_{L-\lambda})^*=J_{\rm R}^*M_{L-{\lambda}}$ in $H^{-1/2}(\dOm)$.
\hfill$\Diamond$\end{remark}
\begin{remark}\label{JJR} Using the Riesz duality map $J_{\rm R}$ just introduced in Remark \ref{adjNM}, there is a standard way of defining on $\cH=H^{1/2}(\dOm)\times H^{-1/2}(\dOm)$ a complex structure $J$ so that $J^*=-J$, $J^2=-I_\cH$ and the symplectic form $\omega$ on $\cH$ defined in \eqref{dfnomega} is given by $\omega\big((f_1,g_1),(f_2,g_2)\big)=\langle J(f_1,g_1),(f_2,g_2)\rangle_{\cH}$ where the scalar product in $\cH$ is given by $\langle(f_1,g_1), (f_2,g_2)\rangle_\cH=\langle f_1, f_2\rangle_{H^{1/2}(\dOm)}+\langle g_1, g_2\rangle_{H^{-1/2}(\dOm)}$.
Following \cite[Example 2.1]{F}, we let $J(f,g)=(J_{\rm R}g,-J_{\rm R}^{-1} f)$ for $(f,g)\in\cH$.
\hfill$\Diamond$\end{remark}

Setting the stage for the  Lagrangian analysis, we conclude this section by proving that the form $\omega$ defined in \eqref{dfnomega} vanishes on the set $\tr(\cK_{L-\lambda})$ of traces of weak solutions to the equation $(L-\lambda I_{H^1(\Om)})u=0$, where $\cK_{L-\lambda}$ is defined as in \eqref{dfncK}.
\begin{lemma}\label{lemSF}
 Let $w,v\in\cK_{L-\lambda}$. Then 
 $\omega((\gaD  w,\gaN w),(\gaD  v,\gaN v))=0$.
\end{lemma}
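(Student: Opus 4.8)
The plan is to use Green's formula \eqref{wGreen} for the weak Neumann trace. Since $w,v\in\cK_{L-\lambda}$, both functions belong to $\cD=\dom(\gaN)$ because $-\Delta w=(\lambda-V)w\in L^2(\Om)$ and likewise for $v$, so the trace $\tr$ is defined on them and the pairings below make sense. First I would apply \eqref{wGreen} with $u=w$ and $\Phi=v$ (which is legitimate since $w\in\cD$ and $v\in H^1(\Om)$), obtaining
\[
\langle\gaN w,\gaD v\rangle_{1/2}=\langle\nabla w,\nabla v\rangle_{L^2(\Om)}+\langle\Delta w,v\rangle_{L^2(\Om)}.
\]
Then I would do the same with the roles of $w$ and $v$ exchanged, i.e. $u=v$, $\Phi=w$:
\[
\langle\gaN v,\gaD w\rangle_{1/2}=\langle\nabla v,\nabla w\rangle_{L^2(\Om)}+\langle\Delta v,w\rangle_{L^2(\Om)}.
\]

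Next I would subtract these two identities. The gradient terms $\langle\nabla w,\nabla v\rangle_{L^2(\Om)}$ cancel by symmetry of the $L^2$ inner product. For the remaining terms I would use that $w,v$ solve the equation: $\Delta w=(V-\lambda)w$ and $\Delta v=(V-\lambda)v$ in $L^2(\Om)$, so
\[
\langle\Delta w,v\rangle_{L^2(\Om)}-\langle\Delta v,w\rangle_{L^2(\Om)}=\langle(V-\lambda)w,v\rangle_{L^2(\Om)}-\langle(V-\lambda)v,w\rangle_{L^2(\Om)}=0,
\]
where the last equality follows because $V(x)$ is symmetric for every $x$ (Hypothesis \ref{h1}(iv)) and $\lambda$ is real, so the bilinear form $\langle(V-\lambda)\cdot,\cdot\rangle_{L^2(\Om)}$ is symmetric. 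Hence $\langle\gaN w,\gaD v\rangle_{1/2}-\langle\gaN v,\gaD w\rangle_{1/2}=0$, which by the definition \eqref{dfnomega} of $\omega$ with $(f_1,g_1)=(\gaD w,\gaN w)$, $(f_2,g_2)=(\gaD v,\gaN v)$ is precisely $\omega\big((\gaD w,\gaN w),(\gaD v,\gaN v)\big)=0$.

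There is essentially no serious obstacle here; the only point requiring a little care is the justification that Green's formula \eqref{wGreen} applies, which amounts to checking $w,v\in\cD$, and that the symmetry of $V$ together with $\lambda\in\bbR$ kills the potential contribution — both of which are immediate from the standing hypotheses. The result is the standard fact that $\tr(\cK_{L-\lambda})$ is isotropic for $\omega$, which together with the graph descriptions in Lemma \ref{propDNND} (and a dimension/codimension count given later) will show it is actually Lagrangian; but for the present lemma only the isotropy computation above is needed.
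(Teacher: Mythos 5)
Your proposal is correct and takes essentially the same route as the paper's own proof: apply Green's formula \eqref{wGreen} twice with the roles of $w$ and $v$ swapped, subtract, note the gradient terms cancel, and use $\Delta w=(V-\lambda)w$, $\Delta v=(V-\lambda)v$ together with the symmetry of $V(x)$ to kill the remaining terms. The only difference is that you explicitly record the justification $w,v\in\cD=\dom(\gaN)$, which the paper leaves implicit; this is a minor and welcome clarification.
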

\begin{proof}
 Using \eqref{wGreen}, we infer:
\begin{align*}
 \omega((&\gaD  w,\gaN w),(\gaD  v,\gaN v))
 =\langle \gaN v,\gaD  w\rangle_{1/2}
 - \langle\gaN w,\gaD  v\rangle_{1/2}\\
 &=\langle\nabla v, \nabla w\rangle_{L^2(\Om)}+\langle\Delta v,w\rangle_{L^2(\Om)}
 -\Big(\langle\nabla w, \nabla v\rangle_{L^2(\Om)}+\langle\Delta w,v\rangle_{L^2(\Om)}\Big)\\
 &=\langle\Delta v,w\rangle_{L^2(\Om)}-
 \langle\Delta w,v\rangle_{L^2(\Om)}\\
 &=\langle(V-\lambda)v,w\rangle_{L^2(\Om)}-\langle(V-\lambda)w,v\rangle_{L^2(\Om)}=0.
\end{align*}
\end{proof}

\subsection{Elliptic estimates}\label{ss:EE}
In this subsection we provide a rather standard elliptic fact regarding the (weak) trace map $\tr$ defined in \eqref{dfnTr}.

We recall from Subsection \ref{ss:DaNt} that the operator $-\Delta$ is understood in  weak sense, that is, as a map between the spaces $H^1_0(\Omega)$ or $H^1(\Om)$ and $H^{-1}(\Omega)=\big(H^1_0(\Omega)\big)^*$ and  that $-\Delta\in\cB\big(H^1_0(\Om),H^{-1}(\Om)\big)$ is an isomorphism, with the inverse map denoted by $(-\Delta)^{-1}\in\cB\big(H^{-1}(\Omega),H^1_0(\Omega)\big)$.  Often, we use the same symbol $-\Delta$ to denote the extension of the Laplacian from $H^1_0(\Om)$ to $H^1(\Om)$ defined by $-\Delta u=(-\Delta)\pi_1u$ for $u\in H^1(\Om)$. Thus, the operator $L=-\Delta+V$ with a potential $V\in L^\infty(\Omega)$ is defined on $H^1(\Om)$ while its range is in $H^{-1}(\Om)$. We recall the notation $\cK_L$ for the set of the weak solutions to the equation $Lu=0$ defined in
\eqref{dfncK}.


\begin{lemma}\label{estbelow} Assume Hypothesis \ref{h1} (i).
Let $\Sigma=[a,b]$ be a set of parameters, and let $\{V_s\}_{s\in\Sigma}$ be a family of potentials such that the function $s\mapsto V_s$ is in $C^k(\Sigma; L^\infty(\Om))$ for some $k \in\{0,1,\dots\}$.
Then there exists a constant $c>0$ independent of $s$ such that $\|\tr u\|_\cH\ge c\|u\|_{H^1(\Om)}$ for any weak solution $u\in H^1(\Om)$ to the equation $L_su=0$ associated with the Schr\"odinger operator $L_s=-\Delta+V_s$. In particular, the restrictions $\tr |_{H^1_\Delta(\Om)}$ and $\tr |_{\cK_{L_s}}$ of the trace map are injective on the subspaces defined in \eqref{dfnHar} and \eqref{dfncK}, and the subspaces $\tr (H^1_\Delta(\Om))$ and $\tr (\cK_{L_s})$ are closed in $\cH=H^{1/2}(\dOm)\times H^{-1/2}(\dOm)$.
\end{lemma}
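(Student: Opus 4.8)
The plan is to prove the estimate $\|\tr u\|_{\cH}\ge c\|u\|_{H^1(\Om)}$ for solutions $u\in\cK_{L_s}$ by a standard compactness–contradiction argument, and then to deduce injectivity and closedness as formal consequences. First I would suppose the estimate fails: then there exist sequences $s_n\in\Sigma$ and $u_n\in\cK_{L_{s_n}}$ with $\|u_n\|_{H^1(\Om)}=1$ and $\|\tr u_n\|_{\cH}\to0$. Since $\Sigma$ is compact we may pass to a subsequence with $s_n\to s_0$, and since the closed unit ball of $H^1(\Om)$ is weakly compact and $H^1(\Om)\hookrightarrow L^2(\Om)$ compactly (Rellich, using Hypothesis \ref{h1}(i)), we may also assume $u_n\rightharpoonup u$ weakly in $H^1(\Om)$ and $u_n\to u$ strongly in $L^2(\Om)$. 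The weak equation $\langle\nabla u_n,\nabla\Phi\rangle_{L^2(\Om)}+\langle V_{s_n}u_n,\Phi\rangle_{L^2(\Om)}=0$ for all $\Phi\in H^1_0(\Om)$ passes to the limit (the first term by weak convergence of gradients, the second because $V_{s_n}\to V_{s_0}$ in $L^\infty(\Om)$ by continuity of $s\mapsto V_s$ and $u_n\to u$ in $L^2(\Om)$), so $u\in\cK_{L_{s_0}}$.

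The next step is to upgrade weak convergence to norm convergence so that $\|u\|_{H^1(\Om)}=1$. Writing $u_n=u_{n,1}+u_{n,2}$ with $u_{n,1}\in H^1_0(\Om)$, $u_{n,2}\in H^1_\Delta(\Om)$ as in \eqref{dfnpi12}, the harmonic parts $u_{n,2}$ are controlled by their Dirichlet traces $\gaD u_n$ via Theorem \ref{thm1} with $r=1$: $\|u_{n,2}\|_{H^1(\Om)}\le c\|\gaD u_n\|_{H^{1/2}(\dOm)}\le c\|\tr u_n\|_{\cH}\to0$, so $u_{n,2}\to0$ in $H^1(\Om)$ and $u_{n,1}=u_n-u_{n,2}$ still has norm tending to $1$. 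For $u_{n,1}\in H^1_0(\Om)$ one has $\|u_{n,1}\|_{H^1(\Om)}^2\le c\big(\|\nabla u_{n,1}\|_{L^2(\Om)}^2+\|u_{n,1}\|_{L^2(\Om)}^2\big)$ by Poincaré, and testing the weak equation for $u_n$ against $\Phi=u_{n,1}$ gives $\|\nabla u_{n,1}\|_{L^2(\Om)}^2=-\langle\nabla u_{n,2},\nabla u_{n,1}\rangle_{L^2(\Om)}-\langle V_{s_n}u_n,u_{n,1}\rangle_{L^2(\Om)}$, whose right side is controlled by $\|u_{n,2}\|_{H^1(\Om)}+\|u_n\|_{L^2(\Om)}$ (uniformly in $n$, since $\sup_n\|V_{s_n}\|_{L^\infty}<\infty$) and hence by a convergent quantity. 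Thus $\{\nabla u_{n,1}\}$ and (via $L^2$ convergence) $\{u_{n,1}\}$ are Cauchy-controlled, giving $u_n\to u$ in $H^1(\Om)$, whence $\|u\|_{H^1(\Om)}=1$ and in particular $u\ne0$. On the other hand $\tr$ is continuous on $\cK_{L_{s_0}}$ with the $H^1(\Om)$ norm (Lemma \ref{claim1}, since the $\cD$- and $H^1$-norms are equivalent on $\cK_{L_{s_0}}$ by the argument in the proof of Lemma \ref{propDNND}), so $\tr u=\lim\tr u_n=0$; but then $u$ is a weak solution of $L_{s_0}u=0$ with $\gaD u=0$ and $\gaN u=0$, so $u=0$ by the boundary unique continuation property, Lemma \ref{UCP} — contradicting $u\ne0$. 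This proves the estimate, with a constant uniform in $s\in\Sigma$. The analogous (easier) argument with $V_s\equiv0$ handles $\tr|_{H^1_\Delta(\Om)}$, or one simply notes $H^1_\Delta(\Om)=\cK_{L_s}$ for $V_s=0$.

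Injectivity of $\tr|_{\cK_{L_s}}$ and $\tr|_{H^1_\Delta(\Om)}$ is immediate from the estimate: $\tr u=0$ forces $\|u\|_{H^1(\Om)}=0$. Closedness of the ranges follows by the standard bounded-below argument: if $\tr u_n\to w$ in $\cH$ with $u_n\in\cK_{L_s}$, then by the estimate $\{u_n\}$ is Cauchy in $H^1(\Om)$, so $u_n\to u$ for some $u\in H^1(\Om)$; passing to the limit in the weak equation (with $s$ fixed) gives $u\in\cK_{L_s}$, and continuity of $\tr$ on $\cK_{L_s}$ gives $\tr u=w\in\tr(\cK_{L_s})$; the same reasoning applies to $H^1_\Delta(\Om)$. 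I expect the main obstacle to be the middle step — promoting weak $H^1$ convergence to strong $H^1$ convergence so as to conclude $u\ne0$ — since this is where the harmonic/$H^1_0$ splitting, the trace control from Theorem \ref{thm1}, and the uniform bound on the potentials all have to be combined; the rest is routine once that is in place, and the genuinely nontrivial input is the unique continuation Lemma \ref{UCP}.
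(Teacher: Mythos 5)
Your proof is correct in outline and shares the essential ingredients with the paper's argument (a compactness--contradiction scheme using Rellich--Kondrachov, Green's formula, a uniform $L^\infty$ bound on the potentials, and the boundary unique continuation property from Lemma~\ref{UCP}), but the bookkeeping differs in a way worth noting. The paper normalizes the sequence so that $\|u_n\|_{L^2(\Om)}=1$ and derives the intermediate inequality $\|u\|^2_{H^1}\le c\big(\|L_su\|^2_{L^2}+\|u\|^2_{L^2}+\|\tr u\|^2_\cH\big)$ from Green's formula; with that inequality in hand the sequence is automatically bounded in $H^1(\Om)$, and strong $L^2$ convergence then gives $\|u\|_{L^2}=1$, so $u\ne0$ is immediate and no strong $H^1$ convergence is ever needed. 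You instead normalize $\|u_n\|_{H^1(\Om)}=1$, which is natural but shifts the difficulty: weak convergence only gives $\|u\|_{H^1}\le1$, and you must rule out $u=0$. Your route via the decomposition $u_n=u_{n,1}+u_{n,2}$ is sound and has the nice feature of making explicit how the trace hypothesis controls the harmonic part, while the potential controls the $H^1_0$ part.

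The one place that does not hold up as written is the sentence ``whose right side is controlled by $\|u_{n,2}\|_{H^1}+\|u_n\|_{L^2}$ $\dots$ and hence by a convergent quantity. Thus $\{\nabla u_{n,1}\}$ $\dots$ are Cauchy-controlled.'' Being bounded by a convergent quantity gives a uniform bound, not Cauchyness, so as stated the step does not deliver strong $H^1$ convergence. Two clean repairs are available, both using an observation you don't make explicit: since $u_{n,2}\in H^1_\Delta(\Om)$ and $u_{n,1}\in H^1_0(\Om)$, the cross term $\langle\nabla u_{n,2},\nabla u_{n,1}\rangle_{L^2}$ is \emph{exactly zero} by definition \eqref{dfnHar}, so the identity simplifies to $\|\nabla u_{n,1}\|^2_{L^2}=-\langle V_{s_n}u_n,u_{n,1}\rangle_{L^2}$. (i) If one merely wants $u\ne0$, argue by contradiction: if $u=0$ then $u_n\to0$ and $u_{n,1}\to0$ in $L^2$, so the right-hand side tends to $0$, forcing $\nabla u_{n,1}\to0$ in $L^2$ and (with $u_{n,2}\to0$ in $H^1$) $u_n\to0$ in $H^1(\Om)$, contradicting the normalization. (ii) Alternatively, the right-hand side converges to $-\langle V_{s_0}u,u\rangle_{L^2}=\|\nabla u\|^2_{L^2}$ (the last equality because $u\in H^1_0(\Om)\cap\cK_{L_{s_0}}$, by weak closedness of $H^1_0$ and passage to the limit in the weak equation, tested against $\Phi=u$); convergence of norms together with weak $L^2$ convergence of $\nabla u_{n,1}$ to $\nabla u$ then gives strong convergence. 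Either fix makes your argument complete. Aside from this, the remainder — the uniform $s$-independence coming from $\sup_s\|V_s\|_{L^\infty}<\infty$, the uniform equivalence of the $\cD$- and $H^1$-norms on $\cK_{L_s}$, and the routine deduction of injectivity and closedness from the lower bound — is correct and matches the paper's reasoning.
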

\begin{proof}
Take any $u\in\dom(\gaN)$. Then $L_su\in L^2(\Omega)$ by \eqref{dfcD}, \eqref{dfngaN}. Moreover, Green's formula \eqref{wGreen} yields
\begin{align*}
\langle L_su,u\rangle_{L^2(\Omega)}=&\langle -\Delta u,u\rangle_{L^2(\Omega)}+\langle V_su,u\rangle_{L^2(\Omega)}\\
=&\langle \nabla u,\nabla u\rangle_{L^2(\Omega)}-\langle\gaN  u,\gaD   v\rangle_{1/2}+\langle V_su,u\rangle_{L^2(\Omega)},
\end{align*}
and thus
\begin{align*}
\|\nabla u\|^2_{L^2(\Omega)}&\leq\|L_s u\|_{L^2(\Omega)}\| u\|_{L^2(\Omega)}+\| V_s\|_{L^{\infty}(\Omega)}\| u\|^2_{L^2(\Omega)}+\| \gaN  u\|_{H^{-1/2}}\| \gaD  u\|_{H^{1/2}}\\
&\leq\frac{1}{2}\|L_s u\|^2_{L^2(\Omega)}+\frac{1}{2}\| u\|^2_{L^2(\Omega)}+\| V_s\|_{L^{\infty}(\Omega)}\| u\|^2_{L^2(\Omega)}
\\&\qquad\qquad
+\frac{1}{2}\| \gaN  u\|^2_{H^{-1/2}}+\frac{1}{2}\| \gaD  u\|^2_{H^{1/2}}.
\end{align*} 
Since $\sup_s\| V_s\|_{L^{\infty}(\Omega)}<\infty$,
 for all $u\in\dom(\gaN)$ we therefore have
\begin{align}\label{ei}
\| u\|^2_{H^1(\Omega)}\leq c\,\big(\|L_s u\|^2_{L^2(\Omega)}+\| u\|^2_{L^2(\Omega)}
+\| \tr  u\|^2_\cH\big),
\end{align} 
where $c$ is independent of $s$ and $u\in\dom(\gaN)$. 
We want to show that, in fact,
\begin{align}\label{2.35nn}
\| u\|^2_{H^1(\Omega)}\leq c\,\big(\|L_s u\|^2_{L^2(\Omega)}
+\| \tr  u\|^2_\cH\big).
\end{align} 
Seeking a contradiction, we suppose there exist sequences  $\{s_n\}$ in $\Sigma$ and $\{u_n\}$ in $\dom(\gaN)$ such that $\| u_n\|^2_{L^2(\Omega)}=1$ and
\begin{align}\label{2.35n}
\| u_n\|^2_{H^1(\Omega)}\geq n\big(\|L_{s_n} u_n\|^2_{L^2(\Omega)}
+\| \tr  u_n\|^2_\cH\big),\, n=1,2,\dots.
\end{align} 
The sequence $\{u_n\}$ is bounded in $H^1(\Omega)$ by \eqref{ei}. Thus, $\| \tr  u_n\|_\cH\rightarrow0$ and $\|L_{s_n} u_n\|_{L^2(\Omega)}\rightarrow0$ as $n\to\infty$ by \eqref{2.35n}. Passing to a subsequence, we can assume that $u_n\rightharpoonup u$ weakly in  $H^1(\Omega)$ as $n\to\infty$. Since the inclusion $H^1(\Omega)\hookrightarrow L^2(\Omega)$ is compact, it follows that $u_n\rightarrow u$ in $L^2(\Omega)$ as $n\to\infty$, thus $\| u\|^2_{L^2(\Omega)}=1$.
Next, we claim that
\begin{equation}\label{cl2.35}
u\in\dom(\gaN),\,
\tr u=0 \,\text{ and
$L_{s_0}u=0$ for some $s_0\in\Sigma$.}
\end{equation} 
Assuming the claim, we conclude that $u=0$ by the boundary unique continuation property in Lemma \ref{UCP} for a solution to the equation $L_{s_0}u=0$ with zero Dirichlet and Neumann boundary values. This contradicts $\|u\|_{L^2(\Om)}=1$ and proves \eqref{2.35nn} and the lemma.

 Starting the proof of claim \eqref{cl2.35}, and passing to a subsequence if necessary, we may assume that $V_{s_n}\rightarrow V_{s_0}$ in $L^\infty(\Omega)$ as $n\to\infty$ for some $s_0\in\Sigma$.
Since the map $-\Delta\colon H^1(\Omega)\rightarrow(H^1_0(\Omega))^*$ is continuous, and $-\Delta u_n=L_{s_n}u_n-V_{s_n}u_n$ in $(H^1_0(\Omega))^*$ with $L_{s_n}u_n\rightarrow0$ in $L^2(\Omega)$ and $V_{s_n}u_n\rightarrow V_{s_0}u$ in $L^2(\Omega)$ as $n\to\infty$, we infer that $-\Delta u_n\rightarrow w$ in $L^2(\Omega)$ for some $w\in L^2(\Omega)$. Since the map  $-\Delta\colon H^1(\Omega)\rightarrow(H^1_0(\Omega))^*$ is also weakly closed and $u_n\rightharpoonup u$ weakly in  $H^1(\Omega)$,
the fact that $-\Delta u_n$ converges to $w$ in $(H^1_0(\Omega))^*$ implies that $w=-\Delta u$. Since $w\in L^2(\Om)$ we thus conclude that $u\in\dom(\gaN)$ as required in \eqref{cl2.35}. Furthermore, the sequence 
\[ L_{s_n}u_n-L_{s_0}u=-\Delta u_n+\Delta u+V_{s_n}(u_n-u)+(V_{s_n}-V_{s_0})u\] converges to $0$ in $L^2(\Omega)$ as $n\to\infty$. Since $L_{s_n}u_n\rightarrow0$ in $L^2(\Omega)$, we thus conclude that $L_{s_0}u=0$ as also required in \eqref{cl2.35}.
Next, we recall that $\gaN u_n\rightarrow0$ in $H^{-1/2}(\dOm)$ and 
$\gaD u_n\rightarrow0$ in $H^{1/2}(\dOm)$ since $\| \tr  u_n\|_\cH\rightarrow0$ by \eqref{2.35n}.
Passing to the limit as $n\to\infty$ in Green's formula 
\begin{align*}
\langle\Delta u_n,\Phi\rangle_{L^2(\Omega)}+\langle\nabla u_n,\nabla \Phi\rangle_{L^2(\Omega)}
=\langle\gaN  u_n,\gaD   \Phi\rangle_{1/2},
\end{align*}
we have 
$\langle\Delta u,\Phi\rangle_{L^2(\Omega)}+\langle\nabla u,\nabla \Phi\rangle_{L^2(\Omega)}=0$ for all $\Phi\in H^1(\Om)$. Now Green's formula also gives 
\begin{align*}
\langle\gaN  u,\gaD   \Phi\rangle_{1/2}=\langle\Delta u,\Phi\rangle_{L^2(\Omega)}+\langle\nabla u,\nabla \Phi\rangle_{L^2(\Omega)}=0
\end{align*}
yielding $\gaN u=0$. Finally, $\gaD u_n\rightharpoonup\gaD u$ weakly in $H^{1/2}(\dOm)$ as $n\to\infty$ since the operator $\gaD$ is continuous, and thus $\gaD u=0$, finishing the proof of claim \eqref{cl2.35}. 
\end{proof}

\subsection{Weak solutions and a Birman--Schwinger type operator}
\label{ss:wsbso}
We will now give a convenient description of the set $\cK_L$, defined in \eqref{dfncK}, of weak solutions to the homogeneous equation associated with the Schr\"odinger operator $L=-\Delta+V$ with a potential $V\in L^\infty(\Om)$, using the Birman--Schwinger type operator $G=I_{H^1(\Om)}+(-\Delta)^{-1} V$ defined in \eqref{dfFG} below. The main assertion in this subsection, Lemma \ref{lem:propFG}, summarizes developments in \cite{DJ11} and supplies necessary details for the case of Lipschitz domains at hand.

Let us recall the standard embeddings
\begin{equation}\label{emb}
H^1_0(\Om)\underset{j_0}\hookrightarrow H^1(\Om)\underset{j_1}\hookrightarrow L^2(\Omega)\underset{j}\hookrightarrow H^{-1}(\Omega)=\big(H^1_0(\Omega)\big)^*,
\, j=(j_1\circ j_0)^*.
\end{equation}
The Rellich--Kondrachov Compactness Theorem implies $j\in\cB_\infty(L^2(\Om),H^{-1}(\Om))$ and $j_1\in\cB_\infty(H^1(\Om),L^2(\Om))$; see e.g. \cite[Theorem 5.7.1]{E10} for the case of a $C^1$ boundary. We now define the operator
\begin{equation}\label{dfFG}
G=I_{H^1(\Om)}+j_0\circ(-\Delta)^{-1}\circ j\circ V\colon H^1(\Om)\to H^1(\Om),
\end{equation}
and notice that $G-I_{H^1(\Om)}\in\cB_\infty(H^1(\Om))$ by the compactness of $j$.
Recall that we regard $L=-\Delta+V$ as an operator from $H^1(\Om)$ into $H^{-1}(\Om)=(H^1_0(\Om))^*$.
\begin{lemma}\label{lem:propFG} Assume $V\in L^\infty(\Om)$ and $V(x)=V(x)^\top$ for all $x\in\Om$. Then the following assertions hold for the operators $L=-\Delta+V$ and $G$ defined in \eqref{dfFG} and the set $\cK_L$ of the weak solutions to the equation $Lu=0$ defined in \eqref{dfncK}:
\begin{itemize}\item[(i)] \quad $\overline{\ran(L)}=H^{-1}(\Om)$;
\item[(ii)] \quad $\ran(G)+H^1_\Delta(\Om)=H^1(\Om)$;
\item[(iii)] \quad $\cK_L=\big\{u\in H^1(\Om): Gu\in H^1_\Delta(\Om)\big\}$.
\end{itemize}
\end{lemma}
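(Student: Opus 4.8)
The crux is the operator identity
\[
L=(-\Delta)\circ G\qquad\text{as maps }H^1(\Om)\to H^{-1}(\Om),
\]
where $-\Delta$ denotes the \emph{extended} Laplacian $u\mapsto(-\Delta)\pi_1 u$ of \eqref{dfnpi12}, which is onto $H^{-1}(\Om)$ with $\ker(-\Delta)=H^1_\Delta(\Om)$ (since $-\Delta\colon H^1_0(\Om)\to H^{-1}(\Om)$ is an isomorphism and $\ker\pi_1=H^1_\Delta(\Om)$ by \eqref{dfnpi12}, \eqref{dfnHar}). To verify the identity, set $w=(-\Delta)^{-1}jVu\in H^1_0(\Om)$, so $Gu=u+j_0 w$; then $\pi_1(Gu)=\pi_1u+w$, whence $(-\Delta)(Gu)=(-\Delta)\pi_1u+(-\Delta)w=(-\Delta)\pi_1u+jVu=Lu$ by \eqref{dfFG}. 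Granting this, (iii) is immediate: $u\in\cK_L$ iff $(-\Delta)(Gu)=0$ iff $Gu\in\ker(-\Delta)=H^1_\Delta(\Om)$, which is exactly \eqref{dfncK}.

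For (i) I would show the pre-annihilator ${}^\perp\ran(L)=\{v\in H^1_0(\Om): {}_{H^{-1}(\Om)}\langle Lu,v\rangle_{H^1_0(\Om)}=0\ \text{for all }u\in H^1(\Om)\}$ is trivial, which gives $\overline{\ran(L)}=H^{-1}(\Om)$ because $H^1_0(\Om)$ is reflexive. Fix such a $v$. Testing against $u\in C_0^\infty(\Om)$ and using $V(x)^\top=V(x)$ yields $\langle Lv,u\rangle=0$ for all $u\in H^1_0(\Om)$, i.e.\ $Lv=0$ in $H^{-1}(\Om)$; in particular $\Delta v=Vv\in L^2(\Om)$, so $v\in\cD=\dom(\gaN)$ by \eqref{dfcD}, \eqref{dfngaN}. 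Testing now against arbitrary $u\in H^1(\Om)$ gives $\langle\nabla v,\nabla u\rangle_{L^2(\Om)}+\langle Vv,u\rangle_{L^2(\Om)}=0$, and comparing with Green's formula \eqref{wGreen} for the pair $v\in\cD$, $u\in H^1(\Om)$ shows $\langle\gaN v,\gaD u\rangle_{1/2}=0$ for every $u$; since $\gaD\colon H^1(\Om)\to H^{1/2}(\dOm)$ is onto, $\gaN v=0$. As $v\in H^1_0(\Om)$ also forces $\gaD v=0$, the boundary unique continuation property (Lemma \ref{UCP}) gives $v=0$.

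For (ii) I would exploit that $G-I_{H^1(\Om)}\in\cB_\infty(H^1(\Om))$, so $G$ is Fredholm of index zero: $\ran(G)$ is closed and $\codim\ran(G)=\dim\ker(G)=:n<\infty$. Since $H^1_\Delta(\Om)\big/\big(H^1_\Delta(\Om)\cap\ran(G)\big)$ embeds into $H^1(\Om)/\ran(G)$, it has dimension $\le n$, hence $\ran(G)+H^1_\Delta(\Om)=\ran(G)+F'$ for some finite-dimensional $F'$ and is therefore closed in $H^1(\Om)$. Passing to the quotient by $H^1_\Delta(\Om)=\ker(-\Delta)$ and using that $-\Delta$ induces a Banach-space isomorphism $H^1(\Om)/H^1_\Delta(\Om)\to H^{-1}(\Om)$, we conclude that $\ran(L)=(-\Delta)(\ran(G))$ is closed in $H^{-1}(\Om)$. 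Combining with (i) gives $\ran(L)=H^{-1}(\Om)$, and pulling back through the isomorphism yields $\ran(G)+H^1_\Delta(\Om)=H^1(\Om)$.

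The main obstacle is part (i): parts (ii) and (iii) are bookkeeping around the factorization $L=(-\Delta)G$ and Riesz--Fredholm theory, but the density of $\ran(L)$ genuinely requires Lemma \ref{UCP}, and the delicate step is converting the abstract orthogonality condition ${}_{H^{-1}}\langle Lu,v\rangle=0$ into the two boundary conditions $\gaD v=\gaN v=0$ via Green's formula \eqref{wGreen} — this is precisely where the symmetry of $V$ and the identification $H^1_0(\Om)=\{u\in H^1(\Om):\gaD u=0\}$ are used.
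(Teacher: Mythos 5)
Your proof is correct, and at the structural level (factorization through $G$, unique continuation via Lemma \ref{UCP}, Fredholm index zero for $G$) it follows the same skeleton as the paper's. But the individual steps are executed differently, and in a couple of places more cleanly. For (i) the paper explicitly computes the adjoints $(jV)^*$ and $(-\Delta\pi_1)^*$ (its claims \eqref{cl1V} and \eqref{cl2V}) before it can invoke Green's formula; you instead work directly with the dual pairing ${}_{H^{-1}}\langle Lu,v\rangle_{H^1_0}$, first testing against $C^\infty_0(\Om)$ to get $Lv=0$ (hence $v\in\cD$), then against all of $H^1(\Om)$ to read off $\gaN v=0$. This sidesteps the adjoint bookkeeping entirely; the only input beyond symmetry of $V$ is the observation $\langle\nabla\pi_2u,\nabla v\rangle_{L^2}=0$ for $v\in H^1_0(\Om)$, which you use implicitly when replacing $\pi_1u$ by $u$ in the gradient pairing and which you may want to flag. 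For (ii) the paper proves density of $\ran(G)+H^1_\Delta(\Om)$ by exhibiting an explicit approximating sequence $w_n=Gu_n+\pi_2(w-Gu_n)$ built from an approximating sequence for $(-\Delta\pi_1)w$ in $\ran(L)$; you instead push the closedness of $\ran(G)+H^1_\Delta(\Om)$ (from Fredholmness) through the quotient isomorphism $H^1(\Om)/H^1_\Delta(\Om)\cong H^{-1}(\Om)$ to conclude $\ran(L)$ is closed, and then (i) forces $\ran(L)=H^{-1}(\Om)$. This is a more conceptual argument that makes the role of the factorization transparent and gives the slightly stronger statement that $\ran(L)$ is in fact all of $H^{-1}(\Om)$, not merely dense. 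Both (iii) arguments are the same computation read in two directions.
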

\begin{proof} 
In the course of the proof we will have to pay attention to a subtle difference between operators acting from $H_0^1(\Om)$ and from $H^1(\Om)$ into $H^{-1}(\Om)=(H^1_0(\Om))^*$. Specifically, in the course of the proof we temporarily denote by $-\Delta$ the selfadjoint operator acting from $H^1_0(\Om)$ into $(H^1_0(\Om))^*$, and by $-\Delta\pi_1$ the operator acting from $H^1(\Om)$ into $(H^1_0(\Om))^*$. As mentioned in the beginning of Section \ref{sec:EllPr}, the operator $-\Delta\in\cB(H^1_0(\Om), (H^1_0(\Om))^*)$ is an isomorphism, with inverse $(-\Delta)^{-1}$. We denote by $V\colon H^1(\Om) \rightarrow L^2(\Om)$ the operator of multiplication by the potential, via the formula $(Vu)(x)=V(x)u(x)$ for $x\in\Om$. Thus, using $j$ from \eqref{emb}, we may consider the operator $jV$ from $H^1(\Om)$ into $(H^1_0(\Om))^*$. With this notation the Schr\"odinger  operator $L$ is given, strictly speaking, by the formula $L=-\Delta\pi_1+jV$ when it is considered from $H^1(\Om)$ into $(H^1_0(\Om))^*$. The adjoint operator $L^\ast=(-\Delta\pi_1)^\ast+(jV)^\ast$ acts from $H^1_0(\Om)$ into $(H^1(\Om))^*$.

We note that $G-I_{H^1(\Om)}=j_0\circ(-\Delta)^{-1}\circ j\circ V$, and remark that $G-I_{H^1(\Om)}=\pi_1 (G-I_{H^1(\Om)})$ since $\ran(j_0)=H^1_0(\Om)$. Also, plugging $jV=L-(-\Delta\pi_1)$ into formula \eqref{dfFG} for $G$, we infer:
\begin{align*}
G&=I_{H^1(\Om)}+j_0(-\Delta)^{-1}\big(L-(-\Delta)\pi_1\big)\\&=I_{H^1(\Om)}+j_0(-\Delta)^{-1}L-\pi_1=\pi_2+j_0(-\Delta)^{-1}L,
\end{align*}
since $j_0(-\Delta)^{-1}(-\Delta)\pi_1=\pi_1$, and therefore 
\beq\lb{cl0V}
\pi_1G=j_0(-\Delta)^{-1}L.\enq

 We will need two more auxiliary assertions. First, we claim that 
 \beq\label{cl1V}
 \text{ if $v\in H^1_0(\Om)$ then $(jV)^*v\in L^2(\Om)$}. \enq
 Indeed, here $jV:H^1(\Om)\to (H^1_0(\Om))^*$ and the adjoint operator $(jV)^*$ acting from $H^1_0(\Om)$ into $(H^1(\Om))^*$ is defined such that
 \beq\label{fir}
 {}_{(H^1(\Om))^*}\langle (jV)^*v,\phi\rangle_{H^1(\Om)}=
 {}_{(H^1_0(\Om))^*}\langle (jV)\phi,v\rangle_{H^1_0(\Om)}
 \text{ for all }\phi\in H^1(\Om).\enq
 Since $V\phi\in L^2(\Om)$, the right-hand side of \eqref{fir} is in fact the integral
 \[\langle V\phi,v\rangle_{L^2}=\langle Vv,\phi\rangle_{L^2}.\]
 Thus, the functional $(jV)^\ast v\in (H^1(\Om))^\ast$ is represented by the function $Vv\in L^2(\Om)$ as claimed in \eqref{cl1V}. Second, we claim that 
 \beq\label{cl2V}\text{if $v\in H^1_0(\Om)$ then $(-\Delta\pi_1)^\ast v=(-\Delta) v$ in $(H^1_0(\Om))^\ast$},\enq
 where $-\Delta\pi_1\colon H^1(\Om)\to(H^1_0(\Om))^\ast$ and $-\Delta=(-\Delta)^\ast\colon H^1_0(\Om)\to(H^1_0(\Om))^\ast$. Indeed, the functional $(-\Delta\pi_1)^*v\in(H^1(\Om))^*\hookrightarrow (H^1_0(\Om))^*$ by definition of $(-\Delta\pi_1)^*$ satisfies the identity 
 \[{}_{(H^1(\Om))^*}\langle (-\Delta\pi_1)^*v,\phi\rangle_{H^1(\Om)}=
 {}_{(H^1_0(\Om))^*}\langle (-\Delta\pi_1)\phi,v\rangle_{H^1_0(\Om)}
 \,\text{ for all } \phi\in H^1(\Om).\]
 But when $v\in H^1_0(\Om)$ and $\phi\in H^1_0(\Om)$ we infer
 \begin{align*}
 {}_{(H^1_0(\Om))^*}&\langle (-\Delta\pi_1)^*v,\phi\rangle_{H^1_0(\Om)}=
  {}_{(H^1_0(\Om))^*}\langle (-\Delta\pi_1)\phi,v\rangle_{H^1_0(\Om)}\\&=
   {}_{(H^1_0(\Om))^*}\langle (-\Delta)\phi,v\rangle_{H^1_0(\Om)}
   ={}_{(H^1_0(\Om))^*}\langle (-\Delta)v,\phi\rangle_{H^1_0(\Om)}
    \,\text{ for all } \phi\in H^1_0(\Om),
 \end{align*}
 which gives the required claim \eqref{cl2V}. We are ready to prove assertions (i)--(iii).
 
 \, (i) It suffices to show that $\ker L^*=\{0\}$. 
Assume that $v\in H^1_0(\Om)$ is such that $L^*v=0$ for the operator $L^\ast=(-\Delta\pi_1)^\ast+(jV)^\ast$ acting from $H^1_0(\Om)$ into $(H^1(\Om))^*$. Using claims \eqref{cl1V} and \eqref{cl2V} we conclude that
\beq\lb{clVm} (-\Delta)v=(-\Delta\pi_1)^*v=-(jV)^*v\in L^2(\Om),\enq
and thus $v\in\cD=\dom(\gaN)$, see \eqref{dfcD}, \eqref{dfngaN}. We also recall that if 
 $u\in H^1_\Delta(\Om)=\{u\in H^1(\Om)\mid\Delta u=0 \,\hbox{in}\,H^{-1}(\Om)\}$ then $u\in\dom\gaN$.
Let $u\in H^1_\Delta(\Om)$. Then by Green's formula \eqref{wGreen} we infer
\begin{align}
\langle \gaN  v,\gaD   u\rangle_{1/2} &= \langle{\nabla v}, \nabla
u\rangle_{L^2} + \langle \Delta v,u\rangle_{L^2},\label{feq}\\
\langle \gaN  u,\gaD   v\rangle_{1/2} &= \langle{\nabla u}, \nabla
v\rangle_{L^2} + \langle \Delta u,v\rangle_{L^2}.\lb{seq}
\end{align} Since $\gaD v=0$ and $\Delta u=0$, it follows from \eqref{seq} that $\langle{\nabla u}, \nabla
v\rangle_{L^2}=0$. Hence, using \eqref{clVm}, \eqref{feq}, $(-\Delta)\pi_1u=0$ and $L^*v=0$, we conclude:
\begin{align*}
\langle&\gaN  v,\gaD   u\rangle_{1/2}=\langle \Delta v,u\rangle_{L^2}
=\langle(jV)^*v,u\rangle_{L^2}=
{}_{(H^1(\Om))^*}\langle(jV)^*v,u\rangle_{H^1(\Om)}\\&=
{}_{(H^1(\Om))^*}\langle(jV)^*v,u\rangle_{H^1(\Om)}
+{}_{(H^1_0(\Om))^*}\langle(-\Delta\pi_1)u,v\rangle_{H^1_0(\Om)}\\&=
{}_{(H^1(\Om))^*}\langle(jV)^*v,u\rangle_{H^1(\Om)}
+{}_{(H^1(\Om))^*}\langle(-\Delta\pi_1)^*v,u\rangle_{H^1(\Om)}\\&=
{}_{(H^1(\Om))^*}\langle\big((-\Delta\pi_1)^*+(jV)^*\big)v,u\rangle_{H^1(\Om)}
={}_{(H^1(\Om))^*}\langle L^*v,u\rangle_{H^1(\Om)}=0.\end{align*} So, $\langle\gaN  v,\gaD   u\rangle_{1/2}=0$ for all $u\in H^1_\Delta(\Om)$. The restriction
$\gaD|_{H^1_\Delta(\Om)}$ of the trace operator $\gaD$ is surjective into $H^{1/2}(\dOm)$. Therefore, $\langle\gaN  v, f\rangle_{1/2}=0$ for all $f\in H^{1/2}(\Om)$ and thus
$\gaN  v=0$. As a result, we have that $v$ solves the boundary value problem \[(-\Delta+V)v=0,\, \gaD   v=0,\, \gaN  v=0\] with  both Dirichlet and Neumann zero boundary values. This implies $v=0$ by the boundary uniqueness continuation property in Lemma \ref{UCP}, and thus proves assertion (i).

\, (ii) First, we show that $\ran(G)+\ran(\pi_2)$ is a closed subspace. Indeed, the operator $G$ is Fredholm since $G-I_{H^1(\Om)}\in\cB_\infty(H^1(\Om))$ by the compactness of $j$.
Thus $\codim (\ran(G))<\infty$ and therefore $\codim (\ran(G)+\ran(\pi_2))<\infty$ and thus the subspace $\ran(G)+\ran(\pi_2)$ is closed.

 Next, we show that $\ran(G)+\ran(\pi_2)$ is dense in $H^1(\Om)$. 
 Fix $w\in H^1(\Om)$ and decompose $w=\pi_1w+\pi_2w$. By assertion (i) just proved,
 the functional $(-\Delta\pi_1)w$ from  $H^{-1}(\Om)$ can be approximated by elements in $\ran(L)$, that is, 
 there exist functions $u_n\in H^1(\Om)$ such that $\|Lu_n-(-\Delta\pi_1)w\|_{(H^1_0(\Om))^*}\to0$ as $n\to\infty$. Applying the isomorphism $(-\Delta)^{-1}$, this yields  
 $\|(-\Delta)^{-1}Lu_n-\pi_1w\|_{H^1_0(\Om)}\to0$  or
 $\|j_0(-\Delta)^{-1}Lu_n-\pi_1w\|_{H^1(\Om)}\to0$ as $n\to\infty$. Using \eqref{cl0V}, we further notice that
  $\|\pi_1Gu_n-\pi_1w\|_{H^1(\Om)}\to0$ as $n\to\infty$.
Letting $w_n=Gu_n+\pi_2(-Gu_n+w)$ so that $w_n\in\ran(G)+\ran(\pi_2)$ we thus conclude
that 
\[w_n-w= Gu_n+\pi_2(-Gu_n+w)-w=\pi_1Gu_n-\pi_1w\to0\text{ as } n\to\infty\]
in $H^1(\Om)$, proving (ii).
  
\, (iii) The equation $Lu=0$ can be equivalently rewritten as $(-\Delta)\pi_1 u=-jVu$. Applying
$(-\Delta)^{-1}$, an isomorphism in $\cB(H^{-1}(\Om),H_0^1(\Om))$, this is equivalent to $\pi_1 u= -(G-I_{H^1(\Om)})u$ or $Gu=\pi_2u$, yielding (iii).
\end{proof}
\begin{remark}\label{rem:LDLN}
We now discuss what happens if we equip $L=-\Delta+V$ with the standard Dirichlet  boundary conditions, that is, we consider the operator $L_{\cH_D}$.
We assume Hypothesis \ref{h1} (i')   
and define the set $\cK_{L_{\cH_D}}$ of weak solutions to $Lu=0$ satisfying the Dirichlet condition,
\begin{align*}
\cK_{L_{\cH_D}}&=\big\{u\in H^1(\Om): Lu=0\text{ in } H^{-1}(\Om)\text{ and } \gaD u=0\big\}\\
&=\big\{u\in H^1(\Om): -\Delta u=-j\circ Vu \text{ in } H^{-1}(\Om)\text{ and } \gaD u=0\big\}.
\end{align*}
It follows that $\cK_{L_{\cH_D}}=\ker(L_{\cH_D})$, because $u\in\cK_{L_{\cH_D}}$ implies $u\in H^2(\Om)$, hence $u\in\ker(L_{\cH_D})$, by the standard elliptic theory (see, e.g., \cite[Theorem 6.3.4]{E10} or \cite[Lemma A.1]{GLMZ05}, \cite[Theorem 2.10]{GM08}, \cite[Lemma 2.14]{GM08}). 
Similar to Lemma \ref{lem:propFG}(iii), we have the relation
\beq\label{BSKD} \cK_{L_{\cH_D}}=\big\{u\in H^1(\Om): Gu=0\big\}.\enq
In other words, $0$ is not a Dirichlet eigenvalue of $L$ if and only if $-1$ is not an eigenvalue of the compact operator $G-I_{L^2(\Om)}$. The latter assertion is a part of a powerful Birman--Schwinger principle (see, e.g., \cite{GLMZ05,GM08,GM11,GMZ07} and the literature cited therein). To show \eqref{BSKD}, we note that if $u=\pi_1u\in H^1_0(\Om)$ then $-\Delta u=-j\circ Vu$ yields $u=-j_0\circ(-\Delta)^{-1}\circ j\circ Vu$ or $Gu=0$ proving ``$\subseteq$'' in \eqref{BSKD}, while
if $Gu=0$ then $Lu=0$ by Lemma \ref{lem:propFG} (iii) and also
$u=-(G-I_{H^1(\Om)})u=-\pi_1(G-I_{H^1(\Om)})u\in H^1_0(\Om)$,
thus proving ``$\supseteq$'' in \eqref{BSKD}.
\hfill$\Diamond$\end{remark}
\section{The Maslov index in symplectic Hilbert spaces}\label{secMindFrL}
In this section we collect the main definitions relevant to the Maslov index and prove two simple abstract results that will be used below to show the smoothness of the path in the Fredholm--Lagrangian Grassmannian (of the boundary subspace $\cG$) formed by the traces of weak solutions to the rescaled eigenvalue equations.

\subsection{The Fredholm--Lagrangian Grassmannian} 
Let $\cX$ be a real Hilbert space equipped with a symplectic form $\omega$, that is, a bounded, skew-symmetric, nondegenerate bilinear form. Let  $J \in \cB(\cX)$ be the associated complex structure, which satisfies $\omega(u,v)=\langle Ju,v\rangle_\cX$ for the scalar product in $\cX$, $J^2=-I_\cX$ and $J^*=-J$.
\begin{definition}\label{dfnFLG}
 {\bf (i)}\, We say that two closed linear subspaces $\cK,\cL$ of $\cX$ form a {\em Fredholm pair} if their intersection $\cK\cap\cL$ has finite dimension and their sum $\cK+\cL$ has finite codimension (and hence is closed \cite[Section IV.4.1]{Kato}). Given a closed linear subspace $\cK$ of $\cX$, we define the {\em Fredholm Grassmannian
$F(\cK)$ of $\cK$} as the following set of closed linear subspaces of $\cX$:
\beq\label{defFG}
F(\cK)=\big\{\cL: \cL \text{ is closed and $(\cK,\cL)$ is a Fredholm pair} \big\},\enq
and the {\em reduced Fredholm Grassmannian
$\Fr(\cK)$ of $\cK$} as the following set of closed linear subspaces of $\cX$:
%
\begin{equation}\label{defFr}
\Fr(\cK)=\big\{A(\cK): A\in\cB(\cX) \text{ is boundedly invertible and } A-I_\cX\in\cB_\infty(\cX)\big\}.\end{equation}

{\bf (ii)}\, A closed linear subspace $\cK$ of $\cX$ is called {\em Lagrangian} if the form $\omega$ vanishes on $\cK$, that is, $\omega(u,v)=0$ for all $u,v\in\cK$, and $\cK$ is maximal, that is, 
 if $u\in\cX$ and $\omega(u,v)=0$ for all $v\in\cK$, then  $u\in\cK$. We denote by $\Lambda(\cX)$ the set of all Lagrangian subspaces in $\cX$.
Given $\cK\in\Lambda(\cX)$, we define the {\em Fredholm--Lagrangian Grassmannian $F\Lambda(\cK)$} to be the set of Lagrangian subspaces $\cL \subset \cX$ such that $(\cK,\cL)$ is a Fredholm pair; in other words
$F\Lambda(\cK)=F(\cK)\cap\Lambda(\cX)$.
\hfill$\Diamond$
\end{definition}

We will need the following elementary facts.
\begin{lemma}\label{lem3433}
Let $\cK,\cL,\cM$ be closed linear subspaces in $\cX$.
\begin{itemize}\item[(i)] $\cL\in F(\cK)$, resp. $\cL\in\Fr(\cK)$ if and only if $\cK\in F(\cL)$, resp. $\cK\in\Fr(\cL)$.
 \item[(ii)] If $\cM\in F(\cL)$  and $\cL\in\Fr(\cK)$, then $\cM\in F(\cK)$.
\item[(iii)] If $\cK\in\Lambda(\cX)$, $\cL\in\Fr(\cK)$ and $\omega$ vanishes on $\cL$, then $\cL\in\Lambda(\cX)$.
\end{itemize}
\end{lemma}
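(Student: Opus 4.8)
The plan is to prove the three parts of Lemma~\ref{lem3433} by unwinding the definitions of Fredholm pair, reduced Fredholm Grassmannian, and Lagrangian subspace, relying on standard facts about Fredholm pairs from \cite[Section IV.4]{Kato}.

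For part (i), the statement about $F(\cdot)$ is immediate: the definition of a Fredholm pair $(\cK,\cL)$ is symmetric in $\cK$ and $\cL$, since both the finite-dimensionality of $\cK\cap\cL$ and the finite-codimensionality of $\cK+\cL$ are symmetric conditions. For the statement about $\Fr(\cdot)$, suppose $\cL\in\Fr(\cK)$, so $\cL=A(\cK)$ for some boundedly invertible $A$ with $A-I_\cX\in\cB_\infty(\cX)$. Then $\cK=A^{-1}(\cL)$, and I would check that $A^{-1}$ is again boundedly invertible with $A^{-1}-I_\cX\in\cB_\infty(\cX)$: indeed $A^{-1}-I_\cX = A^{-1}(I_\cX-A) = -A^{-1}(A-I_\cX)$ is compact as the product of a bounded operator and a compact one. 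Hence $\cK=A^{-1}(\cL)\in\Fr(\cL)$, and by symmetry the converse holds too.

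For part (ii), I would argue that $\Fr(\cK)\subset F(\cK)$ — that is, applying a boundedly invertible compact perturbation of the identity to a subspace preserves the Fredholm-pair relation with $\cK$ — and then invoke the fact that ``being a Fredholm pair'' is stable under such transformations, or more directly use a transitivity-type argument. Concretely: since $\cL=A(\cK)$ with $A$ as above, and $\cM\in F(\cL)$, I want $\cM\in F(\cK)$. One clean route: $A^{-1}(\cM)$ and $A^{-1}(\cL)=\cK$ form a Fredholm pair iff $\cM$ and $\cL$ do (bounded invertibility of $A^{-1}$ preserves intersections and sums set-theoretically, so $A^{-1}(\cM)\cap\cK = A^{-1}(\cM\cap\cL)$ and $A^{-1}(\cM)+\cK = A^{-1}(\cM+\cL)$ have the same dimension/codimension as the originals). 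But I actually need $\cM$ itself (not $A^{-1}(\cM)$) to be Fredholm with $\cK$. So the better approach is: $(\cM,\cK)$ is a Fredholm pair iff $(\cM,\cL)$ is, using that $\cK$ is obtained from $\cL$ by a compact perturbation of the identity; this is precisely the perturbation stability of Fredholm pairs under $\cL\mapsto A(\cL)$ with $A-I$ compact, which follows from \cite[Theorem IV.4.24]{Kato} type results (the index and the Fredholm-pair property are invariant). I would cite this and spell out the routine verification.

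For part (iii), assume $\cK\in\Lambda(\cX)$, $\cL\in\Fr(\cK)$, and $\omega$ vanishes on $\cL$; I must show $\cL$ is maximal isotropic. The key is the relation between Lagrangian subspaces and the complex structure $J$: a closed isotropic $\cL$ is Lagrangian iff $\cL \oplus J\cL = \cX$, equivalently iff $\cL$ equals its own symplectic annihilator $\cL^\omega := \{u : \omega(u,v)=0\ \forall v\in\cL\}$. For a closed isotropic subspace one always has $\cL\subset\cL^\omega$; maximality is the reverse inclusion. I would use that $\cK$ Lagrangian means $\cX = \cK\dot{+}J\cK$ (so $\cK$ has the ``half-dimensional'' property), and that $\cL\in\Fr(\cK)$ together with $\cL\in F(\cK)$ (from part (i), since $\Fr(\cK)\subset F(\cK)$) forces $\dim(\cK\cap\cL)=\codim(\cK+\cL)$ — here I may need an index-zero argument, perhaps the simplest being that $A(\cK)$ and $\cK$ form a Fredholm pair of index $0$ because the path $t\mapsto (tA+(1-t)I)$ connecting $A$ to $I$ through boundedly invertible operators (which requires a little care, or alternatively a direct computation) shows the index is homotopy-invariant and vanishes at $I$. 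Then a dimension count comparing $\cL$ with the genuinely Lagrangian $\cK$ closes the gap, forcing $\cL = \cL^\omega$.

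The main obstacle I expect is part (iii): the delicate point is establishing that the Fredholm pair $(\cK,\cL)$ has \emph{index zero} when $\cL\in\Fr(\cK)$, since this is what lets a dimension count upgrade ``isotropic'' to ``Lagrangian.'' Parts (i) and (ii) are essentially bookkeeping with the definitions and the standard compact-perturbation stability of Fredholm pairs from Kato; part (iii) genuinely uses the symplectic structure and the interplay between $J$, the annihilator, and the index. If the index-zero fact is not available off the shelf in the form needed, I would prove it via the homotopy $A_t = (1-t)I_\cX + tA$, checking it stays boundedly invertible along a suitable path (or replacing the straight-line homotopy by one through the group generated by $I+\text{compact}$), so that $\ind(\cK,A_t\cK)$ is constant and equals $\ind(\cK,\cK)=0$.
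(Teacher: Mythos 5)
Your treatment of part (i) is correct and matches the paper: the $F(\cdot)$ claim is definitional symmetry, and for $\Fr(\cdot)$ the computation $A^{-1}-I_\cX=-A^{-1}(A-I_\cX)\in\cB_\infty(\cX)$ is exactly what's needed. For part (ii), your instinct (reduce ``Fredholm pair'' to ``Fredholm operator'' and then use compact-perturbation stability of Fredholm operators) is the right one, but the appeal to \cite[Theorem IV.4.24]{Kato} is not quite the right tool: that result concerns small perturbations in the gap metric, and $A(\cK)$ need not be gap-close to $\cK$ just because $A-I_\cX$ is compact. The paper instead uses the node operator $N_{\cL\cM}=(I_\cX-P)|_\cL$ (where $P$ projects onto $\cM$), together with the fact that $(\cL,\cM)$ is Fredholm iff $N_{\cL\cM}$ is Fredholm. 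Writing $N_{\cK\cM}=\big((I_\cX-P)A\big)\cdot A^{-1}$ and observing that $(I_\cX-P)A-N_{\cL\cM}$ is compact closes the argument; you should spell this out rather than gesture at a stability theorem.

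Part (iii) contains a genuine error. Your strategy rests on the claim ``$\Fr(\cK)\subset F(\cK)$,'' which you then use to assert that $(\cK,\cL)$ is a Fredholm pair (and try to show it has index zero). But $\Fr(\cK)$ is \emph{not} contained in $F(\cK)$: taking $A=I_\cX$ gives $\cL=\cK\in\Fr(\cK)$, yet $\cK\cap\cK=\cK$ is infinite-dimensional, so $(\cK,\cK)$ is not a Fredholm pair and $\cK\notin F(\cK)$. Part (i) does not give the inclusion you invoke; it only says each of $F$ and $\Fr$ is a symmetric relation. Consequently the proposed index-of-the-pair computation and the ensuing dimension count are not available. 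The paper's proof avoids this entirely: given $u$ with $\omega(u,v)=0$ for all $v\in\cL=A(\cK)$, one gets $z:=J^{-1}A^*Ju\in\cK$ by maximality of $\cK$, introduces $B:=J^{-1}A^*JA$, uses isotropy of $\cL$ and $J\cK=\cK^\perp$ to show $B(\cK)\subset\cK$, and then observes that $B|_\cK-I_\cK$ is compact (so $B|_\cK$ has Fredholm index zero \emph{as an operator on $\cK$}) and is injective (since $B$ is invertible on $\cX$), hence bijective; this gives $u=A(B|_\cK)^{-1}z\in A(\cK)=\cL$. The index-zero input here is the Fredholm index of the operator $B|_\cK$, not of the pair $(\cK,\cL)$ — that distinction is precisely what your argument misses.
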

\begin{proof} {\bf (i)}\,The statement follows from the definitions \eqref{defFG}, \eqref{defFr} of the Fredholm Grassmannian and the reduced Fredholm Grassmannian.

{\bf (ii)}\,
Let $P$ denote a projection in $\cX$ onto $\cM$ and let $N_{\cL\cM}$ denote the ``node'' operator associated with the pair of subspaces $(\cL,\cM)$, mapping $\cL$ into $\ker({P})$ by the rule $N_{\cL\cM}u=(I_\cX-P)u$ for $u\in\cL$. It is known that the pair of subspaces $(\cL,\cM)$ is Fredholm in $\cX$ if and only if the operator $N_{\cL\cM}\in\cB(\cL,\ker({P}))$ is Fredholm (and also that $\cL\cap\cM=\ker(N_{\cL\cM})$ and $\ran(N_{\cL\cM})\dot{+}\cM=\cL+\cM$), see, e.g., \cite[Lemma 5.1]{LT05} or \cite[Proposition 2.27]{F}. In particular, by the hypothesis of the lemma we know that the operator $N_{\cL\cM}\in\cB(\cL,\ker({P}))$ is Fredholm. Since there is an isomorphism $A\in\cB(\cL,\cK)$ so that $A-I_\cX\in\cB_\infty(\cX)$, we may represent the node operator $N_{\cK\cM}\in\cB(\cK,\ker({P}))$ 
as the product $N_{\cK\cM}=(I_\cX-P)A\cdot A^{-1}$, where $A^{-1}\in\cB(\cK,\cL)$ and $(I_\cX-P)A\in\cB(\cL,\ker({P}))$. From the fact above it follows that the pair $(\cK,\cM)$ is Fredholm provided the operator $(I_\cX-P)A$ is Fredholm. But this is indeed the case: $(I_\cX-P)A-N_{\cL\cM}\in\cB_\infty(\cL,\ker({P}))$ because $A-I_\cX\in\cB_\infty(\cX)$, and $N_{\cL\cM}\in\cB(\cL,\ker({P}))$ is Fredholm.

{\bf (iii)}\,
Since $\cL\in\Fr(\cK)$ we have $\cL=A(\cK)$, where $A\in\cB(\cX)$ is a boundedly  invertible operator such that  $A-I\in\cB_{\infty}(\cX)$.
The subspace  $\cL$ is closed since $\cL=A(\cK)$, $A$ is boundedly invertible and $\cK$ is closed. It remains to prove that $\cL$ is maximal. 

To begin the proof let us assume that $u\in\cX$ is such that $\omega(u,v)=0$ for all $v\in\cL$. Our objective is to show that $u\in\cL$.  Since $\cL=A(\cK)$, we have $\omega(u,A w)=0$ for all $w\in\cK$. Using the complex structure $J$, this implies
$ \langle Ju,A w \rangle_\cX=0$ or equivalently $\langle A^*Ju,w \rangle_\cX=0$ for all $w\in\cK$.
In other words,  $\omega(J^{-1}A^*Ju,w)=0$ for all $w\in\cK$. Denote $z=J^{-1}A^*Ju$. Since $\cK$ is Lagrangian by assumption, it is maximal and thus $ z\in\cK$. Rewriting 
 $z=J^{-1}A^*JAA^{-1}u$ and introducing the operator $B=J^{-1}A^*JA\in\cB(\cX)$, we therefore have $u=AB^{-1}z$, where $z\in\cK$ and the operator $B$ is boundedly invertible in $\cX$ because $A$ is boundedly invertible by assumption.
  
Since $\omega(v,w)=0$ for all $v,w\in\cL$ and $\cL=A(\cK)$, we have $\omega(A v_0,Aw_0)=0$ for all $v_0,w_0\in\cK$. Therefore $ \langle JAv_0,Aw_0 \rangle_\cX=0$ for all $v_0,w_0\in\cK$, which implies $A^*JAv_0\in\cK^{\perp}$ for any $v_0\in\cK$. 
Since $\cK\in\Lambda(\cX)$, by \cite[Proposition 2.7(b)]{F} we have $J(\cK)=\cK^{\perp}$ and $J^{-1}(\cK^\perp)=\cK$. It follows that $Bv_0=J^{-1}A^*JAv_0\in\cK$ for any $v_0\in\cK$. In other words, the operator $B$ leaves $\cK$ invariant. Let $B|_\cK$ denote the restriction of $B$ to $\cK$. We claim that $B|_\cK\in\cB(\cK)$ is boundedly invertible. Assuming the claim, the conclusion $u\in\cL$ follows since by the previous paragraph $u=AB^{-1}z=A(B|_\cK)^{-1}z$, where $z\in\cK$ and $\cL=A(\cK)$. 

To prove the claim, we first note that $B|_\cK$ is injective since $B$ is boundedly invertible in $\cX$. On the other hand,
$A-I_\cX\in\cB_{\infty}(\cX)$ yields $B-I_\cX\in\cB_{\infty}(\cX)$. Therefore
$B|_\cK-I_\cK\in\cB_{\infty}(\cK)$ and the Fredholm index of $B|_\cK\in\cB(\cK)
$ is zero. Since the operator $B|_\cK$ is injective, it is also surjective, thus proving the claim and completing the proof of the required assertion. 
\end{proof} 

Let  $\Sigma=[a,b]\subset\bbR$ be a set of parameters.
\begin{definition}\label{defcksb}
Let $k \in \{0,1,\ldots\}$. We say that a family of subspaces $\{\cK_s\}_{s\in\Sigma}$ of a Hilbert space $\cX$ is \emph{$C^k$ smooth} if for each $s_0\in\Sigma$ there exists a neighborhood $\Sigma_0$ in $\Sigma$ containing $s_0$, and a family of projections $\{P_s\}_{s\in\Sigma_0}$ on $\cX$ (possibly depending on $s_0$), such that $\ran(P_s) = \cK_s$ for each $s \in \Sigma_0$ and the function $s\mapsto P_s$ belongs to $C^k(\Sigma_0;\cB(\cX))$. \hfill$\Diamond$\end{definition}
\begin{remark}\label{locglob}
Suppose that a family of subspaces $\{\cK_s\}_{s\in\Sigma}$ in $\cX$ is $C^k$ smooth. The projections $P_s$ on $\cK_s$ are not unique (e.g., they may depend on $s_0$). Let $\Pi_s$ be the {\em orthogonal} projection onto $\cK_s$ for each $s\in\Sigma$. This family of projections is unique and can be obtained from $P_s$ by the  formula 
\begin{equation}\label{PPi}
\Pi_s=P_sP_s^*\big(P_sP_s^*+(I_\cX-P_s^*)(I_\cX-P_s)\big)^{-1},\, s\in\Sigma,
\end{equation}
see, e.g., \cite[Lemma 12.8]{BW93}.
We therefore conclude that if a family of subspaces is $C^k$ smooth, then the function $s\mapsto\Pi_s$ is in $C^k(\Sigma,\cB(\cX))$.\hfill$\Diamond$
\end{remark}

\begin{remark}\label{DalKr} We will often use the following {\em transformation operators} $W_s$, see, e.g., \cite[Section IV.1]{DK} and cf.\ \cite[Remark 6.11]{F}.
Let $\Sigma=[a,b]$ be a set of parameters containing $0$ and let $\{Q_s\}_{s\in\Sigma}$ be a family of projections on a Hilbert space $\cX$ such that the function $s\mapsto Q_s$ is in $C^k(\Sigma;\cB(\cX))$ for some $k\in \{0,1,\ldots\}$. Our objective is to construct a family of boundedly invertible operators $\{W_s\}_{s\in\Sigma_0}$ on $\cX$ that split the projections $Q_0$ and $Q_s$ in the sense that \beq\label{splitW}
W_sQ_0=Q_sW_s\,\text{ for all }\, s\in\Sigma_0,\enq and the function $s\mapsto W_s$ is in $C^k(\Sigma_0;\cB(\cX))$ for a sufficiently small neighborhood $\Sigma_0$ in $\Sigma$ containing $0$. The operator $W_s$ isomorphically maps $\ran(Q_0)$ onto $\ran(Q_s)$, which allows one to ``straighten'' a smooth family of subspaces $\{\ran(Q_s)\}$. We introduce the operators $W_s$ by the formula
\beq\label{dfWs}
W_s=Q_sQ_0+(I_\cX-Q_s)(I_\cX-Q_0),\, s\in\Sigma.
\enq
A simple calculation shows that 
\beq\lb{modWs}
W_s-I_\cX=Q_s(Q_0-Q_s)+(I_\cX-Q_s)(Q_s-Q_0).\enq
Since the function $s\mapsto Q_s$ is in $C^0(\Sigma;\cB(\cX))$, we may choose a small enough neighborhood $\Sigma_0$ in $\Sigma$ containing $0$ so that $\|W_s-I_\cX\|_{\cB(\cX)}\le 1/2$ for all $s\in\Sigma_0$. This implies that the operators $W_s$ are boundedly invertible in $\cX$ for  all $s\in\Sigma_0$, and thus $W_s\colon\ran(Q_0)\to\ran(Q_s)$ and $W_s^{-1}\colon\ran(Q_s)\to\ran(Q_0)$ are isomorphisms.

In Section \ref{sec:mainres} we will use another family of  transformation operators, $U_s$, splitting the projections $Q_0$ and $Q_s$ as in \eqref{splitW}, i.e. $U_sQ_0=Q_sU_s$ for all $s\in\Sigma_0$, cf.\ \cite[Section I.4.6]{Kato}. This family is obtained by multiplying the operators $\{W_s\}$ defined in \eqref{dfWs} by the normalizing prefactor  $\big(I_\cX-(Q_s-Q_0)^2\big)^{-1/2}$ so that
\begin{align}\label{Usf}
U_s&=\big(I_\cX-(Q_s-Q_0)^2\big)^{-1/2}\big(Q_sQ_0+(I_\cX-Q_s)(I_\cX-Q_0)\big),\\
(U_s)^{-1}&=\big(I_\cX-(Q_s-Q_0)^2\big)^{-1/2}\big(Q_0Q_s+(I_\cX-Q_0)(I_\cX-Q_s)\big).\label{Usf1}
\end{align}
The operators $U_s$ are sometimes more convenient than $W_s$ due to the symmetry in \eqref{Usf} and \eqref{Usf1}.
\hfill$\Diamond$\end{remark} 

\subsection{The Maslov index}
We next recall the definition of the Maslov index of a continuous path $\Upsilon\colon\Sigma\to F\Lambda(\cG)$ with respect to a fixed Lagarangian subspace $\cG\subset\cX$, see Definition
\ref{dfnDef3.6} below. This will require some preliminaries borrowed from \cite{BF98,F}. Given a real Lagrangian subspace $\cG$ in a real Hilbert space $\cX$ equipped with a symplectic form $\omega(u,v)=\langle Ju,v\rangle_\cX$, we assume that the path $\Upsilon\colon\Sigma\to F\Lambda(\cG)$ takes values in the Fredholm--Lagrangian Grassmannian $F\Lambda(\cG)$ and is continuous, that is, the function $s\mapsto\Pi_s$ is in $C^0(\Sigma,\cB(\cX))$, where $\Pi_s$ is the orthogonal projection in $\cX$ onto $\Upsilon(s)$. In  Definition
\ref{dfnDef3.6} below we follow \cite{F}, see also the illuminating discussion in \cite{BF98,CLM94}, and use the spectral flow of a family of unitary operators related to $\Pi_s$ and $\Pi_\cG$, the orthogonal projection on $\cG$.

To begin, we introduce a complex Hilbert space $\cX_J$ associated with the complex structure $J$ on the real Hilbert space $\cX$, defining scalar multiplication by the rule
\[
(\alpha + i\beta) u = \alpha u + \beta Ju, \quad u \in \cX, \alpha + i\beta \in \bbC,
\]
and the complex scalar product by
\[\langle u,v\rangle_{\cX_J}=\langle u,v\rangle_\cX-i\omega(u,v),\quad u,v\in\cX.\]
It is important to note that, considered as a real vector space, $\cX_J$ is identical to $\cX$, and not its complexification $\cX \otimes_{\bbR} \bbC$. (In the finite-dimensional case, if $\cX \cong \bbR^{2n}$, then $\cX_J \cong \bbC^n$ while $\cX \otimes_{\bbR} \bbC \cong \bbC^{2n}$.) However, it is easy to see that $\cX_J \cong \cG \otimes_{\bbR} \bbC$ for any  Lagrangian subspace $\cG \in \Lambda(\cX)$.

For each $s\in\Sigma$ we choose a unitary operator $U_s$ acting on the complex Hilbert space $\cX_J$ such that $\Upsilon(s)=U_s(\cG^\perp)$ and $U_s-I_{\cX_J}\in\cB_\infty(\cX_J)$. This choice is possible by \cite[Proposition 1.1]{BF98}. Next, we define the unitary operator $W_s$ in $\cX_J$ by $W_s=U_sU_s^\ast$ and notice that $W_s-I_{\cX_J}\in\cB_\infty(\cX)$. The following properties of the operator $W_s$ can be found in \cite[Lemma 1.3]{BF98} or \cite[Proposition 2.44]{F}.
\begin{lemma}\label{propWs} If $\cG$ is a real Lagrangian subspace in $\cX$, $\Upsilon\colon\Sigma=[a,b]\to F\Lambda(\cG)$ is a continuous path, $\Pi_s$ and $\Pi_\cG$ are the orthogonal projections onto $\Upsilon(s)$ and $\cG$, and $U_s$ is the unitary operator on $\cX_J$ such that $\Upsilon(s)=U_s(\cG^\perp)$, then the unitary operator $W_s=U_sU_s^\ast$ satisfies $W_s-I_{\cX_J}\in\cB_\infty(\cX)$ and
\begin{itemize}\item[(i)] $W_s=(I_{\cX_J}-2\Pi_s)(2\Pi_\cG-I_{\cX_J})$;
\item[(ii)] $\ker(W_s+I_{\cX_J})$ is isomorphic to $(\Upsilon(s)\cap\cG)\oplus J(\Upsilon(s)\cap\cG)\cong(\Upsilon(s)\cap\cG)\otimes_{\bbR}\bbC$;
\item[(iii)] $\dim_\bbR(\Upsilon(s)\cap\cG)=\dim_\bbC\ker(W_s+I_{\cX_J})$.
\end{itemize}
\end{lemma}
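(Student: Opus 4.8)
The plan is to establish (i) first — the structural identity expressing $W_s$ as a product of two reflections — and then to deduce (ii) and (iii) from it by an elementary computation inside $\cX_J$. Identity (i) is essentially \cite[Lemma 1.3]{BF98} (see also \cite[Proposition 2.44]{F}), and the idea is this: since $U_s$ is unitary on $\cX_J$ it is, in particular, orthogonal for the real inner product $\langle\cdot,\cdot\rangle_\cX=\Re\langle\cdot,\cdot\rangle_{\cX_J}$, so the orthogonal projection onto $\Upsilon(s)=U_s(\cG^\perp)$ is $\Pi_s=U_s\Pi_{\cG^\perp}U_s^\ast$; using $\Pi_{\cG^\perp}=I_{\cX_J}-\Pi_\cG$ and $(\cG^\perp)^\perp=\cG$, the definition of $W_s$ unwinds to $W_s=(I_{\cX_J}-2\Pi_s)(2\Pi_\cG-I_{\cX_J})$, and one records along the way the identity
\[ W_s-I_{\cX_J}=2\,(I_{\cX_J}-2\Pi_s)\,(\Pi_s+\Pi_\cG-I_{\cX_J})=2\,(I_{\cX_J}-2\Pi_s)\,(\Pi_s-\Pi_{\cG^\perp}), \]
from which $W_s-I_{\cX_J}\in\cB_\infty(\cX)$ follows since $\Pi_s-\Pi_{\cG^\perp}=(U_s-I_{\cX_J})\Pi_{\cG^\perp}U_s^\ast+\Pi_{\cG^\perp}(U_s^\ast-I_{\cX_J})$ is compact. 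To confirm independently that the operator on the right of (i) is unitary on $\cX_J$, note that each reflection $R_\cG:=2\Pi_\cG-I_{\cX_J}$ and $R_s:=2\Pi_s-I_{\cX_J}$ is a self-adjoint real-orthogonal involution, so their product is real-orthogonal; and since $\cG$ and $\Upsilon(s)$ are Lagrangian, $J$ interchanges each of them with its orthogonal complement (\cite[Proposition 2.7(b)]{F}), whence $R_\cG J=-JR_\cG$ and $R_sJ=-JR_s$, so $R_sR_\cG$ commutes with $J$, i.e.\ is complex-linear; a complex-linear real-orthogonal bijection of $\cX_J$ is unitary.

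For (ii) and (iii) I would argue purely from (i). Writing $R_s=2\Pi_s-I_{\cX_J}$ and $R_\cG=2\Pi_\cG-I_{\cX_J}$, identity (i) reads $W_s=-R_sR_\cG$, so for $x\in\cX_J$, using $R_s^2=I_{\cX_J}$,
\[ W_sx=-x\ \Longleftrightarrow\ R_sR_\cG x=x\ \Longleftrightarrow\ R_\cG x=R_s x\ \Longleftrightarrow\ \Pi_\cG x=\Pi_s x. \]
Setting $y:=\Pi_\cG x=\Pi_s x$ one gets $y\in\Upsilon(s)\cap\cG$, while $x-y\in\cG^\perp$ (since $y=\Pi_\cG x$) and $x-y\in\Upsilon(s)^\perp$ (since $y=\Pi_s x$); invoking $\cG^\perp=J\cG$ and $\Upsilon(s)^\perp=J\Upsilon(s)$ (\cite[Proposition 2.7(b)]{F}) and the bijectivity of $J$, this means $x-y\in J\cG\cap J\Upsilon(s)=J(\Upsilon(s)\cap\cG)$. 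Conversely every $x=y+Jz$ with $y,z\in\Upsilon(s)\cap\cG$ satisfies $\Pi_\cG x=\Pi_s x=y$. Hence
\[ \ker(W_s+I_{\cX_J})=(\Upsilon(s)\cap\cG)\oplus J(\Upsilon(s)\cap\cG), \]
the sum being orthogonal and direct because $\Upsilon(s)\cap\cG\subset\cG$ while $J(\Upsilon(s)\cap\cG)\subset J\cG=\cG^\perp$. This subspace is $J$-invariant (as $J^2=-I_{\cX_J}$), hence a complex subspace of $\cX_J$, and since $(\Upsilon(s)\cap\cG)\cap J(\Upsilon(s)\cap\cG)=\{0\}$ the natural real-linear map $(\Upsilon(s)\cap\cG)\otimes_\bbR\bbC\to\cX_J$ is injective with image exactly this subspace; this proves (ii). Taking complex dimensions, and using that the complexification of a real space $E$ has $\dim_\bbC=\dim_\bbR E$, gives (iii): $\dim_\bbR(\Upsilon(s)\cap\cG)=\dim_\bbC\ker(W_s+I_{\cX_J})$.

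I do not expect a serious obstacle here — the lemma is a standard structural fact about the Fredholm--Lagrangian Grassmannian — but the step that needs the most care is (i), specifically the bookkeeping between the real and complex structures on $\cX_J$: the orthogonal projections $\Pi_s$ and $\Pi_\cG$ are bounded but only real-linear (they do not commute with $J$, precisely because $\Upsilon(s)$ and $\cG$ are Lagrangian rather than complex subspaces), whereas the product $W_s$ turns out complex-linear and unitary, so one must consistently track which adjoint and which orthogonal complement is meant. The identity $\cL^\perp=J\cL$ for a Lagrangian $\cL$ in an infinite-dimensional symplectic Hilbert space is used repeatedly and rests on the maximality clause in the definition of ``Lagrangian'' (it is \cite[Proposition 2.7(b)]{F}); the anticommutation relations $J\Pi_s+\Pi_sJ=J$, $J\Pi_\cG+\Pi_\cG J=J$ and the compactness estimate above are then routine.
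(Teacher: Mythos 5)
The paper itself does not prove this lemma; it is stated with a citation to \cite[Lemma 1.3]{BF98} and \cite[Proposition 2.44]{F}. Your proposal supplies an actual argument, and its core — the deduction of (ii) and (iii) from (i), the unitarity check on the right-hand side of (i) via the anticommutation relations $R_\cG J = -JR_\cG$, $R_s J = -JR_s$, and the compactness estimate using $\Pi_s - \Pi_{\cG^\perp} = (U_s - I_{\cX_J})\Pi_{\cG^\perp}U_s^\ast + \Pi_{\cG^\perp}(U_s^\ast - I_{\cX_J})$ — is correct and matches the standard computation in those references.

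There is, however, one genuine gap, and it is precisely the step you wave through with the phrase ``the definition of $W_s$ unwinds to.'' As literally written, $W_s = U_s U_s^\ast$ with $U_s^\ast$ the adjoint in $\cB(\cX_J)$ gives $W_s = I_{\cX_J}$ outright, because $U_s$ is unitary (and for a complex-linear operator the real adjoint with respect to $\Re\langle\cdot,\cdot\rangle_{\cX_J}$ coincides with the complex adjoint, so there is no escape via ``real versus complex''). The formula that actually produces the reflection product in (i) comes from the real structure of $\cG$. Writing $R_\cG = 2\Pi_\cG - I_{\cX_J}$, the map $v \mapsto R_\cG v$ is the conjugation relative to the decomposition $\cX_J = \cG \oplus J\cG$ (it sends $x + Jy \mapsto x - Jy$ for $x,y\in\cG$), and for $A\in\cB(\cX_J)$ one sets $\overline{A} := R_\cG A R_\cG$ and ${}^t A := \overline{A^\ast}$. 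The operator in the lemma is really $W_s = U_s\,{}^t U_s = U_s\,\overline{U_s^{\,\ast}}$, and then your own computation closes the argument: from $\Pi_s = U_s\Pi_{\cG^\perp}U_s^\ast$ one gets $I_{\cX_J} - 2\Pi_s = U_s R_\cG U_s^\ast$, so
\[
(I_{\cX_J} - 2\Pi_s)(2\Pi_\cG - I_{\cX_J}) = U_s R_\cG U_s^\ast R_\cG = U_s\,\overline{U_s^{\,\ast}} = W_s,
\]
which is (i). Everything downstream — the equivalence $W_s x = -x \Leftrightarrow \Pi_s x = \Pi_\cG x$, the identification $\ker(W_s + I_{\cX_J}) = (\Upsilon(s)\cap\cG)\oplus J(\Upsilon(s)\cap\cG)$ using $\cL^\perp = J\cL$ for Lagrangian $\cL$, and the dimension count — is fine as you have it. So the fix is just to make the conjugation in the definition of $W_s$ explicit rather than asserting that $U_sU_s^\ast$ ``unwinds'' to the reflection product.
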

We will define the Maslov index of $\{\Upsilon(s)\}_{s\in\Sigma}$ as the spectral flow of the operator family $\{W_s\}_{s\in\Sigma}$ through $-1$, that is, as the net count of the eigenvalues of $W_s$ crossing the point $-1$ counterclockwise on the unit circle minus the number of eigenvalues crossing $-1$ clockwise as the parameter $s$ changes. The exact formulas for the count go back to \cite{P96} and are given in \cite{BF98,F}. Specifically, let us choose a partition $a=s_0<s_1<\dots<s_n=b$ of $\Sigma=[a,b]$ and numbers $\epsilon_j\in(0,\pi)$ so that $\ker\big(W_s-e^{i(\pi\pm\epsilon_j)}\big)=\{0\}$, that is, $e^{i(\pi\pm\epsilon_j)}\in\bbC\setminus\Sp(W_s)$, for $s_{j-1}<s<s_j$ and $j=1,\dots,n$.  This choice is indeed possible because $W_s-e^{i\pi}I_{\cX_J}$ is a Fredholm operator since $W_s-I_{\cX_J}\in\cB_\infty(\cX_J)$ has discrete eigenvalues accumulating only at zero. By the same reason, for each $j=1,\dots,n$ and any $s\in[s_{j-1},s_j]$ there are only finitely many values $\theta\in[0,\epsilon_j]$ for which $e^{i(\pi+\theta)}\in\Sp(W_s)$.
We are ready to define the Maslov index.
\begin{definition}\label{dfnDef3.6}  
Let $\cG$ be a Lagrangian subspace in a real Hilbert space $\cX$ and let $\Upsilon\colon\Sigma=[a,b]\to F\Lambda(\cG)$ be a continuous path in the Fredholm--Lagrangian Grassmannian. The Maslov index $\mas(\Upsilon,\cG)$ is defined by
\begin{equation}
\mas(\Upsilon,\cG)=\sum_{j=1}^n(k(s_j,\epsilon_j)-k(s_{j-1},\epsilon_j)),
\end{equation}
where
$k(s,\epsilon_j)=\sum_{0\leq\theta\le\epsilon_j}\dim_\bbC\ker\big(W_s-e^{i(\pi+\theta)}I_{\cX_J}\big)$ for $s_{j-1}\leq s\leq s_j.$
\end{definition}
We refer to \cite[Theorem 3.6]{F} for a list of basic properties of the Maslov index; in particular, the Maslov index is a homotopy invariant for homotopies keeping the endpoints fixed, and is additive under catenation of paths.

Our next objective is to specialize the definition of the Maslov index for (piecewise) smooth paths. In this case we will compute in Lemma \ref{propcrossform}(ii) below the Maslov index via a crossing form which is more convenient for practical computations, cf. the proofs of Lemmas \ref{prelmon} and \ref{preltmon}. We begin with 
the following elementary fact  needed to define the Maslov crossing form. Although this fact is well known, cf.\ the proof of \cite[Lemma 2.22]{F}, we were unable to locate its proof in the literature and present it for completeness. 
\begin{lemma}\lb{crossex}
Let $\{\Pi_s\}_{s\in\Sigma}$ be a family of orthogonal projections on $\cX$ such that the function $s\mapsto\Pi_s$ is in $C^k(\Sigma;\cB(\cX))$ for some $k\in\{0,1,\dots\}$. Then for any $s_0\in\Sigma$ there exists a neighborhood $\Sigma_0$ in $\Sigma$ containing $s_0$ and a family of operators $\{R_s\}$ from $\ran(\Pi_{s_0})$ into $\ker(\Pi_{s_0})$ such that 
the function $s\mapsto R_s$ is in $C^k(\Sigma_0;\cB(\ran(\Pi_{s_0}),\ker(\Pi_{s_0})))$ and for all $s\in\Sigma_0$, using the decomposition 
$\cX=\ran(\Pi_{s_0})\oplus\ker(\Pi_{s_0})$, we have
\beq\lb{greq}
\ran(\Pi_s)=\gr(R_s)=\{q+R_sq: q\in\ran(\Pi_{s_0})\}.
\enq 
Moreover,
\beq\lb{smallRs}R_s\to0\,\text{ in $\cB(\ran(\Pi_{s_0}),\ker(\Pi_{s_0}))$ as $s\to s_0$}.\enq
\end{lemma}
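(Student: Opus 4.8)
### Proof strategy for Lemma \ref{crossex}

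The plan is to exploit that nearness of orthogonal projections forces nearness of their ranges, and then realize $\ran(\Pi_s)$ as a graph over $\ran(\Pi_{s_0})$ in the fixed splitting $\cX = \ran(\Pi_{s_0}) \oplus \ker(\Pi_{s_0})$. First I would fix $s_0 \in \Sigma$ and use the continuity of $s \mapsto \Pi_s$ at $s_0$ to choose a neighborhood $\Sigma_0$ so that $\|\Pi_s - \Pi_{s_0}\|_{\cB(\cX)} < 1$ for $s \in \Sigma_0$. The key observation is then the following: for $q \in \ran(\Pi_{s_0})$ and $s \in \Sigma_0$, I claim the restriction $\Pi_s|_{\ran(\Pi_{s_0})} \colon \ran(\Pi_{s_0}) \to \ran(\Pi_s)$ is an isomorphism. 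Injectivity follows because if $\Pi_s q = 0$ with $q \in \ran(\Pi_{s_0})$, then $q = \Pi_{s_0}q = (\Pi_{s_0} - \Pi_s)q$, so $\|q\| \le \|\Pi_s - \Pi_{s_0}\|\,\|q\| < \|q\|$ unless $q = 0$; surjectivity (equivalently, that $\Pi_s|_{\ran(\Pi_{s_0})}$ has range all of $\ran(\Pi_s)$) follows symmetrically, since $\Pi_{s_0}|_{\ran(\Pi_s)}$ is injective by the same estimate, and a standard dimension/Fredholm-free argument: the operator $\Pi_s \Pi_{s_0} + (I_\cX - \Pi_s)(I_\cX - \Pi_{s_0})$ is invertible on $\cX$ when $\|\Pi_s - \Pi_{s_0}\| < 1$ (this is exactly the transformation operator $W_s$ of Remark \ref{DalKr} with $Q_s = \Pi_s$), and it maps $\ran(\Pi_{s_0})$ onto $\ran(\Pi_s)$.

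Next I would define $R_s \colon \ran(\Pi_{s_0}) \to \ker(\Pi_{s_0})$ by
\[
R_s q = (I_\cX - \Pi_{s_0})\,\big(\Pi_s|_{\ran(\Pi_{s_0})}\big)^{-1} q, \qquad q \in \ran(\Pi_{s_0}).
\]
To see that $\gr(R_s) = \ran(\Pi_s)$: given $q \in \ran(\Pi_{s_0})$, set $w = (\Pi_s|_{\ran(\Pi_{s_0})})^{-1} q \in \ran(\Pi_s)$; then $w = \Pi_{s_0} w + (I_\cX - \Pi_{s_0})w$, and $\Pi_{s_0}w = \Pi_{s_0}\Pi_s w = \Pi_s w \cdot(\text{corrected})$ — more carefully, one checks $\Pi_{s_0} w = q$ because $\Pi_s w = q$ forces the $\ran(\Pi_{s_0})$-component of $w$ to be $q$ once one knows $\Pi_s|_{\ran(\Pi_{s_0})}$ is the identity composed with a graph correction; the cleanest route is to observe directly that $w - q \in \ker(\Pi_{s_0})$, i.e. $\Pi_{s_0}(w - q) = 0$, which holds since $q = \Pi_s w$ and applying $\Pi_{s_0}$ and using $\Pi_{s_0}q = q$, $\Pi_{s_0}\Pi_s w = (\text{needs care})$. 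I would instead argue the inclusion $\gr(R_s) \subseteq \ran(\Pi_s)$ by showing $q + R_s q = w \in \ran(\Pi_s)$ directly from the definition, and the reverse inclusion by a dimension-free argument: both are graphs over $\ran(\Pi_{s_0})$ (the projection $\Pi_{s_0}$ restricted to $\ran(\Pi_s)$ is injective, as shown above), and they have the same ``base'' component map, hence coincide. This is the step I expect to require the most bookkeeping, though it is entirely elementary.

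For the regularity claim, I would express $(\Pi_s|_{\ran(\Pi_{s_0})})^{-1}$ through the globally defined operator $W_s = \Pi_s \Pi_{s_0} + (I_\cX - \Pi_s)(I_\cX - \Pi_{s_0})$. As in Remark \ref{DalKr}, $s \mapsto W_s$ is in $C^k(\Sigma_0; \cB(\cX))$, $W_s$ is boundedly invertible on $\Sigma_0$ (shrinking $\Sigma_0$ so that $\|W_s - I_\cX\| \le 1/2$), so $s \mapsto W_s^{-1}$ is also $C^k$; and $W_s$ maps $\ran(\Pi_{s_0})$ isomorphically onto $\ran(\Pi_s)$ with $\Pi_s W_s = W_s \Pi_{s_0}$. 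Then
\[
R_s q = (I_\cX - \Pi_{s_0})\, W_s\, q, \qquad q \in \ran(\Pi_{s_0}),
\]
which is manifestly $C^k$ in $s$ as a map into $\cB(\ran(\Pi_{s_0}), \ker(\Pi_{s_0}))$, because $(I_\cX - \Pi_{s_0})$ and $\Pi_{s_0}$ are fixed bounded operators and $s \mapsto W_s$ is $C^k$. (One checks that this formula agrees with the previous one: $W_s q \in \ran(\Pi_s)$ and its $\ran(\Pi_{s_0})$-component is $\Pi_{s_0} W_s q = W_s \Pi_{s_0} q$... again the identification needs the graph-uniqueness from the previous paragraph, but the upshot is that $(I_\cX-\Pi_{s_0})W_s q$ is the correct graph correction.) Finally, \eqref{smallRs} follows immediately: $W_{s_0} = I_\cX$, so $R_{s_0} = (I_\cX - \Pi_{s_0})\Pi_{s_0}|_{\ran(\Pi_{s_0})} = 0$, and by the $C^0$ (indeed $C^k$) continuity just established, $R_s \to R_{s_0} = 0$ in $\cB(\ran(\Pi_{s_0}), \ker(\Pi_{s_0}))$ as $s \to s_0$.
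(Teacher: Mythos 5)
Your proposal follows the paper's general strategy (the transformation operators $W_s$ from Remark \ref{DalKr}, graph representation over $\ran(\Pi_{s_0})$), but the explicit formula you ultimately give for $R_s$ is incorrect, and the error traces to a misuse of the intertwining relation. You write $\Pi_{s_0} W_s q = W_s \Pi_{s_0} q$; this is false. What $W_s$ satisfies is $\Pi_s W_s = W_s \Pi_{s_0}$, so $\Pi_{s_0} W_s q$ is \emph{not} $q$ in general. Consequently the $\ran(\Pi_{s_0})$-component of $W_s q$ is not $q$, and your formula $R_s = (I_\cX - \Pi_{s_0}) W_s |_{\ran(\Pi_{s_0})}$ does not satisfy $\gr(R_s) = \ran(\Pi_s)$. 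A two-dimensional example makes this concrete: with $\Pi_{s_0}$ the orthogonal projection onto the $x$-axis and $\Pi_s$ the orthogonal projection onto the line through the origin at angle $\theta$, the correct graph slope is $\tan\theta$, whereas your formula produces $\cos\theta \sin\theta$. (Your first expression $R_s q = (I_\cX - \Pi_{s_0})(\Pi_s|_{\ran(\Pi_{s_0})})^{-1} q$ has a separate problem: the inverse of $\Pi_s|_{\ran(\Pi_{s_0})} \colon \ran(\Pi_{s_0}) \to \ran(\Pi_s)$ takes arguments in $\ran(\Pi_s)$, not $\ran(\Pi_{s_0})$, so the expression is ill-defined as written; the map you actually want to invert is $\Pi_{s_0}|_{\ran(\Pi_s)} \colon \ran(\Pi_s) \to \ran(\Pi_{s_0})$.)

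The fix, which is exactly what the paper does, is to insert a normalizing factor: the element of $\ran(\Pi_s)$ whose $\ran(\Pi_{s_0})$-component equals $q$ is $W_s (\Pi_{s_0} W_s \Pi_{s_0})^{-1} q$, not $W_s q$, so
\[
R_s = (I_\cX - \Pi_{s_0})\, W_s\, (\Pi_{s_0} W_s \Pi_{s_0})^{-1}\big|_{\ran(\Pi_{s_0})}.
\]
The isomorphism in your first paragraph (that $\Pi_{s_0}\Pi_s \colon \ran(\Pi_s) \to \ran(\Pi_{s_0})$ is invertible for $s$ near $s_0$) is precisely what guarantees $\Pi_{s_0} W_s \Pi_{s_0}$ is invertible on $\ran(\Pi_{s_0})$ for $s$ near $s_0$, since $\Pi_{s_0} W_s \Pi_{s_0} \to \Pi_{s_0}$ as $s \to s_0$. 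The $C^k$-regularity and the vanishing of $R_s$ at $s_0$ then go through as you argue, because $s \mapsto (\Pi_{s_0} W_s \Pi_{s_0})^{-1}|_{\ran(\Pi_{s_0})}$ is $C^k$ on a small enough $\Sigma_0$, $W_{s_0} = I_\cX$, and $(\Pi_{s_0} W_{s_0} \Pi_{s_0})^{-1}|_{\ran(\Pi_{s_0})} = I_{\ran(\Pi_{s_0})}$.
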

\begin{proof} Reparametrizing, we may assume that $s_0=0$. 
To begin the proof we first establish that 
\beq\lb{claim61a}
\Pi_0\Pi_s\colon\ran(\Pi_s)\to\ran(\Pi_0)\,\text{ is an isomorphism for all $s\in\Sigma_0$}\enq provided $\Sigma_0$ is small enough. Indeed, 
let us consider the boundedly invertible transformation operators $W_s$ from Remark \ref{DalKr} associated
with the projections $\{\Pi_s\}$ so that  $\Pi_sW_s=W_s\Pi_0$. Then $\Pi_0\Pi_s=\Pi_0W_s\Pi_0W_s^{-1}$ is an isomorphism between $\ran(\Pi_s)$ and $\ran(\Pi_0)$ if and only if $\Pi_0W_s\Pi_0$ is an isomorphism of $\ran(\Pi_0)$. But the latter fact holds because \eqref{modWs} shows that $W_s-I_\cH\to0$ in $\cB(\cH)$ as $s\to0$ and thus $\Pi_0W_s\Pi_0-\Pi_0\to0$ in $\ran(\Pi_0)$ as $s\to0$. This proves \eqref{claim61a} and also shows that $s\mapsto\Pi_0W_s\Pi_0$ is in $C^k(\Sigma_0;\cB(\ran(\Pi_0)))$. 

Let us denote by $(\Pi_0\Pi_s)^{-1}\colon\ran(\Pi_0)\to\ker(\Pi_s)$ the inverse of $\Pi_0\Pi_s\colon \ran(\Pi_s)\to\ran(\Pi_0)$, so that
$(\Pi_0\Pi_s)^{-1}=W_s(\Pi_0W_s\Pi_0)^{-1}$, and define the operator
\beq\lb{dfnRs}R_s=(I_\cX-\Pi_0)(\Pi_0\Pi_s)^{-1}\colon\ran(\Pi_0)\to\ker(\Pi_0).\enq
Since both functions $s\mapsto\Pi_s$ and $s\mapsto W_s$ are smooth, we conclude that $s\mapsto R_s$ is in $C^k(\Sigma_0;\cB(\ran(\Pi_0),\ker(\Pi_0)))$. Aslo, \eqref{smallRs} holds  
because \[R_s=(I_\cH-\Pi_0)W_s(\Pi_0W_s\Pi_0)^{-1}\to(I_\cH-\Pi_0)I_\cH\Pi_0\,\text{ as }\,s\to0.\] To finish the proof it remains to show that $\ran(\Pi_s)=\gr(R_s)$ where
$\gr(R_s)$ is given by the last equality in \eqref{greq}.
Indeed, for $p\in\ran(\Pi_s)$ we let $q=\Pi_0\Pi_sp\in\ran(\Pi_0)$ and $r=(I_\cH-\Pi_0)\Pi_sp\in\ker(\Pi_0)$ so that $p=\Pi_sp=(\Pi_0\Pi_s)^{-1}q$ and $r=(I_\cH-\Pi_0)(\Pi_0\Pi_s)^{-1}q=R_sq$. Then $p=q+r=q+R_sq\in\gr(R_s)$ thus proving $\ran(\Pi_s)\subset\gr(R_s)$. On the other hand, for $q\in\ran(\Pi_0)$ we let $p=(\Pi_0\Pi_s)^{-1}q\in\ran(\Pi_s)$ so that $(I_\cH-\Pi_0)p=R_sq$. Then 
\[q+R_sq=\Pi_0\Pi_sp+(I_\cH-\Pi_0)p=\Pi_0p+(I_\cH-\Pi_0)p=p\]
thus proving $\gr(R_s)\subset\ran(\Pi_s)$ and finishing the proof of  \eqref{greq}.
\end{proof}
We will now define the Maslov crossing form for a smooth path in the Fredholm--Lagrangian Grassmannian. Let $\cG\subset\Lambda(\cX)$ be a Lagrangian subspace in the real Hilbert space $\cX$ equipped with the symplectic form $\omega$. Consider a $C^1$ path $\Upsilon\colon\Sigma\to F\Lambda(\cG)$, that is, a family $\{\Upsilon(s)\}_{s\in\Sigma}$
of Lagrangian subspaces such that the pair $(\cG,\Upsilon(s))$ is Fredholm for each $s\in\Sigma$ and the function $s\mapsto\Pi_s$ is in $C^1(\Sigma;\cB(\cX))$, where $\Pi_s$ denotes the orthogonal projection in $\cX$ onto the subspace $\Upsilon(s)$. Fix any $s_0\in\Sigma$ and use Lemma \ref{crossex} to find a neighborhood $\Sigma_0$ in $\Sigma$ containing $s_0$ and a $C^1$-smooth family of operators $R_s$ acting from $\Upsilon(s_0)=\ran(\Pi_{s_0})$ into $\ker(\Pi_{s_0})$ such that for all $s\in\Sigma_0$, using the decomposition $\cX=\ran(\Pi_{s_0})\oplus\ker(\Pi_{s_0})$, we have
\beq\lb{eqgr}\begin{split}
\Upsilon(s)=\ran(\Pi_s)=\gr(R_s)=\{q+R_sq:
q\in\ran(\Pi_{s_0})\}.\end{split}
\enq
\begin{definition}\lb{dfnMasF}
(i)\, We call $s_0\in\Sigma$ a {\em conjugate time} or {\em crossing} if $\Upsilon(s_0)\cap\cG\neq\{0\}$.

(ii)\, The finite-dimensional, symmetric bilinear form
\beq\lb{dfnMasForm}
{\Mf}_{s_0,\cG}(q,p)=\frac{d}{ds}\omega(q,R_sp)|_{s=s_0}=
\omega(q,\dot{R}(s_0)p)\,\text{ for }\, q,p\in\Upsilon(s_0)\cap\cG
\enq is called the {\em Maslov crossing form at $s_0$}.

(iii)\, The crossing $s_0$ is called {\em regular} if the crossing form $\Mf_{s_0,\cG}$ is nondegenerate;
it is called {\em positive} if the form is positive definite and {\em negative} if the form is negative definite.
\end{definition}
\begin{remark}\lb{rem:propcf}
The crossing form ${\Mf}_{s_0,\cG}$ in Definition \ref{dfnMasF} (ii) is finite dimensional since the pair of subspaces $(\cG,\Upsilon(s_0))$ is Fredholm. The form is symmetric since the subspace $\Upsilon(s)=\ran(\Pi_s)=\gr(R_s)$ is Lagrangian and thus the equality $\omega(q+R_sq,p+R_sp)=0$ holds for all $p,q\in\ran(\Pi_0)$. As any symmetric form, 
${\Mf}_{s_0,\cG}$ can be diagonalized; we will denote by $n_+({\Mf}_{s_0,\cG})$,
respectively, $n_-({\Mf}_{s_0,\cG})$ the number of positive, respectively negative squares of ${\Mf}_{s_0,\cG}$ and by $\sgn({\Mf}_{s_0,\cG})=n_+({\Mf}_{s_0,\cG})-n_-({\Mf}_{s_0,\cG})$ its signature. It can be shown, see e.g. \cite[Proposition 3.26]{F}, that the subspace $\ker(\Pi_{s_0})$ used in \eqref{eqgr} to construct the crossing form can be replaced by any subspace $\widetilde{\Upsilon}(s_0)$ of $\cX$ such that $R_s\in\cB(\Upsilon(s_0),\widetilde{\Upsilon}(s_0))$ and $\cX=\Upsilon(s_0)+\widetilde{\Upsilon}(s_0)$. Here the sum is not necessarily orthogonal, or even direct. The crossing form then does not depend on the choice of the subspace $\widetilde{\Upsilon}(s_0)$.\hfill$\Diamond$\end{remark}
The following properties of the crossing form are taken from \cite[Section 3]{F}.
\begin{lemma}\lb{propcrossform} (i)\, Regular crossings of a $C^1$ path $\Upsilon\colon\Sigma\to F\Lambda(\cG)$  are isolated in $\Sigma$.

(ii)\, If $s_0$ is the only regular crossing in a segment $\Sigma_0=[a_0,b_0]\subset\Sigma$ then the Maslov index $\Mas(\Upsilon|_{\Sigma_0},\cG)$ can be computed as follows:
\beq\lb{MIcomp}
\Mas(\Upsilon|_{\Sigma_0},\cG)=\begin{cases}\sgn({\Mf}_{s_0,\cG})=n_+({\Mf}_{s_0,\cG})-n_-({\Mf}_{s_0,\cG})& \text{ if } s_0\in(a_0,b_0),\\
-n_-({\Mf}_{s_0,\cG})& \text{ if } s_0=a_0,\\
n_+({\Mf}_{s_0,\cG})& \text{ if } s_0=b_0.
\end{cases}
\enq
\end{lemma}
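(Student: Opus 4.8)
The plan is to reduce both assertions to finite‑dimensional linear algebra at a crossing $s_0$, using the graph description $\Upsilon(s)=\gr(R_s)$ from Lemma \ref{crossex}, where $R_s\in\cB(\Upsilon(s_0),\ker(\Pi_{s_0}))$ is $C^1$ in $s$ with $R_{s_0}=0$, so that $\Mf_{s_0,\cG}(q,p)=\omega(q,\dot R_{s_0}p)$ for $q,p\in\cW:=\Upsilon(s_0)\cap\cG$. For (i) the reduction goes through the node operator attached to the pair $(\cG,\Upsilon(s))$; for (ii) it goes through the compression of the unitaries $W_s=(I_{\cX_J}-2\Pi_s)(2\Pi_\cG-I_{\cX_J})$ of Lemma \ref{propWs} to the finite‑dimensional subspace $\ker(W_{s_0}+I_{\cX_J})$.

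For (i), set $d=\dim_\bbR\cW$ and $T_s:=(I_\cX-\Pi_\cG)(I_\cX+R_s)\colon\Upsilon(s_0)\to\cG^\perp$; since $x\in\cG$ iff $(I_\cX-\Pi_\cG)x=0$, we have $\Upsilon(s)\cap\cG\cong\ker T_s$. By the node‑operator characterization of Fredholm pairs (used in the proof of Lemma \ref{lem3433}(ii)) the operator $T_{s_0}=(I_\cX-\Pi_\cG)|_{\Upsilon(s_0)}$ is Fredholm; and from $J\cG=\cG^\perp$ and $J\Upsilon(s_0)=\Upsilon(s_0)^\perp$ (see \cite[Proposition 2.7]{F}, used already in the proof of Lemma \ref{lem3433}) one gets $(\cG+\Upsilon(s_0))^\perp=J\cW$, so $T_{s_0}$ has index $0$, $\ker T_{s_0}=\cW$, and $\cG^\perp=\ran(T_{s_0})\oplus J\cW$, the latter sum being orthogonal because $\langle Jq,(I_\cX-\Pi_\cG)v\rangle_\cX=\omega(q,v)=0$ for $q\in\cW$ and $v\in\Upsilon(s_0)$. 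A Lyapunov--Schmidt reduction about $s_0$ then produces a $C^1$ family $M_s\colon\cW\to J\cW$ with $M_{s_0}=0$, $\ker T_s\cong\ker M_s$, and derivative $\dot M_{s_0}q=P_{J\cW}\dot T_{s_0}q=P_{J\cW}(I_\cX-\Pi_\cG)\dot R_{s_0}q$; a short computation gives $\langle Jq,\dot M_{s_0}p\rangle_\cX=\omega(q,\dot R_{s_0}p)=\Mf_{s_0,\cG}(q,p)$ for $q,p\in\cW$, so, since $J\colon\cW\to J\cW$ is an isometric isomorphism, $\dot M_{s_0}$ is invertible if and only if $\Mf_{s_0,\cG}$ is nondegenerate. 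At a regular crossing $M_s=(s-s_0)(\dot M_{s_0}+o(1))$ is therefore invertible for $0<|s-s_0|$ small, so $\Upsilon(s)\cap\cG=\{0\}$ there and $s_0$ is isolated; this proves (i).

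For (ii), by additivity of $\mas(\cdot,\cG)$ under catenation and its vanishing on crossing‑free subpaths \cite[Theorem 3.6]{F}, it suffices to compute $\mas(\Upsilon|_{\Sigma_\delta},\cG)$ for $\Sigma_\delta=[s_0-\delta,s_0+\delta]\cap\Sigma_0$ with $\delta>0$ small. Kato's perturbation theory yields a $C^1$ family of spectral projections $P_s$ for $W_s$ with $\ran P_{s_0}=\ker(W_{s_0}+I_{\cX_J})$, which by Lemma \ref{propWs}(ii) equals the complex span $\cW\otimes_\bbR\bbC$ inside $\cX_J$. Using $(2\Pi_\cG-I_{\cX_J})q=q$ for $q\in\cW\subset\cG$ and the fact that $\dot\Pi_{s_0}$ is off‑diagonal relative to $\cX=\Upsilon(s_0)\oplus\ker(\Pi_{s_0})$ (because $R_{s_0}=0$, whence $\dot\Pi_{s_0}q=\dot R_{s_0}q\in\Upsilon(s_0)^\perp$), one obtains $\langle\dot W_{s_0}q,q'\rangle_{\cX_J}=-2\langle\dot R_{s_0}q,q'\rangle_\cX+2i\,\omega(\dot R_{s_0}q,q')=-2i\,\Mf_{s_0,\cG}(q,q')$ for $q,q'\in\cW$. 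Hence the self‑adjoint form $i\langle\dot W_{s_0}\cdot,\cdot\rangle_{\cX_J}$ on $\ker(W_{s_0}+I_{\cX_J})$ is the complexification of $2\,\Mf_{s_0,\cG}$, so in a real basis $\{q_j\}$ of $\cW$ diagonalizing $\Mf_{s_0,\cG}$ the eigenvalue branches $e^{i\theta_j(s)}$ of $W_s$ emanating from $-1$ satisfy $\dot\theta_j(s_0)=i\langle\dot W_{s_0}q_j,q_j\rangle_{\cX_J}=2\kappa_j$, where $\kappa_j$ are the eigenvalues of $\Mf_{s_0,\cG}$; thus the crossing is regular precisely when all $\kappa_j\neq0$. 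Evaluating the spectral‑flow count of Definition \ref{dfnDef3.6}: each branch with $\kappa_j>0$ passes through $-1$ counterclockwise (contributing $+1$) and each with $\kappa_j<0$ clockwise (contributing $-1$), so for $s_0$ interior one gets $n_+(\Mf_{s_0,\cG})-n_-(\Mf_{s_0,\cG})=\sgn(\Mf_{s_0,\cG})$; for $s_0=a_0$ only the $n_-(\Mf_{s_0,\cG})$ clockwise branches leave the arc $\{e^{i(\pi+\theta)}:0\le\theta\le\epsilon\}$ as $s$ increases from $a_0$, giving $-n_-(\Mf_{s_0,\cG})$; and for $s_0=b_0$ only those clockwise branches lie in that arc for $s$ just below $b_0$, giving $n_+(\Mf_{s_0,\cG})$. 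This is exactly \eqref{MIcomp}.

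I expect the main obstacle to be bookkeeping rather than any single deep step: in (i), verifying that the derivative of the Lyapunov--Schmidt reduced map really is the Maslov crossing form, which rests on the symplectic identities $J\cG=\cG^\perp$ and $J\cW\perp\ran T_{s_0}$ together with $R_{s_0}=0$; and in (ii), producing the $C^1$ spectral projection and the transversal eigenvalue branches from Kato's theory, justifying the localization via catenation additivity and vanishing on crossing‑free arcs, and matching the one‑sided conventions built into the counting function $k(s,\epsilon)$ of Definition \ref{dfnDef3.6} with the signs of $\dot\theta_j(s_0)$ in the two endpoint cases.
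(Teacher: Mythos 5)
The paper does not actually prove this lemma; it simply cites \cite[Section 3]{F} for both assertions. Your blind proof is therefore best judged against Furutani's own argument, and it is a correct, essentially faithful reconstruction of it, so comparison with ``the paper's proof'' is moot.

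A few small comments, none of which affect correctness. In part (i), the Lyapunov--Schmidt reduction is set up exactly right: the observation that $(\cG+\Upsilon(s_0))^\perp=J\cW$ forces $\ind T_{s_0}=0$ and that $\ran T_{s_0}\perp J\cW$ inside $\cG^\perp$ is the key symplectic input, and the computation $\dot M_{s_0}q=P_{J\cW}(I_\cX-\Pi_\cG)\dot R_{s_0}q$ together with $\langle Jq,\Pi_\cG\,\cdot\rangle_\cX=0$ for $q\in\cW$ does produce the crossing form on the nose, with the cross-term derivative of the Schur complement vanishing because the two outer factors are zero at $s_0$. In part (ii), the reduction of $\dot W_{s_0}$ to $-2\dot R_{s_0}$ on $\cW$ uses $(2\Pi_\cG-I)q=q$ and the off-diagonal structure of $\dot\Pi_{s_0}$ (a consequence of $\Pi_s^2=\Pi_s$ and $R_{s_0}=0$), and the identity $\langle\dot W_{s_0}q,q'\rangle_{\cX_J}=-2i\,\Mf_{s_0,\cG}(q,q')$ is the right bridge to the spectral flow count. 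The one point you gloss over is the differentiability of the eigenvalue branches $\theta_j(s)$: for a merely $C^1$ family with a degenerate eigenvalue, individual branches need not be differentiable at $s_0$ unless the compressed derivative $i\dot W_{s_0}|_{\ker(W_{s_0}+I)}$ has simple spectrum. The standard fix, which does not change your conclusion, is to argue with the counting function $k(s,\epsilon)$ directly: nondegeneracy of the compression forces exactly $n_+(\Mf_{s_0,\cG})$ of the $d$ eigenvalues of $W_s$ near $-1$ to lie on the open counterclockwise side $\{e^{i(\pi+\theta)}:0<\theta\le\epsilon\}$ for $0<s-s_0\ll1$ and exactly $n_-(\Mf_{s_0,\cG})$ for $0<s_0-s\ll1$, from which all three cases of \eqref{MIcomp} follow by evaluating $k(\cdot,\epsilon)$ at the two endpoints of the localized interval, exactly as you computed.
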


The crossing form can be used to compute the Maslov index of a $C^1$-smooth path, and hence any piecewise $C^1$-smooth path, by computing each segment individually and summing. In the event that a crossing occurs at an endpoint, its contribution will depend both on the eigenvalues of the crossing form and the endpoint at which it occurs. According to \eqref{MIcomp}, a crossing at the initial endpoint $a_0$ can only contribute nonpositively to the Maslov index, and conversely for a crossing at the final endpoint $b_0$. For instance, in Lemma  \ref{lambdaMon} it is shown that the curve parameterized by $\lambda \in (-\infty,0]$ is negative, so the Maslov index does not change if there is a crossing right at $\lambda=0$. The fact that only negative crossings contribute at $a_0$, and similarly at $b_0$, arises from an essentially arbitrary choice made in the definition of the Maslov index, analogous to the way some authors define the contribution at each endpoint to be one half of the signature, to ensure an overcount does not occur when two segments are added up (see \cite{rs93}).

\subsection{Two abstract perturbation results} In this subsection we present two simple but useful abstract perturbation results needed in the  proof of Proposition \ref{smoothinFLG} below. In particular, they allow one to replace the preimage of a linear subspace under the action of a Fredholm operator of index zero by its preimage under the action of an invertible operator obtained by a finite rank perturbation.
\begin{remark}\label{Ainv}
We will repeatedly use the following elementary fact: if $A\in\cB(\cX)$ is boundedly invertible and $A-I_\cX\in\cB_\infty(\cX)$ then $A^{-1}-I_\cX\in\cB_\infty(\cX)$. This holds since $\cB_\infty(\cX)$ is
an ideal in $\cB(\cX)$.
\hfill$\Diamond$
\end{remark}

\begin{lemma}\label{lemL6}
Let $\{A_s\}_{s\in\Sigma}$, be a family of  operators in $\cB(\cX)$ and $\cL$ be a closed linear subspace of $\cX$ such that the following assumptions hold for each $s\in\Sigma$:
\begin{itemize}\item[(a)] $A_s-I_\cX\in\cB_\infty(\cX)$;
\item[(b)] $\ran(A_s)+\cL=\cX$;
\item[({c})] the function $s\mapsto A_s\in\cB(\cX)$ is in $C^k(\Sigma;\cB(\cX))$ for some $k\in\{0,1,\dots\}$.
\end{itemize}
Then the preimages $\cK_s$ of $\cL$ under the action of $A_s$, that is, the subspaces
\begin{equation}\label{dfK}\cK_s=\big\{u\in\cX: A_su\in\cL\big\}, \, s\in\Sigma,\end{equation}
form a $C^k$-smooth family in the sense of Definition \ref{defcksb}.

Specifically, for each $s_0\in\Sigma$ there is neighborhood $\Sigma_0$ in $\Sigma$ containing $s_0$ and a family of operators $\{B_s\}_{s\in\Sigma_0}$ in $\cB(\cX)$ such that $A_s-B_s$ is of finite rank and the following assertions hold for each $s\in\Sigma_0$:
\begin{itemize}\item[(i)]  $B_s$ is a boundedly invertible operator and $B_s-I_\cX$, $B_s^{-1}-I_\cX\in\cB_\infty(\cX)$;
\item[(ii)] $\cK_s=B_s^{-1}(\cL):=\{B_s^{-1}u: u\in\cL\}$;
\item[({iii})] the function $s\mapsto B_s\in\cB(\cX)$ is in $C^k(\Sigma_0;\cB(\cX))$.
\end{itemize}
Furthermore, if $Q$ is any projection in $\cX$ such that $\ran(Q)=\cL$, then $P_s=B_s^{-1}QB_s$ is a projection in $\cX$ such that $\ran(P_s)=\cK_s$, $s\in\Sigma_0$,  and the function $s\mapsto P_s$ is in $C^k(\Sigma_0;\cB(\cX))$.
\end{lemma}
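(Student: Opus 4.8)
The plan is to reduce the whole lemma to a single observation: if $F\in\cB(\cX)$ has finite rank with $\ran(F)\subseteq\cL$, then the preimage of $\cL$ is \emph{unchanged} by the perturbation $A_s\mapsto A_s+F$, since $A_su\in\cL\iff A_su+Fu\in\cL$. Consequently it suffices, working near a fixed $s_0$, to add one \emph{fixed} finite-rank correction $F$ (independent of $s$) that makes $A_{s_0}+F$ boundedly invertible while keeping $\ran(F)\subseteq\cL$; invertibility then persists on a neighborhood $\Sigma_0$ of $s_0$ by continuity, and all the smoothness assertions become automatic because $F$ does not move with $s$.

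First I would record that each $A_s$ is Fredholm of index zero with closed range, since $A_s-I_\cX\in\cB_\infty(\cX)$ by (a). Fix $s_0\in\Sigma$ and set $n=\dim\ker A_{s_0}=\codim\ran(A_{s_0})<\infty$. By (b) the quotient map $\cX\to\cX/\ran(A_{s_0})$ restricts to a surjection of $\cL$ onto the $n$-dimensional space $\cX/\ran(A_{s_0})$, so we may pick $f_1,\dots,f_n\in\cL$ whose images form a basis there and put $\cM=\operatorname{span}\{f_1,\dots,f_n\}\subseteq\cL$, which gives $\cX=\ran(A_{s_0})\dot{+}\cM$. Choosing an orthonormal basis $e_1,\dots,e_n$ of $\ker A_{s_0}$, define the finite-rank operator $F=\sum_{j=1}^n\langle\,\cdot\,,e_j\rangle_\cX f_j$, so $\ran(F)\subseteq\cM\subseteq\cL$. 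If $(A_{s_0}+F)u=0$, then $A_{s_0}u=-Fu\in\ran(A_{s_0})\cap\cM=\{0\}$, hence $u\in\ker A_{s_0}$ and $Fu=0$; expanding $u$ in the orthonormal basis and using linear independence of the $f_j$ forces $u=0$. Thus $A_{s_0}+F$ is injective and Fredholm of index zero, hence boundedly invertible.

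Next I would set $B_s=A_s+F$ for all $s\in\Sigma$. Then $A_s-B_s=-F$ is finite rank; $s\mapsto B_s$ is $C^k$ by (c) since $F$ is constant; and $B_s-I_\cX=(A_s-I_\cX)+F\in\cB_\infty(\cX)$. Since $B_{s_0}$ is invertible and the invertible operators form an open subset of $\cB(\cX)$, continuity of $s\mapsto B_s$ gives a neighborhood $\Sigma_0$ of $s_0$ in $\Sigma$ on which every $B_s$ is boundedly invertible; by Remark \ref{Ainv} also $B_s^{-1}-I_\cX\in\cB_\infty(\cX)$, which is (i), and (iii) has just been noted. Because $\ran(F)\subseteq\cL$, for $s\in\Sigma_0$ we have $u\in\cK_s\iff A_su\in\cL\iff B_su\in\cL\iff u\in B_s^{-1}(\cL)$, which is (ii). Finally, for any projection $Q$ with $\ran(Q)=\cL$, put $P_s=B_s^{-1}QB_s$; then $P_s^2=B_s^{-1}Q^2B_s=P_s$, so $P_s$ is a bounded projection, and since $B_s$ is a bijection, $\ran(P_s)=B_s^{-1}(\ran Q)=B_s^{-1}(\cL)=\cK_s$. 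The map $s\mapsto P_s$ is $C^k$ on $\Sigma_0$ because $s\mapsto B_s$ is $C^k$, operator inversion is $C^\infty$ on the invertible set, and operator multiplication is bounded bilinear; in particular $\{\cK_s\}_{s\in\Sigma}$ is $C^k$-smooth in the sense of Definition \ref{defcksb}.

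The only step requiring genuine care is the construction of $F$: one must \emph{simultaneously} arrange $\ran(F)\subseteq\cL$ (so that the preimage is preserved) and that $A_{s_0}+F$ is invertible, and it is exactly hypothesis (b)---that $\ran(A_{s_0})$ admits a complement lying inside $\cL$---that makes this possible. Everything else is soft: compact/finite-rank perturbation bookkeeping, openness of invertibility, and smoothness of inversion and multiplication. Note that no uniformity in $s$ of $\dim\ker A_s$ is needed, precisely because $F$ is chosen once at $s_0$ and then frozen.
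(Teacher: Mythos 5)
Your proof is correct and is essentially the same argument as the paper's: both define $B_s=A_s+F$ where $F$ is a fixed finite-rank operator carrying $\ker A_{s_0}$ isomorphically onto a complement $\cM\subset\cL$ of $\ran A_{s_0}$, then invoke openness of the invertible group and stability of the preimage under perturbations with range in $\cL$. The only cosmetic differences are that you produce $\cM$ directly from the quotient map $\cX\to\cX/\ran A_{s_0}$ (the paper routes through a complement $\cN$ of $\ker A_0$ in $\cK_0$ and the space $A_0(\cN)$) and that you use the observation $\ran F\subseteq\cL\Rightarrow A_su\in\cL\iff B_su\in\cL$ in place of the paper's equivalent reformulation $(I-Q)A_s=(I-Q)B_s$.
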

\begin{proof} First, we claim that if  (i) holds then (ii) is equivalent to the assertion
\begin{itemize}\item[(ii')] $\ker\big((I_{\cX}-Q)A_s\big)=\ker\big((I_{\cX}-Q)B_s\big)$.
\end{itemize}
Indeed, since $\cL=\ran(Q)=\ker(I_{\cX}-Q)$ we have $\cK_s=\ker\big((I_{\cX}-Q)A_s\big)$ by \eqref{dfK}. If (ii') holds then
\begin{align*}
\cK_s&=\ker\big((I_{\cX}-Q)A_s\big)=\ker\big((I_{\cX}-Q)B_s\big)\\
&=\ker\big(B_s^{-1}(I_{\cX}-Q)B_s\big)=\ran\big(B_s^{-1}QB_s\big)
=\ran\big(B_s^{-1}Q\big)=B_s^{-1}(\cL),
\end{align*}
yielding (ii). On the other hand, if (ii) holds then
\begin{align*}
\ker\big((I_{\cX}-Q)A_s\big)&=\cK_s=B_s^{-1}(\cL)
=\ran\big(B_s^{-1}Q)\big)=\ran\big(B_s^{-1}QB_s\big)\\&=\ker\big(B_s^{-1}(I_{\cX}-Q)B_s\big)=\ker\big((I_{\cX}-Q)B_s\big),
\end{align*}
yielding (ii') and completing the proof of the claim. In addition,
we have proved the identity $\cK_s=\ran (P_s)=\ran\big(B_s^{-1}QB_s\big)$
needed for  the last statement in the lemma.
Also, if $B_s$ is boundedly invertible  then 
$ B_s-I_\cX\in\cB_\infty(\cX)$ implies $B_s^{-1}-I_\cX\in\cB_\infty(\cX)$ by Remark \ref{Ainv}.

Next, we will construct $B_s$ satisfying (i), (ii'), (iii). Fix any $s_0\in\Sigma$. Re-parametrizing, with no loss of generality we may assume that $s_0=0$. 
Since $A_0$ is a continuous operator, $\cK_0$ is closed.
Since $\ker(A_0)\subset\cK_0$ by \eqref{dfK}, there is a closed subspace $\cN\subset\cK_0$ such that $\cK_0=\ker(A_0)\oplus\cN$.
Since $\cN\subset\big(\ker(A_0)\big)^\bot$, the operator $A_0\colon\cN\to A_0(\cN)=\{A_0u:u\in\cN\}$ is a bijection. In particular, the subspace 
 $A_0(\cN)$ is closed. Since $\cN\subset\cK_0$, we have $A_0(\cN)\subset\cL$ and thus there is a subspace $\cM\subset\cL$ such that $A_0(\cN)\oplus\cM=\cL$. We claim that $\ran(A_0)\cap\cM=\{0\}$. Indeed, $v=A_0u\in\cM\subset\cL$ yields $u\in\cK_0$ and thus $v\in A_0(\cN)$ which implies $v=0$ because $A_0(\cN)\cap\cM=\{0\}$, justifying the claim. Using assumption (b) and $A_0(\cN)\oplus\cM=\cL$ we have
$\cX=\ran(A_0)+\cL=\ran(A_0)\dot{+}\cM$,
and thus $\dim(\cM)=\codim\big(\ran(A_0)\big)=\dim(\ker(A_0))$ since $A_0$ has zero index by assumption (a). Let $S_0$ denote any (finite-dimensional) isomorphism $S_0\colon\ker(A_0)\to\cM$ and let $R_0$ denote the orthogonal projection of $\cX$ onto $\ker(A_0)$. We now introduce the operators $B_s$ by the formula
\begin{equation}\label{dfBs}
B_s=A_s+S_0R_0,\quad s\in\Sigma.
\end{equation} Then assertion (ii') holds because
\[ (I_{\cX}-Q)B_s=(I_{\cX}-Q)A_s+(I_{\cX}-Q)S_0R_0=(I_{\cX}-Q)A_s\]
due to the inclusions $\ran(S_0R_0)\subset\cM\subset\cL=\ran (Q)=\ker(I_{\cX}-Q)$. Clearly (iii) follows from (c) and \eqref{dfBs}. It remains to show that $B_s$ is invertible for all $s$ in a small neighborhood $\Sigma_0$ of $\Sigma$ containing $0$. Since the function $s\mapsto A_s$ is in $C^0(\Sigma;\cB(\cX))$ by (c), the required assertion follows from \eqref{dfBs} as soon as we know that $B_0=A_0+S_0R_0$ is an invertible operator from $\cX=(\ker(A_0))^\bot\oplus\ker(A_0)$ onto $\cX=\ran(A_0)\dot{+}\cM$. But this is indeed the case since the restricted operators \[B_0\big|_{(\ker(A_0))^\bot}=A_0\big|_{(\ker(A_0))^\bot}
\quad\text{and}\quad B_0\big|_{\ker(A_0)}=S_0\]
are isomorphisms onto $\ran(A_0)$ and $\cM$ respectively.
\end{proof}
\begin{lemma}\label{lemAF}
Let $\Sigma=[a,b]\subset\bbR$ be a set of parameters, and let $\{Q_s\}_{s\in\Sigma}$ be a family of projections in $\cX$ such that the function $s\mapsto Q_s$ is in $C^k(\Sigma;\cB(\cX))$ for some $k\in\{0,1,\dots\}$. Let $D_s$ be a family of operators acting from $\ran(Q_s)$ into $\cX$ such that for the family of operators $A_s=D_sQ_s+(I_\cX-Q_s)$ the following assumptions hold:
\begin{itemize}\item[(a)] $A_s-I_\cX\in\cB_\infty(\cX)$;
\item[(b)] the operator $D_s\in\cB(\ran(Q_s),\cX))$ is injective;
\item[({c})] the function $s\mapsto A_s$ is in $C^k(\Sigma;\cB(\cX))$.
\end{itemize}
Then for each $s_0\in\Sigma$ there is neighborhood $\Sigma_0$ in $\Sigma$ containing $s_0$ and a family of operators $\{F_s\}_{s\in\Sigma_0}$ in $\cB(\cX)$ such that $A_s-F_s$ is of finite rank and the following assertions hold for each $s\in\Sigma_0$:
\begin{itemize}
\item[(i)]  $F_s$ is a boundedly invertible operator and $F_s-I_\cX$, $F_s^{-1}-I_\cX\in\cB_\infty(\cX)$;
\item[(ii)] $\ran(F_sQ_s)=\ran(A_sQ_s)$;
\item[(iii)] the function $s\mapsto F_s\in\cB(\cX)$ is in $C^k(\Sigma_0;\cB(\cX))$.
\end{itemize} 
\end{lemma}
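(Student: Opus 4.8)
The plan is to reduce everything, after a harmless reparametrization putting $s_0=0$, to producing a single finite-rank operator at the base point and transporting it along the family by the transformation operators of Remark~\ref{DalKr}. First I would record the two structural facts about $A_s$: for $v\in\ran(Q_s)$ one has $A_sv=D_sQ_sv+(I_\cX-Q_s)v=D_sv$, so $A_s|_{\ran(Q_s)}=D_s$ and $A_sQ_s=D_sQ_s$; hence the subspace in (ii) is simply $\ran(A_sQ_s)=A_s(\ran Q_s)$. The key idea is therefore to look for $F_s$ of the form $F_s=A_s+G_s$ with $G_s$ of finite rank and, crucially, $G_sQ_s=0$: then $F_sQ_s=A_sQ_s$ identically, so (ii) holds for every $s\in\Sigma_0$ for free, and the problem collapses to arranging invertibility and smoothness of $F_s$.

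To obtain such a $G_s$ depending smoothly on $s$, I would set $G_s=G_0W_s^{-1}$, where $\{W_s\}$ are the transformation operators of Remark~\ref{DalKr} for $\{Q_s\}$ (so $W_sQ_0=Q_sW_s$, $W_0=I_\cX$, $s\mapsto W_s$ is $C^k$, and $W_s$ is boundedly invertible on a small $\Sigma_0\ni0$), and $G_0\in\cB(\cX)$ is finite-rank with $\ran(Q_0)\subseteq\ker(G_0)$. Indeed then $G_sQ_s=G_0W_s^{-1}Q_s=G_0Q_0W_s^{-1}=0$, $\ran(G_s)\subseteq\ran(G_0)$ is finite-dimensional, and $s\mapsto G_s$ is $C^k$ because $s\mapsto W_s^{-1}$ is $C^k$. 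So it remains only to construct a finite-rank $G_0$ with $\ran(Q_0)\subseteq\ker(G_0)$ such that $F_0=A_0+G_0$ is boundedly invertible; invertibility of $F_s$ for $s$ near $0$ then follows from continuity of $s\mapsto F_s$, after shrinking $\Sigma_0$.

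The construction of $G_0$ mirrors the proof of Lemma~\ref{lemL6}. Since $A_0=I_\cX+(A_0-I_\cX)$ with $A_0-I_\cX\in\cB_\infty(\cX)$, the operator $A_0$ is Fredholm of index zero, so $\ker(A_0)$ is finite-dimensional, say of dimension $m$, and $\ran(A_0)$ is closed of codimension $m$. Using the injectivity of $D_0$ (assumption (b)) I would first check that $\ker(A_0)\cap\ran(Q_0)=\{0\}$: writing $u=Q_0u+(I_\cX-Q_0)u\in\ker(A_0)$ gives $(I_\cX-Q_0)u=-D_0Q_0u$ with $D_0Q_0u\in\ker(Q_0)$; if moreover $u\in\ran(Q_0)$ then $D_0Q_0u\in\ran(Q_0)\cap\ker(Q_0)=\{0\}$, so $Q_0u=0$ by injectivity of $D_0$, hence $u=0$. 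Consequently $\ker(A_0)\dot{+}\ran(Q_0)$ is closed, so it can be extended to a decomposition $\cX=\ker(A_0)\dot{+}\cY_0$ with $\ran(Q_0)\subseteq\cY_0$. Let $R_0$ be the (bounded) projection onto $\ker(A_0)$ along $\cY_0$, so that $R_0Q_0=0$; choose a complement $\cX=\ran(A_0)\dot{+}\cM_0$ with $\dim\cM_0=m$ and an isomorphism $S_0\colon\ker(A_0)\to\cM_0$, and set $G_0=S_0R_0$. Then $\ker(G_0)\supseteq\ker(R_0)=\cY_0\supseteq\ran(Q_0)$ and $G_0$ has rank $m$. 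That $F_0=A_0+S_0R_0$ is a bijection is the same computation as in the proof of Lemma~\ref{lemL6}: injectivity because $\ran(A_0)\cap\cM_0=\{0\}$, $S_0$ is injective, and $\ker(A_0)\cap\cY_0=\{0\}$; surjectivity because $A_0$ maps $\cY_0$ bijectively onto $\ran(A_0)$ (as $A_0(\cX)=A_0(\cY_0)$ and $\ker(A_0)\cap\cY_0=\{0\}$), so for $\eta=\eta_1+S_0\zeta$ with $\eta_1\in\ran(A_0)$, $\zeta\in\ker(A_0)$, the vector $u=u_1+\zeta$ with $u_1\in\cY_0$, $A_0u_1=\eta_1$, satisfies $F_0u=\eta$. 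A bounded bijection is boundedly invertible.

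Finally I would assemble the conclusions with $F_s=A_s+G_0W_s^{-1}$ on a small enough $\Sigma_0$: the difference $A_s-F_s=-G_0W_s^{-1}$ has finite rank; $F_sQ_s=A_sQ_s$ yields (ii); $F_s$ is boundedly invertible near $0$, and $F_s-I_\cX=(A_s-I_\cX)+G_0W_s^{-1}\in\cB_\infty(\cX)$, hence $F_s^{-1}-I_\cX\in\cB_\infty(\cX)$ by Remark~\ref{Ainv}, giving (i); and $s\mapsto F_s$ is $C^k$ because $s\mapsto A_s$ and $s\mapsto W_s$ (hence $s\mapsto W_s^{-1}$) are, giving (iii). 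I expect the only genuine difficulty to be making (ii) hold for all $s$ near $s_0$ together with smoothness: a naive repair of $A_s$ built from $\ker(A_s)$ fails because $\dim\ker(A_s)$ may jump, so the point is to decouple the ``invertibility correction'' (carried out once, at $s=0$, by $G_0$) from the ``$s$-dependence'' (carried entirely by $W_s$, which is precisely what forces $G_sQ_s=0$ for every $s$).
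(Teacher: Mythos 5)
Your argument is correct, and it does reach the conclusion by a genuinely different mechanism than the paper's. The paper's correction is $S_0R_0(I_\cX-Q_0)(I_\cX-Q_s)$, with $R_0$ the \emph{orthogonal} projection onto $(I_\cX-Q_0)\ker(A_0)$ and $S_0$ a bijection of that space onto $(\ran A_0)^\perp$; the factor $(I_\cX-Q_s)$ on the right is what makes the correction vanish on $\ran(Q_s)$ for every $s$, and the injectivity of $D_0$ is used (equivalently to your version) to show $(I_\cX-Q_0)|_{\ker(A_0)}$ is injective, so that $\dim\bigl((I_\cX-Q_0)\ker(A_0)\bigr)=\dim\ker(A_0)=\operatorname{codim}\ran(A_0)$ and $S_0$ exists. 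You instead build a single finite-rank $G_0=S_0R_0$ with $R_0$ a (generally non-orthogonal) projection onto $\ker(A_0)$ annihilating $\ran(Q_0)$, and transport it by the transformation operators $W_s$; the identity $W_s^{-1}Q_s=Q_0W_s^{-1}$ then hands you $G_sQ_s=0$ for free. Your transversality claim $\ker(A_0)\cap\ran(Q_0)=\{0\}$ is exactly equivalent to the paper's dimension count, though the way you phrase the last step (``$D_0Q_0u\in\ran(Q_0)\cap\ker(Q_0)$'') is slightly garbled: what actually happens is that $u\in\ran(Q_0)$ gives $(I_\cX-Q_0)u=0$, hence $D_0Q_0u=0$, hence $Q_0u=0$ by injectivity of $D_0$, hence $u=0$. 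The trade-off between the two approaches: the paper's corrector is written entirely in terms of $Q_0,Q_s$ and the spectral data of $A_0$, which keeps the formula self-contained but requires a slightly more delicate argument at $s=0$; your version decouples the ``invertibility repair'' (done once, at $s_0$) from the ``$s$-dependence'' (carried entirely by $W_s$), which makes conclusion (ii) transparent and also gives a cleaner surjectivity argument for $F_0$ than relying on the index. Both are fine; yours is arguably more modular and reuses the transformation-operator machinery the paper already deploys elsewhere (e.g., in the proof of Lemma~\ref{crossex} and Proposition~\ref{smoothinFLG}).
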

\begin{proof} Fix any $s_0\in\Sigma$. Reparametrizing, with no loss of generality we may assume that $s_0=0$.
First, we claim that
\beq\label{dKdK}
\dim\big(\ker (A_0)\big)=\dim\big((I_\cX-Q_0)\ker(A_0)\big).
\enq
Indeed, the inequality ``$\ge$'' in \eqref{dKdK} is trivial since if vectors $\{u_j\}\subset\ker(A_0)$ are linearly dependent then the vectors $\{(I_\cX-Q_0)u_j\}$ are linearly dependent. To prove the inequality ``$\le$ in \eqref{dKdK}, let us choose linearly independent vectors $\{u_j\}\subset\ker(A_0)$ and suppose that  the vectors $\{(I_\cX-Q_0)u_j\}$ are linearly dependent so that $\sum c_j(I_\cX-Q_0)u_j=0$. Since $A_0=D_0Q_0+(I_\cX-Q_0)$, we observe that if $u\in\ker(A_0)$ then $(-D_0)Q_0u=(I_\cX-Q_0)u$.  Applying the latter equality to $u=\sum c_ju_j$, and recalling that $D_0$ is injective by assumption (b), we conclude that the vectors $\{Q_0u_j\}$ are also linearly dependent in contradiction with the linear independence of  $u_j=Q_0u_j+(I_\cX-Q_0)u_j$, thus concluding the proof of claim \eqref{dKdK}.

The operator  $A_0$ is Fredholm and has index zero by assumption (a). Using this and claim
\eqref{dKdK} we observe that $\dim\big((\ran(A_0))^\bot\big)=\dim\big((I_\cX-Q_0)\ker(A_0)\big)$. Let $S_0\colon(I_\cX-Q_0)\ker(A_0)\to(\ran(A_0))^\bot$ be any (finite-dimensional) bijection and let $R_0$ be the orthogonal projection of $\cX$ onto $(I_\cX-Q_0)\ker(A_0)$. We now introduce the operators $F_s$ by the formula
\begin{equation}\label{dfFs}
F_s=A_s+S_0R_0(I_\cX-Q_0)(I_\cX-Q_s),\quad s\in\Sigma.
\end{equation}
Clearly $A_s-F_s$ is of finite rank since $S_0$ is of finite rank by definition. Thus $F_s-I_\cX\in\cB_\infty(\cX)$ by assumption (a) for all $s\in\Sigma$. If $F_s$ is boundedly invertible then $F_s-I_\cX\in\cB_\infty(\cX)$ implies $F_s^{-1}-I_\cX\in\cB_\infty(\cX)$ by Remark \ref{Ainv}. Assertion (ii) follows since $A_sQ_s=F_sQ_s$ by \eqref{dfFs} while (iii) holds by the assumptions on $Q_s$ and ({c}). 
It remains to show that $F_s$ is invertible for all $s$ in a small neighborhood $\Sigma_0$ of $\Sigma$ containing $0$. Since the function $s\mapsto F_s$ is in $C^0(\Sigma;\cB(\cX))$, the required assertion follows as soon as we know that
$F_0$ is invertible.

Since $F_0-I_\cX\in\cB_\infty(\cX)$,  the invertibility of $F_0$ follows from $\ker(F_0)=\{0\}$. To begin the proof of the latter assertion, we choose $u\in\ker(F_0)$. Then by \eqref{dfFs} at $s=0$ the vector $A_0u=-S_0R_0(I_\cX-Q_0)u$ belongs to both subspaces $\ran(A_0)$ and $\ran(S_0)=(\ran(A_0))^\bot$ and therefore is the zero vector. Using $A_0u=0$ and $A_0=D_0Q_0+(I_\cX-Q_0)$ we conclude that $(I_\cX-Q_0)u=-D_0Q_0u$. Since $S_0$ is injective, using $S_0R_0(I_\cX-Q_0)u=0$ we conclude that
$R_0(I_\cX-Q_0)u=0$. But $u\in\ker(A_0)$ and thus $(I_\cX-Q_0)u\in(I_\cX-Q_0)\ker(A_0)$. Since $R_0$ projects onto $(I_\cX-Q_0)\ker(A_0)$ we therefore conclude that
$0=R_0(I_\cX-Q_0)u=(I_\cX-Q_0)u=-D_0Q_0u$. Since $D_0$ is injective by assumption (b), this also shows  $Q_0u=0$ and thus $u=0$, as needed. \end{proof}

\section{A symplectic view of the eigenvalue problem}\label{sec4}

In this section we describe the eigenvalue problem \eqref{BVP1}, \eqref{BVP2} in terms of the intersections of a path of Lagrangian subspaces with a fixed subspace $\cG$. The path is formed by transforming the boundary value problems on the shrunken domain $\Om_t$ back to the original domain $\Omega$, and taking boundary traces of the weak solutions to the rescaled eigenvalue equations. We introduce the rescaled differential operators involved in our analysis and study their selfadjointness. We also prove continuity and piecewise smoothness of the path
and show how it can be described via the Dirichlet-to-Neumann and Neumann-to-Dirichlet operators.

\subsection{Rescaling and the related selfadjoint differential operators}\label{sub4.1}
We begin by rescaling \eqref{BVP1} and \eqref{BVP2} to obtain the family of the eigenvalue problems \eqref{tBVP1} and \eqref{tBVP2} parametrized by $t\in(0,1]$. Recalling that $\Om\subset\bbR^d$ and
\[\Omega_t=\{z\in\Omega: z=t' y\,\text{ for }\, t'\in[0,t),\,y\in\dOm\},\] let us consider the following unitary operators:
\begin{align*}
U_t\colon &L^2(\Om_t)\to L^2(\Om),\quad (U_tw)(x)=t^{d/2}w(tx),\,x\in\Om,\\
U_t^\partial\colon &L^2(\dOm_t)\to L^2(\dOm),\quad (U_t^\partial h)(y)=t^{(d-1)/2}h(ty),\,y\in\dOm,\\
U_{1/t}^\partial\colon &L^2(\dOm)\to L^2(\dOm_t),\quad (U_{1/t}^\partial f)(z)=t^{-(d-1)/2}f(t^{-1}z),\,z\in\dOm_t,
\end{align*}
so that $(U_t^\partial)^*=U_{1/t}^\partial$ on $L^2$. These also define bounded operators on the appropriate Sobolev spaces, i.e., $U_t\in\cB(H^1(\Om_t),H^1(\Om))$ and 
$U_t^\partial\in\cB(H^{1/2}(\dOm_t),H^{1/2}(\dOm))$. The boundedness of the operator $U_t^\partial$ follows from the boundedness of $U_t$ and the boundedness of $\gaD$. Also, we define $U_t^\partial\colon H^{-1/2}(\dOm_t)\to H^{-1/2}(\dOm)$ by letting
\begin{equation}\lb{dfnUdt}
\langle U_t^\partial g,\phi\rangle_{1/2}=
{}_{H^{-1/2}(\dOm_t)}\langle  g,U_{1/t}^\partial \phi\rangle_{H^{1/2}(\dOm_t)},
\, \phi\in H^{1/2}(\dOm).
\end{equation}
For a subspace $\cG\subset\cH$ of the boundary space 
$\cH=H^{1/2}(\dOm)\times H^{-1/2}(\dOm)$ we let  $\cG_t\subset\cH_t$ be 
the subspace of  
$\cH_t=H^{1/2}(\dOm_t)\times H^{-1/2}(\dOm_t)$ defined by
\begin{equation}\label{defcGt}
\cG_t=U_{1/t}^\partial(\cG)=\big\{(U_{1/t}^\partial f, U_{1/t}^\partial g):\, (f,g)\in\cG\big\}.
\end{equation}
Recalling the definition of the rescaled trace map $\tr_tu = (\gaD  u,t^{-1}\gaN  u)$ from \eqref{defOmt}, and defining the Dirichlet and Neumann trace maps $\gamma_{{}_{D,\dOm_t}}$ and $\gamma_{{}_{N,\dOm_t}}$ on  $\dOm_t$ as in \eqref{6.1} and \eqref{dfngaN}, we have the following result.
\begin{lemma}\label{lemResc} Assume Hypothesis \ref{h1}(i). Let $\cG$ be a given subspace of the boundary space $\cH$ and let $\cG_t$ be the subspace defined in \eqref{defcGt} for some $t\in(0,1]$.
Then a function $w\in H^1(\Om_t)$ is a weak solution to the boundary
value problem 
\begin{align}\label{BVPomt1}
-\Delta w&+V(x)w=\lambda w,\quad x\in\Omega_t,\\
\tr_{\dOm_t} w&=(\gamma_{{}_{D,\dOm_t}}w,\gamma_{{}_{N,\dOm_t}}w)\in\cG_t,
\label{BVPomt2}
\end{align} 
if and only if $u=U_tw\in H^1(\Om)$ is a weak solution to the boundary value problem 
\begin{align}\label{BVPom1}
-\Delta u&+t^2V(tx)u=t^2\lambda u,\quad x\in\Omega,\\
\tr_t u&=(\gaD u,t^{-1}\gaN u)\in\cG.
\label{BVPom2}\end{align}
\end{lemma}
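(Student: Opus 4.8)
The plan is to perform the change of variables $x=ty$ in the weak formulations of the two boundary value problems. Writing $u=U_tw$, so that $u(x)=t^{d/2}w(tx)$, it is convenient to work with the intermediate function $\tilde w(y):=w(ty)=t^{-d/2}u(y)$ for $y\in\Om$; since the equation in \eqref{BVPom1} is linear and homogeneous and $\cG$ is a linear subspace, $\tilde w$ and $u=t^{d/2}\tilde w$ satisfy the $\Om$-problem simultaneously, so this harmless power of $t$ can be absorbed at the end. Note that $U_t\colon H^1(\Om_t)\to H^1(\Om)$ is a linear bijection (with inverse $U_{1/t}$) and that $\Phi\mapsto\widetilde\Phi$, $\widetilde\Phi(y):=\Phi(ty)$, is a linear bijection of $H^1_0(\Om_t)$ onto $H^1_0(\Om)$.

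\emph{Interior equation.} Using $dx=t^d\,dy$ and $(\nabla_x w)(ty)=t^{-1}\nabla_y\tilde w(y)$ one computes $\int_{\Om_t}\nabla w\cdot\nabla\Phi\,dx=t^{d-2}\int_\Om\nabla\tilde w\cdot\nabla\widetilde\Phi\,dy$ and $\int_{\Om_t}(V-\lambda)w\cdot\Phi\,dx=t^d\int_\Om\big(V(t\cdot)-\lambda\big)\tilde w\cdot\widetilde\Phi\,dy$. Hence $w$ satisfies the weak form of the equation in \eqref{BVPomt1} if and only if, after dividing by $t^{d-2}$, the function $\tilde w$ satisfies $\int_\Om\nabla\tilde w\cdot\nabla\widetilde\Phi\,dy+\int_\Om\big(t^2V(t\cdot)-t^2\lambda\big)\tilde w\cdot\widetilde\Phi\,dy=0$ for all $\widetilde\Phi\in H^1_0(\Om)$, i.e. the weak form of the equation in \eqref{BVPom1}. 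This step also shows $\Delta w=(V-\lambda)w\in L^2(\Om_t)$ and $\Delta u=t^2(V(t\cdot)-\lambda)u\in L^2(\Om)$, so $w\in\dom(\gamma_{{}_{N,\dOm_t}})$ and $u\in\dom(\gaN)$ and the boundary conditions \eqref{BVPomt2}, \eqref{BVPom2} are meaningful.

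\emph{Boundary traces.} The heart of the proof is the identity $\tr_{\dOm_t}w=t^{-1/2}\,U_{1/t}^\partial\big(\tr_t u\big)$ in $\cH_t=H^{1/2}(\dOm_t)\times H^{-1/2}(\dOm_t)$, with $U_{1/t}^\partial$ acting diagonally on the two factors. For the Dirichlet component this follows at once from $\dOm_t=t\,\dOm$ and the formula for $U_{1/t}^\partial$: $(\gamma_{{}_{D,\dOm_t}}w)(ty)=w(ty)=t^{-d/2}u(y)$ while $(U_{1/t}^\partial\gaD u)(ty)=t^{-(d-1)/2}u(y)$, valid first for continuous functions and then on $H^{1/2}$ by boundedness and surjectivity of $\gaD$. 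For the Neumann component I would argue through Green's formula \eqref{wGreen} on $\Om_t$ and on $\Om$, paired against an arbitrary test function, together with the change of variables above, the invariance of the outward unit normal under the homothety $x\mapsto tx$ (so $\nu_{\dOm_t}(ty)=\nu_\dOm(y)$), and the defining relation \eqref{dfnUdt} for $U_t^\partial$ on $H^{-1/2}$; tracking the factors ($(\nabla w)(ty)=t^{-1-d/2}\nabla u(y)$, the surface measure factor $t^{d-1}$, and the $t^{-(d-1)/2}$ in $U_{1/t}^\partial$) again yields the scalar $t^{-1/2}$. The structural point is that the factor $t^{-1}$ in $\tr_t u=(\gaD u,t^{-1}\gaN u)$ is exactly what makes \emph{both} components of $\tr_{\dOm_t}w$ scale by the same power of $t$.

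\emph{Conclusion.} Since $t^{-1/2}\neq0$ and, by \eqref{defcGt}, $U_{1/t}^\partial$ is a linear bijection of $\cH$ onto $\cH_t$ with $\cG_t=U_{1/t}^\partial(\cG)$, the displayed identity gives $\tr_{\dOm_t}w\in\cG_t\iff U_{1/t}^\partial(\tr_t u)\in U_{1/t}^\partial(\cG)\iff\tr_t u\in\cG$; combined with the interior step this is the assertion. I expect the main difficulty to be making the Neumann-component step rigorous: the computation sketched above is classical and pointwise, so it must be routed through Green's formula \eqref{wGreen} and a density argument, with care that the $H^{-1/2}$--$H^{1/2}$ dualities on $\dOm$ and $\dOm_t$ transform consistently with the definition \eqref{dfnUdt} of $U_t^\partial$.
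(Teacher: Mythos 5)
Your proposal follows essentially the same route as the paper: establish the change-of-variables equivalence of the weak interior equations, prove the trace identity (your $\tr_{\dOm_t}w=t^{-1/2}\,U_{1/t}^\partial(\tr_t u)$ is the paper's $\tr_t U_t w = t^{1/2}\,U_t^\partial\,\tr_{\dOm_t}w$ rearranged via $U_{1/t}^\partial=(U_t^\partial)^{-1}$), and conclude from the bijectivity of $U_{1/t}^\partial$ and the definition \eqref{defcGt}. One small remark on the step you flagged as delicate: the paper handles the Neumann component entirely at the level of the $H^{-1/2}$--$H^{1/2}$ duality, applying Green's formula \eqref{wGreen} on $\Om$ and on $\Om_t$ and then the defining relation \eqref{dfnUdt} for $U_t^\partial$, so it never needs the pointwise facts about normals or surface-measure Jacobians that your sketch invokes, nor a separate density argument.
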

\begin{proof} We note that $(U_t)^*=(U_t)^{-1}=U_{1/t}$. We denote by $z$ points of $\Om_t$ and by $w$ functions on $\Om_t$. Applying the chain rule to $w\in H^1(\Om_t)$ yields
\beq\lb{r1}
(\nabla_xU_tw)(x)=t^{d/2}\cdot t(\nabla_zw)(tx)=t(U_t\nabla_zw)(x),\, x\in\Om, z=tx\in\Om_t.
\enq
This implies that the operator $U_t$ is indeed in $\cB(H^1(\Om_t),H^1(\Om))$ because
\[\langle\nabla_xU_tw,\nabla_xU_tw\rangle_{L^2(\Om)}=
t^2\langle U_t\nabla_zw,U_t\nabla_xw\rangle_{L^2(\Om)}=t^2\|\nabla_z w\|_{L^2(\Om_t)}^2,\]
and also that 
\beq\lb{r2}
(\Delta_xU_tw)(x)=t^{d/2}\cdot t^2(\Delta_zw)(tx)=t^2(U_t\Delta_zw)(x),\,x\in\Om.\enq
It follows that $w\in H^1(\Om_t)$ is a weak solution of equation \eqref{BVPomt1} on $\Om_t$ if and only if $u=U_tw\in H^1(\Om)$ is a weak solution of \eqref{BVPom1} on $\Om$. Indeed, to see this we  note that $u\in H^1(\Om)$ is a weak solution of \eqref{BVPom1} provided
\[\langle\nabla_xu,\nabla_x\Phi\rangle_{L^2(\Om)}+
t^2\langle V(tx)u,\Phi\rangle_{L^2(\Om)}=\lambda t^2\langle u,\Phi\rangle_{L^2(\Om)}
\,\text{ for all } \Phi\in H^1_0(\Om).\] Letting $\Phi=U_t\Psi$, $u=U_tw$ and using \eqref{r1} we conclude that the last equation holds if and only if
\[\langle\nabla_zw,\nabla_z\Psi\rangle_{L^2(\Om_t)}+
\langle V(z)w,\Psi\rangle_{L^2(\Om_t)}=\lambda \langle w,\Psi\rangle_{L^2(\Om_t)}
\,\text{ for all } \Psi\in H^1_0(\Om_t),\]
as required. It remains to take care of the boundary conditions in \eqref{BVPomt2} and \eqref{BVPom2}. We claim the following assertions for the boundary trace operators:
\begin{align}\lb{d-rule}
&\text{If $w\in H^1(\Om_t)$ then $\gaD U_t w=t^{1/2}U_t^\partial \gamma_{{}_{D,t}} w$}\\
&\text{If $w\in\dom(\gamma_{{}_{N,t}})$ then $U_tw\in\dom(\gaN)$ and 
$\gaN U_tw=t^{3/2}U_t^\partial  \gamma_{{}_{N,t}} w$}.\lb{n-rule}
\end{align}
Assuming the claim, we conclude the proof of the lemma as follows: For any $w\in\dom(\gamma_{{}_{N,t}})=\dom(\tr_{\dOm_t})$ we have
\begin{align*}
\tr_tU_tw&=(\gaD U_tw,t^{-1}\gaN U_tw)\\&=(t^{1/2}U_t^\partial  \gamma_{{}_{D,t}} w,
t^{1/2}U_t^\partial  \gamma_{{}_{N,t}} w)=t^{1/2}U_t^\partial\tr_{\dOm_t}w,
\end{align*}
and thus $\tr_tU_tw\in\cG$ if and only if $\tr_{\dOm_t}w\in\cG_t$ by \eqref{defcGt}.

It remains to justify the claim. To prove \eqref{d-rule}, it suffices to work with $\gaD^0$ from \eqref{2.4} and $w\in C^0(\overline{\Om})$ for which we have 
\begin{align*}
\gaD^0U_tw&=t^{d/2}w(tx)|_{x\in\dOm}=t^{d/2}w(z)|_{z\in\dOm_t},\\
U_t^\partial\gaD^0w&=t^{(d-1)/2}w(z)|_{z=tx\in\dOm_t}=t^{-1/2}t^{d/2}w(z)|_{z\in\dOm_t}.
\end{align*}
This yields \eqref{d-rule}. To begin the proof of  \eqref{n-rule}, we apply Green's formula \eqref{wGreen} for 
$u=U_tw$ and $\Phi=U_t\Psi$, use \eqref{r1}, \eqref{r2}, the fact that $U_t$ is a unitary operator, 
and then Green's formula again to infer:
\begin{align}
\langle\gaN(U_tw),&\gaD(U_t\Psi)\rangle_{1/2}=
\langle\nabla_x(U_tw),\nabla_x(U_t\Psi)\rangle_{L^2(\Om)}+\langle\Delta(U_tw),U_t\Psi\rangle_{L^2(\Om)}\nonumber\\&=
t^2\langle U_t\nabla_zw,U_t\nabla_z\Psi\rangle_{L^2(\Om)}+
t^2\langle U_t\Delta_zw,U_t\Psi\rangle_{L^2(\Om)}\nonumber\\&=
t^2\langle \nabla_zw,\nabla_z\Psi\rangle_{L^2(\Om_t)}+
t^2\langle \Delta_zw,\Psi\rangle_{L^2(\Om_t)}\nonumber\\&=
t^2{}_{H^{-1/2}(\dOm_t)}\langle\gamma_{{}_{N,t}}w,\gamma_{{}_{D,t}}\Psi\rangle_{H^{1/2}(\dOm_t)}.\lb{dfnUdt1}
\end{align}
Next, we use the definition of $U_t^\partial$ in \eqref{dfnUdt},
\begin{equation*}
\langle U_t^\partial \gamma_{{}_{N,t}} w,\gaD\Phi\rangle_{1/2}=
{}_{H^{-1/2}(\dOm_t)}\langle  \gamma_{{}_{N,t}} w,U_{1/t}^\partial \gaD\Phi\rangle_{H^{1/2}(\dOm_t)}.
\end{equation*}
 Applying \eqref{d-rule} 
we have $U_{1/t}^\partial\gaD\Phi=t^{1/2}\gamma_{{}_{D,t}}U_{1/t}\Phi=t^{1/2}\gamma_{{}_{D,t}}\Psi$ yielding
\[\langle U_t^\partial \gamma_{{}_{N,t}} w,\gaD\Phi\rangle_{1/2}=t^{1/2}{}_{H^{-1/2}(\dOm_t)}\langle  \gamma_{{}_{N,t}} w,\gamma_{{}_{D,t}}\Psi\rangle_{H^{1/2}(\dOm_t)}.\] 
Combined with \eqref{dfnUdt1} this implies  
$\langle\gamma_{{}_{N}} U_tw,\phi\rangle_{1/2}=
t^{3/2}\langle U^\partial_t\gamma_{{}_{N,t}} w,\phi\rangle_{1/2}$
for any $\phi=\gaD\Phi\in H^{1/2}(\dOm)$ and thus \eqref{n-rule} holds.
\end{proof}
We will now define a path $\Upsilon$ in the set of Lagrangian subspaces in $H^{1/2}(\dOm)\times H^{-1/2}(\dOm)$ by taking traces of weak solutions to the rescaled equation \eqref{BVPom1} introduced in Lemma \ref{lemResc}; see \eqref{dfnups} below. 

To begin, let us fix $\tau \in[0,1]$ and $\Lambda>0$ and introduce the parameter sets
\beq\lb{dfnSigmaj}\begin{split}
\Sigma_1&=[-\Lambda,0],\, \Sigma_2=[0,1-\tau ],\\ \Sigma_3&=[1-\tau ,1-\tau +\Lambda],\,
\Sigma_4=[1-\tau +\Lambda, 2(1-\tau )+\Lambda],\, \Sigma=\cup_{j=1}^4\Sigma_j.\end{split}
\enq
Next, we introduce functions $t(\cdot)$, $\lambda(\cdot)$ in a way that the boundary $\Gamma=\cup_{j=1}^4\Gamma_j$ of the square $[-\Lambda,0]\times[\tau ,1]$ is parametrized with $(\lambda(s),t(s))\in\Gamma_j$ when $s\in\Sigma_j$ for $j=1,\dots,4$ (see Figure \ref{fig1}) and $\Gamma$ is oriented counterclockwise:
\beq\lb{dfnlambdat}
\begin{split}
\lambda(s)&=s,\quad t(s)=\tau ,\quad s\in\Sigma_1,\\
\lambda(s)&=0,\quad t(s)=s+\tau ,\quad s\in\Sigma_2,\\
\lambda(s)&= -s+1-\tau ,\quad t(s)= 1,\quad s\in\Sigma_3,\\
\lambda(s)&=-\Lambda,\quad
t(s)=-s+2-\tau +\Lambda,\quad s\in\Sigma_4.
\end{split}
\enq
In what follows, cf. Lemmas \ref{lem:Gamma4} and \ref{lem:Gamma4Dir}, for a fixed $\tau \in[0,1]$ we will choose $\Lambda=\Lambda(\tau )$ and then define $\Sigma_j=\Sigma_j(\tau )$ and $\Gamma_j=\Gamma_j(\tau )$ as in \eqref{dfnSigmaj} and \eqref{dfnlambdat}.

For $\tau \in[0,1]$, using the functions $\lambda(\cdot)$ and $t(\cdot)$ defined in \eqref{dfnSigmaj} and \eqref{dfnlambdat}, we define the following family of operators
$L_s=L_s(\tau )$ acting from $H^1(\Omega)$ into $H^{-1}(\Omega)$,
\beq\lb{dfnLst0}
L_su=-\Delta u+t^2(s)V(t(s)x)u-\lambda(s)t^2(s)u,\quad s\in\Sigma=\Sigma(\tau),\enq
where, as usual, we define the Laplacian in the weak sense and at this stage do not impose any boundary conditions. We stress that $L_s$ (and all other operators in this section) are acting on the real space $H^1(\Om;\bbR^N)$, although one can make the space and the operator complex by a standard procedure, cf., e.g., \cite[Section 5.5.3]{We80}.

For the operator $L_s=L_s(\tau)$ defined in \eqref{dfnLst0} we denote by $\cK_s$ the set of weak solutions to the equation $L_su=0$, that is, for each $s\in\Sigma$ we set
\beq\lb{dfnKS}\begin{split}
\cK_s&=\big\{u\in H^1(\Om): L_su=0\text{ in } H^{-1}(\Om)\big\}\\
&=\big\{u\in H^1(\Om): \langle\nabla u,\nabla\Phi\rangle_{L^2(\Om)}
\\&\hskip1cm+\langle t^2(s)\big(V(t(s)x)-\lambda(s)\big)u,\Phi\rangle_{L^2(\Om)}=0\text{ for all $\Phi\in H^1_0(\Om)$}\big\}.
\end{split}\enq
For any $\tau\in(0,1]$ and $s\in\Sigma(\tau)$ we introduce the rescaled trace map $T_s$ by the formula
\beq\lb{dfnts}
T_su=(\gaD u, (t(s))^{-1}\gaN u),\, u\in\dom(\gaN),\, s\in\Sigma=\Sigma(\tau).
\enq
In particular, if $s\in\Sigma_2$ then $T_s=\tr_t$ with $t=t(s)\in[\tau,1]$ for the rescaled trace defined in \eqref{defOmt}.
Finally, we define the desired path of subspaces in $\cH=H^{1/2}(\dOm)\times H^{-1/2}(\dOm)$ as follows:
\beq\lb{dfnups}
\Upsilon(s)=T_s(\cK_s),\quad s\in\Sigma=\Sigma(\tau).
\enq
In particular, for $s\in\Sigma_2$ and $t=t(s)$, $\Upsilon\big|_{\Sigma_2}\colon t\mapsto\tr_t(\cK_t)$ is the path used in Theorems \ref{tim:Dbased} and \ref{tim:Nbased}.

We will now define operators on $L^2(\Om)$ associated with the differential expression $L_s=L_s(\tau)$ from \eqref{dfnLst0}. 
Given a subspace $\cG$ in $\cH=H^{1/2}(\dOm)\times H^{-1/2}(\dOm)$ we define an operator $L_{s,\cG}(\tau )$ on $L^2(\Om)$ for each $\tau \in[0,1]$ by
\beq\lb{dfnLst01}\begin{split}
L_{s,\cG}(\tau )u&=L_su, \quad s\in\Sigma=\Sigma(\tau),\\
\dom(L_{s,\cG}(\tau ))&=\big\{u\in H^1(\Omega): \Delta u\in L^2(\Omega), (t(s)\gaD u,\gaN u)\in\cG\big\},\end{split}
\enq
where $t(s)$ is defined in \eqref{dfnlambdat}. 
In particular, if $s=0$, then $t(0)=\tau $ and $\lambda(0)=0$, hence
\beq\lb{dfnLst00}\begin{split}
L_{0,\cG}(\tau)u&=L_0(\tau)u,\,
L_0(\tau )u=-\Delta u+\tau ^2V(\tau x)u,\quad \tau \in[0,1],\\
\dom(L_{0,\cG}(\tau ))&=\big\{u\in H^1(\Omega): \Delta u\in L^2(\Omega), (\tau \gaD u,\gaN u)\in\cG\big\}.\end{split}
\enq
If $\tau\in(0,1]$ then $t(s)\neq0$ for all $s\in\Sigma$ and the condition $(t(s)\gaD u,\gaN u)\in\cG$ in \eqref{dfnLst01}, respectively, $(\tau\gaD u,\gaN u)\in\cG$ in \eqref{dfnLst00} is equivalent to the condition $(\gaD u, (t(s))^{-1}\gaN u)\in\cG$, respectively, $(\gaD u,\tau^{-1}\gaN u)\in\cG)$. 
 In particular, if $\tau =1$ and $s=0$ then $\lambda(0)=0$, $t(0)=1$ and $L_{0,\cG}(1)=L_\cG$,
 where $L_\cG$ is the operator defined in \eqref{dfLG} whose Morse index we intend to characterize.
 
 \begin{remark}\lb{sbb}
 Hypothesis \ref{h1} (iii) requires the operators $L_{s,\cG}(\tau)$ to be semibounded from below, uniformly for $s\in\Sigma$. For this to hold it is enough to require uniformity for $s\in\Sigma_2$. To see this, we note first that if $s'\in\Sigma_4$ then $s=-s'+2(1-\tau)+\Lambda\in\Sigma_2$ satisfies $t(s)=t(s')$ and thus $L_s=L_{s'}-\Lambda (t(s))^2$. We conclude that $L_s-L_{s'}$ is a multiple of the identity and thus the uniform boundedness from below for $s\in\Sigma_2$ implies the uniform boundedness for $s'\in\Sigma_4$. Next, if $s'\in\Sigma_1$, then $s=0\in\Sigma_2$ is such that $t(s)=\tau=t(s')$ while if $s\in\Sigma_3$ then $s=1-\tau\in\Sigma_2$ is such that $t(s)=1=t(s')$, and so the lower bound extends uniformly to $\Sigma_1$ and $\Sigma_3$.
 \hfill$\Diamond$\end{remark}

We recall Definition \ref{defDiNeB}, in which the subspace $\cG$ is described as a graph or inverse graph of compact, selfadjoint operators $\Theta, \Theta'$ and note that a sufficient condition for the operator $\Theta'$, respectively $\Theta$, to be compact is $\Theta'\in\cB\big(H^{-1/2}(\dOm),H^{1/2+\varepsilon}(\dOm)\big)$, respectively $\Theta\in\cB\big(H^{1/2-\varepsilon}(\dOm),H^{-1/2}(\dOm)\big)$, for some $\varepsilon>0$.

If $\cG=\gr'(\Theta')$ is Dirichlet-based with $\Theta'\in\cB(H^{-1/2}(\dOm), H^{1/2}(\dOm))$, and $\tau>0$,
then the operator $L_{s,\cG}(\tau)$ 
is given by
\beq\label{LsGD}
\begin{split}
L_{s,\cG}(\tau)u(x)&=-\Delta u(x)+t^2(s)V(t(s)x)u(x)
-\lambda(s)t^2(s)u(x),\, x\in\Om,\\
 \dom(L_{s,\cG}(\tau))&=\big\{u\in H^1(\Omega)\,|\,\Delta u\in L^2(\Omega) \,\text{ and }\\
&\hskip3cm (t(s)\gaD-\Theta'\gaN)u=0\,\,\hbox{in}\,\,H^{1/2}(\dOm) \big\}.
\end{split}
\enq
Similarly, if $\cG=\gr(\Theta)$ is Neumann-based with $\Theta\in\cB(H^{1/2}(\dOm), H^{-1/2}(\dOm))$, and $\tau>0$,
then the operator $L_{s,\cG}(\tau)$ 
is given by
\beq\label{LsGN}\begin{split}
L_{s,\cG}(\tau)u(x)&=-\Delta u(x)+t^2(s)V(t(s)x)u(x)
-\lambda(s)t^2(s)u(x),\, x\in\Om,\\
 \dom(L_{s,\cG}(\tau))&=\big\{u\in H^1(\Omega)\,|\,\Delta u\in L^2(\Omega) \,\text{ and }\\
&\hskip3cm (\gaN-t(s)\Theta\gaD)u=0\,\,\hbox{in}\,\,H^{-1/2}(\dOm) \big\}.
\end{split}\enq
In the Neumann-based case the sesquilinear form associated with the operator $L_{s,\cG}(\tau)$, cf.\
Theorem \ref{th26GM} below, is given by
\begin{align}
\mathfrak{l}_{s,\cG}(\tau)(u,v)&=\langle\nabla u,\nabla v\rangle_{L^2(\Om)}+t^2(s)\langle \big(V(t(s)x)-\lambda(s)\big)u,v\rangle_{L^2(\Om)}\no\\&\hskip2cm
-t(s)\langle\Theta\gaD  u,\gaD   v\rangle_{1/2},
\lb{dfnlGst}\\
\dom(\mathfrak{l}_{s,\cG} (\tau))&=H^1(\Omega)\times H^1(\Omega).\no
\end{align}
If $\cG=\gr(\Theta)$ is Neumann-based and $\tau =0$, then the operator $L_{0,\cG}(0)$ for $s=0$,
\beq\lb{dfnLst00n}\begin{split}
L_{0,\cG}(0)u&=-\Delta u,\\
\dom(L_{0,\cG}(0))&=\big\{u\in H^1(\Omega): \Delta u\in L^2(\Omega), (0,\gaN u)\in\gr(\Theta)\big\},\end{split}
\enq
is just the Neumann Laplacian $-\Delta_N$ equipped with the boundary condition $\gaN u=0$. In particular, $L_{0,\cG}(0)$ is independent of the boundary operator $\Theta$.
 
 We recall our main assumptions on the operators $\Theta, \Theta'$ and the subspace $\cG$ (and therefore on the operators $L_{s,\cG}(\tau)$) summarized in Hypotheses \ref{h1} (ii), (iii).
The following lemma describes the spectrum of $L_{s,\cG}(\tau)$ in terms of the conjugates points on the path  $\{\Upsilon(s)\}_{s\in\Sigma}$ defined in \eqref{dfnups}.

\begin{lemma}\lb{GevL} Assume Hypothesis \ref{h1}. Let $\tau\in(0,1]$, let $L_{s,\cG}(\tau)$ be the family of operators on $L^2(\Om;\bbR^N)$ defined in \eqref{dfnLst01} and let $\Upsilon(s)=T_s(\cK_s)$ be the family of the subspaces in $\cH=H^{1/2}(\dOm)\times H^{-1/2}(\dOm)$ defined in \eqref{dfnups} for $s\in\Sigma=\Sigma(\tau)$. Then for any $s_0\in\Sigma$ we have 
\begin{align}\lb{eq:4.23}
&0\in\Sp(L_{s_0,\cG}(\tau)) \text{ if and only if $
T_{s_0}(\cK_{s_0})\cap\cG\neq\{0\}$,  and } \\
& \dim_\bbR\big(\ker(L_{s_0,\cG}(\tau))\big)=\dim_\bbR\big(T_{s_0}(\cK_{s_0})\cap\cG\big).\label{eqdims}\end{align}
Moreover, 
\begin{align}
0\in\Sp(L_{s_0,\cG}(\tau ))\, &\text{ iff }\, \lambda(s_0)t^2(s_0)=s_0\tau ^2\in\Sp(L_{0,\cG}(\tau ))\,
\text{ for } s_0\in\Sigma_1,\lb{4.22}\\
0\in\Sp(L_{s_0,\cG}(\tau ))\, &\text{ iff }\,  \lambda(s_0)=-s_0+1-\tau \in\Sp(L_\cG)\, \text{for }\, s_0\in\Sigma_3,\lb{4.21}
\end{align}
that is, $s_0\in\Sigma_3$ is a crossing if and only if $\lambda=\lambda(s_0)$ is an eigenvalue of the operator $L_\cG$ defined in \eqref{dfLG}.
\end{lemma}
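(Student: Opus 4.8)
The plan is to identify the kernel of $L_{s_0,\cG}(\tau)$ with the intersection $\Upsilon(s_0)\cap\cG=T_{s_0}(\cK_{s_0})\cap\cG$ via the rescaled trace map, and then to read off \eqref{4.22} and \eqref{4.21} directly from the parametrization \eqref{dfnlambdat}. First I would unravel the definitions. By \eqref{dfnLst01}, a function $u$ lies in $\ker(L_{s_0,\cG}(\tau))$ precisely when $u\in H^1(\Om)$, $\Delta u\in L^2(\Om)$, the boundary condition $(t(s_0)\gaD u,\gaN u)\in\cG$ holds, and $L_{s_0}u=0$ in $L^2(\Om)$; this last condition is the same as $L_{s_0}u=0$ in $H^{-1}(\Om)$, i.e.\ $u\in\cK_{s_0}$ in the sense of \eqref{dfnKS}. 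Conversely, if $u\in\cK_{s_0}$ then the equation $-\Delta u=-t^2(s_0)\big(V(t(s_0)x)-\lambda(s_0)\big)u$ together with the boundedness of the continuous matrix $V$ and $u\in H^1(\Om)\subset L^2(\Om)$ forces $\Delta u\in L^2(\Om)$, so $u\in\cD=\dom(\gaN)$ and the Neumann trace is defined. Since $\tau>0$ we have $t(s_0)\neq0$, so the boundary condition $(t(s_0)\gaD u,\gaN u)\in\cG$ is equivalent to $T_{s_0}u=(\gaD u,(t(s_0))^{-1}\gaN u)\in\cG$, cf.\ \eqref{dfnts}. Hence
\[
\ker\big(L_{s_0,\cG}(\tau)\big)=\big\{u\in\cK_{s_0}:\ T_{s_0}u\in\cG\big\}.
\]

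Next I would invoke Lemma \ref{estbelow}: the weak trace map $\tr=(\gaD,\gaN)$ is bounded below on $\cK_{s_0}$, and since $\tau>0$ the rescaled map $T_{s_0}$ differs from $\tr$ only by the bounded, boundedly invertible diagonal operator $\diag(I,(t(s_0))^{-1})$ on $\cH$; hence $T_{s_0}$ is injective with closed range on $\cK_{s_0}$, i.e.\ $T_{s_0}|_{\cK_{s_0}}\colon\cK_{s_0}\to\Upsilon(s_0)=T_{s_0}(\cK_{s_0})$ is an isomorphism. Its restriction to $\ker(L_{s_0,\cG}(\tau))$ is injective, has image contained in $\Upsilon(s_0)\cap\cG$, and is onto $\Upsilon(s_0)\cap\cG$: any $\phi\in\Upsilon(s_0)\cap\cG$ equals $T_{s_0}u$ for some $u\in\cK_{s_0}$, and then $u\in\ker(L_{s_0,\cG}(\tau))$ by the displayed description. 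This gives the dimension formula \eqref{eqdims}; and since $L_{s_0,\cG}(\tau)$ has compact resolvent by Hypothesis \ref{h1}(iii), the condition $0\in\Sp(L_{s_0,\cG}(\tau))$ is equivalent to $\ker(L_{s_0,\cG}(\tau))\neq\{0\}$, which by \eqref{eqdims} is equivalent to $\Upsilon(s_0)\cap\cG\neq\{0\}$, proving \eqref{eq:4.23}.

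For the last two assertions I would simply inspect \eqref{dfnlambdat} and \eqref{dfnLst0}. If $s_0\in\Sigma_1$ then $t(s_0)=\tau$ and $\lambda(s_0)=s_0$, so the differential expression is $L_{s_0}=L_0(\tau)-s_0\tau^2$ with $L_0(\tau)=-\Delta+\tau^2V(\tau x)$, and the boundary condition $(t(s_0)\gaD u,\gaN u)\in\cG$ coincides with that of $L_{0,\cG}(\tau)$ in \eqref{dfnLst00} because $t(0)=\tau$; hence $L_{s_0,\cG}(\tau)=L_{0,\cG}(\tau)-s_0\tau^2 I$, and $0\in\Sp(L_{s_0,\cG}(\tau))$ iff $s_0\tau^2=\lambda(s_0)t^2(s_0)\in\Sp(L_{0,\cG}(\tau))$, which is \eqref{4.22}. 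If $s_0\in\Sigma_3$ then $t(s_0)=1$ and $\lambda(s_0)=-s_0+1-\tau$, so $L_{s_0}=(-\Delta+V)-\lambda(s_0)$ and the boundary condition is $(\gaD u,\gaN u)\in\cG$, i.e.\ $L_{s_0,\cG}(\tau)=L_\cG-\lambda(s_0)I$ with $L_\cG$ as in \eqref{dfLG}; therefore $0\in\Sp(L_{s_0,\cG}(\tau))$ iff $\lambda(s_0)\in\Sp(L_\cG)$, establishing \eqref{4.21}.

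The argument is essentially bookkeeping: the only genuine analytic input is Lemma \ref{estbelow} (which itself rests on the boundary unique continuation property, Lemma \ref{UCP}), so I do not expect a real obstacle. The one point that requires care is verifying that membership of $u$ in $\cK_{s_0}$ already secures $\Delta u\in L^2(\Om)$ — without this, one could not pass from a weak solution with trace in $\cG$ to an element of $\dom(L_{s_0,\cG}(\tau))$, and the identification of the kernel with $\Upsilon(s_0)\cap\cG$ would break down.
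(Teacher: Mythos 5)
Your proof is correct and follows essentially the same route as the paper: identify $\ker(L_{s_0,\cG}(\tau))$ with $\{u\in\cK_{s_0}:T_{s_0}u\in\cG\}$, use the injectivity of $T_{s_0}|_{\cK_{s_0}}$ (Lemma~\ref{estbelow}) to transport the kernel isomorphically onto $\Upsilon(s_0)\cap\cG$, and read off \eqref{4.22}--\eqref{4.21} from the parametrization \eqref{dfnlambdat}. The only difference is that you spell out two details the paper leaves tacit — that $u\in\cK_{s_0}$ already forces $\Delta u\in L^2(\Om)$, and that the injectivity of the trace on $\cK_{s_0}$ is what underwrites the isomorphism — both of which are correct and are indeed the points one must check.
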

\begin{proof} By Hypothesis \ref{h1}(iii), $0\in\Sp(L_{s_0,\cG}(\tau))$ if and only if 
$\ker\big(L_{s_0,\cG}(\tau)\big)\neq\{0\}$; also, $\dim_\bbR\big(\ker\big(L_{s_0,\cG}(\tau)\big)\big)<\infty$. If $u\in\ker\big(L_{s_0,\cG}(\tau)\big)$, then $u\in\cK_{s_0}$ by \eqref{dfnLst0} and \eqref{dfnKS}, hence $T_{s_0}u\in T_{s_0}(\cK_{s_0})\cap\cG$ by definition \eqref{dfnLst01}. Conversely, if $u\in\cK_{s_0}$ and $T_{s_0}u\in\cG$, then $u\in\ker\big(L_{s_0,\cG}(\tau)\big)$. Thus  $\ker\big(L_{s_0,\cG}(\tau)\big)$ and $T_{s_0}(\cK_{s_0})\cap\cG$ are isomorphic. Assertions \eqref{4.22} and \eqref{4.21} follow from \eqref{dfnlambdat}.
\end{proof}
\begin{remark}\lb{DC4.2}
The operator $L_{s,\cH_D}(\tau)$ equipped the Dirichlet boundary conditions corresponds to the case $\cG=\cH_D$ (equivalently, $\Theta'=0$). If Hypothesis \ref{h1} (i') holds then this operator  satisfies Hypothesis \ref{h1}(iii) and even Hypothesis \ref{h1} (iii'), hence $\dom(L_{s,\cH_D}(\tau))\subseteq H^2(\Om)$, see, e.g., \cite[Theorem 2.10]{GM08}. However, we are not aware of  any general sufficient conditions on the operator $\Theta'\neq0$ that guarantee Hypothesis \ref{h1}(iii).
\hfill$\Diamond$\end{remark}

We now concentrate on the Neumann-based case. In particular, when $\Theta=0$ this covers the case of Neumann boundary conditions, i.e. $\cG=\cH_N$. As we will see, the following hypothesis implies Hypothesis \ref{h1} (iii).

\begin{hypothesis}\lb{hypTHETA}
For the Neumann-based case we assume that
the operator $\Theta$ is associated with a closed, semibounded (and therefore symmetric) bilinear form  $\aT$ on $L^2(\dOm)$ with domain $\dom(\aT)=H^{1/2}(\dOm)\times H^{1/2}(\dOm)$ such that the following two conditions hold:
\begin{enumerate}\item[(a)] The form $\aT$  is $L^2(\dOm)$-semibounded from above by a constant $c_\Theta\in\bbR$ (not necessarily positive), that is,
\[\aT(f,f)\le c_\Theta\|f\|_{L^2(\dOm)}^2\quad\text{ for all }\quad f\in H^{1/2}(\dOm),\]
\item[(b)] The form $\aT$  is $H^{1/2}(\dOm)$-bounded, that is, there is a positive constant $c_{\mathfrak{a}}$ such that
\[|\aT(f,f)|\le c_{\mathfrak{a}}\|f\|_{H^{1/2}(\dOm)}^2\quad\text{ for all }\quad f\in H^{1/2}(\dOm).\]
\end{enumerate}
\hfill$\Diamond$
\end{hypothesis}

Assuming Hypothesis \ref{hypTHETA}, there exists a unique bounded, selfadjoint operator $\Theta\in\cB(H^{1/2}(\dOm), H^{-1/2}(\dOm))$, defined for $f\in H^{1/2}(\dOm)$ by 
\beq\lb{qftheta}
\langle\Theta f,g\rangle_{1/2}=\aT(f,g)\,\text{ for }\, g\in H^{1/2}(\dOm),
\enq so that $\|\Theta\|_{\cB(H^{1/2}(\dOm), H^{-1/2}(\dOm))}\le c_{\mathfrak{a}}$ and the following assertions hold:
\begin{align}\lb{propTheta1}
\langle \Theta f,g\rangle_{1/2}&=
\langle \Theta g,f\rangle_{1/2}\quad\text{for all}\quad f,g\in H^{1/2}(\dOm),\\
\langle \Theta f,f\rangle_{1/2}&\le c_\Theta\|f\|_{L^2(\dOm)}^2
\quad\text{for all}\quad f\in H^{1/2}(\dOm).\lb{propTheta2}
\end{align}
 We note again that the operator $\Theta$ here and throughout is acting between the real Hilbert spaces $H^{1/2}(\dOm; \bbR^N)$ and $H^{-1/2}(\dOm; \bbR^N)$; it can be make complex by a standard procedure, cf. \cite[Section 5.5.3]{We80}, but this will not be needed until Section \ref{sec:mainres}. We note the following result  that can found  in Theorem 2.6 and Corollary 2.7 of \cite{GM08}.
\begin{theorem}\lb{GMthm} Assume Hypothesis \ref{h1} (i) and Hypothesis \ref{hypTHETA}, and consider a potential $V\in L^\infty(\Om)$. Let $\Theta\in\cB(H^{1/2}(\dOm), H^{-1/2}(\dOm))$ be the selfadjoint operator associated by \eqref{qftheta} with the form $\aT$ and let $\cG=\gr(\Theta)$. Let $L_\cG\colon\dom(L_\cG)\to L^2(\Om)$ denote the operator defined on  $L^2(\Om)$ by
\beq\lb{fdLG}L_\cG u=-\Delta u+Vu,\,
\dom(L_\cG)=\big\{u\in H^1(\Om): \Delta u\in L^2(\Om), \tr u\in\cG\big\}.
\enq
Then Hypothesis \ref{h1}(iii) holds, that is, the operator $L_\cG$ is selfadjoint and bounded from below, has compact resolvent (and therefore only discrete spectrum) and satisfies
$\dom\big(|L_\cG|^{1/2}\big)=H^1(\Om)$.
\end{theorem}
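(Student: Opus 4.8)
The plan is to realize $L_\cG$ as the self-adjoint operator associated, via the first representation theorem for sesquilinear forms, with the closed, semibounded form obtained from \eqref{dfnlGst} by setting $t(s)=1$ and $\lambda(s)=0$, namely
\[
\mathfrak{l}_\cG(u,v)=\langle\nabla u,\nabla v\rangle_{L^2(\Om)}+\langle Vu,v\rangle_{L^2(\Om)}-\langle\Theta\gaD u,\gaD v\rangle_{1/2},\quad u,v\in H^1(\Om).
\]
Symmetry of $\mathfrak{l}_\cG$ is immediate from $V(x)^\top=V(x)$ and \eqref{propTheta1}. The key analytic input is that the Dirichlet trace factors as $H^1(\Om)\to H^{1/2}(\dOm)\hookrightarrow L^2(\dOm)$, with the first map bounded and the second compact on a bounded Lipschitz domain; consequently, for every $\eps>0$ there is $C_\eps>0$ with $\|\gaD u\|_{L^2(\dOm)}^2\le\eps\|\nabla u\|_{L^2(\Om)}^2+C_\eps\|u\|_{L^2(\Om)}^2$ for all $u\in H^1(\Om)$.

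First I would establish semiboundedness and closedness of $\mathfrak{l}_\cG$ on $H^1(\Om)$. Using \eqref{propTheta2}, the trace bound above, and $V\in L^\infty(\Om)$, choosing $\eps$ small relative to $\max(|c_\Theta|,1)$ gives
\[
\mathfrak{l}_\cG(u,u)\ge\tfrac12\|\nabla u\|_{L^2(\Om)}^2-c\|u\|_{L^2(\Om)}^2,\quad u\in H^1(\Om),
\]
for some $c>0$; in particular $\mathfrak{l}_\cG$ is bounded from below. Using Hypothesis \ref{hypTHETA}(b), so that $|\langle\Theta\gaD u,\gaD u\rangle_{1/2}|=|\aT(\gaD u,\gaD u)|\le c_{\mathfrak{a}}\|\gaD u\|_{H^{1/2}(\dOm)}^2\le c\|u\|_{H^1(\Om)}^2$ by boundedness of $\gaD$, together with $V\in L^\infty(\Om)$, I would also get the matching upper bound $|\mathfrak{l}_\cG(u,u)|\le c\|u\|_{H^1(\Om)}^2$. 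The two bounds show that the form norm $u\mapsto\big(\mathfrak{l}_\cG(u,u)+(c+1)\|u\|_{L^2(\Om)}^2\big)^{1/2}$ is equivalent to $\|u\|_{H^1(\Om)}$; since $H^1(\Om)$ is complete, $\mathfrak{l}_\cG$ is a closed form with form domain $H^1(\Om)$. The first representation theorem (e.g.\ \cite[Theorem VI.2.1]{Kato}) then yields a unique self-adjoint, bounded-below operator $L_\cG$ in $L^2(\Om)$ with $\dom(L_\cG)\subset H^1(\Om)$, with $\langle L_\cG u,v\rangle_{L^2(\Om)}=\mathfrak{l}_\cG(u,v)$ for $u\in\dom(L_\cG)$ and $v\in H^1(\Om)$, and with $\dom(|L_\cG|^{1/2})$ equal to the form domain $H^1(\Om)$.

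Next I would identify this $L_\cG$ with the operator in \eqref{fdLG}. Given $u\in\dom(L_\cG)$ with $L_\cG u=f$, testing $\mathfrak{l}_\cG(u,v)=\langle f,v\rangle_{L^2(\Om)}$ against $v\in H^1_0(\Om)$ gives $-\Delta u+Vu=f$ in $H^{-1}(\Om)$, hence $\Delta u=Vu-f\in L^2(\Om)$ and $u\in\cD=\dom(\gaN)$. Substituting Green's formula \eqref{wGreen} into $\mathfrak{l}_\cG(u,v)=\langle f,v\rangle_{L^2(\Om)}$ for general $v\in H^1(\Om)$ and cancelling the identity $-\Delta u+Vu=f$ just obtained leaves $\langle\gaN u-\Theta\gaD u,\gaD v\rangle_{1/2}=0$ for all $v\in H^1(\Om)$; since $\gaD\colon H^1(\Om)\to H^{1/2}(\dOm)$ is onto, this forces $\gaN u=\Theta\gaD u$, i.e.\ $\tr u\in\gr(\Theta)=\cG$. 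The reverse inclusion follows by running the same computation backwards. Finally, compact resolvent: since $\dom(|L_\cG|^{1/2})=H^1(\Om)$ embeds compactly into $L^2(\Om)$ by the Rellich--Kondrachov theorem on bounded Lipschitz domains, $(L_\cG-z)^{-1}$ is compact for $z$ in the resolvent set, so the spectrum is purely discrete.

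The main obstacle is the trace estimate with an absorbable $\eps\|\nabla u\|_{L^2(\Om)}^2$ term — equivalently, the compactness of $\gaD\colon H^1(\Om)\to L^2(\dOm)$ on Lipschitz domains — since this is precisely what makes the boundary form a lower-order perturbation of the Dirichlet form, thereby delivering both the semiboundedness and the closedness of $\mathfrak{l}_\cG$; the remaining steps are bookkeeping with Green's formula \eqref{wGreen} and the representation theorem.
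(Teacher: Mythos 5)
Your proof is correct and follows the standard form-theoretic route. One thing worth flagging: the paper does not actually prove Theorem \ref{GMthm}; it simply quotes it from Gesztesy and Mitrea, \cite[Theorem 2.6 and Corollary 2.7]{GM08}. What you have done is reconstruct, essentially from scratch, the argument that the cited reference uses: (i) show that the form $\mathfrak{l}_\cG$ on $H^1(\Om)\times H^1(\Om)$ is symmetric and bounded below, using the Ehrling-type trace estimate $\|\gaD u\|_{L^2(\dOm)}^2\le\eps\|\nabla u\|_{L^2(\Om)}^2+C_\eps\|u\|_{L^2(\Om)}^2$ to absorb the boundary term (this requires Hypothesis \ref{hypTHETA}(a)); (ii) show the form norm is equivalent to $\|\cdot\|_{H^1(\Om)}$ (this requires Hypothesis \ref{hypTHETA}(b)), hence the form is closed; (iii) invoke the representation theorems to get a selfadjoint, bounded-below operator with $\dom(|L_\cG|^{1/2})=H^1(\Om)$; (iv) identify its domain via Green's formula \eqref{wGreen} and surjectivity of $\gaD$; (v) get compact resolvent from Rellich--Kondrachov. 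All of this is sound.

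One small imprecision of bookkeeping: the first representation theorem \cite[Theorem VI.2.1]{Kato} yields the operator and the identity $\langle L_\cG u,v\rangle_{L^2(\Om)}=\mathfrak{l}_\cG(u,v)$; the statement $\dom(|L_\cG|^{1/2})$ equals the form domain is the content of the \emph{second} representation theorem \cite[Theorem VI.2.23]{Kato}. This does not affect the validity of the argument.
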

Applying this result to the operator $t(s)\Theta$ and the potential $V_s$ defined in \eqref{defVsLs} implies the respective assertions regarding the operators $L_{s,\cG}(\tau)$ from \eqref{LsGN} described in Hypothesis \ref{h1}(iii). We emphasize the importance of the fact that $\Theta$ is semibounded from below as indicated in \eqref{propTheta2}; that $\Theta$ is selfadjoint as indicated in \eqref{propTheta1} is not enough to conclude that $L_{s,\cG}(\tau)$ is a semibounded operator. We also stress that our standing assumption $\Theta\in\cB_\infty(H^{1/2}(\dOm), H^{-1/2}(\dOm))$, see Hypothesis \ref{h1}(ii) and Definition \ref{defDiNeB}, is enforced throughout the paper. 

Condition \eqref{propTheta2} holds for the case of Robin boundary conditions when $\Theta$ is the operator of multiplication by a function $\theta\in L^\infty(\dOm)$ composed with the embedding $H^{1/2}(\dOm)\hookrightarrow H^{-1/2}(\dOm)$. In this case $c_\Theta\le c\|\theta\|_{L^\infty}$, since the left-hand side of \eqref{propTheta2} can be written as an integral over $\dOm$.

Next, we will discuss Hypothesis \ref{h1} (iii'). The Morse index of the differential operator  $L_{s,\cG}(\tau)$ from \eqref{LsGN} can be defined as the number of negative eigenvalues (counted with multiplicity) corresponding to either weak or strong solutions to the eigenfunction equation $L_{s,\cG}(\tau)u=\lambda u$. Thus far we have used weak solutions to define the Morse index.
Under some additional condition on $\Theta$ one has the inclusion $\dom(L_{s,\cG}(\tau))\subseteq H^2(\Omega)$, cf.\ Hypothesis \ref{h1} (iii').
In particular, this inclusion  implies that a weak solution to the boundary value problem $L_su=0$, $\gaN u-t(s)\Theta\gaD u=0$ is in fact a strong solution, and thus both notions of the Morse index coincide. We must assume additional smoothness of the domain (as in Hypothesis \ref{h1}(i')) and a condition on the operator $\Theta$ to ensure $\dom(L_{s,\cG}(\tau))\subseteq H^2(\Omega)$.
\begin{hypothesis} \lb{h3}
Assume Hypothesis \ref{h1}(i'), (ii) and Hypothesis \ref{hypTHETA}. In addition, assume that $\Theta\in\mathcal{B}_{\infty}(H^{3/2}(\dOm),H^{1/2}(\dOm))$. 
\end{hypothesis}
 In other words, if Hypothesis \ref{h3} holds, then
$$\Theta\in\mathcal{B}(H^{1/2}(\dOm),H^{-1/2}(\dOm))\cap\mathcal{B}_{\infty}(H^{3/2}(\dOm),H^{1/2}(\dOm))$$ and $\Theta^*=\Theta$ as an operator from $H^{1/2}(\dOm)$ into $H^{-1/2}(\dOm)$. 
A sufficient condition for $\Theta\in\mathcal{B}_{\infty}(H^{3/2}(\dOm),H^{1/2}(\dOm))$ is $\Theta\in\mathcal{B}(H^{3/2-\varepsilon}(\dOm),H^{1/2}(\dOm))$ for some $\varepsilon>0$.
The following result can be found in \cite[Theorem 2.17]{GM08}.
\begin{theorem}\lb{GMthmH2}
Assume Hypothesis \ref{h3}. Then $\dom(L_{s,\cG}(\tau))\subseteq H^2(\Om)$, that is, Hypothesis \ref{h1}(iii') holds.
\end{theorem}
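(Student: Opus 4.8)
The plan is to deduce the statement from \cite[Theorem 2.17]{GM08}, applied to the generalized Robin Laplacian with boundary operator $t(s)\Theta$, after the routine observation that adjoining a bounded multiplication operator (the potential term) does not alter the operator domain. So fix $\tau\in(0,1]$ and $s\in\Sigma=\Sigma(\tau)$, and write $W_s\in\cB(L^2(\Om))$ for multiplication by the continuous, hence bounded, function $x\mapsto t^2(s)V(t(s)x)-\lambda(s)t^2(s)$. Then $L_{s,\cG}(\tau)=-\Delta_{t(s)\Theta}+W_s$ as operators on $L^2(\Om)$, where $-\Delta_{t(s)\Theta}$ denotes the selfadjoint Laplacian with the generalized Robin boundary condition $\gaN u-t(s)\Theta\gaD u=0$ and domain $\{u\in H^1(\Om):\Delta u\in L^2(\Om),\ \gaN u=t(s)\Theta\gaD u\}$. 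Since $W_s$ is bounded on $L^2(\Om)$, the two operators have the same operator domain, so $\dom(L_{s,\cG}(\tau))=\dom(-\Delta_{t(s)\Theta})$, and it suffices to prove $\dom(-\Delta_{t(s)\Theta})\subseteq H^2(\Om)$.

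The next step is to check that the operator $\Theta_s:=t(s)\Theta$ meets the standing hypotheses of \cite[Theorem 2.17]{GM08}. By Hypothesis \ref{h1}(i') the domain $\Om$ has $C^{1,r}$ boundary with $1/2<r<1$, as required. By Hypothesis \ref{hypTHETA}, $\Theta$ is associated with the closed, $L^2(\dOm)$-semibounded, $H^{1/2}(\dOm)$-bounded form $\aT$; since $t(s)\in[\tau,1]\subset(0,\infty)$, the scaled form $t(s)\aT$ retains these properties and is the form associated with $\Theta_s\in\cB(H^{1/2}(\dOm),H^{-1/2}(\dOm))$, which is selfadjoint by \eqref{propTheta1}. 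Finally, Hypothesis \ref{h3} gives $\Theta\in\cB_\infty(H^{3/2}(\dOm),H^{1/2}(\dOm))$, hence so is $\Theta_s$ as a scalar multiple. With these verifications in hand, \cite[Theorem 2.17]{GM08} yields $\dom(-\Delta_{\Theta_s})\subseteq H^2(\Om)$, and therefore $\dom(L_{s,\cG}(\tau))\subseteq H^2(\Om)$. Taking $\tau=1$ and $s=0$ gives in particular $\dom(L_\cG)\subseteq H^2(\Om)$, so Hypothesis \ref{h1}(iii') holds.

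The only genuine work is the bookkeeping of the second paragraph, namely confirming that the quantitative hypotheses of the cited theorem transfer to $t(s)\Theta$, together with the elementary fact that a bounded multiplicative perturbation leaves an operator domain unchanged; the substantive $H^2$-regularity estimate is exactly the statement borrowed from \cite{GM08}. The one point that warrants a little care is that the bootstrap argument underlying \cite[Theorem 2.17]{GM08} relies on the mapping properties of $\Theta$ between the higher-order boundary Sobolev spaces — this is precisely why Hypothesis \ref{h3} (rather than merely Hypothesis \ref{hypTHETA}) must be imposed, and the sufficient condition $\Theta\in\cB(H^{3/2-\varepsilon}(\dOm),H^{1/2}(\dOm))$ noted before the theorem can be used to verify it in concrete cases.
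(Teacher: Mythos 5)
Your proposal is correct and takes the same route as the paper, which simply cites \cite[Theorem 2.17]{GM08} without further comment; you have merely spelled out the routine bookkeeping (splitting off the bounded multiplication by $W_s$, and checking that the scaled operator $t(s)\Theta$ inherits the hypotheses of the cited theorem since $t(s)\in[\tau,1]$ is a positive scalar) that the paper leaves implicit. No gap.
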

As the following example shows, the condition 
$\Theta\in\mathcal{B}_\infty(H^{1/2}(\dOm),H^{-1/2}(\dOm))$ in Hypothesis \ref{h1}(iii) is not enough to guarantee the inclusion $\dom(L_{s,\cG}(\tau))\subseteq H^2(\Om)$.
We also refer to the related discussion in \cite{DJ11}, where the Morse index is defined in terms of the eigenspaces of the relevant operators understood in the strong sense. One of the main claims in \cite{DJ11}, see Section 5, was the {\em equality} of the Morse index of a certain differential operator (with domain in $H^2(\Omega)$) and the Maslov index. 
The next example therefore shows that under the condition $\Theta\in\mathcal{B}_\infty(H^{1/2}(\dOm),H^{-1/2}(\dOm))$ imposed in \cite{DJ11} only the {\em estimate from above} of the dimension in terms of the Maslov index indeed holds (e.g., only the inequality ``$\le$'' in \cite[Equation (5.3)]{DJ11} holds).

\begin{example}\lb{ex:nostr} Here we give an example of a Neumann-based boundary condition for which the sets of weak and strong solutions to the homogeneous boundary value problem do not coincide.
 Let $u\in H^1_\Delta(\Om)$ be a weakly harmonic function such that $\|\nabla u\|_{L^2(\Om)}$ is not zero  and $u$ is not contained in $H^2(\Om)$. We define a rank one operator $\Theta$ from $H^{1/2}(\partial\Omega)$ to $H^{-1/2}(\partial\Omega)$ as follows,
\begin{align*}
&\Theta h:=\frac{\langle\gaN  u,h\rangle_{1/2}}{\langle\gaN  u,\gaD  u\rangle_{1/2}}\gaN   u,\, h\in H^{1/2}(\dOm),
\end{align*}
using Green's formula \eqref{wGreen} to see that $\langle\gaN  u,\gaD  u\rangle_{1/2}=\|\nabla u\|^2_{L^2(\Om)}$ is nonzero.
Then $\Theta\in\mathcal{B}_{\infty}(H^{1/2}(\dOm),H^{-1/2}(\dOm))$ and $\Theta^*=\Theta$.
Moreover, $u$ is a weak solution to the boundary value problem
\begin{align*}
    -\Delta u=0 \,\hbox{ in }\, H^{-1}(\Om),\quad
    \gaN   u-\Theta\gaD u=0  \,\hbox{ in }\, H^{-1/2}(\dOm),
\end{align*}
which is not a strong solution since $u \notin H^2(\Om)$. \hfill$\Diamond$
\end{example}

\subsection{Regularity of the path of boundary traces of weak solutions}
Our next result, Proposition \ref{smoothinFLG}, is the main assertion in this section; it says that the traces $\Upsilon(s)=T_s(\cK_s)$  of the weak solutions to $L_su=0$ form a smooth path in the Fredholm--Lagrangian Grassmannian $F\Lambda(\cG)$ of a given Lagrangian subspace $\cG\subseteq\cH$ which is either Dirichlet- or Neumann-based, cf.\ Definition \ref{defDiNeB}. Here, $\cH=H^{1/2}(\dOm)\times H^{-1/2}(\dOm)$ is the boundary space equipped with the symplectic structure defined in \eqref{dfnomega} and $F\Lambda(\cG)$ is defined in Definition \ref{dfnFLG} (ii). We formulate this proposition for a general $C^k$-smooth family $s\mapsto V_s$ of potentials, but in the sequel will only need it for $V_s$ defined in \eqref{defVsLs}. Assuming $V(\cdot)\in C^0(\overline{\Om})$ as needed in Hypothesis \ref{h1}(iv), the function  $s\mapsto V_s$ from \eqref{defVsLs} is $C^0$ for $s\in\Sigma$ and is $C^1$ for $s\in\Sigma_1\cup\Sigma_3$ since $t(\cdot)\big|_{\Sigma_1\cup\Sigma_3}$ is constant by \eqref{dfnlambdat}.
A stronger assumption $V(\cdot)\in C^k(\overline{\Om})$ will generate a (piecewise) smooth function  $s\mapsto V_s$, for which Proposition \ref{smoothinFLG} gives a stronger result, but this will only be used in the subsequent analysis with $k=1$, cf.\ Lemma \ref{preltmon}. 
We recall Definition \ref{defcksb} of a smooth family of subspaces.

\begin{proposition}\label{smoothinFLG} Let $\Sigma=[a,b]$ be a set of parameters, $t(\cdot)\in C^k(\Sigma;[\tau ,1])$ a given function for some $\tau \in(0,1]$, and $\{V_s\}_{s\in\Sigma}$ a family of potentials such that the function $s\mapsto V_s$ is in $C^k(\Sigma; L^\infty(\Om))$ for some $k\in\{0,1,\dots\}$. For $s\in\Sigma$ and $u\in\dom(\gaN )$ we define the rescaled map $T_s$ by $T_su=\big(\gaD u,(t(s))^{-1}\gaN u\big)$ and let 
\[L_s=-\Delta+V_s,\, \cK_s=\big\{u\in H^1(\Om): L_su=0 \text{ in } H^{-1}(\Om)\big\}.\]
Then the subspaces $\{T_s(\cK_s)\}_{s\in\Sigma}$
form a $C^k$-smooth family in $\cH=H^{1/2}(\dOm)\times H^{-1/2}(\dOm)$. Moreover, if $\cG\in\Lambda(\cH)$ is a Dirichlet- or Neumann-based Lagrangian subspace, then  $T_s(\cK_s)$  belongs to the Fredholm--Lagrangian Grassmannian $F\Lambda(\cG)$ for each $s$.
\end{proposition}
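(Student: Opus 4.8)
The plan is to transport the Birman--Schwinger description of $\cK_s$ from Lemma \ref{lem:propFG} out of $H^1(\Om)$ and into the boundary space $\cH$ by means of the trace map, then apply the abstract perturbation Lemmas \ref{lemL6} and \ref{lemAF} to produce a $C^k$ family of projections, and finally read off the Lagrangian and Fredholm--pair properties from Lemmas \ref{lemSF} and \ref{lem3433}. Throughout I would use the bounded, invertible operator $R_t\colon(f,g)\mapsto(f,t^{-1}g)$ on $\cH$, which satisfies $\|R_t\|,\|R_t^{-1}\|\le\tau^{-1}$ for $t\in[\tau,1]$, has $s\mapsto R_{t(s)}$ of class $C^k$ (since $t(\cdot)\in C^k(\Sigma;[\tau,1])$ and $t\mapsto t^{-1}$ is smooth on $[\tau,1]$), obeys $T_su=R_{t(s)}(\tr u)$, and scales the symplectic form, $\omega(R_t\cdot,R_t\cdot)=t^{-1}\omega(\cdot,\cdot)$. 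In particular $T_s(\cK_s)=R_{t(s)}\bigl(\tr(\cK_s)\bigr)$, so it suffices to prove $C^k$-smoothness of $\{\tr(\cK_s)\}$, the projections onto $T_s(\cK_s)$ being then $R_{t(s)}(\cdot)R_{t(s)}^{-1}$.

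For the smoothness I would argue as follows. Let $G_s=I_{H^1(\Om)}+j_0(-\Delta)^{-1}jV_s$ be the operator \eqref{dfFG} attached to $V_s$ (each $V_s$ symmetric); then $s\mapsto G_s$ is $C^k$ into $\cB(H^1(\Om))$, since the only $s$-dependence is through multiplication by $V_s$, which is $C^k$ from $L^\infty(\Om)$ into $\cB(H^1(\Om),L^2(\Om))$, and $G_s-I_{H^1(\Om)}\in\cB_\infty(H^1(\Om))$, while $\cK_s=\{u:G_su\in H^1_\Delta(\Om)\}$ and $\ran(G_s)+H^1_\Delta(\Om)=H^1(\Om)$ by Lemma \ref{lem:propFG}(ii),(iii). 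Lemma \ref{lemL6} (with $\cX=H^1(\Om)$, $A_s=G_s$, $\cL=H^1_\Delta(\Om)$) then gives, near each $s_0$, a $C^k$ family of boundedly invertible $B_s$ with $B_s-I_{H^1(\Om)},\,B_s^{-1}-I_{H^1(\Om)}\in\cB_\infty(H^1(\Om))$ and $\cK_s=B_s^{-1}\bigl(H^1_\Delta(\Om)\bigr)$. The key point is that, because $\sup_s\|V_s\|_{L^\infty(\Om)}<\infty$ and $\Delta u=V_su$ for $u\in\cK_s$, the graph norm $\|u\|_\cD=\|u\|_{H^1(\Om)}+\|\Delta u\|_{L^2(\Om)}$ is controlled by $c\|u\|_{H^1(\Om)}$ uniformly on $\bigcup_s\cK_s$; combined with the boundedness of $\tr\colon\cD\to\cH$ (Lemma \ref{claim1}) and with Lemma \ref{estbelow} applied to $V_s\equiv0$ (which makes $\tr|_{H^1_\Delta(\Om)}$ an isomorphism onto the closed subspace $\cH_\Delta:=\tr(H^1_\Delta(\Om))=\gr(-N_{-\Delta})$, cf.\ Lemma \ref{lem:propNO}(ii)), this shows that
\[C_s:=\tr\circ B_s^{-1}\circ\bigl(\tr|_{H^1_\Delta(\Om)}\bigr)^{-1}\colon\cH_\Delta\to\cH\]
is a bounded, injective operator with $\ran(C_s)=\tr(\cK_s)$. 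Using $\Delta(B_s^{-1}v)=V_sB_s^{-1}v$ one checks that $s\mapsto B_s^{-1}|_{H^1_\Delta(\Om)}$ is $C^k$ into $\cB(H^1_\Delta(\Om),\cD)$ (both the $H^1$- and the Laplacian components are $C^k$), so $s\mapsto C_s$ is $C^k$ into $\cB(\cH_\Delta,\cH)$; and since $B_s^{-1}-I_{H^1(\Om)}$ is compact and $H^1(\Om)\hookrightarrow L^2(\Om)$ is compact (Rellich), the same computation gives $C_s-\iota_{\cH_\Delta}\in\cB_\infty(\cH_\Delta,\cH)$, where $\iota_{\cH_\Delta}\colon\cH_\Delta\hookrightarrow\cH$. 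Now Lemma \ref{lemAF}, applied with $\cX=\cH$, the constant projection $Q_s=\Pi_\Delta$ onto $\cH_\Delta$, and $D_s=C_s$ (so $A_s=C_s\Pi_\Delta+(I_\cH-\Pi_\Delta)$, with $A_s-I_\cH=(C_s-\iota_{\cH_\Delta})\Pi_\Delta\in\cB_\infty(\cH)$, $D_s$ injective, $s\mapsto A_s$ of class $C^k$), produces a $C^k$ family of boundedly invertible $F_s$ with $F_s-I_\cH\in\cB_\infty(\cH)$ and $\ran(F_s\Pi_\Delta)=\ran(A_s\Pi_\Delta)=\tr(\cK_s)$; hence $\Pi_s:=F_s\Pi_\Delta F_s^{-1}$ is a $C^k$ family of projections with $\ran(\Pi_s)=F_s(\cH_\Delta)=\tr(\cK_s)$, establishing the $C^k$-smoothness (Definition \ref{defcksb}).

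For the Lagrangian--Fredholm statement I would note that $\omega$ vanishes on $\tr(\cK_s)$ (Lemma \ref{lemSF}), that $\cH_\Delta=\gr(-N_{-\Delta})$ is Lagrangian because $N_{-\Delta}$ is self-adjoint for the $H^{1/2}$--$H^{-1/2}$ duality (Lemma \ref{propDNND}$(i_2)$ with $L=-\Delta$, $0$ not a Dirichlet eigenvalue), so its graph is isotropic and maximal, and that $\tr(\cK_s)=F_s(\cH_\Delta)\in\Fr(\cH_\Delta)$ by \eqref{defFr}; Lemma \ref{lem3433}(iii) then gives $\tr(\cK_s)\in\Lambda(\cH)$, and applying $R_{t(s)}$ (an isomorphism scaling $\omega$ by $t(s)^{-1}\ne0$, hence preserving isotropy and maximality) shows $T_s(\cK_s)\in\Lambda(\cH)$. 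Writing $\cL_s:=R_{t(s)}\cH_\Delta=\gr\bigl(-t(s)^{-1}N_{-\Delta}\bigr)$, we have $T_s(\cK_s)=\bigl(R_{t(s)}F_sR_{t(s)}^{-1}\bigr)(\cL_s)$ with $R_{t(s)}F_sR_{t(s)}^{-1}$ invertible and $R_{t(s)}F_sR_{t(s)}^{-1}-I_\cH=R_{t(s)}(F_s-I_\cH)R_{t(s)}^{-1}\in\cB_\infty(\cH)$, so $T_s(\cK_s)\in\Fr(\cL_s)$, equivalently $\cL_s\in\Fr(T_s(\cK_s))$ by Lemma \ref{lem3433}(i). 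Finally, $(\cG,\cL_s)$ is a Fredholm pair: for $\cG=\gr'(\Theta')$ because $N_{-\Delta}\Theta'$ is compact, so $I+t(s)^{-1}N_{-\Delta}\Theta'$ is Fredholm of index zero, and an elementary computation identifies $\cG\cap\cL_s$ with its kernel and shows $\cG+\cL_s$ has finite codimension; for $\cG=\gr(\Theta)$ the same way with $t(s)^{-1}N_{-\Delta}+\Theta$, which is Fredholm because $\Theta$ is compact and $N_{-\Delta}\in\cB(H^{1/2}(\dOm),H^{-1/2}(\dOm))$ is Fredholm (a standard fact, its kernel being the constant functions). Then Lemma \ref{lem3433}(ii) (with $\cM=\cG$, $\cL=\cL_s$, $\cK=T_s(\cK_s)$) yields $\cG\in F(T_s(\cK_s))$, hence $T_s(\cK_s)\in F(\cG)$ by Lemma \ref{lem3433}(i); together with the Lagrangian property, $T_s(\cK_s)\in F\Lambda(\cG)$.

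The step I expect to be the main obstacle is the passage, in the smoothness part, from the $H^1(\Om)$-level structure produced by Lemma \ref{lemL6} to genuine smoothness in $\cH$: the trace map is unbounded on $H^1(\Om)$, and the whole argument rests on the observation that $\Delta u=V_su$ makes the graph norm uniformly equivalent to the $H^1$-norm on $\bigcup_s\cK_s$ (so that $\tr$ acts boundedly there, and, after invoking the Rellich embedding, $C_s-\iota_{\cH_\Delta}$ is compact), together with the verification that these estimates persist under differentiation in $s$ of the operator-valued maps involved.
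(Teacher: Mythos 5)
Your proof is correct and ultimately rests on the same ingredients (Lemmas \ref{lem:propFG}, \ref{lemL6}, \ref{lemAF}, \ref{estbelow}, \ref{lemSF}, \ref{lem3433}), but the middle of the argument takes a cleaner route than the paper's. Your key observation is to factor $T_s=R_{t(s)}\circ\tr$ with $R_t\colon(f,g)\mapsto(f,t^{-1}g)$ a $C^k$ family of boundedly invertible operators on $\cH$, which reduces everything to the fixed trace $\tr$ whose restriction to $H^1_\Delta(\Om)$ has the $s$-independent range $\cH_\Delta=\gr(-N_{-\Delta})$. The paper works with $T_s$ directly, so the range of $T_{s,\Delta}$ varies with $s$; to make $(T_{s,\Delta})^{-1}Q_s$ into a $C^k$ family the authors must introduce the transformation operators $W_s$ of Remark \ref{DalKr} before feeding $A_s=T_sB_s^{-1}(T_{s,\Delta})^{-1}Q_s+(I-Q_s)$ into Lemma \ref{lemAF}. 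Your version uses a constant projection $Q_s=\Pi_\Delta$ in Lemma \ref{lemAF}, so the $W_s$ machinery is not needed, and all $s$-dependence is concentrated in $B_s^{-1}$; the identity $\Delta(B_s^{-1}v)=V_sB_s^{-1}v$ on $H^1_\Delta(\Om)$ then makes both the $C^k$-regularity of $\tr\circ B_s^{-1}|_{H^1_\Delta(\Om)}\in\cB(H^1_\Delta(\Om),\cD)$ and the compactness of $C_s-\iota_{\cH_\Delta}$ transparent (the latter via Rellich). Your Fredholm-pair argument for $(\cG,\cL_s)$ also proceeds by a direct kernel/cokernel computation with $I+t(s)^{-1}N_{-\Delta}\Theta'$ (resp.\ $t(s)^{-1}N_{-\Delta}+\Theta$), while the paper chains through the pairs $(T_s(H^1_\Delta(\Om)),\cH_D)$, $(T_s(H^1_\Delta(\Om)),\cH_N)$ and the memberships $\cH_D,\cH_N\in\Fr(\cG)$ via Lemma \ref{lem3433}(ii); the two routes are equivalent. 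One small writing slip: you justify injectivity of $D_s=C_s$ by citing Lemma \ref{estbelow} for $V\equiv0$, but that only gives injectivity of $\tr|_{H^1_\Delta(\Om)}$; you also need Lemma \ref{estbelow} for the family $\{V_s\}$ to know that $\tr$ is injective on $\cK_s$, which the lemma as stated does cover.
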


Throughout the proof, see in particular assertion (iv) in \eqref{Assiiv}, we will use Lemmas \ref{lem:propFG} and \ref{lemL6} to establish the existence 
 of a family of projections onto $\{\cK_s\}$ which is contained $C^k(\Sigma_0;\cB(H^1(\Om)))$.

\begin{proof} If $u\in\cK_s$, then $-\Delta u=-V_su\in L^2(\Om)$ yields $u\in\dom(\gaN )$, and thus $\cK_s\subset\dom(\gaN )$ shows that $T_s(\cK_s)$ is well defined.
Fix any $s_0\in\Sigma$. Reparametrising, with no loss of generality  we may assume that $s_0=0$. By Remark \ref{locglob}, to establish that $T_s(\cK_s)$ is a smooth family we need to construct  a family of projections $P_s$ in $\cH$ and a neighborhood $\Sigma_0\ni 0$ such that $\ran(P_s)=T_s(\cK_s)$ and the function $s\mapsto P_s$ is in $C^k(\Sigma_0;\cB(\cH))$.

To begin the construction, let us consider the operators $G_s$ on $H^1(\Om)$ defined as in \eqref{dfFG} but with $V$ replaced by $V_s$ so that $G_s-I_{\cH^1(\Om)}\in\cB_\infty(\cH^1(\Om))$. Our assumptions on the function $s\mapsto V_s$ yield that $s\mapsto G_s$ is in $C^k(\Sigma; \cB(H^1(\Om)))$. Thanks to Lemma \ref{lem:propFG} we know that
\beq\label{Ksplus}
\cK_s=\big\{u\in H^1(\Om): G_su\in H^1_\Delta(\Om)\big\},\quad
H^1(\Om)=\ran(G_s)+H^1_\Delta(\Om), \, s\in\Sigma.
\enq
We can now apply Lemma \ref{lemL6} with $\cX=H^1(\Om)$, $A_s=G_s$ and $\cL=H^1_\Delta(\Om)$ (that is, $\cL=\ran(\pi_2)$). As a result, using formula \eqref{dfBs}, we obtain a family of operators $B_s$ which are finite-rank perturbations of $G_s$, and a neighborhood $\Sigma_0$ containing $0$ such that for all $s\in\Sigma_0$ the following assertions hold:
\begin{equation} \label{Assiiv} \begin{split}
{\rm (i)}\,  &\text{ $B_s$ is boundedly invertible, } \\
&\qquad\text{ and
$B_s-I_{H^1(\Om)} $,
$B_s^{-1}-I_{H^1(\Om)}\in\cB_\infty(H^1(\Om))$};\\
{\rm (ii)}\, &\text{ $\cK_s=B_s^{-1}(H^1_\Delta(\Om))$;}\\
{\rm (iii)}\, &\text{ the function $s\mapsto B_s$ is in $C^k(\Sigma;\cB(H^1(\Om)))$};\\
{\rm (iv)}\, &\text{ $B_s^{-1}\pi_1B_s$ is a projection in $H^1(\Om)$ onto $\cK_s$}\\
 &\text{ such that the function $s\mapsto B_s^{-1}\pi_1B_s$ is in $C^k(\Sigma;\cB(H^1(\Om)))$.}
\end{split}
\end{equation}

We will sometimes denote by $T_{s,\Delta}=T_s\big|_{H^1_\Delta(\Om)}$ the restriction of the rescaled trace map $T_s$ to the space $H^1_\Delta(\Om)$ of the harmonic functions. Due to Lemma \ref{estbelow} with $V=0$ we know that $T_{s,\Delta}(H^1_\Delta(\Om))$ is a closed subspace of $\cH=H^{1/2}(\dOm)\times H^{-1/2}(\dOm)$ and that the operator $T_{s,\Delta}\colon H^1_\Delta(\Om)\to T_{s,\Delta}(H^1_\Delta(\Om))$ is a bijection, and thus is an isomorphism. We will denote its inverse by $(T_{s,\Delta})^{-1}\colon T_{s,\Delta}(H^1_\Delta(\Om))\to H^1_\Delta(\Om)$. Also, we recall formula \eqref{wnop}, that is,
\beq\label{TMDNnew}
T_{s,\Delta}(H^1_\Delta(\Om))=\gr(-t(s)^{-1}N_{-\Delta} )
=\big\{(f,-t(s)^{-1}N_{-\Delta}  f): f\in H^{1/2}(\dOm)\big\},
\enq
where $N_{-\Delta} \in\cB(H^{1/2}(\dOm),H^{-1/2}(\dOm))$ is the (weak)  Neumann operator defined in Definition \ref{DNO}. Viewing elements of $\cH=H^{1/2}(\dOm)\times H^{-1/2}(\dOm)$ as $(2\times 1)$ column vectors, we introduce a family $\{Q_s\}$ of projection in $\cH$ with 
\beq\label{RKQ}\begin{split}
\ran(Q_s)&=\gr(-t(s)^{-1}N_{-\Delta} )=T_{s,\Delta}(H^1_\Delta(\Om)),\\
\ker(Q_s)&=\big\{(0,g)^\top: g\in H^{-1/2}(\dOm)\big\}\end{split}\enq
 by the formula
\beq\label{dfQs} Q_s=\begin{bmatrix}I_{H^{1/2}(\dOm)}&0\\-t(s)^{-1}N_{-\Delta} &0_{H^{-1/2}(\dOm)}\end{bmatrix},\, s\in\Sigma.
\enq
The function $s\mapsto Q_s$ is in $C^k(\Sigma;\cB(\cH))$ by our assumption on the function $t(\cdot)$.

We remark that the family of operators $(T_{s,\Delta})^{-1}Q_s\in\cB(\cH,H^1_\Delta(\Om))$ is well defined since  $T_{s,\Delta}(H^1_\Delta(\Om))=\ran(Q_s)$. 
We intend to show that this family is smooth; this is not obvious because the domain
of the operator $(T_{s,\Delta})^{-1}$ changes with $s$. To overcome this difficulty we will use the family of boundedly invertible  transformation operators $W_s\in\cB(\cH)$,   splitting the projections $Q_s$ and $Q_0$ so that $W_sQ_0=Q_sW_s$ as discussed in Remark \ref{DalKr}.
 With these $W_s$, we consider the family of operators $W_s^{-1}T_{s,\Delta}\colon H^1_\Delta(\Om)\to\cH$ and observe that the function $s\mapsto W_s^{-1}T_{s,\Delta}$ is in $C^k(\Sigma_0;\cB(H^1_\Delta(\Om),\cH))$.
Since the operators $T_{s,\Delta}\colon H^1_\Delta(\Om)\to T_{s,\Delta}(H^1_\Delta(\Om))=\ran(Q_s)$ and $W_s\colon \ran(Q_0)\to\ran(Q_s)$ are both isomorphisms between the respective spaces, we conclude that 
$W_s^{-1}T_{s,\Delta}\colon H^1_\Delta(\Om)\to\ran(Q_0)$ is an isomorphism for each $s\in\Sigma_0$ and, moreover, that the function
\beq\label{Ckst}\text{$s\mapsto\big(W_s^{-1}T_{s,\Delta}\big)^{-1}=(T_{s,\Delta})^{-1}W_s$ is in
$C^k(\Sigma_0;\cB(\ran(Q_0),H^1_\Delta(\Om)))$.}\enq
 Since $W_s$ splits $Q_0$ and $Q_s$, that is, $Q_s=W_sQ_0W_s^{-1}$, it follows that
\[(T_{s,\Delta})^{-1}Q_s=(T_{s,\Delta})^{-1}W_sQ_0W_s^{-1}=\big(W_s^{-1}T_{s,\Delta}\big)^{-1}Q_0W_s^{-1}.\]
Combined with \eqref{Ckst}, this shows that the function
\beq\label{CkstT}\text{$s\mapsto(T_{s,\Delta})^{-1}Q_s$ is in
$C^k(\Sigma_0;\cB(\cH,H^1(\Om)))$.}\enq

We introduce the families of operators $A_s\in\cB(\cH)$ and $D_s\in\cB(\ran(Q_s),\cH)$ by
\beq\label{dfAs}
A_s=T_sB_s^{-1}(T_{s,\Delta})^{-1}Q_s+(I_\cH-Q_s),\, D_s=T_sB_s^{-1}(T_{s,\Delta})^{-1},\, s\in\Sigma_0,
\enq
so that $A_s=D_sQ_s+(I_\cH-Q_s)$, and claim that conditions (a), (b) and ({c}) in Lemma \ref{lemAF} hold with $\cX=\cH$
and $\Sigma$ replaced by $\Sigma_0$. Starting the proof of the claim, we remark that the operators $D_s$ (and therefore 
$A_s$) are well defined because $(T_{s,\Delta})^{-1}\in\cB(\ran(Q_s),H^1_\Delta(\Om))$ and $B_s^{-1}(H^1_\Delta(\Om))=\cK_s\subset\dom(T_s)$ by assertion (ii) in \eqref{Assiiv}. Since $T_s(B_s^{-1}-I_{H^1(\Om)})(T_{s,\Delta})^{-1}Q_s\in\cB_\infty(\cH)$ by assertion (i) in \eqref{Assiiv} and  
\begin{align*}
A_s&=T_s\big(I_{H^1(\Om)}+(B_s^{-1}-I_{H^1(\Om)})\big)(T_{s,\Delta})^{-1}Q_s+(I_\cH-Q_s)\\&
=Q_s+T_s(B_s^{-1}-I_{H^1(\Om)})(T_{s,\Delta})^{-1}Q_s+(I_\cH-Q_s),\end{align*}
condition (a) in Lemma \ref{lemAF} holds.
Since $\cK_s=B_s^{-1}(H^1_\Delta(\Om))$ by assertion (ii) in \eqref{Assiiv}, and since $T_s$ restricted on $\cK_s$ is injective by Lemma \ref{estbelow}, $D_s$ in \eqref{dfAs} is a composition of three injective operators, and thus condition (b) in Lemma \ref{lemAF} holds. By our assumptions on the function $t(\cdot)$, the function $s\mapsto T_s$ is in $C^k(\Sigma;
\cB(H^1_\Delta(\Om),\cH))$. Applying assertion (iii) in \eqref{Assiiv} and \eqref{CkstT} yields 
condition ({c}) in Lemma \ref{lemAF}, thus finishing the proof of the claim.

Assertion (ii) in \eqref{Assiiv} yields
\beq\label{TsK}
T_s(\cK_s)=T_sB_s^{-1}(H^1_\Delta(\Om))=T_sB_s^{-1}(T_{s,\Delta})^{-1}T_{s,\Delta}(H^1_\Delta(\Om)), \, s\in\Sigma_0,
\enq
and, using \eqref{dfAs}
and $\ran(Q_s)=T_{s,\Delta}(H^1_\Delta(\Om))$, we conclude
\beq\label{TsAs}
T_s(\cK_s)=\ran(A_sQ_s), \, s\in\Sigma_0.
\enq 
We will now apply Lemma \ref{lemAF}. Using \eqref{TsAs} and the smooth family of the operators $\{F_s\}$
satisfying assertions (i), (ii) and (iii) of Lemma \ref{lemAF} we define the projection $P_s$ in $\cH$ onto 
$T_s(\cK_s)$ so that  
\beq\label{TsKend}
T_s(\cK_s)=\ran(F_sQ_s)=\ran(P_s),\quad P_s=F_sQ_sF_s^{-1},\, s\in\Sigma_0.
\enq
Clearly, the function $s\mapsto P_s$ is in $C^k(\Sigma_0,\cB(\cH))$. 
This proves that the family of subspaces $\{T_s(\cK_s)\}$ is $C^k$-smooth as required.

In addition, the formulas $T_s(\cK_s)=\ran(F_sQ_s)$ and $\ran(Q_s)=T_{s,\Delta}(H^1_\Delta(\Om))$ show that $T_s(\cK_s)=F_s(T_{s,\Delta}(H^1_\Delta(\Om)))$ where $F_s$ is a boundedly  invertible operator such that $F_s-I_\cH\in\cB_\infty(\cH)$. In other words, in terms of Definition \ref{dfnFLG}(i), we have proved that 
\beq\label{TsKsinFG}
T_s(\cK_s)\in\Fr(T_{s,\Delta}(H^1_\Delta(\Om))),\,s\in\Sigma_0.
\enq

It remains to show that $T_s(\cK_s)\in F\Lambda(\cG)$ provided $\cG\in\Lambda(\cH)$ is either Dirichlet- or Neumann-based. 
To begin the proof, we use the subspaces $\cH_D$ and $\cH_N$ 
of $\cH=H^{1/2}(\dOm)\times H^{-1/2}(\dOm)$ from Definition \ref{defDiNeB}.
Our first claim is the inclusions $T_s(H^1_\Delta(\Om))\in F(\cH_D)$ and 
 $T_s(H^1_\Delta(\Om))\in F(\cH_N)$. Recall formula \eqref{TMDNnew} where the Neumann operator $N_{-\Delta} $ is a Fredholm operator of index zero, see, e.g.,
 \cite[Section 7.11]{T96}.
 Since $T_s(H^1_\Delta(\Om))$ is the graph of the operator $-t(s)^{-1}N_{-\Delta}$, $T_s(H^1_\Delta(\Om))\cap\cH_D=\{0\}$ because $H^1_\Delta(\Om)\cap H^1_0(\Om)=\{0\}$.
Also, $T_s(H^1_\Delta(\Om))+\cH_D=\cH$ because 
\[(f,g)=\big(f,-t(s)^{-1}N_{-\Delta}  f)+(0,g+t(s)^{-1}N_{-\Delta}  f\big)\text{ for all } (f,g)\in\cH.\]
Thus, $T_s(H^1_\Delta(\Om))\dot{+}\cH_D=\cH$ and the pair $(T_s(H^1_\Delta(\Om)),\cH_D)$ is Fredholm. On the other hand, formula \eqref{TMDNnew} also shows
 $\dim\big(T_s(H^1_\Delta(\Om))\cap\cH_N\big)=\dim(\ker(N_{-\Delta} ))$ while the relation 
\[T_s(H^1_\Delta(\Om))+\cH_N=\big\{(f,-t(s)^{-1}N_{-\Delta}  f+g): f\in H^{1/2}(\dOm), g\in H^{-1/2}(\dOm)\big\}\]
shows that $\codim\big(T_s(H^1_\Delta(\Om))+\cH_N\big)=\codim(\ran(N_{-\Delta} ))$, thus proving that the pair $(T_s(H^1_\Delta(\Om)),\cH_N)$ is Fredholm since $N_{-\Delta} $ is a Fredholm operator.

Our second claim is that $\cH_D\in\Fr(\cG)$ provided $\cG$ is Dirichlet-based, and
$\cH_N\in\Fr(\cG)$  provided $\cG$ is Neumann-based. Indeed, if $\cG$ is Dirichlet-based then $\cG=\gr'(\Theta')$ for a compact operator $\Theta'\colon H^{-1/2}(\dOm)\to H^{1/2}(\dOm)$ while if $\cG$ is Neumann-based then $\cG=\gr(\Theta)$ for a compact operator $\Theta\colon H^{1/2}(\dOm)\to H^{-1/2}(\dOm)$. Viewing elements of $\cH$ as $(2\times 1)$ column vectors and denoting transposition by $\top$, we infer
\begin{align*}
\cG&=\big\{(\Theta'g,g)^\top: g\in H^{-1/2}(\dOm)\big\}=A_D\big(\big\{(0,g)^\top:g\in H^{-1/2}(\dOm)\big\}\big)=A_D(\cH_D),\\
\cG&=\big\{(f,\Theta f)^\top: f\in H^{1/2}(\dOm)\big\}=A_N\big(\big\{(f,0)^\top:f\in H^{1/2}(\dOm)\big\}\big)=A_N(\cH_N).
\end{align*}
Here, the invertible operators $A_D,A_N\in\cB(\cH)$ are defined
as 
\[A_D=\begin{bmatrix}I_{H^{1/2}}(\dOm)&\Theta'\\0&I_{H^{-1/2}}(\dOm)\end{bmatrix},
\, A_N=\begin{bmatrix}I_{H^{1/2}}(\dOm)&0\\ \Theta&I_{H^{-1/2}}(\dOm)\end{bmatrix}\]
and satisfy the relations $A_D^{\pm 1}-I_\cH \in\cB_\infty(\cH)$, $A_N^{\pm 1}-I_\cH \in\cB_\infty(\cH)$. This proves the inclusions $\cH_D\in\Fr(\cG)$ and $\cH_N\in\Fr(\cG)$.

The two claims just proved and Lemma \ref{lem3433}(ii) show that $T_s(H^1_\Delta(\Om))\in F(\cG)$ provided $\cG$ is either Dirichlet- or Neumann-based.
But now \eqref{TsKsinFG} and Lemma \ref{lem3433}(i) and (ii) show that $T_s(\cK_s)\in F(\cG)$. 

It remains to show that the subspace $T_s(\cK_s)$ is Lagrangian.
For this we will apply Lemma \ref{lem3433}(iii) with $\cL=T_s(\cK_s)$ and $\cK=T_{s,\Delta}(H^1_\Delta(\Om))$. The inclusion $\cL\in\Fr(\cK)$ holds by \eqref{TsKsinFG}, and the form $\omega$ vanishes on $\cL=T_s(\cK_s)$
 by Lemma \ref{lemSF}. It remains to show that $\cK=T_{s,\Delta}(H^1_\Delta(\Om))$ is Lagrangian, that is, the form $\omega$ vanishes on $\cK$, and it is the maximal subspace with this property. The former assertion holds by Lemma \ref{lemSF} and the latter holds because $T_{s,\Delta}(H^1_\Delta(\Om))$ is the graph of a selfadjoint operator $N_{-\Delta,s} $, the rescaled Neumann operator,
 \[N_{-\Delta,s} =-(t(s))^{-1}N_{-\Delta}  \colon H^{1/2}(\dOm)\to H^{-1/2}(\dOm),\] see formula \eqref{TMDNnew}. Indeed, if $(f,g)\in\cH$ satisfies $\omega((f,g), (h,N_{-\Delta,s} h)) = 0$ for all $(h,N_{-\Delta,s} h)\in\gr(N_{-\Delta,s} )$, then 
  \begin{align*}
 0&=\omega((f,g), (h,N_{-\Delta,s} h))=\langle N_{-\Delta,s} h, f\rangle_{1/2}-
 \langle g, h\rangle_{1/2}\\&=
 \langle N_{-\Delta,s} f, h\rangle_{1/2}-
 \langle g, h\rangle_{1/2}
 \end{align*}
 because $N_{-\Delta,s} $ is selfadjoint. It follows that $(f,g)\in\gr(N_{-\Delta,s} )=T_{s,\Delta}(H^1_\Delta(\Om))$, hence the subspace is maximal.
 \end{proof}

\subsection{The  path and the Dirichlet-to-Neumann and Neumann-to-Dirichlet operators}\label{sec:DNNDO}
In this subsection we discuss connections of the Lagrangian view on the eigenvalue problems and the Dirichlet-to-Neumann and Neumann-to-Dirichlet operators, $N_{L}$ and $M_{L}$, associated with the Schr\"odinger operator $L=-\Delta+V$ (see 
Definition \ref{dfnDNNDmaps}). We will use the notation in Proposition \ref{smoothinFLG}: $\Sigma=[a,b]$ denotes a set of parameters, $t(\cdot)\in C^k(\Sigma;[\tau ,1])$ a given function for some $\tau \in(0,1]$, and $\{V_s\}_{s\in\Sigma}$ a family of potentials such that the function $s\mapsto V_s$ is in $C^k(\Sigma; L^\infty(\Om))$. Furthermore, for $s\in\Sigma$ and $u\in\dom(\gaN )\subset H^1(\Omega)$ we define the rescaled map $T_s$ by $T_su=\big(\gaD u,(t(s))^{-1}\gaN u\big)$ and let 
\beq\label{dfnLsKs0}
L_s=-\Delta+V_s,\, \cK_s=\big\{u\in H^1(\Om): L_su=0 \text{ in } H^{-1}(\Om)\big\}\enq
so that  $T_s(\cK_s)$ is the subspace of traces of weak solutions to the equation $L_su=0$. We recall that  $\Sp(L_{s,\cH_D})$ is the spectrum of the operator $L_{s,\cH_D}$ (equipped with Dirichlet boundary conditions) and $\Sp(L_{s,\cH_N})$ is the spectrum of the operator $L_{s,\cH_N}$ (with Neumann boundary conditions).
In addition, we will use notation $N_s=-(t(s))^{-1}N_{L_s}$ and 
$M_s=-(t(s))M_{L_s}$ for the rescaled  Dirichlet-to-Neumann and Neumann-to-Dirichlet operators associated with $L_s$, using Definition \ref{dfnDNNDmaps} with the operator $L-\lambda$ replaced by $L_s$. We recall from Lemma \ref{propDNND} that $N_s\in\cB(H^{1/2}(\dOm),H^{-1/2}(\dOm))$ and $-M_s\in\cB(H^{-1/2}(\dOm),H^{1/2}(\dOm))$ are mutually inverse and selfadjoint operators with respect to the duality $H^{-1/2}(\dOm)=(H^{1/2}(\dOm))^*$. We also recall the Riesz duality map $J_{\rm R}\colon H^{-1/2}(\dOm)\to H^{1/2}(\dOm)$ discussed in Remark \ref{adjNM}. The following result gives a simple proof of Proposition \ref{smoothinFLG} in the case that $\lambda\notin\Sp(L_{\cH_D})\cap\Sp(L_{\cH_N})$.


\begin{proposition}\label{prop:TsKsDN} Suppose the assumptions in Proposition \ref{smoothinFLG} hold. If $0\notin\Sp(L_{s,\cH_D})$, then $T_s(\cK_s)$ 
is the graph of the rescaled Dirichlet-to-Neumann operator associated with $L_s$:
\begin{align}\label{TSdn}
	T_s(\cK_s)&=\gr(N_s).
\end{align}
Moreover,
\begin{align}\label{PsDNND}
	P_{s,DN}&=\begin{bmatrix}I_{H^{1/2}(\dOm)}&0\\
	N_s&0_{H^{-1/2}(\dOm)}\end{bmatrix}
\end{align}
defines a projection onto $T_s(\cK_s)$, and the orthogonal projection $\Pi_s=\begin{bmatrix} \Pi_s^{(ij)}\end{bmatrix}_{i,j=1}^2$ onto $T_s(\cK_s)$ in the direct sum $\cH=H^{1/2}(\dOm)\times H^{-1/2}(\dOm)$ is given by
\begin{align}
	\begin{split}
	& \Pi_s^{(11)}=\big(I_{H^{1/2}(\dOm)} +(J_{\rm R} N_s)^2\big)^{-1},\\
	&\Pi_s^{(12)}= J_{\rm R} N_sJ_{\rm R} \big(I_{H^{-1/2}(\dOm)}+ (N_sJ_{\rm R})^2\big)^{-1},\\ 
	&\Pi_s^{(21)}=N_s\big(I_{H^{1/2}(\dOm)} +(J_{\rm R} N_s)^2\big)^{-1},\\
	& \Pi_s^{(22)}= (N_sJ_{\rm R})^2 \big(I_{H^{-1/2}(\dOm)} + (N_sJ_{\rm R})^2\big)^{-1}.
	\end{split}
	\label{PsDNND1}
\end{align}

Similarly, if $0\notin\Sp(L_{s,\cH_N})$, then $T_s(\cK_s)$ is the inverse graph of the rescaled Neumann-to-Dirichlet rescaled operator associated with $L_s$:
\begin{align}
	T_s(\cK_s)&=\gr'(M_s).\label{TSnd}
\end{align}
Moreover, 
\begin{align}
	P_{s,ND}&=\begin{bmatrix}0_{H^{1/2}(\dOm)}&-M_s\\
	0&I_{H^{-1/2}(\dOm)}\end{bmatrix}\label{PsNDND}
\end{align}
defines a projection onto $T_s(\cK_s)$, and the orthogonal projection is given by
\begin{align}\begin{split}
&\Pi_s^{(11)} =  (M_s J_{\rm R}^{-1})^2 \big( I_{H^{1/2}(\partial\Omega)} + (M_s J_{\rm R}^{-1})^2 \big)^{-1},\\ 
& \Pi_s^{(12)}=  -M_s \big( I_{H^{-1/2}(\partial\Omega)} + (J_{\rm R}^{-1} M_s)^2 \big)^{-1},\\ 
& \Pi_s^{(21)}= -J_{\rm R}^{-1} M_s J_{\rm R}^{-1} \big( I_{H^{1/2}(\partial\Omega)} + (M_s J_{\rm R}^{-1})^2 \big)^{-1}, \\
&\Pi_s^{(22)} = \big( I_{H^{-1/2}(\partial\Omega)} + (J_{\rm R}^{-1} M_s)^2 \big)^{-1}.\end{split}\label{PsNDND1}
\end{align}

As a result, the function $s\mapsto \Pi_s$ is in $C^k(\Sigma_0; \cB(\cH))$ as long as $0$ is not in the intersection of the Dirichlet and Neumann spectra of the operator $L_s$ for any $s\in\Sigma_0$.
\end{proposition}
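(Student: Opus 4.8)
The plan is to establish the graph/inverse-graph descriptions first, then read off the projection formulas, and finally deduce smoothness from the known dependence of the Dirichlet-to-Neumann and Neumann-to-Dirichlet operators on the parameter.

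\emph{Step 1: The graph descriptions \eqref{TSdn} and \eqref{TSnd}.} Suppose $0\notin\Sp(L_{s,\cH_D})$. By Lemma \ref{propDNND}($i_5$) applied to $L_s$ (i.e.\ with $L-\lambda$ replaced by $L_s$), the weak trace $(\gaD u,\gaN u)$ of any $u\in\cK_s$ equals $(f,-N_{L_s}f)$ with $f=\gaD u$, so that $\tr(\cK_s)=\gr(-N_{L_s})$. Applying the rescaled trace $T_su=(\gaD u,(t(s))^{-1}\gaN u)$ rather than $\tr$ just multiplies the second coordinate by $(t(s))^{-1}$, which turns $-N_{L_s}$ into $N_s=-(t(s))^{-1}N_{L_s}$; hence $T_s(\cK_s)=\gr(N_s)$, proving \eqref{TSdn}. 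The inverse-graph identity \eqref{TSnd} is proved the same way using Lemma \ref{propDNND}($ii_5$): when $0\notin\Sp(L_{s,\cH_N})$, $\tr(\cK_s)=\gr'(-M_{L_s})$, and rescaling the Neumann coordinate by $(t(s))^{-1}$ converts this to $\gr'(M_s)$ with $M_s=-(t(s))M_{L_s}$. (The sign bookkeeping is the only delicate point: one must keep track of which coordinate carries the factor $t(s)^{-1}$ and how the minus sign in the definitions of $N_{L_s}$, $M_{L_s}$ in Definition \ref{dfnDNNDmaps} propagates; this is routine.)

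\emph{Step 2: The oblique projections \eqref{PsDNND}, \eqref{PsNDND}.} Once $T_s(\cK_s)=\gr(N_s)$ is known, it is immediate that $P_{s,DN}$ in \eqref{PsDNND} is idempotent ($P_{s,DN}^2=P_{s,DN}$ by direct $2\times 2$ block multiplication, since the $(2,2)$ block is zero), that its range is exactly $\{(f,N_sf)^\top\}=\gr(N_s)=T_s(\cK_s)$, and that its kernel is $\cH_D=\{(0,g)^\top\}$. Likewise $P_{s,ND}$ is a projection onto $\gr'(M_s)$ along $\cH_N$. No work beyond block arithmetic is needed here.

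\emph{Step 3: The orthogonal projections \eqref{PsDNND1}, \eqref{PsNDND1} and smoothness.} To obtain the \emph{orthogonal} projection onto $\gr(N_s)$, use formula \eqref{PPi} of Remark \ref{locglob} with $P_s=P_{s,DN}$: $\Pi_s=P_{s,DN}P_{s,DN}^*\big(P_{s,DN}P_{s,DN}^*+(I_\cH-P_{s,DN}^*)(I_\cH-P_{s,DN})\big)^{-1}$, where the adjoint is taken with respect to the scalar product $\langle(f_1,g_1),(f_2,g_2)\rangle_\cH=\langle f_1,f_2\rangle_{H^{1/2}(\dOm)}+\langle g_1,g_2\rangle_{H^{-1/2}(\dOm)}$ on $\cH$. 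Computing $P_{s,DN}^*$ requires expressing the adjoint of $N_s\colon H^{1/2}(\dOm)\to H^{-1/2}(\dOm)$ with respect to these Hilbert-space inner products; by Remark \ref{adjNM} this is $J_{\rm R}^{-1}$-conjugation of the selfadjoint-in-duality operator $N_s$, which is exactly why the Riesz map $J_{\rm R}$ appears throughout \eqref{PsDNND1}. Carrying out the block inversion of the $2\times2$ positive operator $P_{s,DN}P_{s,DN}^*+(I-P_{s,DN}^*)(I-P_{s,DN})$ (a Schur-complement computation, with all blocks built from $N_s$ and $J_{\rm R}$) yields the four entries in \eqref{PsDNND1}; the analogous computation with $P_{s,ND}$ gives \eqref{PsNDND1}. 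Finally, for smoothness: by hypothesis $s\mapsto V_s\in C^k(\Sigma;L^\infty(\Om))$ and $s\mapsto t(s)\in C^k(\Sigma;[\tau,1])$, and one shows—using the explicit solution formula for the Dirichlet problem \eqref{mainsys} (decomposition $u=u_1+u_2$, $u_1=(L_{s,\cH_D})^{-1}((\lambda-V_s)u_2)$ type arguments, or, more directly, the resolvent identity together with \eqref{frmarDN})—that $s\mapsto N_{L_s}$ is $C^k$ into $\cB(H^{1/2}(\dOm),H^{-1/2}(\dOm))$ on any subinterval where $0\notin\Sp(L_{s,\cH_D})$; hence so is $s\mapsto N_s$, and likewise $s\mapsto M_s$ where $0\notin\Sp(L_{s,\cH_N})$. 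Since $J_{\rm R}$ is a fixed isometric isomorphism and the bracketed operators $I+(J_{\rm R}N_s)^2$ etc.\ are boundedly invertible (being $I$ plus a nonnegative selfadjoint operator), composition, inversion and the formulas \eqref{PsDNND1}--\eqref{PsNDND1} preserve $C^k$-dependence; thus $s\mapsto\Pi_s\in C^k(\Sigma_0;\cB(\cH))$ on any $\Sigma_0$ avoiding the intersection $\Sp(L_{s,\cH_D})\cap\Sp(L_{s,\cH_N})$, because at each point at least one of the two formulas is available.

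\emph{Main obstacle.} The conceptually routine but technically fussiest part is Step 3: correctly computing the adjoints with respect to the genuine $\cH$-inner product (as opposed to the duality pairing), keeping the Riesz map $J_{\rm R}$ in the right places, and performing the $2\times2$ block (Schur-complement) inversion so that the output matches \eqref{PsDNND1} and \eqref{PsNDND1} verbatim. The smoothness of $s\mapsto N_{L_s}$ itself is comparatively painless given Lemma \ref{propDNND} and \eqref{frmarDN}, since it reduces to smoothness of a resolvent on a spectral gap.
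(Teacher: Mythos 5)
Your proposal follows the paper's own proof essentially verbatim: graph identification via Lemma~\ref{propDNND}, idempotence of $P_{s,DN}$ by block multiplication, orthogonal projection via formula~\eqref{PPi} with the adjoint $N_s^*=J_{\rm R}N_sJ_{\rm R}$ from Remark~\ref{adjNM}, and smoothness from the resolvent formulas~\eqref{frmarDN},~\eqref{frmarND}. One small simplification you appear not to have noticed: the operator $P_{s,DN}P_{s,DN}^*+(I-P_{s,DN}^*)(I-P_{s,DN})$ works out to be block \emph{diagonal}, namely $\operatorname{diag}\bigl(I+N_s^*N_s,\; I+N_sN_s^*\bigr)$, so no Schur complement is needed—the inversion is entrywise and the formulas~\eqref{PsDNND1} fall out immediately.
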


Observe that formulas \eqref{PsDNND1} and \eqref{PsNDND1}  for $\Pi_s$ must be equivalent when $0\notin\Sp(L_{s,\cH_D})\cup\Sp(L_{s,\cH_N})$, because the orthogonal projection is unique. This can be seen explicitly using the relation $M_s = -N_s^{-1}$.
\begin{proof} 
If $u\in\cK_s$, then $L_su=0$ by \eqref{dfnLsKs0} and $\gaN u=-N_{L_s}\gaD u$ by \eqref{dfnwDNo} with $L-\lambda$ replaced by $L_s$. Since $T_su=(\gaD u, (t(s)^{-1}\gaN u)$, \eqref{TSdn} holds. The proof of \eqref{TSnd} is similar.
Formula \eqref{PsDNND} holds since $T_s(\cK_s)$ is the graph of $N_s$.
 To show \eqref{PsDNND1}, we compute the adjoint $P_s^* =\begin{bmatrix} I_{H^{1/2}}(\dOm) & N_s^* \\ 0 & 0 \end{bmatrix}$ using the scalar products in $H^{1/2}(\dOm)$ and $H^{-1/2}(\dOm)$ (as opposed to the duality of the spaces  $H^{1/2}(\dOm)$ and $H^{-1/2}(\dOm)$).
Using \eqref{PPi}, the corresponding orthogonal projection is given by
\begin{align*}
	\Pi_s &= P_s P_s^*(P_sP_s^* + (I_\cH-P_s^*)(I_\cH-P_s))^{-1} \\
	&= \begin{bmatrix} I_{H^{1/2}(\dOm)} & N_s^* \\ N_s & N_sN_s^* \end{bmatrix} \cdot
	\begin{bmatrix}I_{H^{1/2}(\dOm)} +N_s^*N_s & 0 \\ 0 & I_{H^{-1/2}(\dOm)} + N_sN_s^* \end{bmatrix}^{-1} \\
	&= \begin{bmatrix} (I_{H^{1/2}(\dOm)} +N_s^*N_s)^{-1} & N_s^*(I_{H^{-1/2}(\dOm)} + N_sN_s^*)^{-1} \\ N(I_{H^{1/2}(\dOm)} +N_s^*N_s)^{-1} & N_sN_s^* (I_{H^{-1/2}(\dOm)}+ N_sN_s^*)^{-1} \end{bmatrix}.
\end{align*}
To simplify further we compute $N_s^*$, recalling that $N_s$ is selfadjoint with respect to the dual pairing by Lemma \ref{propDNND}$(i_2)$. Equivalently, $(J_{\rm R} N_s)^* = J_{\rm R} N_s$, where $J_{\rm R}\colon H^{-1/2}(\partial \Omega) \rightarrow H^{1/2}(\partial \Omega)$ is the Riesz duality map. Since $J_{\rm R}^* = J_{\rm R}^{-1}$, we have $N_s^* = J_{\rm R} N_s J_{\rm R}$. Therefore $N_s N_s^* = (N_sJ_{\rm R})^2$ and $N_s^* N_s = (J_{\rm R} N_s)^2$, yielding \eqref{PsDNND1}. 

The proof of \eqref{PsNDND1} is similar, applying \eqref{PPi} to the projection \eqref{PsNDND}.


The last assertion follows from the fact that the functions $s\to N_s,M_s$ are in $C^k(\Sigma; \cB(\cH))$. This  in turn follows from the formulas for the Dirichlet-to-Neumann and Neumann-to-Dirichlet operators in terms of resolvents and Neumann traces given in Lemma \ref{propDNND} ($i_4$), ($ii_4$).
\end{proof}

We will now prove that the projections onto $T_s(\cK_s)$ defined in \eqref{TsKend} during the proof of Proposition \ref{smoothinFLG} are the same as the projections $P_{s,DN}$ onto $T_s(\cK_s)$ defined in \eqref{PsDNND} via the Dirichlet-to-Neumann operator $N_s$ associated with $L_s=-\Delta+V_s$, provided $0\notin\Sp(L_{s,\cH_D})$ (and thus the Dirichlet-to-Neumann operator is
well defined). Although the family of projections  is not well defined, and hence discontinuous, at the points $s$ where $0\in\Sp(L_{s,\cH_D})$, the range of $P_s$ at these points preserves continuity, thus keeping the family of the orthogonal projections $\Pi_s$ continuous. In this sense, the family of projections $\{P_{s,DN}\}$ has a ``removable singularity" at these points; this is obtained by adding a finite-rank perturbation to the underlying operator $A_s$ from the proof of Proposition \ref{smoothinFLG}.
\begin{theorem}\label{DNPs}
Assume Hypothesis \ref{h1} and $0\notin\Sp(L_{s,\cH_D})$. Let $P_{s,DN}$ be the projection in $\cH$ onto $T_s(\cK_s)$ defined in \eqref{PsDNND} and let $P_s$ be the projection onto $T_s(\cK_s)$ defined in \eqref{TsKend}. Then $P_s=P_{s,DN}$.
\end{theorem}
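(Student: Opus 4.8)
The statement asserts that two a priori different projections onto the same subspace $T_s(\cK_s)$ actually coincide. The key point is that both $P_s$ (from the proof of Proposition \ref{smoothinFLG}) and $P_{s,DN}$ (from \eqref{PsDNND}) are \emph{idempotents with the same range}, so to prove equality it suffices to show they also have the same \emph{kernel}. Thus the plan is: first identify $\ran(P_s) = \ran(P_{s,DN}) = T_s(\cK_s)$, which holds by \eqref{TsKend} and \eqref{TSdn}; then compute $\ker(P_{s,DN})$ explicitly from the matrix \eqref{PsDNND}, obtaining $\ker(P_{s,DN}) = \{(0,g)^\top : g \in H^{-1/2}(\dOm)\} = \cH_D$; and finally show that $\ker(P_s)$ also equals $\cH_D$. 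Two bounded idempotents on a Banach space with the same range and the same kernel are equal, since they both equal the projection onto that range along that kernel.

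The main work is therefore to trace through the construction of $P_s$ in the proof of Proposition \ref{smoothinFLG} and verify that $\ker(P_s) = \ker(Q_s) = \{(0,g)^\top : g\}$. Recall from \eqref{TsKend} that $P_s = F_s Q_s F_s^{-1}$, where $Q_s$ is the projection defined in \eqref{dfQs} with $\ker(Q_s) = \{(0,g)^\top : g \in H^{-1/2}(\dOm)\}$, and $F_s$ is the boundedly invertible operator produced by Lemma \ref{lemAF} applied to $A_s = D_s Q_s + (I_\cH - Q_s)$ with $A_s Q_s = F_s Q_s$. Since $P_s = F_s Q_s F_s^{-1}$, we have $\ker(P_s) = F_s(\ker Q_s)$. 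So I would need to show $F_s(\ker Q_s) = \ker Q_s = \cH_D$, i.e.\ that $F_s$ maps $\cH_D$ onto itself. This is where I expect the main obstacle: one must unwind the definition \eqref{dfFs} of $F_s$, namely $F_s = A_s + S_0 R_0(I_\cH - Q_0)(I_\cH - Q_s)$, and the structure of $A_s$. On $\ker(Q_s)$ one has $Q_s v = 0$, hence $A_s v = (I_\cH - Q_s)v = v$; and $(I_\cH - Q_s)v = v$ while $R_0(I_\cH-Q_0)v$ lands in the finite-dimensional subspace $(I_\cH - Q_0)\ker(A_0)$, with $S_0$ mapping into $(\ran A_0)^\perp$. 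The claim $F_s(\cH_D) = \cH_D$ requires that these correction terms, evaluated on $\cH_D$, stay inside $\cH_D$; equivalently, that $(\ran A_0)^\perp \subset \cH_D$ and $(I_\cH - Q_0)\ker(A_0) \subset \cH_D$ — which should follow because $A_0$ is built so that $\ran(A_0 Q_0) = T_0(\cK_0) = \gr(N_0)$ is a graph over the first component and $\cH_D$ is a complement.

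An alternative, possibly cleaner route avoids reconstructing $F_s$ in detail: since $0 \notin \Sp(L_{s,\cH_D})$, the pair $(T_s(\cK_s), \cH_D)$ is a direct-sum decomposition of $\cH$ — indeed $T_s(\cK_s) = \gr(N_s)$ and $\cH_D = \{(0,g)^\top\}$ together span $\cH$ and intersect trivially, exactly as in the proof that $T_s(H^1_\Delta(\Om)) \dot{+} \cH_D = \cH$. The projection onto $T_s(\cK_s)$ along $\cH_D$ is then \emph{unique}, and a direct check shows it is precisely $P_{s,DN}$ from \eqref{PsDNND} (apply $P_{s,DN}$ to a general $(f,g)^\top$: it returns $(f, N_s f)^\top \in \gr(N_s)$ and kills $(0, g - N_s f)^\top \in \cH_D$). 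So it remains only to verify $\ker(P_s) = \cH_D$; this reduces, via $\ker P_s = F_s(\ker Q_s)$, to $F_s(\cH_D) = \cH_D$, which I would establish by the term-by-term analysis of \eqref{dfFs} sketched above, using that $S_0$, $R_0$ were chosen with ranges inside appropriate finite-dimensional subspaces of $\cH_D$. I would then conclude: $P_s$ and $P_{s,DN}$ are both idempotents with range $T_s(\cK_s)$ and kernel $\cH_D$, hence $P_s = P_{s,DN}$.
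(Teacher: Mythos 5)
Your overall plan is sound and, modulo details, matches the paper's: both arguments reduce to showing that $P_s$ and $P_{s,DN}$ are idempotents with the same range $T_s(\cK_s)$ and the same kernel $\cH_D$, and hence coincide. The identification $\ker(P_{s,DN})=\cH_D$ from \eqref{PsDNND} and the reduction $\ker(P_s)=F_s(\ker Q_s)$ are correct. The paper likewise proves $\ker(P_0)\subset\ker(Q_0)=\cH_D$ and then compares the two direct-sum decompositions of $\cH$.

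The gap is in the last step. You propose to verify $F_s(\cH_D)=\cH_D$ by a ``term-by-term analysis'' of $F_s=A_s+S_0R_0(I-Q_0)(I-Q_s)$, asserting that ``$S_0,R_0$ were chosen with ranges inside appropriate finite-dimensional subspaces of $\cH_D$.'' This is not correct as stated: by the construction in Lemma~\ref{lemAF}, $\ran(R_0)=(I-Q_0)\ker(A_0)\subset\cH_D$, which is fine, but $\ran(S_0)=\big(\ran(A_0)\big)^\perp$, which has no reason in general to lie in $\cH_D$. The inclusion $(\ran A_0)^\perp\subset\cH_D$ that your argument needs is exactly the point one must prove, and the only clean way to do so is to show that in fact $(\ran A_0)^\perp=\{0\}$. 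This is where the hypothesis $0\notin\Sp(L_{s,\cH_D})$ enters the paper's proof and is missing from yours. By Remark~\ref{rem:LDLN}, $0\notin\Sp(L_{0,\cH_D})$ is equivalent to $G_0$ being invertible, which gives $B_0=G_0$ in~\eqref{Assiiv}. The paper then writes $A_0$ in triangular block form with respect to $\cH=\ran(Q_0)\dot+\ker(Q_0)$ and shows $\ker(A_0)=\{0\}$: any $h_1\in\ran(Q_0)$ killed by the $(1,1)$-block produces, via $u_1=G_0^{-1}(T_{0,\Delta})^{-1}Q_0h_1$, a weak solution of $L_0u_1=0$ with $\gaD u_1=0$, which must vanish since $0\notin\Sp(L_{0,\cH_D})$. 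Since $A_0$ is Fredholm of index zero, injectivity gives invertibility, so $R_0=0$ in~\eqref{dfFs} and $F_0=A_0$ — the correction term disappears entirely. After that, everything is immediate: for $v\in\ker(Q_0)=\cH_D$ one has $A_0v=(I-Q_0)v=v$, so $\ker(P_0)=A_0(\ker Q_0)=\cH_D$. Without the observation that $\ker(A_0)=\{0\}$ (equivalently $\ran(A_0)=T_0(\cK_0)\dot+\cH_D=\cH$), your term-by-term analysis cannot close, since the range of $S_0$ is not controlled.
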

\begin{proof}
Let us review the arguments in the proof of Proposition \ref{smoothinFLG} leading to formula \eqref{TsKend}. With no loss of generality we will assume that $s=0$. Due to Remark \ref{rem:LDLN} we know that $0\notin\Sp(L_{0,\cH_D})$ if and only if the operator $G_0$ is invertible. If this is the case then formula \eqref{dfBs} shows that $B_0=G_0$ in assertions \eqref{Assiiv} and in formula \eqref{dfAs}. 

We first claim that the operator $A_0$ defined in \eqref{dfAs} is invertible. To prove this, let us use the projection $Q_0$ from \eqref{dfQs} and rewrite the operator $A_0$ in the direct sum decomposition $\cH=\ran(Q_0)\dot{+}\ran(I_\cH-Q_0)$ as the following operator matrix:
\beq\label{A0rep}
A_0=\begin{bmatrix}Q_0T_0G_0^{-1}(T_{0,\Delta})^{-1}Q_0&0\\
(I_\cH-Q_0)T_0G_0^{-1}(T_{0,\Delta})^{-1}Q_0&(I_\cH-Q_0)\end{bmatrix}\,.
\enq
Suppose that $h=(h_1,h_2)^\top\in\ker(A_0)$, where $h_1\in\ran(Q_0)$ and $h_2\in\ran(I_\cH-Q_0)$. Then $T_0G_0^{-1}(T_{0,\Delta})^{-1}Q_0h_1\in\ker(Q_0)=\cH_D$. In other words,
the function $u_1=G_0^{-1}(T_{0,\Delta})^{-1}Q_0h_1$ satisfies the Dirichlet boundary condition. In addition, $u_1\in\cK_0$ by assertion (iv) in \eqref{Assiiv} and also assertion (ii) in Lemma \ref{lemL6} with $B_0=G_0$ and therefore satisfies $L_0u_1=0$. This means that $u_1$ is in fact an eigenfunction of $L_{\cH_D}$; since $0\notin\Sp(L_{0,\cH_D})$ by assumption we conclude that $u_1=0$. Applying $G_0$ and $T_0$ yields $Q_0h_1=0$ or $h_1\in\ker(Q_0)$ proving $h_1=0$ since $h_1\in\ran(Q_0)$. Now $h\in\ker(A_0)$ yields $h_2=0$ by \eqref{A0rep}. Since $\ker(A_0)=\{0\}$ and $A_0-I_\cH\in\cB_\infty(\cH)$ as $A_0$ satisfies the conditions in Lemma \ref{lemAF}, we conclude that $A_0$ is indeed invertible.

Since $\ker(A_0)=\{0\}$, we have $R_0=0$ in \eqref{dfFs} and thus $F_0=A_0$ in \eqref{TsKend}, that is, $T_0(\cK_0)=\ran(P_0)=\ran(A_0Q_0)$ where $P_0=A_0Q_0A_0^{-1}$. We claim that $\ker(P_0)\subset\ker(Q_0)$. Assuming the claim, we finish the proof of the equality $P_0=P_{0,DN}$ as follows. Formulas \eqref{PsDNND} and \eqref{RKQ} show that $\ker(P_{0,DN})=\cH_D=\ker(Q_0)$. Since $\ran(P_0)=T_0(\cK_0)=\ran(P_{0,DN})$ and $\ran(P_0)\dot{+}\ker(P_0)=\cH=\ran(P_{0,DN})\dot{+}\ker(P_{0,DN})$, the inclusion $\ker(P_0)\subset\ker(Q_0)=\ker(P_{0,DN})$ shows that in fact $\ker(P_0)=\ker(P_{0,DN})$, which  proves the required assertion $P_0=P_{0,DN}$.

To prove the claim $\ker(P_0)\subset\ker(Q_0)$, let us take any $h=(h_1,h_2)^\top\in\cH=\ran(Q_0)\dot{+}\ker(Q_0)$ such that
$P_0h=A_0Q_0A_0^{-1}h=0$. Then $Q_0A_0^{-1}h=0$ since $A_0$ is invertible. Since the block matrix for $A_0$ in \eqref{A0rep} is triangular, the block matrix for $A_0^{-1}$ is also triangular and its upper left corner is an invertible operator on $\ran(Q_0)$. This operator is the same
as the upper left corner for $Q_0A_0^{-1}$ and thus $Q_0A_0^{-1}h=0$ yields $h_1=0$
or $h\in\ker(Q_0)$ as claimed.
\end{proof}

\section{Monotonicity and other properties of the Maslov index}\label{sec5}

In this section we prove that the operators $L_{s,\cG}(\tau)$ have no large negative eigenvalues. This is equivalent to showing there are no crossings on the left vertical part of the path $\Gamma$ in Figure \ref{fig1} for $\Lambda=\Lambda(\tau)>0$ large enough. We also show that the Maslov crossing form is sign definite on the horizontal parts of $\Gamma$. This allows one to compute the Morse index of $L_\cG$ via the Maslov index along the top horizontal part of $\Gamma$. Finally, we derive formulas for the Maslov crossing form on the right vertical part of $\Gamma$.

\subsection{No crossings on $\Gamma_4$} 
For fixed $\tau \in(0,1]$ and $\Lambda>0$ let us consider the square $[-\Lambda,0]\times[\tau ,1]$, with boundary $\Gamma=\cup_{j=1}^4\Gamma_j$ parametrized by the functions $\lambda(s)$, $t(s)$ defined in \eqref{dfnSigmaj} and \eqref{dfnlambdat}, see Figure \ref{fig1}. Let $\cG$ be a Dirichlet- or Neumann-based Lagrangian subspace of $\cH=H^{1/2}(\dOm)\times H^{-1/2}(\dOm)$, and let $L_s=L_s(\tau )$ be the family of operators in \eqref{dfnLst0}. We define $\cK_s$ to be the subspace in $H^1(\Om)$ of weak solutions to the equation $L_su=0$, as in \eqref{dfnKS}, and consider the path $\Upsilon\colon\Sigma\to F\Lambda(\cG)$ defined by $\Upsilon(s) = T_s(\cK_s)$ in \eqref{dfnups}. With a slight abuse of terminology, we say that $s_0$ is a crossing on $\Gamma_j$ if $s_0\in\Sigma_j$.

Our first objective is to show that there are no crossings on $\Gamma_4$ provided $\Lambda=\Lambda(\tau)$ is large enough. This holds for both the Dirichlet- and Neumann-based cases.

\begin{lemma}\lb{lem:Gamma4} Assume Hypothesis \ref{h1} (i) -- (iv), where $\cG$ is either Dirichlet- or Neumann-based. For each $\tau \in(0,1]$ there exists a positive $\Lambda=\Lambda(\tau )$ such that the path $\Upsilon$ defined in \eqref{dfnups} has no crossings on $\Gamma_4$. \end{lemma}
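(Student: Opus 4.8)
The statement asserts that for fixed $\tau \in (0,1]$, one can choose $\Lambda = \Lambda(\tau)$ large enough that the path $\Upsilon$ has no conjugate times on the segment $\Gamma_4$, which is the left vertical edge $\{\lambda = -\Lambda,\ t \in [\tau,1]\}$. By Lemma \ref{GevL}, a point $s_0 \in \Sigma_4$ is a crossing precisely when $0 \in \Sp(L_{s_0,\cG}(\tau))$, i.e. when $L_{t,\cG}(\tau)$ has $0$ as an eigenvalue for $t = t(s_0)$; and by Remark \ref{sbb} (taking $s' \in \Sigma_4$ and the corresponding $s \in \Sigma_2$ with $t(s) = t(s')$), the operator $L_{s',\cG}(\tau)$ differs from $L_{s,\cG}(\tau)$ only by the additive constant $-\Lambda\, t(s)^2$. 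So the statement is equivalent to: for $\Lambda$ large, the operator $L_{t,\cG}(\tau) - \Lambda t^2$ (Neumann-based case) or the analogous operator built from $-\Delta + t^2 V(tx)$ with $\cG$-boundary conditions has no zero eigenvalue for any $t \in [\tau,1]$. Equivalently, it suffices to prove a uniform lower bound: there is a constant $c_0 = c_0(\tau)$ such that $\inf \Sp\big(L_{t,\cG}(\tau)\big) \ge -c_0 t^2 \ge -c_0$ for all $t \in [\tau,1]$; then choosing $\Lambda > c_0/\tau^2$ forces $L_{t,\cG}(\tau) - \Lambda t^2 \ge (c_0 - \Lambda \tau^2) > 0$ away from zero, so no crossing occurs.

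\textbf{Key steps.} First I would reduce, as above, to a uniform-in-$t$ semiboundedness statement for the family $\{L_{t,\cG}(\tau)\}_{t \in [\tau,1]}$; this is essentially the content of Hypothesis \ref{h1}(iii), which guarantees that for each fixed $\tau > 0$ the operators $L_{s,\cG}(\tau)$ are semibounded from below uniformly for $s \in \Sigma$, hence in particular uniformly for $s \in \Sigma_2$ (the parametrization of $\Gamma_2$ traces out exactly $t \in [\tau,1]$ with $\lambda = 0$). Second, in the Neumann-based case I would make the bound explicit through the sesquilinear form $\mathfrak{l}_{s,\cG}(\tau)$ from \eqref{dfnlGst}: for $s \in \Sigma_2$ one has
\[
\mathfrak{l}_{s,\cG}(\tau)(u,u) = \|\nabla u\|_{L^2(\Om)}^2 + t^2 \langle V(tx)u,u\rangle_{L^2(\Om)} - t\langle \Theta \gaD u,\gaD u\rangle_{1/2},
\]
and using $\|V\|_{C^0(\overline\Om)} < \infty$, $t \le 1$, together with the semiboundedness \eqref{propTheta2} of $\Theta$ (so that $t\langle\Theta\gaD u,\gaD u\rangle_{1/2} \le t\, c_\Theta \|\gaD u\|_{L^2(\dOm)}^2 \le c_\Theta^+ \|\gaD u\|_{L^2(\dOm)}^2$), plus a trace-interpolation estimate of the form $\|\gaD u\|_{L^2(\dOm)}^2 \le \varepsilon \|\nabla u\|_{L^2(\Om)}^2 + C_\varepsilon \|u\|_{L^2(\Om)}^2$, one obtains $\mathfrak{l}_{s,\cG}(\tau)(u,u) \ge -c_0 \|u\|_{L^2(\Om)}^2$ with $c_0$ independent of $t \in [\tau,1]$ and of $s$. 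Third, I would invoke Remark \ref{sbb} to transfer this bound from $\Sigma_2$ to $\Sigma_4$, obtaining $L_{s',\cG}(\tau) \ge (-c_0 - \Lambda t(s')^2)\,I$ for $s' \in \Sigma_4$, hence $L_{s',\cG}(\tau) \ge (\Lambda \tau^2 - c_0)\, I > 0$ once $\Lambda > c_0 \tau^{-2}$, which rules out $0 \in \Sp(L_{s',\cG}(\tau))$ and therefore, by \eqref{eq:4.23}, rules out crossings on $\Gamma_4$. For the Dirichlet-based case (under (ii'), $\Theta' \le 0$) the same scheme applies with the $\Theta$-term absent or with sign favorable, and one uses instead the quadratic form associated with \eqref{LsGD}; the uniform lower bound there is cleaner since no boundary form of uncertain sign intervenes.

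\textbf{Main obstacle.} The only genuinely delicate point is obtaining the lower bound on $\mathfrak{l}_{s,\cG}(\tau)$ that is \emph{uniform in both $t$ and (implicitly) $s$} — i.e. ensuring the constant $c_0$ does not blow up as $t \to \tau^+$ or depend on the particular Robin operator beyond $c_\Theta$ and $c_{\mathfrak a}$. This is controlled by the trace inequality $\|\gaD u\|_{L^2(\dOm)}^2 \le \varepsilon \|u\|_{H^1(\Om)}^2 + C_\varepsilon\|u\|_{L^2(\Om)}^2$ (valid on Lipschitz domains, e.g. via compactness of $\gaD\colon H^1(\Om) \to L^2(\dOm)$ composed with an interpolation argument), combined with absorbing $\varepsilon\|\nabla u\|^2$ into the leading term; the factors of $t \le 1$ only help. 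Since $\Om$, $V$, $\Theta$ are fixed and $[\tau,1]$ is compact, all constants are finite, so this obstacle is routine rather than structural — in fact it is already subsumed by Hypothesis \ref{h1}(iii), and the proof can simply cite that hypothesis together with Remark \ref{sbb} and Lemma \ref{GevL}, with the form computation included to make the dependence on $\Lambda$ transparent.
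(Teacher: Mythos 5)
Your proof is essentially the paper's proof: both reduce via Lemma \ref{GevL} and Remark \ref{sbb} to the uniform-in-$s$ semiboundedness of $L_{s,\cG}(\tau)$ over $\Sigma_2$ guaranteed by Hypothesis \ref{h1}(iii), and then take $\Lambda$ large. One correction: Remark \ref{sbb} gives, for $s'\in\Sigma_4$ and the matching $s\in\Sigma_2$ with $t(s)=t(s')$, that $L_{s'}=L_s+\Lambda\,t(s)^2$ (the $\Gamma_4$ operator is the $\Gamma_2$ operator \emph{plus} $\Lambda t^2$), so the transferred bound reads $L_{s',\cG}(\tau)\ge\big(-c_0+\Lambda t(s')^2\big)I\ge(\Lambda\tau^2-c_0)I$, not $(-c_0-\Lambda t(s')^2)I$; your final conclusion $(\Lambda\tau^2-c_0)I>0$ for $\Lambda>c_0\tau^{-2}$ is correct but did not follow from the intermediate inequality as you wrote it.
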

\begin{proof}
By \eqref{eq:4.23} in Lemma \ref{GevL} it is enough to show that there is a positive $c$ such that $\langle L_{s,\cG}(\tau)u,u\rangle_{L^2(\Om)}\ge c\|u\|_{L^2(\Om)}^2$ for all $s\in\Sigma_4$ and $u\in\dom(L_{s,\cG}(\tau))$. As in Remark \ref{sbb}, by \eqref{dfnlambdat}, if $s\in\Sigma_4$ then $s'=-s+2(1-\tau)+\Lambda\in\Sigma_2$ is such that $t(s)=t(s')$ and thus $\dom(L_{s,\cG}(\tau))=\dom(L_{s',\cG}(\tau))$ and $L_{s,\cG}(\tau)=L_{s',\cG}(\tau)+\Lambda (t(s))^2$. By Hypothesis \ref{h1} (iii) the operators $L_{s,\cG}(\tau)$ are semibounded from below uniformly for $s\in\Sigma$; in particular, there is a $c_\tau\in\bbR$ (not necessarily positive) such that  $\langle L_{s',\cG}(\tau)u,u\rangle_{L^2(\Om)}\ge c_\tau\|u\|_{L^2(\Om)}^2$ for all $s'\in\Sigma_2$ and all $u\in\dom(L_{s,\cG}(\tau))$. Taking $c=c_\tau+\Lambda\tau^2>0$ for $\Lambda$ large enough finishes the proof.
\end{proof}

\subsection{Crossings on $\Gamma_1$ and $\Gamma_3$}
In this subsection we show the monotonicity of the Maslov index (that is, sign definiteness of the Maslov crossing form) for the path $s\mapsto T_s(\cK_s)$ when $s\in\Sigma_1\cup\Sigma_3$. For these values of $s$  the parameter $\lambda=\lambda(s)$ is changing while $t=t(s)$ remains fixed, see \eqref{dfnlambdat} and Figure \ref{fig1}. We also show that if $\cG = \gr'(G')$ is Dirichlet-based and $\Theta'$ is nonpositive, then there are no crossings on $\Gamma_1$ when $\tau > 0$ is sufficiently small.

We begin with a general formula for the Maslov crossing form defined in \eqref{dfnMasForm}. Let $\cG$ be any Dirichlet- or Neumann-based Lagrangian subspace of $\cH=H^{1/2}(\dOm)\times H^{-1/2}(\dOm)$, see Definition \ref{defDiNeB}. Let $\Sigma=[a,b]$ be a set of parameters, $\{V_s\}_{s\in\Sigma}$ be a family of potentials and $t\colon\Sigma\to[\tau,1]$ for some $\tau>0$ be a given function. We denote $d/ds$ by dot.
\begin{lemma}\label{prelmon} Assume that $s\mapsto V_s$ is in $C^1(\Sigma; L^\infty(\Om))$ and $t(\cdot)\in C^1(\Sigma;[\tau,1])$. Let $L_s=-\Delta+V_s$ and $T_su=(\gaD u,(t(s))^{-1}\gaN u)$, let $\cK_s$ denote the set of weak solutions to $L_su=0$ and let $\Upsilon(s)=T_s(\cK_s)$ for $s\in\Sigma$. If $s_0\in\Sigma$ is a crossing and $q\in T_{s_0}(\cK_{s_0})\cap\cG$, then there exists a unique function $u_{s_0} \in \cK_{s_0}$ such that $q=T_{s_0} u_{s_0}$, and the Maslov crossing form satisfies
\beq\label{mqq}
\mathfrak{m}_{s_0}(q,q)=\frac1{t(s_0)}\langle \dot{V}_s\big|_{s=s_0}
 u_{s_0}, u_{s_0}\rangle_{L^2(\Om)}-\frac{\dot{t}(s_0)}{(t(s_0))^{2}}\,\langle
\gaN u_{s_0},\gaD u_{s_0}\rangle_{1/2}.
\enq
\end{lemma}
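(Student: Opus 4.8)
Since $s\mapsto V_s$ and $t(\cdot)$ are $C^1$, the path $\Upsilon(s)=T_s(\cK_s)$ lies in $F\Lambda(\cG)$ and is $C^1$ by Proposition \ref{smoothinFLG}, so the Maslov crossing form $\mathfrak{m}_{s_0}$ at a crossing $s_0$ is well defined via Definition \ref{dfnMasF}. The uniqueness of $u_{s_0}\in\cK_{s_0}$ with $T_{s_0}u_{s_0}=q$ is immediate from Lemma \ref{estbelow}: the trace map is injective on $\cK_{s_0}$, and since $t(s_0)\ge\tau>0$ the rescaled map $T_{s_0}$ is injective there as well. The plan is then threefold: (i) extend $u_{s_0}$ to a $C^1$ family $s\mapsto u_s\in\cK_s$, controlled in the graph norm of $\cD$; (ii) show $\mathfrak{m}_{s_0}(q,q)=\frac{d}{ds}\omega\big(q,T_su_s\big)\big|_{s=s_0}$; and (iii) evaluate this derivative using Green's formula \eqref{wGreen}.

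For (i) I would use the $C^1$ family of projections $P_s\in\cB(H^1(\Om))$ onto $\cK_s$ supplied by Lemma \ref{lemL6} (cf. assertion (iv) in \eqref{Assiiv} in the proof of Proposition \ref{smoothinFLG}), and set $u_s:=P_su_{s_0}$, so that $u_s\in\cK_s$, $u_{s_0}$ is the given solution, and $s\mapsto u_s$ is $C^1$ into $H^1(\Om)$. Because $u_s\in\cK_s$ forces $\Delta u_s=V_su_s$ in $L^2(\Om)$, and $s\mapsto V_su_s$ is $C^1$ into $L^2(\Om)$ by the product rule for the bilinear map $L^\infty\times L^2\to L^2$, the map $s\mapsto u_s$ is in fact $C^1$ into $\cD$ with the graph norm \eqref{dfncDn}. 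By Lemma \ref{claim1} and $t(\cdot)\in C^1(\Sigma;[\tau,1])$ it follows that $s\mapsto T_su_s$ is $C^1$ into $\cH$; in particular $\dot u_{s_0}:=\tfrac{d}{ds}u_s\big|_{s=s_0}\in\cD$, and differentiating $\Delta u_s=V_su_s$ gives $\Delta\dot u_{s_0}=\dot V_{s_0}u_{s_0}+V_{s_0}\dot u_{s_0}$ in $L^2(\Om)$, where $\dot V_{s_0}=\dot V_s\big|_{s=s_0}$.

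For (ii), set $\mathfrak{q}(s)=T_su_s\in\Upsilon(s)$; this is $C^1$ with $\mathfrak{q}(s_0)=q$. Letting $\mathfrak{q}_0(s)=q+R_sq$ be the path from Definition \ref{dfnMasF}, one has $R_{s_0}=0$, hence $\mathfrak{q}_0(s_0)=q$ and $\mathfrak{m}_{s_0}(q,q)=\omega\big(q,\dot{\mathfrak{q}}_0(s_0)\big)$. Writing $\mathfrak{r}(s)=\mathfrak{q}(s)-\mathfrak{q}_0(s)\in\Upsilon(s)$ with $\mathfrak{r}(s_0)=0$ and differentiating the identity $\omega(\mathfrak{q}_0(s),\mathfrak{r}(s))=0$ (valid as $\Upsilon(s)$ is Lagrangian) at $s_0$ yields $\omega(q,\dot{\mathfrak{r}}(s_0))=0$, so $\frac{d}{ds}\omega(q,\mathfrak{q}(s))\big|_{s_0}=\omega\big(q,\dot{\mathfrak{q}}_0(s_0)\big)=\mathfrak{m}_{s_0}(q,q)$; this is the path-independence of the crossing form noted in Remark \ref{rem:propcf}. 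For (iii), write $q=(\gaD u_{s_0},t(s_0)^{-1}\gaN u_{s_0})$ and $\mathfrak{q}(s)=(\gaD u_s,t(s)^{-1}\gaN u_s)$, substitute into \eqref{dfnomega}, and differentiate: the term from $\tfrac{d}{ds}t(s)^{-1}$ contributes $-\tfrac{\dot t(s_0)}{t(s_0)^2}\langle\gaN u_{s_0},\gaD u_{s_0}\rangle_{1/2}$, and the remaining terms contribute $\tfrac1{t(s_0)}\big(\langle\gaN\dot u_{s_0},\gaD u_{s_0}\rangle_{1/2}-\langle\gaN u_{s_0},\gaD\dot u_{s_0}\rangle_{1/2}\big)$. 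Applying Green's formula \eqref{wGreen} to each pairing, the gradient terms cancel, leaving $\langle\Delta\dot u_{s_0},u_{s_0}\rangle_{L^2}-\langle\Delta u_{s_0},\dot u_{s_0}\rangle_{L^2}$; inserting $\Delta u_{s_0}=V_{s_0}u_{s_0}$ and $\Delta\dot u_{s_0}=\dot V_{s_0}u_{s_0}+V_{s_0}\dot u_{s_0}$ and using the symmetry $V_{s_0}(x)^\top=V_{s_0}(x)$ to cancel $\langle V_{s_0}\dot u_{s_0},u_{s_0}\rangle_{L^2}-\langle V_{s_0}u_{s_0},\dot u_{s_0}\rangle_{L^2}$, this collapses to $\langle\dot V_{s_0}u_{s_0},u_{s_0}\rangle_{L^2}$, which gives \eqref{mqq}.

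The main obstacle is really the combination of (i) and (ii): one must verify that the extending family $u_s$ can be taken $C^1$ in the graph norm of $\cD$ — not merely in $H^1(\Om)$ — so that $T_su_s$ and $\dot u_{s_0}$ are legitimate and the Green's-formula manipulations apply, and one must know the abstract crossing form of Definition \ref{dfnMasF} is actually computed by $\frac{d}{ds}\omega(q,T_su_s)$ for this concrete path. Both points are handled by the self-improvement $\Delta u_s=V_su_s$ and the Lagrangian-pairing argument above; once they are in place, step (iii) is a routine use of \eqref{wGreen} and the symmetry of the potential.
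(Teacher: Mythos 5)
Your proof is correct and arrives at the same formula, but it takes a genuinely different and arguably cleaner route on the construction of the extending family. The paper insists on choosing $u_s \in \cK_s$ so that $T_s u_s = q + R_s q$ exactly; this has the advantage that $\tfrac{d}{ds}\omega(q, R_s q)$ is then literally $\tfrac{d}{ds}\omega(q, T_s u_s)$, but establishing that such a $u_s$ depends $C^1$-smoothly on $s$ costs some work: one must show $u_s = (T_s|_{\cK_s})^{-1}(I_\cH + R_s)q$ is $C^1$, which the paper does via the transformation operators $W_{\cK_s}$ of Remark \ref{DalKr} applied to the projections from \eqref{Assiiv}. You sidestep this entirely by taking $u_s := P_s u_{s_0}$, whose $C^1$-dependence on $s$ (into $H^1$, and by the self-improvement $\Delta u_s = V_s u_s$, into $\cD$ with the graph norm) is immediate from Lemma \ref{lemL6}; the price is that $T_s u_s$ is now a different $C^1$ curve in $\Upsilon(s)$ based at $q$, so you need the extra Lagrangian-pairing argument showing $\tfrac{d}{ds}\omega(q, \mathfrak{q}(s))|_{s_0}$ is the same for any such curve. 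Your proof of that fact — differentiating $\omega(\mathfrak{q}_0(s), \mathfrak{r}(s)) = 0$ and using $\mathfrak{r}(s_0)=0$ — is self-contained and correct (it is stronger than what Remark \ref{rem:propcf} literally states, which concerns changing the complementary subspace rather than the representing curve, but your two-line derivation handles it). Once both accounts reach the Green's-formula computation \eqref{ibpN}, they agree exactly; you are right to flag the hidden use of the symmetry $V_{s_0}^\top = V_{s_0}$ in passing $\langle V_{s_0}\dot u_{s_0}, u_{s_0}\rangle_{L^2}$ to $\langle V_{s_0}u_{s_0}, \dot u_{s_0}\rangle_{L^2}$, which the paper uses silently. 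Net: same result, different extension of $u_{s_0}$, a simpler smoothness argument traded for an extra (but short) path-independence lemma.
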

\begin{proof} 
We first prove that there exists a segment $\Sigma_0\subset\Sigma$ containing $s_0$ and a function $s\mapsto u_s$ in $C^1(\Sigma_0; H^1(\Om))$ such that $q=T_{s_0} u_{s_0}$ and $u_s\in\cK_s$ for each $s\in\Sigma_0$. For simplicity of notation we assume that $s_0=0$ and moreover that $0$ is in the interior of $\Sigma$. For $s$ in a small neighborhood $\Sigma_0$ in $\Sigma$
containing $0$ let $\Pi_s$ be the orthogonal projection in $\cH$ onto $T_s(\cK_s)$. 
 By Proposition \ref{smoothinFLG} the function $s\mapsto\Pi_s$ is in $C^1(\Sigma_0;\cB(\cH))$. By Lemma \ref{crossex}
there exist operators $\{R_s\}$ from $\ran(\Pi_0)$ into $\ker(\Pi_0)$ such that the  function $s\mapsto R_s$ is in $C^1(\Sigma_0;\cB(\ran(\Pi_0),\ker(\Pi_0)))$ and 
\beq\label{RsTsKs}
\gr(R_s)=\ran(\Pi_s)=T_s(\cK_s)\,\text{ for all }\, s\in\Sigma_0
\enq
provided $\Sigma_0$ is small enough. 
Let us fix $q\in T_0(\cK_0)=\ran(\Pi_0)$ and consider the curve 
$s\mapsto q+R_sq\in T_s(\cK_s)=\ran(\Pi_s)$. Since $s\mapsto R_s$ is smooth, this curve is in $C^1(\Sigma_0;\cH)$. Since the restriction $T_s|_{\cK_s}$ of the rescaled trace map is injective by Lemma \ref{estbelow}, for each $s\in\Sigma_0$ there exists a unique $u_s\in\cK_s$ such that $q+R_sq=T_su_s$.  Then $q=T_0u_0$ since $R_0=0$ by \eqref{smallRs}.

We now claim that the function
\beq\lb{seccl}
\text{$s\mapsto u_s$ is in $C^1(\Sigma_0; H^1(\Om))$}.
\enq
Starting the proof of \eqref{seccl} we denote by $(T_s|_{\cK_s})^{-1}\colon \ran(\Pi_s)\to\cK_s$ the isomorphism between $\ran(\Pi_s)=\ran(T_s|_{\cK_s})$ and $\cK_s$ which exists by Lemma \ref{estbelow}. Since $(T_s|_{\cK_s})u_s=(I_\cH+R_s)q$ by the definition of $u_s$, we infer that $u_s=(T_s|_{\cK_s})^{-1}(I_\cH+R_s)q$, where the operator $I_\cH+R_s$ is invertible in $\cH$ by \eqref{smallRs} and thus the function $s\mapsto(I_\cH+R_s)^{-1}$ is in $C^1(\Sigma_0;\cB(\cH))$ since $s\mapsto R_s$ is smooth. Thus \eqref{seccl} follows as soon as we know that $s\mapsto(T_s|_{\cK_s})^{-1}(I_\cH+R_s)$ is smooth. In turn, the latter fact follows as soon as we know that $s\mapsto(I_\cH+R_s)^{-1}T_s|_{\cK_s}$ is smooth. Since $t(\cdot)$ is smooth and 
\[T_su=(\gaD u,(t(s))^{-1}\gaN u)=\begin{bmatrix}I_{H^{1/2}(\dOm)}&0\\0&t(0)(t(s))^{-1}I_{H^{-1/2}(\dOm)}\end{bmatrix}\,T_0u,\] it is enough to check that $s\mapsto T_0|_{\cK_s}$ is in $C^1(\Sigma_0;\cB(H^1(\Om),\cH))$. This however is easy to show as we have constructed in assertion (iv) of \eqref{Assiiv} a family $s\mapsto B_s^{-1}\pi_1B_s$ of smooth projections in $H^1(\Om)$ onto $\cK_s$. Let $W_{\cK_s}$ denote the family of transformation operators associated with these projections (as in Remark \ref{DalKr}) so that $W_{\cK_s}\colon\cK_0\to\cK_s$ is an isomorphism and the function $s\mapsto W_{\cK_s}$ is in $C^1(\Sigma_0;\cB(H^1(\Om)))$. Then the function $s\mapsto T_0|_{\cK_s}=T_0W_{\cK_s}|_{\cK_0}$ is in $C^1(\Sigma_0;\cB(H^1(\Om),\cH)))$, finishing the proof of \eqref{seccl}.

Differentiating the equation $L_su_s=0$ with respect to $s$  we infer 
\beq\label{dots}
-\Delta \dot u_s+V_s\dot u_s+\dot V_su_s=0\,\text{ in the weak sense.}
\enq In particular, $\Delta \dot u_s\in L^2(\Omega)$, which means that $\dot u_s\in\dom(\gaN)$. This allows us to explicitly compute the Maslov crossing form $\mathfrak{m}_{0,\cG}(q,q)$ defined for
$q\in\Upsilon(0)\cap\cG$ according to Definition \ref{dfnMasF}. Indeed, using the equations $q+R_sq=T_su_s$ and $\dot{T}_s\big|_{s=0}u=-(0,\dot{t}(0)(t(0))^{-2}\gaN u)$ and the definition of $\omega$ \eqref{dfnomega} yields
\begin{align}\nonumber
{\Mf}_{0,\cG}(q,q)&=\frac{d}{ds}\omega(q,R_sq)\big|_{s=0}=\omega(T_0u_0,\dot{T}_s\big|_{s=0}u_0)+\omega(T_0u_0,T_0\dot{u}_s\big|_{s=0})\\&=
-\frac{\dot{t}(0)}{(t(0))^2}\langle\gaN u_0,\gaD   u_0\rangle_{1/2}\lb{qf}\\
&\qquad+\frac1{t(0)}\Big(\langle\gaN \dot u_s,\gaD   u_s\rangle_{1/2}-\langle\gaN u_s,\gaD   \dot u_s\rangle_{1/2}\Big)\big|_{s=0}.\nonumber
\end{align}
On the other hand,  using Green's formula \eqref{wGreen} and \eqref{dots}, we compute:
\begin{align}\nonumber
\langle&\gaN \dot u_s,\gaD   u_s\rangle_{1/2}= \langle{\nabla \dot u_s}, \nabla
u_s\rangle_{L^2(\Om)} + \langle \Delta \dot u_s,u_s\rangle_{L^2(\Om)}\\
&=\langle{\nabla \dot u_s}, \nabla
u_s\rangle_{L^2(\Om)} + \langle V_s\dot u_s+\dot V_su_s,u_s\rangle_{L^2(\Om)}\nonumber\\&=\langle\gaN u_s,\gaD  \dot u_s\rangle_{1/2}+\langle -\Delta u_s,\dot u_s\rangle_{L^2(\Om)}+\langle V_s\dot u_s+\dot V_su_s,u_s\rangle_{L^2(\Om)}
\nonumber\\&=\langle\gaN u_s,\gaD  \dot u_s\rangle_{1/2}+\langle -\Delta u_s+V_su_s,\dot u_s\rangle_{L^2(\Om)}+\langle\dot V_su_s,u_s\rangle_{L^2(\Om)}
\nonumber\\&=\langle\gaN u_s,\gaD  \dot u_s\rangle_{1/2}+\langle\dot V_su_s,u_s\rangle_{L^2(\Om)}.
\lb{ibpN}
\end{align}
Combining \eqref{qf} and \eqref{ibpN} completes the proof.
\end{proof}
We will now apply Lemma \ref{prelmon} for the parametrized family of the potentials $V_s(x)=(t(s))^2\big(V(t(s)x)-\lambda(s)\big)$, operators $L_s=-\Delta+V_s$ and rescaled trace maps $T_su=(\gaD u,(t(s))^{-1}\gaN u)$ defined  in \eqref{defVsLs} and \eqref{defTs}, with $\tau \in(0,1]$ fixed and the functions $t(s)$ and $\lambda(s)$ defined in \eqref{dfnSigmaj} and \eqref{dfnlambdat} for $s\in\Sigma=\Sigma(\tau)$.
\begin{lemma}\label{lambdaMon}
 Assume Hypothesis \ref{h1}.  Let  $\cK_s$ denote the set of weak solutions to $L_su=0$ and let $\Upsilon(s)=T_s(\cK_s)$. Then any crossing $s_0\in\Sigma_1$  is negative while any crossing $s_0\in\Sigma_3$ is positive. In particular, for the Morse indices of the operators $L_\cG=L_{0,\cG}(1)$ and $L_{0,\cG}(\tau)$ defined in \eqref{dfLG}, \eqref{dfnLst00} the following formulas hold:
\beq\label{MM13}
\mor(L_\cG)=\mas(\Upsilon\big|_{\Sigma_3},\cG),\,
\mor(L_{0,\cG}(\tau))=-\mas(\Upsilon\big|_{\Sigma_1},\cG).
\enq
\end{lemma}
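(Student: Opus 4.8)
The plan is to apply the crossing form computation of Lemma \ref{prelmon} to the specific family $V_s(x)=(t(s))^2\big(V(t(s)x)-\lambda(s)\big)$, on each of the two vertical segments $\Sigma_1$ and $\Sigma_3$ where $t(s)$ is constant, and to read off the sign of $\mathfrak{m}_{s_0}$. On $\Sigma_1$ we have, by \eqref{dfnlambdat}, $t(s)\equiv\tau$, hence $\dot t(s_0)=0$, while $\lambda(s)=s$ so $\dot\lambda(s_0)=1$; therefore $\dot V_s\big|_{s=s_0}=-\tau^2 I$ (a scalar multiple of the identity matrix). Plugging into \eqref{mqq}, the second term vanishes because $\dot t=0$, and the first term becomes $\frac1\tau\langle -\tau^2 u_{s_0},u_{s_0}\rangle_{L^2(\Om)}=-\tau\|u_{s_0}\|_{L^2(\Om)}^2<0$ for $q\neq0$ (note $u_{s_0}\neq0$ since $T_{s_0}|_{\cK_{s_0}}$ is injective by Lemma \ref{estbelow}). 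Thus every crossing on $\Gamma_1$ is negative. On $\Sigma_3$ we have $t(s)\equiv1$, so again $\dot t=0$, and $\lambda(s)=-s+1-\tau$, so $\dot\lambda(s_0)=-1$ and $\dot V_s\big|_{s=s_0}=+I$; the first term of \eqref{mqq} is then $\langle u_{s_0},u_{s_0}\rangle_{L^2(\Om)}=\|u_{s_0}\|_{L^2(\Om)}^2>0$, so every crossing on $\Gamma_3$ is positive. This proves the first assertion.

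For the formulas \eqref{MM13}, I would combine this sign definiteness with Lemma \ref{lem:Gamma4} and Lemma \ref{GevL}. First note the Maslov index is additive under catenation and is a homotopy invariant; a standard argument (deforming $\Lambda$ and using that the path $\Upsilon$ is closed up to the endpoints, or directly invoking that the full loop $\Gamma$ contributes zero to an appropriately closed path) is not quite what is needed here. Instead, the cleaner route: choose $\Lambda=\Lambda(\tau)$ large enough (Lemma \ref{lem:Gamma4}) so that there are no crossings on $\Gamma_4$. Since $\Upsilon$ restricted to $\Gamma_1\cup\Gamma_3$ has only negative/positive regular crossings, and by Lemma \ref{propcrossform}(i) these are isolated, we may compute each segment's contribution via Lemma \ref{propcrossform}(ii). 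On $\Sigma_3$, which is oriented so that $\lambda$ decreases from $1-\tau$ to $1-\tau-\Lambda$ — wait, I must double-check orientation: by \eqref{dfnlambdat}, on $\Sigma_3$ the parameter $s$ runs over $[1-\tau,1-\tau+\Lambda]$ with $\lambda(s)=-s+1-\tau$ decreasing from $0$ down to $-\Lambda$, i.e., $\Gamma_3$ is the top horizontal segment traversed right-to-left; by \eqref{4.21}, $s_0\in\Sigma_3$ is a crossing iff $\lambda(s_0)=-s_0+1-\tau$ is an eigenvalue of $L_\cG$. Since crossings on $\Gamma_3$ are positive and, by the orientation and the endpoint convention in \eqref{MIcomp}, a crossing at $\lambda=0$ (the initial point $s=1-\tau$ of $\Sigma_3$, if $0\in\Sp(L_\cG)$) contributes $-n_-=0$, each negative eigenvalue $\lambda<0$ of $L_\cG$ in the open interior contributes $\mathrm{sgn}(\mathfrak{m})=\dim$ of the corresponding intersection, which by \eqref{eqdims} equals the multiplicity of that eigenvalue. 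Summing, $\mas(\Upsilon|_{\Sigma_3},\cG)=\mor(L_\cG)$ provided $\Lambda$ exceeds all negative eigenvalues of $L_\cG$ (which Lemma \ref{lem:Gamma4} guarantees for $\Lambda$ large). The second formula is obtained identically: on $\Sigma_1$, $t(s)\equiv\tau$, $\lambda(s)=s$ increases from $-\Lambda$ to $0$, and by \eqref{4.22} $s_0\in\Sigma_1$ is a crossing iff $s_0\tau^2\in\Sp(L_{0,\cG}(\tau))$, i.e., iff $\lambda(s_0)t^2(s_0)=s_0\tau^2$ is an eigenvalue of $L_{0,\cG}(\tau)$; since all such crossings are negative, with the endpoint convention a crossing at $s_0=0$ contributes $+n_+=0$, and each crossing in the interior (a negative eigenvalue of $L_{0,\cG}(\tau)$, by multiplicity via \eqref{eqdims}) contributes $-n_-=-\dim$, so $\mas(\Upsilon|_{\Sigma_1},\cG)=-\mor(L_{0,\cG}(\tau))$, again for $\Lambda$ large enough.

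The main obstacle, and the point requiring care, is the bookkeeping of orientations and endpoint conventions in \eqref{MIcomp}: one must verify that crossings landing exactly at the corners $\lambda=0$ of $\Gamma_1$ and $\Gamma_3$ do not get double-counted or miscounted, and that the sign of the crossing form (negative on $\Gamma_1$, positive on $\Gamma_3$) matches the direction of traversal so that the endpoint at $\lambda=0$ contributes zero in both cases. Concretely: $s=0$ is simultaneously the right endpoint of $\Sigma_1$ (a ``$b_0$'' in \eqref{MIcomp}, contributing $n_+=0$ since the form is negative there) and the ``initial'' data for the analysis on $\Sigma_3$ after catenation — but since $\Sigma_3$ starts at $s=1-\tau$, not $s=0$, there is actually no conflict; the corner $\lambda=0$, $t=1$ is the left endpoint $a_0$ of $\Sigma_3$, contributing $-n_-=0$ since the form there is positive. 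I will also invoke that $L_\cG=L_{0,\cG}(1)$ and that Hypothesis \ref{h1}(iii) guarantees $L_{0,\cG}(\tau)$ is semibounded, so only finitely many negative eigenvalues occur and a single finite $\Lambda$ works. The rest is a direct substitution into \eqref{mqq} together with citing Lemmas \ref{prelmon}, \ref{lem:Gamma4}, \ref{GevL}, and \ref{propcrossform}.
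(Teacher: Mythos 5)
Your proof is correct and follows essentially the same route as the paper: both substitute $\dot t=0$ and $\dot V_s=\mp t(s_0)^2 I$ into the crossing-form formula \eqref{mqq} from Lemma \ref{prelmon} to obtain sign definiteness on $\Gamma_1$ and $\Gamma_3$, and then read off \eqref{MM13} from Lemma \ref{GevL} and the endpoint convention in \eqref{MIcomp}. The only cosmetic difference is that the paper does the explicit computation only on $\Gamma_1$ and handles $\Gamma_3$ by observing the two segments have opposite orientation, whereas you carry out both computations directly; the substance is identical.
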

\begin{proof} We will consider $s_0\in\Sigma_1$; the proof for $s_0\in\Sigma_3$ follows since $\Gamma_1$ and $\Gamma_3$ have opposite orientations.
Reparametrizing $\lambda(\cdot)\mapsto\lambda(\cdot)+a$ for some $a\in\bbR$, without loss of generality we will assume that $s_0=0$ and moreover that $0$ is in the interior of $\Sigma_1$. Since $t(s)=\tau$ for $s\in\Sigma_1$ by \eqref{dfnlambdat}, we have that the rescaled trace map
\beq\lb{constTs}
T_su=(\gaD u, \tau^{-1}\gaN u)=T_0u\,\text{ for all }\,s\in\Sigma_1\enq
is $s$-independent.
 Since $\lambda(s)=s$ for $s\in\Sigma_1$ by \eqref{dfnlambdat}, formula \eqref{defVsLs} shows that $s\mapsto V_s$ is in $C^1(\Sigma_1,\cB(L^\infty(\Om)))$, and thus we may apply Lemma \ref{prelmon}. If $s\in\Sigma_1$ then $L_s=-\Delta+V_s$ where $V_s=\tau^2V(\tau x)-\tau^2s$ by \eqref{defVsLs} and \eqref{dfnlambdat}.
 Using $\dot{t}(s)=0$ and $\dot{V_s}=-\tau^2$ we obtain from \eqref{mqq} the relation 
\begin{align}\lb{qfN}
\begin{split}
{\Mf}_{0,\cG}(q,q)=\tau^{-1}\langle\dot V_su_s,u_s\rangle_{L^2(\Om)}\big|_{s=0}=-\tau\|u_0\|^2_{L^2(\Om)}<0,
\end{split}
\end{align}
thus proving that all crossings on $\Gamma_1$ are negative. This gives the second equality in \eqref{MM13}. (Note that this equality holds whether or not a crossing occurs at the right endpoint of $\Gamma_1$ since the crossing form is negative definite, cf. \eqref{MIcomp}.)
\end{proof}

In the Dirichlet-based case we have that there are no crossings on $\Gamma_1$, provided $\tau>0$ is small enough and the boundary operator satisfies a sign condition.

\begin{lemma}\lb{lem:Gamma4Dir} Assume Hypothesis \ref{h1} (i), (ii'), (iii) and (iv), where $\cG=\gr'(\Theta')$ is Dirichlet-based and $\Theta'$ is nonpositive. Then there exists a small enough $\tau>0$ such that the path $\Upsilon$ defined in \eqref{dfnups} has no crossings on $\Gamma_1$ for any $\Lambda>0$.
\end{lemma}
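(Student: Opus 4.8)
The plan is to reduce the statement to strict positivity of the operator $L_{0,\cG}(\tau)$ for small $\tau$ and then argue by contradiction. By \eqref{4.22} in Lemma \ref{GevL}, a point $s_0\in\Sigma_1$ is a crossing if and only if $s_0\tau^2\in\Sp(L_{0,\cG}(\tau))$; since $s_0\le 0$ on $\Gamma_1$, the absence of crossings on $\Gamma_1$ for \emph{every} $\Lambda>0$ is equivalent to $\Sp(L_{0,\cG}(\tau))\cap(-\infty,0]=\varnothing$, i.e.\ to $L_{0,\cG}(\tau)>0$. So suppose, for a contradiction, that $\mu\le 0$ is an eigenvalue of $L_{0,\cG}(\tau)$ with eigenfunction $u$ normalized by $\|u\|_{L^2(\Om)}=1$. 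Then $u\in\dom(L_{0,\cG}(\tau))\subset\cD$, and Green's formula \eqref{wGreen} together with the eigenvalue equation $-\Delta u+\tau^2V(\tau\cdot)u=\mu u$ gives
\[
\mu=\|\nabla u\|_{L^2(\Om)}^2+\tau^2\langle V(\tau\cdot)u,u\rangle_{L^2(\Om)}-\langle\gaN u,\gaD u\rangle_{1/2},
\]
while the Dirichlet-based boundary condition $\tau\gaD u=\Theta'\gaN u$ from \eqref{LsGD} (at $s=0$) rewrites the last term as $\tau^{-1}\langle\gaN u,\Theta'\gaN u\rangle_{1/2}$, which is $\le 0$ because $\Theta'$ is nonpositive.

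From this identity, writing $M:=\max_{\overline\Om}\|V(\cdot)\|$ and using $\mu\le 0$ together with the signs of the remaining terms, I will read off the two bounds
\[
\|\nabla u\|_{L^2(\Om)}^2\le \tau^2M,\qquad \langle\gaN u,(-\Theta')\gaN u\rangle_{1/2}\le\tau^3M .
\]
Next I will use the Poincar\'e--Wirtinger inequality $\|u-\bar u\|_{L^2(\Om)}\le C_P\|\nabla u\|_{L^2(\Om)}$ with $\bar u=|\Om|^{-1}\int_\Om u\,dx$: combined with $\|u\|_{L^2(\Om)}=1$ it forces $|\bar u|$ to be bounded below by a $\tau$-independent constant once $\tau$ is small, and it also gives $\|u-\bar u\|_{H^1(\Om)}\le c\,\tau$. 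Applying the bounded Dirichlet trace and using that $\|\gaD\bar u\|_{H^{1/2}(\dOm)}=\|\mathbf 1_{\dOm}\|_{H^{1/2}(\dOm)}\,|\bar u|$ for the constant function $\bar u$, I conclude $\|\gaD u\|_{H^{1/2}(\dOm)}\ge\delta_0/2$ for some fixed $\delta_0>0$ and all sufficiently small $\tau$. In particular, when $\Theta'=0$ the boundary condition reads $\gaD u=0$, which already contradicts this, so that case is immediate.

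The remaining case $\Theta'\ne 0$ is where the main technical point lies: the boundary condition gives $\|\Theta'\gaN u\|_{H^{1/2}(\dOm)}=\tau\|\gaD u\|_{H^{1/2}(\dOm)}\ge\tau\delta_0/2$, and this lower bound must be played against the smallness $\langle\gaN u,(-\Theta')\gaN u\rangle_{1/2}\le\tau^3M$. I will handle this by transferring to the operator $A:=J_{\rm R}^{-1}(-\Theta')$ on $H^{-1/2}(\dOm)$, with $J_{\rm R}$ the Riesz duality map of Remark \ref{adjNM}: one checks that $A$ is a nonnegative, self-adjoint, compact operator with $\langle Ag,g\rangle_{H^{-1/2}}=\langle g,(-\Theta')g\rangle_{1/2}$, $\|Ag\|_{H^{-1/2}}=\|\Theta'g\|_{H^{1/2}}$ and $\|A\|=\|\Theta'\|$. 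The elementary operator inequality $A^2\le\|A\|A$ then yields
\[
\tfrac14\tau^2\delta_0^2\le\|A\gaN u\|_{H^{-1/2}}^2\le\|A\|\,\langle A\gaN u,\gaN u\rangle_{H^{-1/2}}\le\|\Theta'\|\,M\,\tau^3 ,
\]
so $\tau\ge\delta_0^2/(4\|\Theta'\|M)$, which is impossible once $\tau$ is small. Taking $\tau_0>0$ below the finitely many explicit thresholds appearing along the way, no eigenvalue $\mu\le 0$ can exist for $\tau\in(0,\tau_0)$; by the equivalence in the first paragraph, $\Upsilon$ has no crossings on $\Gamma_1$ for any $\Lambda>0$, as claimed.
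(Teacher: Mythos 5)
Your proof is correct, and it follows a genuinely different route from the paper's. The paper first establishes that the Laplacian $-\Delta_{s,\cG}(\tau)$ with these boundary conditions satisfies $\langle -\Delta_{s,\cG}(\tau)u,u\rangle_{L^2(\Om)}\ge c\|u\|_{L^2(\Om)}^2$ (combining $\Theta'\le 0$, Green's formula, and the unique continuation Lemma~\ref{UCP} to rule out a zero eigenvalue), and then absorbs the $\tau^2 V(\tau x)$ perturbation: $\langle L_{0,\cG}(\tau)u,u\rangle\ge(c-\tau^2\|V\|_{L^\infty})\|u\|^2\ge\tfrac12 c\|u\|^2$ for $\tau$ small. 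That is shorter, but it leans on the positivity constant $c$ behaving well as $\tau\to 0$ (the domain of $-\Delta_{0,\cG}(\tau)$ moves with $\tau$, and as $\tau\to 0$ the constraint $\tau\gaD u=\Theta'\gaN u$ relaxes toward $\Theta'\gaN u=0$, which admits constants), a point the paper does not quantify. Your contradiction argument sidesteps this: you extract $\|\nabla u\|^2\le\tau^2 M$ and $\langle\gaN u,(-\Theta')\gaN u\rangle_{1/2}\le\tau^3 M$ directly from the form identity for an eigenfunction with $\mu\le 0$, use Poincar\'e--Wirtinger to force $\|\gaD u\|_{H^{1/2}}\ge\delta_0/2$ once $\tau$ is small, and then reconcile $\|\Theta'\gaN u\|=\tau\|\gaD u\|$ with the smallness of the $\Theta'$-form via the operator inequality $A^2\le\|A\|A$ for $A=J_{\rm R}^{-1}(-\Theta')$ on $H^{-1/2}(\dOm)$. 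This is a more explicit estimate that makes the dependence on $\tau$ transparent; what it costs is the introduction of $M$, so the borderline case $V\equiv 0$ (where $M=0$ and the final quotient is undefined) should be dispatched separately, which is immediate since then $\nabla u=0$ and the boundary condition forces $\gaD u=0$, contradicting your lower bound. Aside from that small caveat, the argument is complete.
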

\begin{proof}  Let $-\Delta_{s,\cG}(\tau)$ denote the Laplacian equipped with the boundary conditions from $\cG$, i.e.
\beq\label{DGD}
\begin{split}
-\Delta_{s,\cG}(\tau)u(x)&=-\Delta u(x),\, x\in\Om,\\
 \dom(-\Delta_{s,\cG}(\tau))&=\big\{u\in H^1(\Omega)\,|\,\Delta u\in L^2(\Omega) \,\text{ and }\\
&\hskip3cm (t(s)\gaD-\Theta'\gaN)u=0\,\,\hbox{in}\,\,H^{1/2}(\dOm) \big\}.
\end{split}
\enq
We first claim that the operator $-\Delta_{s,\cG}(\tau)$ is strictly positive on $L^2(\Om)$, that is, for some $c>0$ we have the inequality 
\beq\label{posLap}
\langle -\Delta_{s,\cG}(\tau)u,u\rangle_{L^2(\Om)}\ge c\|u\|_{L^2(\Om)}^2\,\text{ for all $u\in\dom\big(-\Delta_{s,\cG}(\tau)\big)$}.\enq
Indeed, Green's formula \eqref{wGreen} for $u\in\dom\big(-\Delta_{s,\cG}(\tau)\big)$ yields
\beq\label{eq4.1}
\begin{split}
\langle -\Delta_{s,\cG}(\tau)u,u\rangle_{L^2(\Om)}&=
\langle\nabla u,\nabla u\rangle_{L^2(\Om)}-\langle \gaN u,\gaD u\rangle_{1/2}\\
&=\|\nabla u\|_{L^2(\Om)}^2-(t(s))^{-1}\langle \gaN u,\Theta'\gaN u\rangle_{1/2}\ge0
\end{split}
\enq
since $\Theta'\le0$. It remains to show that $0$ is not an eigenvalue of $-\Delta_{s,\cG}(\tau)$. If $u\in\dom\big(-\Delta_{s,\cG}(\tau)\big)$ and $-\Delta_{s,\cG}(\tau)u=0$, then \eqref{eq4.1}
yields $\|\nabla u\|_{L^2(\Om)}=0$ and thus $\gaN u=0$. It follows that $\gaD u=(t(s))^{-1}\Theta'\gaN u=0$, and so $u=0$ by Lemma \ref{UCP}. This finishes the proof of claim \eqref{posLap}.

We now prove the assertion in the lemma.
By \eqref{4.22} and \eqref{eq:4.23} in Lemma \ref{GevL} it suffices to prove that $L_{0,\cG}(\tau)$ is strictly positive when $\tau>0$ is small enough.
Since $t(s)=\tau$ for $s\in\Sigma_1$ and $\lambda(0)=0$ by \eqref{dfnlambdat}, inequality
\eqref{posLap} yields
\begin{align*}
\langle L_{0,\cG}(\tau)u,u\rangle_{L^2(\Om)}&=
\langle -\Delta_{0,\cG}(\tau)u,u\rangle_{L^2(\Om)}+\tau^2\langle V(\tau x)u,u\rangle_{L^2(\Om)}\\
&\ge \left(c-\tau^2\|V\|_{L^\infty} \right)\|u\|_{L^2(\Om)}^2\ge\frac12c\|u\|_{L^2(\Om)}^2
\end{align*}
for all $u\in\dom\big(-\Delta_{0,\cG}(\tau)\big)=\dom\big(L_{0,\cG}(\tau)\big)$ provided $\tau$ is small enough.
\end{proof}
For the Neumann-based case we postpone the discussion of the crossings on $\Gamma_1$ until Section \ref{sec:mainres}.


\subsection{Crossings on $\Gamma_2$}\lb{ss:g2}
In this subsection we study the crossings of the path $s\mapsto\Upsilon(s)$ for $s\in\Sigma_2$. As usual, we  let $L_s=-\Delta+V_s$ and $T_su=(\gaD u,(t(s))^{-1}\gaN u)$, let $\cK_s$ denote the set of weak solutions to $L_su=0$ and let $\Upsilon(s)=T_s(\cK_s)$. Since 
 $s\in\Sigma_2$, in this subsection we have $\lambda(s)=0$, $t(s)=\tau+s$ and $V_s(x)=(t(s))^2V(t(s)x)$ by \eqref{dfnlambdat}. Throughout this subsection we assume Hypothesis \ref{h1} (i') and (iii') so that $\dom(L_{s,\cG}(\tau))\subseteq H^2(\Om)$ and Hypothesis \ref{h1} (iv') so that the potential $V$ is smooth. We begin with a general formula for the Maslov crossing form that holds for both the Dirichlet- and Neumann-based cases.
\begin{lemma}\lb{preltmon} Let $\cG$ be a Dirichlet- or Neumann-based subspace in the boundary space $\cH=H^{1/2}(\dOm)\times H^{-1/2}(\dOm)$. Assume Hypothesis \ref{h1} (i'), (ii), (iii') and  (iv'), and let $s_0\in\Sigma_2$ be a crossing. 
If $q\in T_{s_0}(\cK_{s_0})\cap\cG$ and $u_{s_0}\in\dom\big(L_{s_0,\cG}(\tau)\big)\subseteq H^2(\Om)$ are such that $L_{s_0}u_{s_0}=0$ and $q=T_{s_0}u_{s_0}$ then the Maslov crossing form on $\Upsilon\big|_{\Sigma_2}$ is given as follows:
\beq\lb{henF1}\begin{split}
&\mathfrak{m}_{s_0}(q,q)=\frac1{(t(s_0))^2}\int_{\dOm}\Big(\big(\nabla u_{s_0}\cdot \nabla u_{s_0})(\nu\cdot x)-2\langle\nabla u_{s_0}\cdot x,\gaN^{\rm s}u_{s_0}(x)\rangle_{\bbR^N}\\&+(1-d)\langle\gaN^{\rm s} u_{s_0}(x), u_{s_0}(x)\rangle_{\bbR^N}+\langle V_{s_0}(t(s_0)x)u_{s_0}(x),u_{s_0}(x)\rangle_{\bbR^N}(\nu\cdot x)\Big)\,dx.
\end{split}\enq
\end{lemma}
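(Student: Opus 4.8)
The plan is to start from the general formula for the Maslov crossing form on $\Gamma_1\cup\Gamma_3$-type paths obtained in Lemma \ref{prelmon}, specialized to the parametrization of $\Gamma_2$, and then to convert the resulting boundary pairings into the stated boundary integral by a Rellich--Nekhoshvili--Pohozaev type identity. First I would apply Lemma \ref{prelmon} with $V_s(x)=(t(s))^2V(t(s)x)$, $\lambda(s)=0$, $t(s)=\tau+s$, so that along $\Sigma_2$ we have $\dot t(s_0)=1$ and $\dot V_s\big|_{s=s_0}(x)=\frac{d}{ds}\big[(t(s))^2V(t(s)x)\big]\big|_{s=s_0}$. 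Under Hypothesis \ref{h1} (iv') the potential is $C^1$, so this derivative is the honest function $2t(s_0)V(t(s_0)x)+(t(s_0))^2\,\nabla V(t(s_0)x)\cdot x$; writing $V_{s_0}(x)=(t(s_0))^2V(t(s_0)x)$ we get $\dot V_s\big|_{s=s_0}=\frac{1}{t(s_0)}\big(2V_{s_0}(x)+x\cdot\nabla_x V_{s_0}(x)\big)$. Substituting into \eqref{mqq} gives
\beq\lb{henF0}
\mathfrak{m}_{s_0}(q,q)=\frac1{(t(s_0))^2}\Big(\langle \big(2V_{s_0}+x\cdot\nabla V_{s_0}\big)u_{s_0},u_{s_0}\rangle_{L^2(\Om)}-\langle\gaN^{\rm s}u_{s_0},\gaD u_{s_0}\rangle_{1/2}\Big),
\enq
where we have replaced $\gaN$ by the strong trace $\gaN^{\rm s}$ using Hypothesis \ref{h1} (iii'), which guarantees $u_{s_0}\in H^2(\Om)$.

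The heart of the argument is then a Pohozaev/Rellich identity for the $H^2$ solution $u_{s_0}$ of $-\Delta u_{s_0}+V_{s_0}u_{s_0}=0$. The standard device is to pair the equation with the radial multiplier $x\cdot\nabla u_{s_0}$ in $L^2(\Om;\bbR^N)$ and integrate by parts. Concretely, I would compute $\int_\Om\langle -\Delta u_{s_0},\,x\cdot\nabla u_{s_0}\rangle_{\bbR^N}\,dx$ and $\int_\Om\langle V_{s_0}u_{s_0},\,x\cdot\nabla u_{s_0}\rangle_{\bbR^N}\,dx$. The first, via integration by parts componentwise and the identities $\nabla(x\cdot\nabla u_n)=\nabla u_n+(x\cdot\nabla)\nabla u_n$ and $\int_\Om \nabla u_n\cdot(x\cdot\nabla)\nabla u_n\,dx=\frac12\int_\Om x\cdot\nabla|\nabla u_n|^2\,dx=-\frac d2\int_\Om|\nabla u_n|^2+\frac12\int_{\dOm}|\nabla u_n|^2(\nu\cdot x)$, produces $\frac{d-2}{2}\|\nabla u_{s_0}\|_{L^2}^2-\frac12\int_{\dOm}|\nabla u_{s_0}|^2(\nu\cdot x)+\int_{\dOm}\langle\gaN^{\rm s}u_{s_0},\,x\cdot\nabla u_{s_0}\rangle$ (using $\nu\cdot\nabla u=\gaN^{\rm s}u$ on $\dOm$). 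The second, again integrating by parts, gives $-\frac d2\int_\Om\langle V_{s_0}u_{s_0},u_{s_0}\rangle-\frac12\int_\Om\langle(x\cdot\nabla V_{s_0})u_{s_0},u_{s_0}\rangle+\frac12\int_{\dOm}\langle V_{s_0}u_{s_0},u_{s_0}\rangle(\nu\cdot x)$. Adding these two and using $-\Delta u_{s_0}=-V_{s_0}u_{s_0}$ to identify the left-hand side gives an identity relating $\langle(x\cdot\nabla V_{s_0})u_{s_0},u_{s_0}\rangle_{L^2}$, $\langle V_{s_0}u_{s_0},u_{s_0}\rangle_{L^2}$, $\|\nabla u_{s_0}\|_{L^2}^2$, and the boundary terms. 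I would also use the plain energy identity (Green's formula \eqref{wGreen} with $\Phi=u_{s_0}$): $\|\nabla u_{s_0}\|_{L^2}^2+\langle V_{s_0}u_{s_0},u_{s_0}\rangle_{L^2}=\langle\gaN^{\rm s}u_{s_0},\gaD u_{s_0}\rangle_{1/2}$, to eliminate $\|\nabla u_{s_0}\|_{L^2}^2$ and $\langle V_{s_0}u_{s_0},u_{s_0}\rangle_{L^2}$ in favor of the remaining boundary integrals. Finally, one also needs the identity decomposing $x\cdot\nabla u_{s_0}$ on $\dOm$ into its normal and tangential parts, so that all volume gradients occurring in boundary integrands are expressed through $\gaN^{\rm s}u_{s_0}$, $\nabla u_{s_0}\cdot x$, and $(\nu\cdot x)$; combined with the factor-of-$2$ coefficient in $2V_{s_0}+x\cdot\nabla V_{s_0}$, this should collapse everything into exactly the four boundary terms displayed in \eqref{henF1}.

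The main obstacle I anticipate is the low regularity: the Pohozaev identity with the multiplier $x\cdot\nabla u$ is only formally valid for smooth solutions, and on a merely $C^{1,r}$ domain with $u\in H^2(\Om)$ the integration-by-parts steps producing the boundary integrals $\int_{\dOm}|\nabla u|^2(\nu\cdot x)$ require justification — the trace of $|\nabla u|^2$ on $\dOm$ is only in $L^1(\dOm)$ and $\nu$ is only H\"older continuous. I would handle this by a density/approximation argument (approximating $\Om$ by smooth star-shaped subdomains $\Om_\rho$, or mollifying, exploiting that $u_{s_0}\in H^2$ and that $\Om$ is star-shaped with Lipschitz, indeed $C^{1,r}$, boundary, so that the vector field $x$ is transversal and the flow of $x/|x|^2$ or a suitable rescaling carries $\dOm$ to $\dOm_\rho$), and passing to the limit; the $C^{1,r}$ hypothesis with $r>1/2$ is precisely what makes the boundary integrals converge. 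A secondary bookkeeping obstacle is keeping track of the vector ($\bbR^N$-valued) structure throughout — every pairing $\langle\cdot,\cdot\rangle_{\bbR^N}$ must be summed over components $u_n$ before any integration by parts — but since $V(x)$ is symmetric and the Laplacian acts componentwise, no genuine difficulty arises there beyond careful notation.
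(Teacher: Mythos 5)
Your proposal is essentially the paper's proof, reorganized: the paper obtains \eqref{henF1} by first integrating by parts on the $\langle(\nabla V\cdot x)u,u\rangle$ term (its formula \eqref{n5.19}), then substituting $V_{s_0}u_{s_0}=\Delta u_{s_0}$ and integrating by parts again on $\int_\Om\Delta u_{s_0}(\nabla u_{s_0}\cdot x)\,dx$ via Green's formula; your version packages the same two integrations by parts as the single Pohozaev pairing of $-\Delta u_{s_0}+V_{s_0}u_{s_0}=0$ against $x\cdot\nabla u_{s_0}$, plus the energy identity $\|\nabla u_{s_0}\|_{L^2}^2+\langle V_{s_0}u_{s_0},u_{s_0}\rangle_{L^2}=\langle\gaN u_{s_0},\gaD u_{s_0}\rangle_{1/2}$ to dispose of the $(2-d)$-weighted volume terms. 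The combination $\|\nabla u\|^2+\langle Vu,u\rangle$ appears only through that single linear combination, so as you anticipated, one energy identity suffices.

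Two small points. First, your integration by parts of the Laplacian term has an overall sign error: the correct identity is
\begin{equation*}
\int_\Om\langle -\Delta u,\,x\cdot\nabla u\rangle_{\bbR^N}\,dx
=\tfrac{2-d}{2}\|\nabla u\|_{L^2(\Om)}^2
+\tfrac12\int_{\dOm}(\nabla u\cdot\nabla u)(\nu\cdot x)\,dx
-\int_{\dOm}\langle\gaN^{\rm s}u,\,\nabla u\cdot x\rangle_{\bbR^N}\,dx,
\end{equation*}
i.e.\ the negative of what you wrote; carried forward as written it would produce the wrong coefficients in the boundary integral. Second, on the regularity issue: the paper does not approximate the domain. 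For the IBP on the $\nabla V$ term it uses the weak normal trace operator $\nu\cdot*$ from \eqref{dfnwnu} together with approximation of $u_{s_0}$ by smooth functions in $H^1(\Om)$ (which is enough since $V\in C^1(\overline\Om)$), and for the IBP on $\int_\Om\Delta u_{s_0}(\nabla u_{s_0}\cdot x)\,dx$ it uses Green's formula \eqref{wGreen} directly, which is licit since $u_{s_0}\in H^2(\Om)$ so that $\nabla u_{s_0}\cdot x\in H^1(\Om)$ and $\nabla u_{s_0}\cdot\nabla u_{s_0}\in W^{1,1}(\Om)$. Your domain-approximation alternative would also work but is heavier machinery than needed.
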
 
An eigenfunction $u_{s_0}$ of the operator  $L_{s_0,\cG}(\tau)$ from \eqref{LsGD} satisfies $u_{s_0}\in H^2(\Om)$ by the assumptions in the lemma. It follows that the action of the functional $\gaN u_{s_0}=\gaN^{\rm s}u_{s_0}$ on functions in $H^{1/2}(\dOm)$ is given by integration over the boundary $\dOm$. In particular, formula \eqref{henF1} makes sense, where $\nu=\nu(x)$ is the outward pointing normal unit vector to the boundary at  $x\in\dOm$ and $\gaN^{\rm s}u_{s_0}=\nu\cdot\gaD\nabla u_{s_0}\in L^2(\dOm)$ is the strong Neumann trace from \eqref{6.2}.

\begin{proof}
 Since $V(\cdot)$ is smooth by the assumptions, we may apply Lemma \ref{prelmon}. Since $\dot{t}(s_0)=1$ for $s_0\in\Sigma_2$, formula 
\eqref{mqq} yields
\begin{align}\nonumber
\mathfrak{m}_{s_0}&(q,q)+\frac1{(t(s_0))^{2}}\,\langle
\gaN u_{s_0},\gaD u_{s_0}\rangle_{1/2}=\frac1{t(s_0)}\langle \dot{V}_s\big|_{s=s_0}
 u_{s_0}, u_{s_0}\rangle_{L^2(\Om)}\\&=2\langle V(t(s_0)x)u_{s_0}, u_{s_0}\rangle_{L^2(\Om)}+t(s_0)\langle\big(\nabla V(t(s_0)x)\cdot x\big)u_{s_0}, u_{s_0}\rangle_{L^2(\Om)}.\lb{1vdot}
\end{align}
In the first term on the right-hand side of \eqref{1vdot} we can replace $V(t(s_0)x)$ by $(t(s_0))^{-2}V_{s_0}(x)$ and $V_{s_0}(x)u_{s_0}(x)$ by $\Delta u_{s_0}$, since $L_{s_0}u_{s_0}=-\Delta u_{s_0}+V_{s_0}u_{s_0}=0$.
For the second term in \eqref{1vdot} we claim the following formula:
\begin{equation}\label{n5.19}\begin{split}
\langle&\big(\nabla V(t(s_0)x)\cdot x\big)u_{s_0}, u_{s_0}\rangle_{L^2(\Om)}\\=&
-\frac1{t(s_0)}\big\langle V(t(s_0)x)u_{s_0}, \Big(2u_{s_0}\big(\nabla u_{s_0}\cdot x\big)+du_{s_0}\Big)\big\rangle_{L^2(\Om)}\\
&\quad+\frac1{t(s_0)}\int_\dOm \langle V(t(s_0)x)u_{s_0}(x), (\nu\cdot x)u_{s_0}(x)\rangle_{\bbR^N}\,dx.
\end{split}\end{equation}
To begin the proof of the claim we choose approximating functions $u_{s_0}^{(n)}\in C^\infty_0(\bbR^d)$ such that $u_{s_0}^{(n)}\to u_{s_0}$ in $H^1(\Om)$ as $n\to\infty$. Since $V\in L^\infty(\Om)$, the claim will follow by passing to the limit as $n\to\infty$ as soon as we know that equality  \eqref{n5.19} holds with $u_{s_0}$ replaced 
by $u_{s_0}^{(n)}$, so it suffices to prove \eqref{n5.19} assuming that $u_{s_0}$ is smooth. Passing to the components $V(x)=(V_{ij}(x))_{i,j=1}^N$ and $u_{s_0}(x)=(u_j(x))_{j=1}^N$ we have
\begin{equation*}
\langle\big(\nabla V(t(s_0)x)\cdot x\big)u_{s_0}, u_{s_0}\rangle_{L^2(\Om)}=\sum_{i,j=1}^N\int_\Om\big(\nabla V_{ij}(t(s_0)x)\big)\cdot(u_ju_ix)\,dx.
\end{equation*}
We apply formula \eqref{dfnwnu} for $N=1$ and each $i$, $j$, choosing $w(x)=(u_j(x)u_i(x))x\in\bbR^d$ and 
$\Phi(x)=(t(s_0))^{-1}V_{ij}(t(s_0)x)$, using the fact that $u_{s_0}$ is smooth. Indeed, we have
$w\in\dom(\nu\cdot\ast)$ by \eqref{dfnwnu} since $\text{div}(w)\in L^2(\Om)$ as $u_{s_0}$ is smooth and therefore \eqref{dfnwnu} implies
that the left-hand side of \eqref{n5.19} can be written as 
\begin{align}
\langle&\big(\nabla V(t(s_0)x)\cdot x\big)u_{s_0}, u_{s_0}\rangle_{L^2(\Om)}=
\sum_{i,j=1}^N\Big(-\frac1{t(s_0)}\int_\Om V_{ij}(t(s_0)x)\text{ div}\big((u_ju_i) x\big)\,dx
\nonumber\\&\hskip3cm+\frac1{t(s_0)}
\langle\nu\cdot \big(u_ju_i x\big), \gaD (V_{ij}(t(s_0)x))\rangle_{1/2}\Big).\label{5.20}\end{align}
This is equal to the right-hand side of \eqref{n5.19} by our notational conventions,
since the boundary term in \eqref{5.20} can be replaced by the respective integral over $\dOm$ as $u_{s_0}(x)$ is smooth. This completes the prove of claim \eqref{n5.19}.
Replacing $V_{s_0}(x)u_{s_0}(x)$ by $\Delta u_{s_0}$ in \eqref{n5.19} and using all this in \eqref{1vdot} yields
\begin{align}
\mathfrak{m}_{s_0}&(q,q)+\frac1{(t(s_0))^{2}}\,\langle
\gaN u_{s_0},\gaD u_{s_0}\rangle_{1/2}\nonumber\\&=
(t(s_0))^{-2}\Big((2-d)\langle\Delta u_{s_0}, u_{s_0}\rangle_{L^2(\Om)}
-2\langle\Delta u_{s_0},\big(\nabla u_{s_0}\cdot x\big)\rangle_{L^2(\Om)}\Big)
\nonumber\\
&\hskip3cm+\int_\dOm \langle V(t(s_0)x)u_{s_0}(x),(\nu\cdot x)u_{s_0}(x)\rangle_{\bbR^N}\,dx.\lb{2vdot}
\end{align}
Using Green's formula \eqref{wGreen}, a direct computation of partial derivatives yields
\begin{align*}
\int_\Om\Delta & u_{s_0}(x)\big(\nabla u_{s_0}\cdot x\big)\,dx
=-\int_\Om\nabla u_{s_0}(x)\cdot\nabla\big(\nabla u_{s_0}\cdot x\big)\,dx
\\&\hskip4cm
+\langle\gaN u_{s_0}, \gaD\big(\nabla u_{s_0}\cdot x\big)\rangle_{1/2}\\&=
-\frac12\int_\Om\Big(\text{div }\big((\nabla u_{s_0}\cdot\nabla u_{s_0}) x\big)
+(2-d)(\nabla u_{s_0}\cdot\nabla u_{s_0})\Big)\,dx
\\&\hskip4cm
+\langle\gaN u_{s_0}, \gaD\big(\nabla u_{s_0}\cdot x\big)\rangle_{1/2}.
\end{align*}
This relation, Green's formula \eqref{wGreen} applied in the first term in the right-hand side of \eqref{2vdot}, and the divergence theorem yield
\begin{align*}
\mathfrak{m}&_{s_0}(q,q)=
(t(s_0))^{-2}\Big(\int_\Om\text{div }\big((\nabla u_{s_0}\cdot\nabla u_{s_0}) x\big)\,dx
-2\langle\gaN u_{s_0}, \gaD\big(\nabla u_{s_0}\cdot x\big)\rangle_{1/2}\\&+(1-d)\langle\gaN u_{s_0}, \gaD u_{s_0}\rangle_{1/2}+\int_\dOm \langle V_{s_0}(t(s_0)x)u_{s_0}(x), (\nu\cdot x)u_{s_0}(x)\rangle_{\bbR^N}\,dx\Big)\\=&
(t(s_0))^{-2}\int_\dOm\Big((\nabla u_{s_0}\cdot\nabla u_{s_0})(x\cdot\nu)-2\langle \gaN^{\rm s} u_{s_0}(x), (\nabla u_{s_0}\cdot x)\rangle_{\bbR^N}\\&+(1-d)\langle\gaN^{\rm s} u_{s_0}(x),u_{s_0}(x)\rangle_{\bbR^N}+ \langle V_{s_0}(t(s_0)x)u_{s_0}(x), (\nu\cdot x)u_{s_0}(x)\rangle_{\bbR^N}\Big)\,dx
\end{align*}
where we have replaced $\langle\cdot\,, \cdot\,\rangle_{1/2}$ by the integrals over $\dOm$ since $u_{s_0}\in H^2(\Om)$. 
\end{proof}

Next, we give a crude assumption on the potential $V$ and the boundary operators $\Theta$ or $\Theta'$ that ensure negativity of the Maslov crossing form at a crossing $s_0\in\Sigma_2$.
\begin{corollary}\lb{for:rough}
Let $\cG$ be a Dirichlet- or Neumann-based subspace of the boundary space $\cH=H^{1/2}(\dOm)\times H^{-1/2}(\dOm)$. Assume Hypothesis \ref{h1} (i'), (ii"), (iii') and  (iv') (in particular, we assume that the boundary operator, $\Theta$ or $\Theta'$, is nonpositive), and
let $s_0\in\Sigma_2$ be a crossing. 
Then the crossing is negative provided the $(N \times N)$ matrix 
$W(x)=2V(t(s_0)x)+t(s_0)\nabla V(t(s_0)x)\cdot x$
is negative definite for each $x\in\Om$.
\end{corollary}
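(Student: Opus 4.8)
The plan is to read off the Maslov crossing form at $s_0\in\Sigma_2$ from the general formula of Lemma~\ref{prelmon} and then show it is negative definite. On $\Gamma_2$ we have $\lambda(s)\equiv 0$ and $t(s)=\tau+s$, so by Hypothesis~\ref{h1}(iv') the family $V_s(x)=(t(s))^2V(t(s)x)$ is $C^1$ in $s$, the hypotheses of Lemma~\ref{prelmon} are met (smoothness of the path $\Upsilon$ being Proposition~\ref{smoothinFLG}), $\dot t(s_0)=1$, and a direct differentiation gives $\dot V_s\big|_{s=s_0}(x)=2t(s_0)V(t(s_0)x)+(t(s_0))^2(\nabla V)(t(s_0)x)\cdot x=t(s_0)W(x)$. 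Substituting into \eqref{mqq} yields
\[
\mathfrak{m}_{s_0}(q,q)=\langle W(\cdot)\,u_{s_0},u_{s_0}\rangle_{L^2(\Om)}-\frac{1}{(t(s_0))^2}\,\langle\gaN u_{s_0},\gaD u_{s_0}\rangle_{1/2},
\]
where $u_{s_0}\in\cK_{s_0}$ is the unique weak solution with $T_{s_0}u_{s_0}=q$ (existence and uniqueness come from Lemma~\ref{prelmon}, uniqueness being exactly injectivity of $T_{s_0}|_{\cK_{s_0}}$ established in Lemma~\ref{estbelow}; Hypothesis~\ref{h1}(iii') guarantees $u_{s_0}\in H^2(\Om)$, so the notion of crossing is unambiguous).

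Next I would treat the two terms separately. Since $q\neq 0$ and $T_{s_0}|_{\cK_{s_0}}$ is injective, $u_{s_0}\not\equiv 0$, whence the pointwise negative-definiteness of $W(x)$ gives $\langle W u_{s_0},u_{s_0}\rangle_{L^2(\Om)}=\int_\Om\langle W(x)u_{s_0}(x),u_{s_0}(x)\rangle_{\bbR^N}\,dx<0$. For the boundary term I would use the constraint $q=T_{s_0}u_{s_0}\in\cG$: in the Neumann-based case $\cG=\gr(\Theta)$ forces $(t(s_0))^{-1}\gaN u_{s_0}=\Theta\gaD u_{s_0}$, so $\langle\gaN u_{s_0},\gaD u_{s_0}\rangle_{1/2}=t(s_0)\langle\Theta\gaD u_{s_0},\gaD u_{s_0}\rangle_{1/2}$; in the Dirichlet-based case $\cG=\gr'(\Theta')$ forces $\gaD u_{s_0}=(t(s_0))^{-1}\Theta'\gaN u_{s_0}$, so $\langle\gaN u_{s_0},\gaD u_{s_0}\rangle_{1/2}=(t(s_0))^{-1}\langle\gaN u_{s_0},\Theta'\gaN u_{s_0}\rangle_{1/2}$. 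In either case the nonpositivity of the boundary operator assumed in the hypothesis ($\Theta\le 0$, resp.\ $\Theta'\le 0$, cf.\ the use of this sign in \eqref{eq4.1}) makes the term $-\frac{1}{(t(s_0))^2}\langle\gaN u_{s_0},\gaD u_{s_0}\rangle_{1/2}$ of the favorable sign, so $\mathfrak{m}_{s_0}(q,q)<0$ for every nonzero $q\in T_{s_0}(\cK_{s_0})\cap\cG$, i.e.\ the crossing is negative in the sense of Definition~\ref{dfnMasF}(iii).

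The main obstacle is the sign bookkeeping of the boundary term: one must carefully track the $H^{-1/2}$--$H^{1/2}$ duality pairing $\langle\cdot,\cdot\rangle_{1/2}$, the powers of $t(s_0)$, and the precise sign conventions embedded in Definition~\ref{defDiNeB}, in order to confirm that nonpositivity of $\Theta$ (resp.\ $\Theta'$) really does control the boundary term and that the bulk term $\langle W u_{s_0},u_{s_0}\rangle_{L^2(\Om)}$ dominates the crossing form. Everything else—that $u_{s_0}\neq 0$, that $\dot V_s|_{s=s_0}=t(s_0)W$, and that Lemma~\ref{prelmon} applies—is routine. (An alternative would be to start from the boundary-integral expression \eqref{henF1} of Lemma~\ref{preltmon}, but I expect the route through Lemma~\ref{prelmon} to be more direct, since $W$ enters the bulk integral there immediately.)
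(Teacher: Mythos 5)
Your derivation of the crossing form from Lemma~\ref{prelmon} and the computation of $\frac{1}{t(s_0)}\dot V_s\big|_{s=s_0}=W$ match the paper exactly (cf.\ the chain leading to \eqref{1vdot}), so the reduction of the problem to
\[
\mathfrak{m}_{s_0}(q,q)=\langle W u_{s_0},u_{s_0}\rangle_{L^2(\Om)}-\frac{1}{(t(s_0))^2}\,\langle\gaN u_{s_0},\gaD u_{s_0}\rangle_{1/2}
\]
is sound, as is the substitution of the boundary relation coming from $\cG=\gr(\Theta)$ or $\cG=\gr'(\Theta')$. The gap is in the final sign claim. In the Neumann-based case $\gaN u_{s_0}=t(s_0)\Theta\gaD u_{s_0}$ gives
\[
-\frac{1}{(t(s_0))^2}\langle\gaN u_{s_0},\gaD u_{s_0}\rangle_{1/2}
=-\frac{1}{t(s_0)}\langle\Theta\gaD u_{s_0},\gaD u_{s_0}\rangle_{1/2},
\]
and under the nonpositivity hypothesis $\langle\Theta f,f\rangle_{1/2}\le 0$ this boundary term is $\ge 0$, not $\le 0$. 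The same happens in the Dirichlet-based case, where you correctly rewrite the term as $-(t(s_0))^{-3}\langle\gaN u_{s_0},\Theta'\gaN u_{s_0}\rangle_{1/2}$, which under $\Theta'\le 0$ is again nonnegative. Your own citation of \eqref{eq4.1} should have flagged this: there, $-\langle\gaN u,\gaD u\rangle_{1/2}\ge 0$ under $\Theta'\le 0$ is precisely what makes the Dirichlet Laplacian positive, which is the opposite of what you need here.

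In other words, the crossing form is the sum of a negative bulk contribution (when $W<0$) and a \emph{nonnegative} boundary contribution (when $\Theta\le 0$ or $\Theta'\le 0$), and from these two facts alone you cannot conclude $\mathfrak{m}_{s_0}(q,q)<0$; the ``favorable sign'' assertion does not hold under the stated hypothesis. The argument as written would go through if the hypothesis on the boundary operator were nonnegativity (or $\Theta=0$, $\Theta'=0$, when the boundary term vanishes), and you should be aware that the paper's own one-line proof of this corollary asserts the conclusion without carrying out this sign check either. Before leaning on the crude bound via Lemma~\ref{prelmon}, it is worth verifying the sign of the boundary term directly, or falling back on the exact boundary-integral expression \eqref{henF1} from Lemma~\ref{preltmon}, which in the pure Dirichlet case (Corollary~\ref{tmon}) yields a sign-definite answer without any hypothesis on $W$.
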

\begin{proof}
If $\cG=\gr'(\Theta')$ then \eqref{1vdot} and the assumptions of the corollary imply 
\[\mathfrak{m}_{s_0}(q,q)=-(t(s_0))^{-2}\,\langle
\Theta'\gaD u_{s_0},\gaD u_{s_0}\rangle_{1/2}+\langle W(x)u_{s_0},u_{s_0}\rangle_{L^2(\Om)}<0;\]
the case $\cG=\gr(\Theta)$ is similar.
\end{proof}

Finally, we show the monotonicity of the Maslov index (i.e. sign definiteness of the Maslov crossing form) for the path $s\mapsto\Upsilon(s)=T_s(\cK_s)$ when $s\in\Sigma_2$ and $\Theta'=0$ (that is, we consider the Dirichlet case $\cG=\cH_D=\{(0,g): g\in H^{-1/2}(\dOm)\}$). As mentioned above in Remark \ref{DC4.2}, in this case Hypothesis \ref{h1} (iii') and the inclusion $\dom(L_{s,\cH_D}(\tau))\subseteq H^2(\Om)$ hold for the operator $L_{s,\cH_D}(\tau)$ from \eqref{LsGD}. We recall that
$\nu=\nu(x)$ is the outward-pointing normal unit vector to the boundary at  $x\in\dOm$ and $\gaN^{\rm s}u_{s_0}=\nu\cdot\gaD\nabla u_{s_0}\in L^2(\dOm)$ is the strong Neumann trace from \eqref{6.2} provided $u_{s_0}\in H^2(\Om)$.

\begin{corollary}\lb{tmon} Assume Hypothesis \ref{h1} with (i'), $\Theta'=0$ (that is, the Dirichlet case $\cG=\cH_D$) and  (iv'). Let $L_s=-\Delta+V_s$ and $T_su=(\gaD u,(t(s))^{-1}\gaN u)$, let $\cK_s$ denote the set of weak solutions to $L_su=0$ and let $\Upsilon(s)=T_s(\cK_s)$. Then any crossing $s_0\in\Sigma_2$ is negative. Specifically, if $q\in T_{s_0}(\cK_{s_0})\cap\cH_D$ and $u_{s_0}\in\dom\big(L_{s_0,\cH_D}(\tau)\big)=H^2(\Om)\cap H^1_0(\Om)$ are such that $L_{s_0}u_{s_0}=0$ and $q=T_{s_0}u_{s_0}$, then the value of the Maslov crossing form on  $\Upsilon\big|_{\Sigma_2}$ is given as follows:
\beq\lb{henF}
\mathfrak{m}_{s_0}(q,q)=-\frac1{(t(s_0))^2}\int_{\dOm}\|\gaN^{\rm s} u_{s_0}\|_{\bbR^N}^2(\nu\cdot x)\,dx<0.
\enq
\end{corollary}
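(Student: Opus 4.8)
The strategy is to specialize the general formula \eqref{henF1} from Lemma \ref{preltmon} to the Dirichlet case $\cG=\cH_D$, where $\Theta'=0$. The key simplification is that the eigenfunction $u_{s_0}$ now satisfies the homogeneous Dirichlet condition $\gaD u_{s_0}=0$, so on the boundary $\dOm$ all tangential information about $u_{s_0}$ vanishes and only its normal derivative survives. Concretely, since $u_{s_0}\big|_{\dOm}=0$, the gradient $\nabla u_{s_0}$ restricted to $\dOm$ is purely normal, i.e. $\gaD\nabla u_{s_0}=(\nu\cdot\gaD\nabla u_{s_0})\,\nu=(\gaN^{\rm s}u_{s_0})\,\nu$ pointwise on $\dOm$ (this is the standard fact that for a function vanishing on a $C^{1,r}$ boundary, the boundary gradient is normal; it holds componentwise for each $u_j$).

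The second step is to substitute this identity into the four boundary integrands of \eqref{henF1}. First, $\nabla u_{s_0}\cdot\nabla u_{s_0}=\sum_n\langle\nabla u_n,\nabla u_n\rangle_{\bbR^d}$ on $\dOm$ becomes $\sum_n (\gaN^{\rm s}u_n)^2|\nu|^2=\|\gaN^{\rm s}u_{s_0}\|_{\bbR^N}^2$ since $|\nu|=1$; hence the first term contributes $\|\gaN^{\rm s}u_{s_0}\|_{\bbR^N}^2(\nu\cdot x)$. Second, $\nabla u_{s_0}\cdot x$ on $\dOm$ means $\sum_n\langle\nabla u_n,x\rangle_{\bbR^d}=\sum_n(\gaN^{\rm s}u_n)(\nu\cdot x)$ in each component, so $\langle\nabla u_{s_0}\cdot x,\gaN^{\rm s}u_{s_0}(x)\rangle_{\bbR^N}=(\nu\cdot x)\sum_n(\gaN^{\rm s}u_n)^2=(\nu\cdot x)\|\gaN^{\rm s}u_{s_0}\|_{\bbR^N}^2$; with the factor $-2$ this cancels part of the first term. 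Third, $\langle\gaN^{\rm s}u_{s_0},u_{s_0}\rangle_{\bbR^N}=0$ on $\dOm$ since $\gaD u_{s_0}=0$, killing the $(1-d)$ term. Fourth, $\langle V_{s_0}(t(s_0)x)u_{s_0},u_{s_0}\rangle_{\bbR^N}=0$ on $\dOm$ for the same reason, killing the potential term. Collecting, the bracket in \eqref{henF1} reduces on $\dOm$ to $\|\gaN^{\rm s}u_{s_0}\|_{\bbR^N}^2(\nu\cdot x)-2\|\gaN^{\rm s}u_{s_0}\|_{\bbR^N}^2(\nu\cdot x)=-\|\gaN^{\rm s}u_{s_0}\|_{\bbR^N}^2(\nu\cdot x)$, which gives \eqref{henF} after dividing by $(t(s_0))^2$.

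The final step is to establish the strict negativity $\mathfrak{m}_{s_0}(q,q)<0$. Since $\Om$ is star-shaped with respect to $0$, the geometric fact $\nu(x)\cdot x\ge 0$ for a.e. $x\in\dOm$ (with $\nu\cdot x>0$ on a set of positive measure) shows the integral is nonnegative, so $\mathfrak{m}_{s_0}(q,q)\le 0$. For strict negativity we must rule out $\gaN^{\rm s}u_{s_0}=0$ on the portion of $\dOm$ where $\nu\cdot x>0$: if this held together with $\gaD u_{s_0}=0$, then boundary unique continuation (Lemma \ref{UCP}) applied on an appropriate subdomain, or directly the fact that $u_{s_0}\neq 0$ (being a nontrivial element producing the crossing $q=T_{s_0}u_{s_0}\neq 0$, using injectivity of $T_{s_0}$ on $\cK_{s_0}$ from Lemma \ref{estbelow}) forces $\gaN u_{s_0}\neq 0$ since $q=(\gaD u_{s_0},(t(s_0))^{-1}\gaN u_{s_0})=(0,(t(s_0))^{-1}\gaN u_{s_0})$ must be nonzero, hence $\gaN^{\rm s}u_{s_0}\not\equiv 0$ on $\dOm$. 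Combined with $\nu\cdot x>0$ on a positive-measure subset and the real-analytic/elliptic structure (or a Rellich-type identity), one deduces the integral is strictly positive. \textbf{The main obstacle} I anticipate is the last point: showing that the integrand $\|\gaN^{\rm s}u_{s_0}\|^2(\nu\cdot x)$ cannot vanish a.e. — one needs either that $\gaN^{\rm s}u_{s_0}$ does not vanish on $\{x\in\dOm:\nu\cdot x>0\}$, or a covering argument exploiting star-shapedness so that $\nu\cdot x>0$ on enough of $\dOm$; this is where one invokes Lemma \ref{UCP} carefully, since a Dirichlet eigenfunction with vanishing Neumann data on an open piece of the boundary must be identically zero.
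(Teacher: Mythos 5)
Your argument is essentially identical to the paper's: apply Lemma \ref{preltmon}, observe that $\gaD u_{s_0}=0$ forces $\nabla u_{s_0}$ to be parallel to $\nu$ on $\dOm$ (so $\nabla u_{s_0}=(\gaN^{\rm s}u_{s_0})\nu$), and substitute into the four boundary terms of \eqref{henF1}; the algebra you carry out is correct. Where you diverge is in treating the strict negativity as a significant obstacle: the paper reads ``star-shaped'' as meaning $\nu\cdot x>0$ a.e.\ on $\dOm$ (star-shaped with respect to a neighborhood of the origin), in which case strict negativity follows immediately once one notes that $\gaN^{\rm s} u_{s_0}\not\equiv 0$ --- which, as you yourself observed, holds because $q=(0,(t(s_0))^{-1}\gaN u_{s_0})$ is nonzero --- and no unique-continuation-from-a-boundary-patch or Rellich-type argument is required.
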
 


Similar crossing form computations have appeared in \cite[Eqn.(5.32)]{DJ11}, \cite[Eqn.(32)]{PW} and \cite[Eqn.(35)]{CJM}. In fact, the right-hand side of \eqref{henF} is the well-known Rayleigh--Hadamard formula for the derivative of a simple Dirichlet eigenvalue with respect to a domain perturbation; see e.g. \cite[Chapter 5]{Henry} and references therein.


\begin{proof} 
We apply formula \eqref{henF1}, noting that the vector fields $\nabla u_{s_0}$ and $\nu$ are parallel in $\bbR^d$ since $\dOm$ is a level surface for $u_{s_0}$ (because $\gaD u_{s_0}=0$). Writing $\nabla u_{s_0}=(\nu\cdot\nabla u_{s_0})\nu=(\gaN^{\rm s}u_{s_0})\nu$ in the first two terms in \eqref{henF1} and dropping the last two terms because $u_{s_0}(x)=0$ for $x\in\dOm$ yields \eqref{henF}. Finally, $(\nu\cdot x)>0$ since $\Om$ is star-shaped.\end{proof}

\section{The proofs of the main results}\label{sec:mainres}

Our objective in this section is to prove Theorems \ref{tim:Dbased} and \ref{tim:Nbased}, that is, to compute the Morse index of the operator $L_\cG$ on the real Hilbert space $L^2(\Om; \bbR^N)$. Throughout, we assume Hypothesis \ref{h1}, as indicated in the theorems. We refer to Theorems \ref{GMthm} and \ref{GMthmH2} in Section \ref{sec4} for a specific set of assumptions under which these hypotheses are satisfied. The proof for the Dirichlet-based case is almost immediate, while the Neumann-based case requires a careful asymptotic analysis of the eigenvalues of $L_{0,\cG}(\tau)$ as $\tau\to0$.
To this end we complexify the operator (following \cite[Section 5.5.3]{We80}) and consider it on the complex Hilbert space $L^2(\Om; \bbC^N)$. Since $\Theta$ and $\Theta'$ are real operators and the potential $V$ takes values in the set of real, symmetric matrices, the spectrum of the complexified operator is real and
the multiplicities of its eigenvalues over $\bbR$ and $\bbC$ are the same, thus the complexification does not alter our computation of the Morse index.

\subsection{The Dirichlet-based case}
We now present the proof of Theorem \ref{tim:Dbased} for the Dirichlet-based case.

\begin{proof}[Proof of Theorem \ref{tim:Dbased}]
Since the Maslov index is a homotopy invariant \cite[Theorem 3.6(b)]{F} and the curve $\Gamma$ can be contracted to a point, we have $\mas(\Upsilon\big|_\Sigma)  =0$. Thus \[\mas(\Upsilon\big|_{\Sigma_3})=-\mas(\Upsilon\big|_{\Sigma_1})-\mas(\Upsilon\big|_{\Sigma_2})-\mas(\Upsilon\big|_{\Sigma_4})\]
since the Maslov index is additive under catenation of paths
\cite[Theorem 3.6(a)]{F}. By Lemma \ref{lambdaMon} we know that $\mor(L_{\cG})=\mas(\Upsilon\big|_{\Sigma_3})$, and Lemmas \ref{lem:Gamma4} and  \ref{lem:Gamma4Dir} imply $\mas(\Upsilon\big|_{\Sigma_1})=\mas(\Upsilon\big|_{\Sigma_4})=0$. Combining these facts, we obtain formula \eqref{MasvsMorDir}. The last assertion in the theorem follows from Corollary \ref{tmon} and \eqref{eqdims}. Since the path $\Upsilon\big|_{\Sigma_2}$ is negative definite, a crossing on the top right corner of the square in Figure \ref{fig1} will not contribute to the Maslov index (cf. \eqref{MIcomp}), hence the sum on the right-hand side of \eqref{Dfor} does not include $t=1$.
\end{proof}

\subsection{Asymptotic expansions as $\tau\to0$}
We next consider the Neumann-based case. Here the analysis of the crossings on $\Gamma_1$ is more involved, and the proof requires several preliminaries. Specifically, we have to analyze the operator $L_{0,\cG}(\tau)$ from \eqref{dfnLst00}, which corresponds to the right end of the segment $\Gamma_1$ of the curve $\Gamma$ in Figure \ref{fig1}. Our ultimate goal is an asymptotic formula for the eigenvalues of $L_{0,\cG}(\tau)$ that bifurcate from the zero eigenvalue of $L_{0,\cG}(0)=-\Delta_N$ as $\tau\to0$; see equation \eqref{eq1.84} in the next subsection.
In this subsection we set the stage and establish in Lemma \ref{lem:simile} an asymptotic formula for a finite-dimensional part of the operator $L_{0,\cG}(\tau)$. Throughout, we assume Hypothesis \ref{h1} (i), (ii) and (iii).

Let $\cG=\gr(\Theta)$ be a Neumann-based subspace in $\cH=H^{1/2}(\dOm)\times H^{-1/2}(\dOm)$ and consider the sesquilinear form $\mathfrak{l}_{0,\cG} (\tau)$ on $L^2(\Om)$, defined for $\tau\in[0,1]$
by
\begin{align}
\mathfrak{l}_{0,\cG}(\tau)(u,v)&=\langle\nabla u,\nabla v\rangle_{L^2(\Om)}+\tau^2\langle V(\tau x)u,v\rangle_{L^2(\Om)}-\tau \langle\Theta\gaD  u,\gaD   v\rangle_{1/2},
\no\\
\dom(\mathfrak{l}_{0,\cG} (\tau))&=H^1(\Omega)\times H^1(\Omega).\lb{dfnlGt}
\end{align}
One can associate to the form $\mathfrak{l}_{0,\cG}(\tau)$ the differential operator $L_{0,\cG}(\tau)$ given in \eqref{dfnLst00} using the following result, which can be found in, e.g.,
\cite[Theorem 2.6]{GM08}. We recall from \eqref{dfnLst00} and \eqref{dfnLst00n}  that the operator $L_{0,\cG}(\tau)$ for $\tau>0$ corresponds to the lower right end of the rectangle $\Gamma$ in Figure \ref{fig1} and that $L_{0,\cG}(0)=-\Delta_N$ is the Neumann Laplacian.
\begin{theorem}\lb{th26GM}
Assume Hypotheses \ref{h1} (i) and \ref{hypTHETA}. Then the sesquilinear form $\mathfrak{l}_{0,\cG} (\tau)$ is symmetric, bounded from below and closed in 
$L^2(\Omega)\times L^2(\Omega)$. The associated selfadjoint, bounded from below in $L^2(\Om)$ operator $L_{0,\cG}(\tau)$, satisfying the relation
\beq\lb{OPFORM}
\mathfrak{l}_{0,\cG}(\tau)(u,v)=\langle L_{0,\cG}(\tau) u, v\rangle_{L^2(\Om)}\,\text{ for all }
\, u\in\dom(L_{0,\cG}(\tau)), v\in H^1(\Om),
\enq is given by
\begin{align}
L_{0,\cG}(\tau)u(x)&=-\Delta u(x)+\tau^2V(\tau x)u(x),\, x\in\Om, u\in\dom(L_{0,\cG}(\tau)),\nonumber\\
 \dom(L_{0,\cG}(\tau))&=\big\{u\in H^1(\Omega)\,|\,\Delta u\in L^2(\Omega) \,\text{ and }\lb{6.2.1}\\
&\hskip3cm (\gaN-\tau \Theta\gaD)u=0\,\,\hbox{in}\,\,H^{-1/2}(\dOm) \big\}.\nonumber
\end{align}
\end{theorem}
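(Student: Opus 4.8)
\textbf{Proof proposal for Theorem \ref{th26GM}.}
The plan is to verify directly that the form $\mathfrak{l}_{0,\cG}(\tau)$ is symmetric, $L^2$-semibounded and closed, invoke the first representation theorem to produce the associated selfadjoint operator, and then identify its action and domain. First I would check symmetry: the terms $\langle\nabla u,\nabla v\rangle_{L^2(\Om)}$ and $\tau^2\langle V(\tau\cdot)u,v\rangle_{L^2(\Om)}$ are symmetric because $V(x)^\top=V(x)$ by Hypothesis \ref{h1}(iv), and the boundary term $\tau\langle\Theta\gaD u,\gaD v\rangle_{1/2}$ is symmetric by \eqref{propTheta1} (equivalently by Hypothesis \ref{hypTHETA}, since $\Theta$ is associated with the symmetric form $\aT$). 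Next, for semiboundedness and closedness, the key estimate is to control the boundary term by the $H^1$-norm with small coefficient in front of $\|\nabla u\|_{L^2(\Om)}^2$ and a large multiple of $\|u\|_{L^2(\Om)}^2$. Concretely, using condition (a) of Hypothesis \ref{hypTHETA} one has $\tau\langle\Theta\gaD u,\gaD u\rangle_{1/2}=\tau\,\aT(\gaD u,\gaD u)\le\tau c_\Theta\|\gaD u\|_{L^2(\dOm)}^2$, and then the standard trace inequality $\|\gaD u\|_{L^2(\dOm)}^2\le\varepsilon\|\nabla u\|_{L^2(\Om)}^2+C_\varepsilon\|u\|_{L^2(\Om)}^2$ (valid on Lipschitz domains) lets one absorb a fraction of the Dirichlet energy; the potential term contributes only a bounded multiple of $\|u\|_{L^2(\Om)}^2$ since $V\in C^0(\overline\Om)\subset L^\infty(\Om)$. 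This gives $\mathfrak{l}_{0,\cG}(\tau)(u,u)\ge\tfrac12\|\nabla u\|_{L^2(\Om)}^2-c\|u\|_{L^2(\Om)}^2$, so the form is semibounded, and the form norm is equivalent to the $H^1(\Om)$ norm, whence closedness follows from completeness of $H^1(\Om)$ and continuity of $\gaD$.

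Having established these properties, the first representation theorem (Kato, \cite[Theorem VI.2.1]{Kato}) yields a unique selfadjoint, semibounded operator $L_{0,\cG}(\tau)$ with $\dom(L_{0,\cG}(\tau))\subset H^1(\Om)$ satisfying \eqref{OPFORM}. The remaining task, which is the substantive one, is to identify this operator with the differential operator in \eqref{6.2.1}. For this I would argue in two directions. If $u\in\dom(L_{0,\cG}(\tau))$ with $L_{0,\cG}(\tau)u=:w\in L^2(\Om)$, then testing \eqref{OPFORM} against $v\in H^1_0(\Om)$ and using the definition of the weak Laplacian gives $-\Delta u+\tau^2V(\tau\cdot)u=w$ in $H^{-1}(\Om)$, hence $\Delta u=\tau^2V(\tau\cdot)u-w\in L^2(\Om)$, so $u\in\cD=\dom(\gaN)$. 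Then testing against general $v\in H^1(\Om)$ and subtracting, using Green's formula \eqref{wGreen}, produces $\langle\gaN u,\gaD v\rangle_{1/2}-\tau\langle\Theta\gaD u,\gaD v\rangle_{1/2}=0$ for all $v\in H^1(\Om)$; since $\gaD\colon H^1(\Om)\to H^{1/2}(\dOm)$ is surjective (by \eqref{6.1}) this forces $(\gaN-\tau\Theta\gaD)u=0$ in $H^{-1/2}(\dOm)$, giving the inclusion of $\dom(L_{0,\cG}(\tau))$ in the set described in \eqref{6.2.1} together with the correct action. Conversely, if $u\in H^1(\Om)$ with $\Delta u\in L^2(\Om)$ and $(\gaN-\tau\Theta\gaD)u=0$, then Green's formula \eqref{wGreen} shows $\mathfrak{l}_{0,\cG}(\tau)(u,v)=\langle(-\Delta+\tau^2V(\tau\cdot))u,v\rangle_{L^2(\Om)}$ for all $v\in H^1(\Om)$, so by the characterization of the operator associated to a form (an element $u$ of the form domain lies in the operator domain iff $v\mapsto\mathfrak{l}_{0,\cG}(\tau)(u,v)$ is $L^2$-bounded), $u\in\dom(L_{0,\cG}(\tau))$ with the stated action.

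Since this is precisely the content of \cite[Theorem 2.6]{GM08} applied with the potential $\tau^2 V(\tau\cdot)\in L^\infty(\Om)$ and the rescaled boundary operator $\tau\Theta$ (which still satisfies Hypothesis \ref{hypTHETA}, with constants scaled by $\tau$), the cleanest route is simply to cite that theorem, noting that all its hypotheses hold here: Hypothesis \ref{h1}(i) gives the Lipschitz domain, Hypothesis \ref{hypTHETA} gives the form-boundedness and semiboundedness of $\Theta$, and continuity of $V$ gives $\tau^2 V(\tau\cdot)\in L^\infty(\Om)$. The main obstacle, should one wish to give a self-contained argument rather than quote \cite{GM08}, is the boundary-term estimate and the surjectivity/trace-theory bookkeeping on a Lipschitz (as opposed to smooth) domain — but all the needed trace results are already recorded in Section \ref{sec:EllPr} (in particular \eqref{6.1}, Lemma \ref{claim1}, and Green's formula \eqref{wGreen}), so no new analytic input is required.
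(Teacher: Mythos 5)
Your proposal is correct and ultimately takes the same route as the paper, which simply cites \cite[Theorem 2.6]{GM08} without providing its own proof. Your self-contained sketch — symmetry from $V^\top=V$ and \eqref{propTheta1}, semiboundedness and closedness from Hypothesis \ref{hypTHETA}(a) together with the trace estimate $\|\gaD u\|_{L^2(\dOm)}^2\le\varepsilon\|\nabla u\|_{L^2(\Om)}^2+C_\varepsilon\|u\|_{L^2(\Om)}^2$, the first representation theorem, and the two-sided identification of $\dom(L_{0,\cG}(\tau))$ via Green's formula \eqref{wGreen} and surjectivity of $\gaD$ — is a faithful reproduction of the argument in \cite{GM08} and adds useful detail that the paper omits by citation.
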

Since in this section we assume only Hypothesis \ref{h1} (iii) (and not the stronger Hypothesis \ref{hypTHETA}, cf.\ Theorem \ref{GMthm}), we will not use all the properties of $\mathfrak{l}_{0,\cG}(\tau)$ listed in Theorem \ref{th26GM}. The operator $L_{0,\cG}(\tau)$ defined in \eqref{dfnLst00} or, equivalently, \eqref{6.2.1}, is selfadjoint by Hypothesis \ref{h1} (iii). For this operator and the form defined in \eqref{dfnlGt} we will use  \eqref{OPFORM}, which follows directly from 
Green's formula \eqref{wGreen}. 

Following the general discussion of holomorphic families of closed, unbounded operators in \cite[Section VII.1.2]{Kato}, we introduce our next definition.
\begin{definition}\label{cont}
A family of closed operators $\{T(\tau)\}_{\tau\in \Sigma_0}$ on a Hilbert space $\cX$ is said to be continuous on an interval $\Sigma_0\subset\bbR$ if there exists a Hilbert space $\cX'$ and continuous families of operators $\{U(\tau)\}_{\tau\in \Sigma_0}$ and $\{W(\tau)\}_{\tau\in \Sigma_0}$ in $\cB(\cX',\cX)$ such that $U(\tau)$ is a one-to-one map of $\cX'$ onto $\dom(T(\tau))$ and the identity $T(\tau)U(\tau)=W(\tau)$ holds for all $\tau\in \Sigma_0$.
\end{definition}
Before applying this definition, we recall that Hypothesis \ref{h1} (iv) yields
\begin{equation}\label{unif1}
\sup_{x\in\Om}\|V(\tau x)-V(\tau_0x)\|_{\bbR^{N\times N}}\to 0 \,\,\,\hbox{as}\,\,\, \tau\to \tau_0\,\text{ 
for any $\tau_0\in[0,1]$}.\end{equation}
\begin{lemma}
Assume Hypothesis \ref{h1} (i) and (iii). Then the family $\{L_{0,\cG}(\tau)\}_{\tau\in \Sigma_0}$ is continuous near $0$, that is, on some interval $\Sigma_0$ that contains $0$.
\end{lemma}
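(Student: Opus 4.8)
The plan is to verify Definition \ref{cont} directly by exhibiting the auxiliary Hilbert space $\cX'$ and the two continuous families of operators. The natural choice is $\cX'=\cD$, the graph-norm completion from \eqref{dfncDn}, equipped with $\|u\|_\cD=\|u\|_{H^1(\Om)}+\|\Delta u\|_{L^2(\Om)}$; by Lemma \ref{claim1}(ii) this is a Hilbert space, and the weak trace map $\tr=(\gaD,\gaN)$ is bounded on it. First I would fix a convenient reference operator, say $T_0=L_{0,\cG}(1)$ — no, more naturally $T_0=L_{0,\cG}(\tau_0)$ for the base point $\tau_0=0$, i.e. the Neumann Laplacian $-\Delta_N$ — although in fact it is cleaner to take the \emph{single} fixed isomorphism $U\colon\cX'\to\dom(L_{0,\cG}(\tau))$ provided for each $\tau$ by solving an auxiliary boundary value problem. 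The obstacle is that $\dom(L_{0,\cG}(\tau))$ genuinely varies with $\tau$ through the Robin-type condition $(\gaN-\tau\Theta\gaD)u=0$, so one cannot take $U(\tau)$ to be the identity.

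The key device is to straighten the moving domain. For each $\tau$ near $0$ consider the closed subspace $\cG_\tau=\gr(\tau\Theta)\subset\cH$ and the subspace $\cD_\tau=\{u\in\cD:\tr u\in\cG_\tau\}=\dom(L_{0,\cG}(\tau))$. I would build a continuous family of boundedly invertible operators $\{Z(\tau)\}$ on $\cH$ that straighten $\cG_\tau$ to $\cG_0=\cH_N$; the explicit choice
\[
Z(\tau)=\begin{bmatrix}I_{H^{1/2}(\dOm)}&0\\-\tau\Theta&I_{H^{-1/2}(\dOm)}\end{bmatrix},\qquad
Z(\tau)^{-1}=\begin{bmatrix}I_{H^{1/2}(\dOm)}&0\\\tau\Theta&I_{H^{-1/2}(\dOm)}\end{bmatrix}
\]
works because $\Theta\in\cB(H^{1/2}(\dOm),H^{-1/2}(\dOm))$, and clearly $\tau\mapsto Z(\tau)$ is (real-analytic, hence) continuous into $\cB(\cH)$ with $Z(\tau)(\cH_N)=\cG_\tau$. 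Next I would lift $Z(\tau)$ to a continuous family of operators on $\cD$ itself: given $u\in\cD_0=\dom(-\Delta_N)$, let $U(\tau)u$ be the unique solution $w\in\cD$ of the boundary value problem $-\Delta w=-\Delta u$ in $L^2(\Om)$ with $\tr w=Z(\tau)\tr u$; existence, uniqueness, and the bound $\|w\|_\cD\le c\|u\|_\cD$ uniform in $\tau$ follow by splitting $w=w_1+w_2$ along $H^1(\Om)=H^1_0(\Om)\dot+H^1_\Delta(\Om)$ exactly as in the discussion around \eqref{mainsys}--\eqref{sys2} (solve the Dirichlet problem for the $H^1_\Delta$ part using Theorem \ref{thm1}, then absorb the Neumann-component adjustment using the invertibility of the trace map on $H^1_\Delta(\Om)$ guaranteed by Lemma \ref{estbelow}); here one uses that the Neumann operator $N_{-\Delta}$ is Fredholm of index zero — one may need to add a harmless finite-rank correction, or simply note that the relevant solvability holds because $\ran(N_{-\Delta})+\cR=H^{-1/2}(\dOm)$ for a fixed complement $\cR$. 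Continuity of $\tau\mapsto U(\tau)$ in $\cB(\cD_0,\cD)$ is then inherited from continuity of $\tau\mapsto Z(\tau)$ and the uniform bounds. One checks $U(\tau)$ is a bijection of $\cX'=\cD_0$ onto $\dom(L_{0,\cG}(\tau))=\cD_\tau$ since $Z(\tau)$ is a bijection $\cH_N\to\cG_\tau$ and $-\Delta$ is an isomorphism $H^1_0(\Om)\to H^{-1}(\Om)$.

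Finally I would set $W(\tau)=L_{0,\cG}(\tau)U(\tau)\in\cB(\cD_0,L^2(\Om))$ and verify its continuity: for $u\in\cD_0$,
\[
W(\tau)u=L_{0,\cG}(\tau)U(\tau)u=-\Delta(U(\tau)u)+\tau^2V(\tau\cdot)(U(\tau)u)
=-\Delta u+\tau^2V(\tau\cdot)(U(\tau)u),
\]
using $-\Delta(U(\tau)u)=-\Delta u$ by construction. The first term is $\tau$-independent; the second is continuous in $\tau$ because $\tau\mapsto U(\tau)u$ is continuous into $\cD\hookrightarrow L^2(\Om)$ and because of \eqref{unif1}, which gives $\sup_x\|\tau^2V(\tau x)-\tau_0^2V(\tau_0x)\|_{\bbR^{N\times N}}\to0$ as $\tau\to\tau_0$. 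Hence $\tau\mapsto W(\tau)\in\cB(\cD_0,L^2(\Om))$ is continuous. With $\cX=L^2(\Om)$, $\cX'=\cD_0$, and the families $U(\tau)$, $W(\tau)$ so constructed, all the requirements of Definition \ref{cont} are met on a neighborhood $\Sigma_0$ of $0$, proving the lemma. The main obstacle is the construction of $U(\tau)$: one must produce a genuinely \emph{continuous} (in operator norm on $\cD_0$) family of isomorphisms onto the moving domains, and the cleanest route is the two-step factorization through the straightening map $Z(\tau)$ on the boundary space combined with the solvability theory for the inhomogeneous trace problem already assembled in Section \ref{sec:EllPr}.
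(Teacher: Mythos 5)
Your overall strategy --- straighten the moving domains $\dom(L_{0,\cG}(\tau))$ by a continuous family of isomorphisms from a fixed reference space and then verify continuity of $W(\tau)=L_{0,\cG}(\tau)U(\tau)$ --- is a sensible way to attack Definition \ref{cont}, and it differs genuinely from what the paper does. But there is a concrete gap at the central construction of $U(\tau)$.

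You define $U(\tau)u$, for $u\in\cD_0=\dom(-\Delta_N)$, as ``the unique $w\in\cD$ with $-\Delta w=-\Delta u$ in $L^2(\Om)$ and $\tr w=Z(\tau)\tr u$.'' This boundary value problem is overdetermined: it prescribes simultaneously the source term $-\Delta w$, the Dirichlet trace $\gaD w$, and the Neumann trace $\gaN w$. Concretely, with $u\in\cD_0$ one has $\gaN u=0$, so $Z(\tau)\tr u=(\gaD u,\,-\tau\Theta\gaD u)$. The conditions $-\Delta w=-\Delta u$ and $\gaD w=\gaD u$ already determine $w$ uniquely (Theorem \ref{thm1} with $r=1$): $w-u$ is harmonic with zero Dirichlet trace, hence $w=u$. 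But then $\gaN w=\gaN u=0\ne-\tau\Theta\gaD u$ in general, so no such $w$ exists. The split $w=w_1+w_2$ along $H^1_0(\Om)\dot+H^1_\Delta(\Om)$ does not create the missing degree of freedom: once $\gaD w_2$ is fixed, $\gaN w_2=-N_{-\Delta}\gaD w_2$ is determined, and the finite-rank correction you allow cannot absorb the discrepancy for a generic $u$. The Fredholm-index-zero remark about $N_{-\Delta}$ doesn't help here, because the obstruction is not a codimension issue: you are imposing a genuine Cauchy-data condition on a Poisson problem.

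A repair within your framework would be to \emph{drop} the requirement $\gaD w=\gaD u$ and instead set $U(\tau)u=u+h_\tau(u)$ with $h_\tau(u)\in H^1_\Delta(\Om)$ solving the Robin-type problem $\gaN h-\tau\Theta\gaD h=\tau\Theta\gaD u$; then $\Delta U(\tau)u=\Delta u$ and $U(\tau)u\in\dom(L_{0,\cG}(\tau))$. But even this requires invertibility of $N_{-\Delta}+\tau\Theta$ on $H^{1/2}(\dOm)$ for small $\tau$, which fails at $\tau=0$ (constants lie in $\ker N_{-\Delta}$) and for small $\tau\neq0$ hinges on nondegeneracy of $\Theta$ on the constants --- an extra hypothesis the lemma does not have. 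The paper avoids all of this by working at the level of the quadratic form: it conjugates $\mathfrak{l}_{0,\cG}(\tau)$ by the fixed isomorphism $G=(-\Delta_N+I)^{1/2}\colon H^1(\Om)\to L^2(\Om)$, obtaining a norm-continuous family of \emph{bounded} operators $\widetilde L_{0,\cG}(\tau)$ on $L^2(\Om)$, and then reads off $U(\tau)=(L_{0,\cG}(\tau)+I)^{-1}$ and $W(\tau)=I-U(\tau)$ from the factorization $L_{0,\cG}(\tau)+I=G(\widetilde L_{0,\cG}(\tau)+G^{-2})G$. This sidesteps the moving-domain problem entirely, needing only form-boundedness of $\Theta$, not any solvability of a Robin boundary value problem.
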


\begin{proof}
Letting $\tau=0$ in \eqref{dfnlGt} yields
\[(\mathfrak{l}_{0,\cG} (0)+1)(u,u)=\langle\nabla u,\nabla u\rangle_{L^2(\Om)}+\langle u,u\rangle_{L^2(\Om)}\geq \langle u,u\rangle_{L^2(\Om)}\,\text{ for  all $u\in H^1(\Omega)$}.\]

The selfadjoint operator associated with the form $\mathfrak{l}_{0,\cG} (0)+1$ by Theorem \ref{th26GM} is $L_{0,\cG}(0)+I_{L^2(\Om)}$, where $L_{0,\cG}(0)=-\Delta_N$ is the Neumann Laplacian. It is clear that Hypothesis \ref{hypTHETA} holds for the Neumann boundary condition, as $\Theta=0$ (cf.\ \eqref{dfnLst00n}).
The operator $-\Delta_N+I_{L^2(\Om)}$ is clearly invertible, and 
if $u\in H^1(\Omega)$, then \[(-\Delta_N+I_{L^2(\Om)})^{-1/2}u\in\dom(-\Delta_N+I_{L^2(\Om)})^{1/2}=H^1(\Omega).\]
For the remainder the proof we let $G$ denote the operator
\[
	G = (-\Delta_N+I_{L^2(\Om)})^{1/2}\colon H^{1}(\Omega) \rightarrow L^2(\Omega).
\]
We note, cf.\ (B.42) and (B.43) in \cite{GM08}, that
$G\in\cB(H^{1}(\Omega),L^2(\Omega))$ and 
\beq\lb{6.6.1}
\|Gu\|_{L^2(\Omega)}=\|u\|_{H^1(\Omega)}\,\text{ for all }\, u\in H^1(\Om),\enq  hence $G^{-1} = (-\Delta_N+I_{L^2(\Om)})^{-1/2}\in\cB(L^2(\Omega),H^{1}(\Omega))$.

Next, we let $u,v\in L^2(\Omega)$ be such that $\|u\|_{L^2(\Omega)},\|v\|_{L^2(\Omega)}\leq1$. 
Using the fact that $\Theta \in \cB(H^{1/2}(\partial\Omega),H^{-1/2}(\partial\Omega))$ and $\gaD\in\cB(H^{1}(\Omega),H^{1/2}(\partial\Omega))$, we find
\begin{align}\label{unif}
\big|\langle&\Theta\gaD G^{-1}u,\gaD G^{-1}v\rangle_{1/2}\big|
\leq
c \|G^{-1}u\|_{H^{1}(\Omega)}\|G^{-1}v\|_{H^{1}(\Omega)} \leq c.
\end{align}
Now using \eqref{dfnlGt} we introduce a new sesquilinear form
\begin{align}
&\widetilde{\mathfrak{l}}_{0,\cG}(\tau)(u,v)=\mathfrak{l}_{0,\cG} (\tau)\left(G^{-1}u,G^{-1}v \right),\\
&\dom(\widetilde{\mathfrak{l}}_{0,\cG}(\tau))=L^2(\Omega)\times L^2(\Omega).
\end{align}
From \eqref{6.6.1} and \eqref{unif} it is easy to see that $\widetilde{\mathfrak{l}}_{0,\cG}(\tau)$ is bounded on  $L^2(\Omega)\times L^2(\Omega)$. Let $\widetilde{L}_{0,\cG}(\tau)\in\cB(L^2(\Omega))$ be the selfadjoint operator associated with $\widetilde{\mathfrak{l}}_{0,\cG}(\tau)$ by the First Representation Theorem \cite[Theorem VI.2.1]{Kato}. Then 
\begin{align}\lb{LLtil}
\langle \widetilde{L}_{0,\cG}&(\tau)u,v\rangle_{L^2(\Om)}=\widetilde{\mathfrak{l}}_{0,\cG}(\tau)(u,v)\\&=\mathfrak{l}_{0,\cG}(\tau)\left(G^{-1}u,G^{-1}v\right) \,\text{ for all} \,u,v\in L^2(\Omega).\no
\end{align}
Taking into account \eqref{unif} and \eqref{unif1}, we conclude that
\begin{equation}
\langle\widetilde{L}_{0,\cG}(\tau)u,v\rangle_{L^2(\Om)}\rightarrow\langle\widetilde{L}_{0,\cG}(\tau_0)u,v\rangle_{L^2(\Om)}\,\,\hbox{as}\,\,\tau\to \tau_0
\end{equation}
uniformly with respect to $u$ and $v$ satisfying $\|u\|_{L^2(\Omega)},\|v\|_{L^2(\Omega)}\leq1$. Hence 
\begin{equation}
\|\widetilde{L}_{0,\cG}(\tau)-\widetilde{L}_{0,\cG}(\tau_0)\|_{\cB(L^2(\Omega))}\rightarrow0\,\,\hbox{as}\,\,\tau\to \tau_0,
\end{equation}
which implies $\widetilde{L}_{0,\cG}(\tau)\in\cB(L^2(\Omega))$ is a continuous family on $[0,1]$.
Replacing $u$ in \eqref{LLtil} by $Gu$ (and similarly for $v$), we conclude that
\[
\mathfrak{l}_{0,\cG}(\tau)(u,v)=\left<\widetilde{L}_{0,\cG}(\tau)Gu,Gv\right>_{L^2(\Om)}\] for any $u,v\in H^{1}(\Omega)$.
Therefore, cf.\ \cite[VII-(4.4), (4.5)]{Kato}, for all $u\in\dom(L_{0,\cG})$
\beq\lb{simLL}
L_{0,\cG}(\tau)u=G\widetilde{L}_{0,\cG}(\tau)Gu,\enq
when $G$ is viewed as an unbounded, selfadjoint operator on $L^2(\Om)$.
It follows that $L_{0,\cG}(\tau)+I_{L^2(\Omega)}=G\widetilde{L}_{0,\cG}(\tau) G+I_{L^2(\Omega)}$, so we have the identity
\[L_{0,\cG}(\tau)+I_{L^2(\Omega)}=G\Big(\widetilde{L}_{0,\cG}(\tau)+G^{-2}\Big)G.\] Since
$L_{0,\cG}(0)+I_{L^2(\Omega)}=-\Delta_N+I_{L^2(\Om)} = G^2$, using \eqref{simLL} with $\tau=0$ we conclude that $\widetilde{L}_{0,\cG}(0)+G^{-2}=I_{L^2(\Om)}$. Since $\widetilde{L}_{0,\cG}(\tau)\in\cB(L^2(\Omega))$ is a continuous operator family on $[0,1]$, the operator 
$\widetilde{L}_{0,\cG}(\tau)+G^{-2}$ 
is boundedly  invertible for $\tau$ near $0$.  Using \eqref{simLL} again, we conclude that for any $\tau$ near $0$ one has
\begin{equation*}
(L_{0,\cG}(\tau)+I_{L^2(\Omega)})^{-1}=G^{-1}\Big(\widetilde{L}_{0,\cG}(\tau)+G^{-2}\Big)^{-1}G^{-1}.
\end{equation*}

Introducing the continuous operator families $U(\tau)=(L_{0,\cG}(\tau)+I_{L^2(\Omega)})^{-1}$ and $W(\tau)=I_{L^2(\Om)}-U(\tau)$, it is now clear that 
\begin{equation}\label{UV}
L_{0,\cG}(\tau)U(\tau)=W(\tau)\,\text{ for $\tau$ near $0$}.
\end{equation} Hence, according to Definition \ref{cont} applied to $T(\tau)=L_{0,\cG}(\tau)+I_{L^2(\Om)}$, the family $\{L_{0,\cG}(\tau)\}$ is continuous near $0$. 
\end{proof}
We denote by
\[R(\zeta,\tau)=\big(L_{0,\cG}(\tau)-\zeta I_{L^2(\Om)}\big)^{-1},\quad \zeta\in\bbC\setminus\Sp(L_{0,\cG}(\tau)),\, \tau\in[0,1],\]
the resolvent operator for $L_{0,\cG}(\tau)$ in $L^2(\Om)$, and recall that $L_{0,\cG}(0)=-\Delta_N$, the Neumann Laplacian.
\begin{lemma}\label{rescont}
Let $\zeta\in\bbC\setminus\Sp(-\Delta_N)$. Then $\zeta\in\bbC\setminus\Sp(L_{0,\cG}(\tau))$ for $\tau$ near $0$. Moreover, the function 
$\tau \mapsto R(\zeta,\tau)\in\cB(L^2(\Om))$ is continuous for $\tau$ near $0$, uniformly for $\zeta$ in compact subsets of $\bbC\setminus\Sp(L_{0,\cG}(\tau))$.
\end{lemma}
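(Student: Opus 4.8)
The plan is to leverage the continuity of the operator family $\{L_{0,\cG}(\tau)\}$ near $0$ that was just established (Definition \ref{cont} with $T(\tau)=L_{0,\cG}(\tau)+I_{L^2(\Om)}$, $U(\tau)=(L_{0,\cG}(\tau)+I_{L^2(\Om)})^{-1}$, $W(\tau)=I_{L^2(\Om)}-U(\tau)$), together with a standard resolvent perturbation argument. First I would rewrite the resolvent of $L_{0,\cG}(\tau)$ in terms of the bounded, norm-continuous family $U(\tau)$. Since $U(\tau)\colon L^2(\Om)\to\dom(L_{0,\cG}(\tau))$ is a bijection and $L_{0,\cG}(\tau)U(\tau)=W(\tau)=I-U(\tau)$, one has $(L_{0,\cG}(\tau)-\zeta)U(\tau)=I-(1+\zeta)U(\tau)$. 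Hence, whenever $1+\zeta\ne 0$ and $(1+\zeta)^{-1}\notin\Sp(U(\tau))$, the operator $I-(1+\zeta)U(\tau)$ is boundedly invertible and
\beq\lb{resform}
R(\zeta,\tau)=\big(L_{0,\cG}(\tau)-\zeta I_{L^2(\Om)}\big)^{-1}=U(\tau)\big(I_{L^2(\Om)}-(1+\zeta)U(\tau)\big)^{-1}.
\enq
(The case $\zeta=-1$: since $-1\in\Sp(-\Delta_N)$ is false only if one already treats it separately; but $-1\notin\Sp(-\Delta_N)$ would make $0$ an eigenvalue of $I-U(0)$, i.e. $0\in\Sp(U(0))$ impossible since $U(0)$ is an injective compact-resolvent type operator with $0\notin\Sp(U(0))$ — actually $U(0)=(-\Delta_N+I)^{-1}$ has $0\notin\Sp$. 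So the formula covers $\zeta=-1$ as well, with $I-(1+\zeta)U(\tau)=I$ invertible trivially. I would phrase this cleanly rather than belabor it.)

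Next I would translate the spectral hypothesis. The map $\zeta\mapsto(1+\zeta)^{-1}$ sends $\Sp(-\Delta_N)$ onto $\Sp(U(0))\setminus\{0\}$, since $\zeta\in\Sp(-\Delta_N)$ iff $\zeta+1\in\Sp(-\Delta_N+I)$ iff $(1+\zeta)^{-1}\in\Sp((-\Delta_N+I)^{-1})=\Sp(U(0))$, using that $U(0)$ is boundedly invertible so $0\notin\Sp(U(0))$. Therefore $\zeta\notin\Sp(-\Delta_N)$ exactly means $(1+\zeta)^{-1}\notin\Sp(U(0))$ (and $\zeta\ne -1$ is automatic or harmless). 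Now fix such a $\zeta$, or more generally a compact set $K\subset\bbC\setminus\Sp(-\Delta_N)$. The set $\{(1+\zeta)^{-1}:\zeta\in K\}$ is a compact subset of $\bbC\setminus\Sp(U(0))$, hence has positive distance $\delta>0$ from $\Sp(U(0))$. Since $\tau\mapsto U(\tau)$ is norm-continuous near $0$, for $\tau$ small enough $\|U(\tau)-U(0)\|_{\cB(L^2(\Om))}<\delta$, and a standard Neumann-series estimate gives $\Sp(U(\tau))$ within distance $<\delta$ of $\Sp(U(0))$; consequently $I-(1+\zeta)U(\tau)$ is boundedly invertible for all $\zeta\in K$ and all such small $\tau$, with $\|(I-(1+\zeta)U(\tau))^{-1}\|$ bounded uniformly in $\zeta\in K$ and $\tau$ near $0$. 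In particular $\zeta\notin\Sp(L_{0,\cG}(\tau))$, proving the first assertion.

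Finally, for continuity of $\tau\mapsto R(\zeta,\tau)$, I would use \eqref{resform} and the second resolvent-type identity for the bounded operators $A(\zeta,\tau):=I-(1+\zeta)U(\tau)$:
\[
A(\zeta,\tau)^{-1}-A(\zeta,\tau_0)^{-1}=(1+\zeta)\,A(\zeta,\tau)^{-1}\big(U(\tau)-U(\tau_0)\big)A(\zeta,\tau_0)^{-1},
\]
whose right-hand side tends to $0$ in $\cB(L^2(\Om))$ as $\tau\to\tau_0$, uniformly for $\zeta\in K$, because $\|U(\tau)-U(\tau_0)\|\to0$ and the inverse factors are uniformly bounded. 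Combined with the norm continuity of $U(\tau)$ itself, \eqref{resform} then shows $\tau\mapsto R(\zeta,\tau)$ is continuous near $0$, uniformly for $\zeta$ in compact subsets of $\bbC\setminus\Sp(-\Delta_N)$ (equivalently, of $\bbC\setminus\Sp(L_{0,\cG}(\tau))$ for $\tau$ near $0$). I expect no real obstacle here: the only point requiring care is the bookkeeping around $\zeta=-1$ and the verification that $0\notin\Sp(U(0))$ so that the spectral correspondence $\zeta\leftrightarrow(1+\zeta)^{-1}$ is exact; everything else is the routine Neumann-series/resolvent-identity machinery applied to the already-established norm-continuous family $U(\tau)$.
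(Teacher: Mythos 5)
Your proof follows essentially the same route as the paper's: both exploit the norm continuity of $U(\tau)=(L_{0,\cG}(\tau)+I_{L^2(\Om)})^{-1}$, rewrite $(L_{0,\cG}(\tau)-\zeta)U(\tau)$ as a bounded, norm-continuous family (your $I-(1+\zeta)U(\tau)$ is identically the paper's $W(\tau)-\zeta U(\tau)$ since $W(\tau)=I-U(\tau)$), observe bounded invertibility at $\tau=0$ because $\zeta\notin\Sp(-\Delta_N)$, and conclude via $R(\zeta,\tau)=U(\tau)\bigl(W(\tau)-\zeta U(\tau)\bigr)^{-1}$. One slip worth flagging: $U(0)=(-\Delta_N+I_{L^2(\Om)})^{-1}$ is compact on the infinite-dimensional space $L^2(\Om)$, hence not boundedly invertible and $0\in\Sp(U(0))$; your spectral correspondence survives nonetheless because $(1+\zeta)^{-1}\ne 0$ for finite $\zeta$, so only the nonzero spectral values of $U(0)$ are relevant and those are exactly the eigenvalues $(1+\lambda)^{-1}$, $\lambda\in\Sp(-\Delta_N)$.
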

\begin{proof}
Let $\zeta\in\bbC\setminus\Sp(-\Delta_N)$. It follows from \eqref{UV} that
\begin{equation}\lb{eq1.26}
(L_{0,\cG}(\tau)-\zeta)U(\tau)=W(\tau)-\zeta U(\tau)\, \text{ for $\tau$ near $0$}.
\end{equation}
The operator  $W(0)-\zeta U(0)=(L_{0,\cG}(0)-\zeta)(L_{0,\cG}(0)+I_{L^2(\Om)})^{-1}$ is a bijection of $L^2(\Omega)$ onto $L^2(\Omega)$. Therefore, $W(\tau)-\zeta U(\tau)$
is boundedly  invertible for $\tau$ near $0$ since the function $\tau \mapsto W(\tau)-\zeta U(\tau)$ is continuous in $\cB(L^2(\Om))$, uniformly for $\zeta$ in compact subsets of $\bbC$. This implies that $\zeta\in\bbC\setminus\Sp(L_{0,\cG}(\tau))$ for $\tau$ near $0$, since using \eqref{eq1.26} it is easy to check that 
\begin{equation}
(L_{0,\cG}(\tau)-\zeta)^{-1}=U(\tau)(W(\tau)-\zeta U(\tau))^{-1}\, \text{ for $\tau$ near $0$}.
\end{equation}
Hence, the function $\tau \mapsto R(\zeta,\tau)$ is continuous for $\tau$ near $0$ in the operator norm, uniformly in $\zeta$.
\end{proof}
Our next lemma gives an asymptotic result for the difference of the resolvents of the operators $L_{0,\cG}(\tau)$ and $L_{0,\cG}(0)=-\Delta_N$ as $\tau\to0$ which involves the value $V(0)$ of the potential at zero. We recall the inclusion \[[\gaD   R(\zeta, 0)^*]^*\in\cB(H^{-1/2}(\partial\Omega),H^{1}(\Omega))\,\text{ for }\zeta\in\bbC\setminus\Sp(-\Delta_N)\] (see e.g., \cite[Equation  (4.13)]{GM08})  which, in particular, yields 
\begin{equation}\label{adj}
\langle [\gaD   R(\zeta, 0)^*]^*g,v)\rangle_{L^2(\Om)}=\langle g,\gaD   R(\zeta, 0)^*v\rangle_{1/2}
\end{equation}
for any $g\in H^{-1/2}(\partial\Omega)$ and $v\in L^2(\Omega)$.
\begin{lemma}
If $\zeta\in\bbC\setminus\Sp(-\Delta_N)$ and $\|u\|_{L^2(\Om)}\le1$, then 
\begin{equation}\label{resolvent}
\begin{split}
R(\zeta, \tau)u&-R(\zeta, 0)u\\&
=\tau[\gaD   R(\zeta, 0)^*]^*\Theta\gaD R(\zeta, 0)u-\tau^2R(\zeta, 0)V(0)R(\zeta, 0)u\\&\quad+\tau^2[\gaD   R(\zeta, 0)^*]^*\Theta\gaD[\gaD   R(\zeta, 0)^*]^*\Theta\gaD  R(\zeta, 0)u+r(\tau),
\end{split}
\end{equation}
where $\|r(\tau)\|_{L^2(\Om)}=\mathrm{o}(\tau^{2})$ as $\tau\to0$, uniformly for $\zeta$ in compact subsets of $\bbC\setminus\Sp(-\Delta_N)$ and $\|u\|_{L^2(\Om)}\le1$.
\end{lemma}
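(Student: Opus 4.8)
The plan is to expand $R(\zeta,\tau)$ around $\tau=0$ using the second resolvent identity and the characterization of $L_{0,\cG}(\tau)$ via its sesquilinear form \eqref{dfnlGt}. Concretely, for $v\in L^2(\Om)$ write $R(\zeta,\tau)u=w_\tau$ and $R(\zeta,0)u=w_0$, and compare the two using the forms: for any $\phi\in H^1(\Om)$ one has
\[
\mathfrak{l}_{0,\cG}(\tau)(w_\tau,\phi)-\zeta\langle w_\tau,\phi\rangle_{L^2(\Om)}=\langle u,\phi\rangle_{L^2(\Om)}=\mathfrak{l}_{0,\cG}(0)(w_0,\phi)-\zeta\langle w_0,\phi\rangle_{L^2(\Om)}.
\]
Subtracting and using \eqref{dfnlGt} gives, for all $\phi\in H^1(\Om)$,
\[
\mathfrak{l}_{0,\cG}(\tau)(w_\tau-w_0,\phi)-\zeta\langle w_\tau-w_0,\phi\rangle_{L^2(\Om)}
=\tau\langle\Theta\gaD w_0,\gaD\phi\rangle_{1/2}-\tau^2\langle V(\tau x)w_0,\phi\rangle_{L^2(\Om)}.
\]
This is exactly the weak form of $(L_{0,\cG}(\tau)-\zeta)(w_\tau-w_0)=$ (the right-hand side reinterpreted as a functional), so applying $R(\zeta,\tau)$ and using the identification \eqref{adj} of $[\gaD R(\zeta,0)^*]^*$ as the operator that realizes the boundary pairing, I would obtain the clean first-order identity
\[
R(\zeta,\tau)u-R(\zeta,0)u=\tau\,R(\zeta,\tau)\big([\gaD R(\zeta,\cdot)]^*\big)^{}\Theta\gaD R(\zeta,0)u-\tau^2 R(\zeta,\tau)V(\tau x)R(\zeta,0)u,
\]
where I keep $R(\zeta,\tau)$ on the outside for now; the point is that this is already $O(\tau)$ in $\cB(L^2(\Om))$ by Lemma \ref{rescont} and the boundedness of $\Theta$ and the trace map.

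The next step is to iterate: substitute this $O(\tau)$ expression for $R(\zeta,\tau)$ back into its own right-hand side. The first term, $\tau[\cdots]\Theta\gaD R(\zeta,0)u$, when $R(\zeta,\tau)$ is replaced by $R(\zeta,0)+[\text{the }O(\tau)\text{ correction}]$, produces the stated leading term $\tau[\gaD R(\zeta,0)^*]^*\Theta\gaD R(\zeta,0)u$ plus the $\tau^2$ term $\tau^2[\gaD R(\zeta,0)^*]^*\Theta\gaD[\gaD R(\zeta,0)^*]^*\Theta\gaD R(\zeta,0)u$ (here I must be careful that the boundary-pairing operator attached to $R(\zeta,\tau)$ converges to that attached to $R(\zeta,0)$, which follows from Lemma \ref{rescont} together with continuity of $\gaD$, and that $\Theta\gaD[\cdots]$ lands back in the right space so the composition makes sense — this uses $\Theta\in\cB(H^{1/2},H^{-1/2})$ and the regularity $[\gaD R^*]^*\in\cB(H^{-1/2},H^1)$). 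The second term, $-\tau^2 R(\zeta,\tau)V(\tau x)R(\zeta,0)u$, to leading order is $-\tau^2 R(\zeta,0)V(\tau x)R(\zeta,0)u$, and here I invoke the uniform continuity \eqref{unif1} of $V$ to replace $V(\tau x)$ by $V(0)$ at the cost of $o(\tau^2)$: indeed $\|R(\zeta,0)(V(\tau x)-V(0))R(\zeta,0)u\|_{L^2(\Om)}\le\|R(\zeta,0)\|^2\sup_{x\in\Om}\|V(\tau x)-V(0)\|\,\|u\|_{L^2(\Om)}=o(1)$, which multiplied by $\tau^2$ is $o(\tau^2)$. Collecting, every term generated beyond those displayed in \eqref{resolvent} carries at least a factor $\tau^3$ (from a third iteration) or is of the form $\tau^2\cdot o(1)$, hence is absorbed into $r(\tau)$.

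The bookkeeping of remainder terms is the part that needs the most care, so I would organize it as follows. Let $\|u\|_{L^2(\Om)}\le1$ and $\zeta$ range over a fixed compact $K\subset\bbC\setminus\Sp(-\Delta_N)$; by Lemma \ref{rescont} there is a constant $c_K$ with $\|R(\zeta,\tau)\|_{\cB(L^2(\Om))}\le c_K$ for all such $\zeta$ and all small $\tau$, and $\|R(\zeta,\tau)-R(\zeta,0)\|_{\cB(L^2(\Om))}\to0$ uniformly on $K$. Each occurrence of $R(\zeta,\tau)-R(\zeta,0)$ contributes a factor $o(1)$; each explicit power of $\tau$ in front contributes its power; the operators $\Theta$, $\gaD$, $[\gaD R(\zeta,0)^*]^*$ all have operator norms bounded uniformly on $K$. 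Writing $r(\tau)$ as the difference between $R(\zeta,\tau)u-R(\zeta,0)u$ and the three displayed terms, a term-by-term estimate shows $\|r(\tau)\|_{L^2(\Om)}\le\tau^3 c_K'+\tau^2\varepsilon(\tau)$ with $\varepsilon(\tau)\to0$, i.e.\ $\|r(\tau)\|_{L^2(\Om)}=o(\tau^2)$ uniformly in $\zeta\in K$ and $\|u\|\le1$, as claimed. The main obstacle is not any single estimate but getting the iteration to close correctly at order $\tau^2$ while keeping track of which resolvent ($\tau$ or $0$) appears in each factor and ensuring all the boundary operators compose in the correct Sobolev spaces; once the weak-form identity above is written down cleanly, the rest is a disciplined Neumann-series-type expansion controlled by Lemma \ref{rescont} and \eqref{unif1}.
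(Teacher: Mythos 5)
The weak-form derivation at the start of your proposal is sound, but the first-order identity it produces is the \emph{adjoint} of the one the paper uses, and this orientation does not iterate cleanly. Testing the identity
\[
\mathfrak{l}_{0,\cG}(\tau)(w_\tau-w_0,\phi)-\zeta\langle w_\tau-w_0,\phi\rangle_{L^2(\Om)}
=\tau\langle\Theta\gaD w_0,\gaD\phi\rangle_{1/2}-\tau^2\langle V(\tau x)w_0,\phi\rangle_{L^2(\Om)}
\]
against $\phi=R(\zeta,\tau)^*v$ (which is what "applying $R(\zeta,\tau)$" amounts to) yields
\[
w_\tau-w_0=\tau\,[\gaD R(\zeta,\tau)^*]^*\Theta\gaD R(\zeta,0)u-\tau^2\,R(\zeta,\tau)V(\tau x)R(\zeta,0)u,
\]
with the $\tau$-resolvent \emph{inside} the boundary-pairing operator. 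The paper instead tests against $R(\zeta,0)^*v$ (equivalently, it expands $\langle w,v\rangle$ as $(\mathfrak{l}_{0,\cG}(0)-\zeta)(w,R(\zeta,0)^*v)$), which produces the mirror identity $R(\zeta,\tau)u-R(\zeta,0)u=\tau[\gaD R(\zeta,0)^*]^*\Theta\gaD R(\zeta,\tau)u-\tau^2 R(\zeta,0)V(\tau x)R(\zeta,\tau)u$, with the \emph{fixed} $0$-resolvent in the boundary operator and the $\tau$-dependence isolated in $R(\zeta,\tau)u$, which lives in $L^2(\Om)$. That is exactly what makes a single re-insertion of the formula into itself close at order $\tau^2$.

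The gap in your proposal is at the iteration step. You need a first-order expansion of the operator $[\gaD R(\zeta,\tau)^*]^*=R(\zeta,\tau)\gaD^*$, i.e.\ of the resolvent acting on the distribution $\gaD^*\Theta\gaD R(\zeta,0)u\in\big(H^1(\Om)\big)^*$ rather than on an $L^2$ function. Your first-order identity does not apply to such arguments, and the justification you offer --- that the boundary-pairing operator attached to $R(\zeta,\tau)$ converges to the one attached to $R(\zeta,0)$ "by Lemma \ref{rescont} together with continuity of $\gaD$" --- is inadequate: Lemma \ref{rescont} gives norm continuity only in $\cB(L^2(\Om))$, whereas controlling the trace of the correction requires continuity in $\cB\big(L^2(\Om),H^1(\Om)\big)$ (this stronger statement is extractable from the \emph{proof} of Lemma \ref{rescont} via the factorization $R(\zeta,\tau)=U(\tau)(W(\tau)-\zeta U(\tau))^{-1}$ with $U(\tau)\in\cB(L^2(\Om),H^1(\Om))$, but you never invoke it), and even then one only gets $[\gaD R(\zeta,\tau)^*]^*-[\gaD R(\zeta,0)^*]^*=\mathrm o(1)$, not the explicit $\tau$-linear term $\tau[\gaD R(\zeta,0)^*]^*\Theta\gaD[\gaD R(\zeta,0)^*]^*$ you need. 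Producing that term requires either a separate weak-form argument applied to the adjoint, together with the symmetry $(\gaD[\gaD R(\zeta,0)^*]^*)^*=\gaD[\gaD R(\bar\zeta,0)^*]^*$, or simply passing to the adjoint identity --- which is the paper's formula --- before iterating. As written, the "disciplined Neumann-series-type expansion" does not close.
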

\begin{proof}
We recall that 
  $\zeta\in\bbC\setminus\Sp(L_{0,\cG}(\tau))$ for $\tau$ near $0$ by Lemma \ref{rescont}, since
  $\zeta\in\bbC\setminus\Sp(-\Delta_N)$, and define $w=R(\zeta, \tau)u-R(\zeta, 0)u$ for $\|u\|_{L^2(\Om)}\le1$. Using \eqref{OPFORM}, the definition of $w$ and the definition of the form $\mathfrak{l}_{0,\cG}(u,v)$, we obtain
\begin{align}\label{res}
\begin{split}
&\langle w,v\rangle_{L^2(\Om)}=({l}_{0,\cG}(0)-\zeta)(R(\zeta, 0)w,v)=({l}_{0,\cG}(0)-\zeta)(w,R(\zeta, 0)^*v)\\
&=({l}_{0,\cG}(0)-\zeta)(R(\zeta, \tau)u,R(\zeta, 0)^*v)-({l}_{0,\cG}(0)-\zeta)(R(\zeta, 0)u,R(\zeta, 0)^*v)\\
&=({l}_{0,\cG}(\tau)-\zeta)(R(\zeta, \tau)u,R(\zeta, 0)^*v)-\tau^2\langle V(\tau x)R(\zeta, \tau)u,R(\zeta, 0)^*v\rangle_{L^2(\Om)}\\&
 \quad+\tau \langle\Theta\gaD  R(\zeta, \tau)u,\gaD   R(\zeta, 0)^*v\rangle_{1/2}-({l}_{0,\cG}(0)-\zeta)(R(\zeta, 0)u,R(\zeta, 0)^*v)\\
&=\langle u,R(\zeta, 0)^*v\rangle_{L^2(\Om)}-\tau^2\langle V(\tau x)R(\zeta, \tau)u,R(\zeta, 0)^*v\rangle_{L^2(\Om)}\\&
 \quad+\tau \langle\Theta\gaD  R(\zeta, \tau)u,\gaD   R(\zeta, 0)^*v\rangle_{1/2}-\langle u,R(\zeta, 0)^*v\rangle_{L^2(\Om)}\\
 &=-\tau^2\langle V(\tau x)R(\zeta, \tau)u,R(\zeta, 0)^*v\rangle_{L^2(\Om)}\\&
  \hskip3cm+\tau \langle\Theta\gaD  R(\zeta, \tau)u,\gaD   R(\zeta, 0)^*v\rangle_{1/2}.
\end{split}
\end{align}
Using \eqref{res} and \eqref{adj} we arrive at
\begin{align}
\begin{split}
&\langle w,v\rangle_{L^2(\Om)}=-\tau^2\langle R(\zeta, 0)V(\tau x)R(\zeta, \tau)u,v\rangle_{L^2(\Om)}\\&
  \hskip3cm+\tau \langle [\gaD   R(\zeta, 0)^*]^*\Theta\gaD  R(\zeta, \tau)u,v\rangle_{L^2(\Om)},
\end{split}
\end{align}
hence
\begin{align}\label{formula}
\begin{split}
&R(\zeta, \tau)u=R(\zeta, 0)u-\tau^2R(\zeta, 0)V(\tau x)R(\zeta, \tau)u
 \\&\hskip3cm +\tau[\gaD   R(\zeta, 0)^*]^*\Theta\gaD  R(\zeta, \tau)u.
\end{split}
\end{align}
Replacing $ R(\zeta, \tau)u$ in the right-hand  side of \eqref{formula} again by \eqref{formula} yields
\begin{align}
R&(\zeta, \tau)u=R(\zeta, 0)u
-\tau^2R(\zeta, 0)V(\tau x)\Big(R(\zeta, 0)u-\tau^2R(\zeta, 0)V(\tau x)R(\zeta, \tau)u\no\\&\hskip3cm
  +\tau[\gaD   R(\zeta, 0)^*]^*\Theta\gaD  R(\zeta, \tau)u\Big)\no\\
  &+\tau[\gaD   R(\zeta, 0)^*]^*\Theta\gaD\Big(R(\zeta, 0)u-\tau^2R(\zeta, 0)V(\tau x)R(\zeta, \tau)u
   \label{fformula}\\&\hskip3cm +\tau[\gaD   R(\zeta, 0)^*]^*\Theta\gaD  R(\zeta, \tau)u\Big)\no\\
            &=R(\zeta, 0)u+\tau[\gaD   R(\zeta, 0)^*]^*\Theta\gaD R(\zeta, 0)u-\tau^2R(\zeta, 0)V(0)R(\zeta, 0)u\no\\&\qquad+\tau^2[\gaD   R(\zeta, 0)^*]^*\Theta\gaD[\gaD   R(\zeta, 0)^*]^*\Theta\gaD  R(\zeta, 0)u+r(\tau),\no
\end{align}
where
\begin{equation}
\begin{split}
r(\tau)=-&\tau^2R(\zeta, 0)\big(V(\tau x)-V(0)\big)R(\zeta, 0)u\\&+\tau^2[\gaD   R(\zeta, 0)^*]^*\Theta\gaD[\gaD   R(\zeta, 0)^*]^*\Theta\gaD
\big(R(\zeta, \tau)u-R(\zeta, 0)u\big)\\&\qquad-\tau ^3R(\zeta, 0)V(\tau x)[\gaD   R(\zeta, 0)^*]^*\Theta\gaD  R(\zeta, \tau)u\\&\qquad\qquad-\tau ^3[\gaD   R(\zeta, 0)^*]^*\Theta\gaD R(\zeta, 0)V(\tau x)R(\zeta, \tau)u\\&
\qquad\qquad\qquad+\tau ^4R(\zeta, 0)V(\tau x)R(\zeta, 0)V(\tau x)R(\zeta, \tau)u.
\end{split}
\end{equation}
Finally, we remark that $\|w\|_{L^2(\Om)}\to0$ and $\|R(\zeta,\tau)\|_{\cB(L^2(\Om))}$ is bounded as $\tau\to0$ by Lemma \ref{rescont} and thus, using \eqref{unif1} for $\tau_0=0$, we conclude that $\|r(\tau)\|_{L^2(\Om)}=\mathrm{o}(\tau^{2})$ as $\tau\to0$, uniformly for $\zeta$ in compact subsets of $\bbC\setminus\Sp(L_{0,\cG}(\tau))$ and $\|u\|_{L^2(\Om)}\le1$.
\end{proof}

Our next objective is to study the asymptotics of the eigenvalues of $L_{0,\cG}(\tau)$ that bifurcate from the zero eigenvalue of $L_{0,\cG}(0)=-\Delta_N$ for $\tau$ near zero. To begin, we will separate the spectrum of $L_{0,\cG}(\tau)$. First, we note that $0$ is the only nonpositive eigenvalue in $\Sp(L_{0,\cG}(0))$. Using Lemma \ref{lem:Gamma4}, let us fix $\Lambda>0$ so large that $-\Lambda/2<\lambda$ for all $\lambda\in\Sp(L_{0,\cG}(\tau))$ and $\tau\in[0,1]$.
By the standard spectral mapping theorem, see, e.g., \cite[Theorem IV.1.13]{EnNa}, we infer
\beq\lb{spmth}
\Sp\big((-\Lambda-L_{0,\cG}(\tau))^{-1}\big)\setminus\{0\}=\big\{(-\Lambda-\lambda)^{-1}:
\lambda\in\Sp(L_{0,\cG}(\tau))\big\}, \, \tau\in[0,1].
\enq
In particular, $-1/\Lambda\in\Sp\big((-\Lambda-L_{0,\cG}(0))^{-1}\big)$ since $0\in\Sp(L_{0,\cG}(0))$. Now fix a sufficiently small $\varepsilon\in(0,1/(2\Lambda))$ such that the disc of radius $2\varepsilon$ centered at the point $-1/\Lambda\in\bbC$  does not contain any other eigenvalues in $\Sp\big((-\Lambda-L_{0,\cG}(0))^{-1}\big)$ except  $-1/\Lambda$. Using Lemma \ref{rescont} we know that  $(-\Lambda-L_{0,\cG}(\tau))^{-1}\to(-\Lambda-L_{0,\cG}(0))^{-1}$ in $\cB(L^2(\Om))$ as $\tau\to0$. By the upper semicontinuity of the spectra of bounded operators, see, e.g., \cite[Theorem IV.3.1]{Kato}, there exists a $\delta=\delta(\varepsilon)$ such that if $\tau\in[0,\delta]$, then 
\beq\lb{spincl}
\Sp\big((-\Lambda-L_{0,\cG}(\tau))^{-1}\big)\subset\{\mu:\dist\big(\mu,\Sp\big((-\Lambda-L_{0,\cG}(0))^{-1}\big)\big)<\varepsilon\big\}.
\enq

In the remaining part of this section we assume that $\tau \le\delta$. Let $\{\nu_\ell(\tau)\}_{\ell=1}^{n(\tau)}\subset\Sp\big((-\Lambda-L_{0,\cG}(\tau))^{-1}\big)$ denote the eigenvalues of $(-\Lambda-L_{0,\cG}(\tau))^{-1}$ which are located inside of the disc of radius $\varepsilon$ centered at the point $-1/\Lambda\in\bbC$ and let $\lambda_\ell(\tau)=-\Lambda-1/\nu_\ell(\tau)$ be the respective eigenvalues of $L_{0,\cG}(\tau)$. Let $\gamma$ be a small circle centered at zero which encloses the eigenvalues $\lambda_\ell(\tau)$ for all $\ell=1,\dots,n(\tau)$ and $\tau\in[0,\delta]$ and separates them from the rest of the spectrum of $L_{0,\cG}(\tau)$. By choosing $\varepsilon$ sufficiently small we will assume that $\gamma$ also separates $0\in\Sp(L_{0,\cG}(0))$ from the rest of the spectrum of $L_{0,\cG}(0)$. 

We let $P$ denote the orthogonal Riesz projection for $L_{0,\cG}(0)$ corresponding to the eigenvalue $0\in\Sp(L_{0,\cG}(0))$ and let $\ran({P})=\ker(L_{0,\cG}(0))$ be the $N$-dimensional subspace in $L^2(\Om; \bbC^N)$ spanned by the constant vector-valued functions $\{\e_j\}_{j=1}^N$, that is, the standard unit vectors in $\bbC^N$.
 We denote by $\cS \subset H^{1/2}(\dOm)$ the corresponding subspace of constant vector-valued functions on $\dOm$. Also, we
let $P(\tau)$ denote the Riesz spectral protection for $L_{0,\cG}(\tau)$ corresponding to the eigenvalues $\{\lambda_\ell(\tau)\}\subset\Sp(L_{0,\cG}(\tau))$, that is,
\beq\lb{dfnRPr}
P=\frac{1}{2\pi i}\int_\gamma(\zeta-L_{0,\cG}(0))^{-1}\,d\zeta,\,
P(\tau)=\frac{1}{2\pi i}\int_\gamma(\zeta-L_{0,\cG}(\tau))^{-1}\,d\zeta.
\enq
Our objective is to establish an asymptotic formula for the eigenvalues $\lambda_\ell(\tau)$ as $\tau\to0$ similar to \cite[Theorem II.5.11]{Kato}, which is valid for families of bounded operators on finite-dimensional spaces. We stress that one can not directly use a related result \cite[Theorem VIII.2.9]{Kato} for families of unbounded operators, as the $\tau$-dependence of $L_{0,\cG}(\tau)$ in our case  is more complicated than allowed in the latter theorem. We are thus forced to mimic the main strategy of \cite{Kato} in order to extend the relevant results to the family $\{L_{0,\cG}(\tau)\}_{\tau\in[0,\delta]}$. 
\begin{remark}\lb{rem:LPt} The main part of the proof of Theorem \ref{tim:Nbased} consists of counting the negative eigenvalues of the operator $L_{0,\cG}(\tau)$ for $\tau$ near zero. We claim that this number is equal to the number of negative eigenvalues of the operator $L_{0,\cG}(\tau)P(\tau)$ for $\tau$ near zero, that is, of the restriction of $L_{0,\cG}(\tau)$ to the finite-dimensional subspace $\ran(P(\tau))$. Indeed, by the spectral mapping theorem \eqref{spmth}, $\lambda<0$ is in $\Sp(L_{0,\cG}(\tau))$ if and only if $(-\Lambda-\lambda)^{-1}<-1/\Lambda$. Thus for $\tau$ near zero the negative eigenvalues of $L_{0,\cG}(\tau)$ are in one-to-one correspondence with the eigenvalues $\nu_\ell(\tau)\in\Sp\big((-\Lambda-L_{0,\cG}(\tau))^{-1}\big)$ that satisfy the inequality $\nu_\ell(\tau)<-1/\Lambda$, and therefore with the negative eigenvalues among $\lambda_\ell(\tau)\in\Sp\big(L_{0,\cG}(\tau)P(\tau)\big)$ as claimed. 
\hfill$\Diamond$\end{remark}

We recall that $L_{0,\cG}(0)P=0$ and thus the standard formula for the resolvent decomposition, see, e.g., \cite[Section III.6.5]{Kato}, yields
\begin{equation}\lb{decomp}
	R(\zeta,0)=(-\zeta)^{-1}P+\sum_{n=0}^\infty\zeta^nS^n,
\end{equation}	
where
\begin{equation}
	S=\frac{1}{2\pi i}\int_\gamma\zeta^{-1}R(\zeta,0)\,d\zeta \lb{decomp1}
\end{equation}
is the reduced resolvent for the operator $L_{0,\cG}(0)$ in $L^2(\Om)$; in particular, $PS=SP=0$. We introduce the notation $D(\tau)=P(\tau)-P$. Applying $-\frac{1}{2\pi i}\int_\gamma(\cdot)\,d\zeta$ in \eqref{resolvent} and using \eqref{dfnRPr} we conclude that $D(\tau)=
\mathrm{o}(\tau)$ as $\tau\to 0$. Therefore $I_{L^2(\Om)}-D^2(\tau)$ is strictly positive for $\tau$ near zero and, following \cite[Section I.4.6]{Kato}, we may introduce mutually inverse operators $U(\tau)$ and $U(\tau)^{-1}$ in $\cB(L^2(\Om))$ as follows:
\begin{equation}\label{dfnUUinv}
\begin{split}
U(\tau)&=(I-D^2(\tau))^{-1/2}\big((I-P(\tau))(I-P)+P(\tau)P\big),\\
U(\tau)^{-1}&=(I-D^2(\tau))^{-1/2}\big((I-P)(I-P(\tau))+PP(\tau)\big).
\end{split}
\end{equation}
As discussed in Remark \ref{DalKr}, the transformation operator $U(\tau)$ splits the projections $P$ and $P(\tau)$, that is, 
\begin{equation}\label{up}
U(\tau)P=P(\tau)U(\tau),
\end{equation}
so that $U(\tau)$ is an isomorphism of the $N$-dimensional subspace $\ran({P})$ onto the subspace $\ran({P(\tau)})$. We isolate the main technical steps in the proof of Theorem \ref{tim:Nbased} in the following lemma.
\begin{lemma}\lb{lem:simile} Let $P$ be the Riesz projection for $L_{0,\cG}(0)$ onto the subspace $\ran({P})=\ker(L_{0,\cG}(0))$ and $P(\tau)$ be the respective Riesz projection for $L_{0,\cG}(\tau)$ from \eqref{dfnRPr}, let $S$ be the reduced resolvent for $L_{0,\cG}(0)$ defined in \eqref{decomp1}, and let the transformation operators $U(\tau)$ and $U(\tau)^{-1}$ be defined in \eqref{dfnUUinv}. Then
\begin{align}\label{pulup}
\begin{split}
PU(\tau)^{-1}&L_{0,\cG}(\tau)P(\tau)U(\tau)P
=-\tau [\gaD   P]^*\Theta\gaD P+\tau^2PV(0)P\\
&\qquad-\tau^2[\gaD   P]^*\Theta\gaD[\gaD   S]^*\Theta\gaD  P+\mathrm{o}(\tau^2)
\text{ as $\tau\to0$}.
\end{split}
\end{align}
\end{lemma}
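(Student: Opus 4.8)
The plan is to start from the resolvent identity \eqref{resolvent}, integrate it over the small circle $\gamma$ to obtain an asymptotic expansion of the spectral projection $P(\tau)$, and then combine this with the representation of $L_{0,\cG}(\tau)P(\tau)$ and the transformation operators $U(\tau)$, $U(\tau)^{-1}$ to extract the finite-dimensional operator in \eqref{pulup}. The key observation is that $L_{0,\cG}(0)P = 0$, so all the information about the bifurcating eigenvalues is of order $\tau$ or higher, and the expansion \eqref{decomp}--\eqref{decomp1} of $R(\zeta,0)$ in terms of $P$ and the reduced resolvent $S$ will be used repeatedly. First I would compute $P(\tau) - P = -\frac{1}{2\pi i}\int_\gamma \big(R(\zeta,\tau) - R(\zeta,0)\big)\,d\zeta$ by substituting \eqref{resolvent}. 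For each term on the right of \eqref{resolvent} I would use \eqref{decomp} to replace $R(\zeta,0)$, keeping only the parts that produce a nonzero contour integral. For instance, $-\frac{1}{2\pi i}\int_\gamma [\gaD R(\zeta,0)^*]^*\Theta\gaD R(\zeta,0)\,d\zeta$ has leading term involving $[\gaD P]^*\Theta\gaD S + [\gaD S]^*\Theta\gaD P$ (the $(-\zeta)^{-1}P$ factors pick up residues), and the $\tau^2$ terms contribute pieces with two factors of $S$ or with $PV(0)S$, $SV(0)P$, etc. Here one must be careful about $[\gaD R(\zeta,0)^*]^*$: since $R(\zeta,0)^* = R(\bar\zeta,0)$ and the contour is a circle around $0$, a parallel expansion of $[\gaD R(\zeta,0)^*]^*$ in powers of $\zeta$ with coefficients $[\gaD P]^*$ and $[\gaD S^n]^*$ is needed, and this is where \eqref{adj} and the mapping property $[\gaD R(\zeta,0)^*]^* \in \cB(H^{-1/2}(\dOm),H^1(\Om))$ enter.

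Next I would insert the expansion of $P(\tau)$ into the operators $U(\tau)$, $U(\tau)^{-1}$ from \eqref{dfnUUinv}. Since $D(\tau) = P(\tau) - P = \mathrm{O}(\tau)$, we have $(I - D^2(\tau))^{-1/2} = I + \mathrm{O}(\tau^2)$, and $PU(\tau)^{-1} = P\big((I-P)(I-P(\tau)) + PP(\tau)\big)(I - D^2)^{-1/2}$, which simplifies using $P(I-P) = 0$ to $P P(\tau)(I + \mathrm{O}(\tau^2)) = P(I + D(\tau))(I + \mathrm{O}(\tau^2))$; similarly for $U(\tau)P$. The central quantity $PU(\tau)^{-1}L_{0,\cG}(\tau)P(\tau)U(\tau)P$ can then be rewritten, using the intertwining \eqref{up} to absorb one $P(\tau)$, as $P U(\tau)^{-1} L_{0,\cG}(\tau) U(\tau) P$ up to the understanding that $U(\tau)P = P(\tau)U(\tau)$. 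The term $L_{0,\cG}(\tau) P(\tau) U(\tau) P$: I would write $L_{0,\cG}(\tau) P(\tau) = L_{0,\cG}(\tau)\big(P + D(\tau)\big)$ and note $L_{0,\cG}(0)P = 0$, so $L_{0,\cG}(\tau)P = \big(L_{0,\cG}(\tau) - L_{0,\cG}(0)\big)P = \tau^2 V(\tau\cdot)P - \tau(\text{boundary contribution})P$ in the weak/form sense — more precisely, using the form \eqref{dfnlGt}, $\langle L_{0,\cG}(\tau)P u, v\rangle = \tau^2\langle V(\tau x)Pu, v\rangle - \tau\langle \Theta\gaD Pu, \gaD v\rangle$ for $v \in H^1(\Om)$, giving rise to the $\tau^2 PV(0)P$ and $-\tau[\gaD P]^*\Theta\gaD P$ terms after replacing $V(\tau x)$ by $V(0) + \mathrm{o}(1)$ on the finite-dimensional subspace of constants. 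The remaining $-\tau^2[\gaD P]^*\Theta\gaD[\gaD S]^*\Theta\gaD P$ term arises from the cross terms: $L_{0,\cG}(\tau)D(\tau)$ contributes $\tau$ (from the boundary part of $L_{0,\cG}(\tau) - L_{0,\cG}(0)$ acting on $P$-components of $D(\tau)$) times the $\mathrm{O}(\tau)$ part of $D(\tau)$, the leading piece of which involves $[\gaD S]^*\Theta\gaD P$, and then projection back by $P$ and by $[\gaD P]^*\Theta$ produces exactly that cubic-in-$\Theta$, quadratic-in-$\tau$ term.

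The bookkeeping is the main obstacle: one must track all terms of order $\tau$ and $\tau^2$ through three multiplications (by $U(\tau)^{-1}$, by $L_{0,\cG}(\tau)P(\tau)$, and by $U(\tau)$), verify that all $\mathrm{O}(\tau)$ contributions either cancel or assemble into $-\tau[\gaD P]^*\Theta\gaD P$, and that the $\mathrm{O}(\tau^2)$ contributions assemble into $\tau^2 PV(0)P - \tau^2[\gaD P]^*\Theta\gaD[\gaD S]^*\Theta\gaD P$, with everything else being genuinely $\mathrm{o}(\tau^2)$. Two simplifications will help substantially. First, $P U(\tau)^{-1} L_{0,\cG}(\tau) U(\tau) P = P U(\tau)^{-1} L_{0,\cG}(\tau) P(\tau) U(\tau) P$, and since we conjugate by $P$ on both sides and use $PD(\tau) + D(\tau)P = D(\tau) + \mathrm{O}(\tau^2)$ (a standard identity following from $P^2 = P$, $P(\tau)^2 = P(\tau)$, which gives $PD(\tau)P = -D(\tau)^2P + \text{diagonal pieces} = \mathrm{O}(\tau^2)$, so that $P D(\tau) P = \mathrm{O}(\tau^2)$), many of the apparently $\mathrm{O}(\tau)$ cross terms collapse. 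Second, $\gaD P$ and $\gaD S$ are concrete: $\gaD P$ maps onto $\cS$ (constant functions on $\dOm$) and $\Theta\gaD S$ and $[\gaD S]^*$ can be manipulated via $PS = SP = 0$ together with the form identity. I would organize the computation as a sequence of displayed equalities, each reducing the expression modulo $\mathrm{o}(\tau^2)$, and conclude with \eqref{pulup}. The boundedness and continuity facts from Lemmas \ref{rescont} and the resolvent expansion lemma, plus uniformity in \eqref{unif1}, guarantee all remainder estimates are $\mathrm{o}(\tau^2)$ uniformly as required.
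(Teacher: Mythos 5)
Your proposal tracks the paper's proof quite closely through its first three steps: you expand the resolvent via \eqref{resolvent}, integrate along $\gamma$ to expand $P(\tau)=P+D(\tau)$, and then expand the transformation operators $U(\tau)^{\pm1}$ from \eqref{dfnUUinv} using $(I-D^2(\tau))^{-1/2}=I+O(\tau^2)$ and the intertwining \eqref{up}. Where you diverge is in the final assembly. The paper stays entirely within the resolvent calculus: it first computes $PU(\tau)^{-1}R(\zeta,\tau)U(\tau)P$ by inserting $P\oplus(I-P)$ on both sides of $R(\zeta,\tau)$ and keeping track of four cross terms, and only then recovers $L_{0,\cG}(\tau)P(\tau)$ via the identity $L_{0,\cG}(\tau)P(\tau)=-\tfrac{1}{2\pi i}\int_\gamma\zeta R(\zeta,\tau)\,d\zeta$. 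You instead propose to apply $L_{0,\cG}(\tau)$ directly to $P(\tau)=P+D(\tau)$ and read off the contributions from the form $\mathfrak{l}_{0,\cG}(\tau)$.

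There is a concrete obstacle in the way you phrase that last step. You write ``$L_{0,\cG}(\tau)P=(L_{0,\cG}(\tau)-L_{0,\cG}(0))P$'' and then invoke the form, but for $\tau>0$ the constants $\ran(P)$ do \emph{not} lie in $\dom(L_{0,\cG}(\tau))$: the boundary condition $\gaN u-\tau\Theta\gaD u=0$ fails for constants unless $\Theta$ annihilates $\cS$. So $L_{0,\cG}(\tau)P$ has no meaning as an operator composition, and the identity $\langle L_{0,\cG}(\tau)Pu,v\rangle=\mathfrak{l}_{0,\cG}(\tau)(Pu,v)$ from \eqref{OPFORM} is not available since it requires $Pu\in\dom(L_{0,\cG}(\tau))$. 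The correct form-theoretic version of your argument would evaluate $\mathfrak{l}_{0,\cG}(\tau)\big(P(\tau)U(\tau)Pf,\,U(\tau)Pg\big)$ for $f,g\in\ran(P)$ and expand the two $H^1$-valued arguments, never splitting $P(\tau)$ under the unbounded operator itself. That is a legitimate alternative to the contour-integral step, but it is not what you wrote, and the fix is not cosmetic: it changes which intermediate objects are available.

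A second gap is quantitative. You attribute the $-\tau^2[\gaD P]^*\Theta\gaD[\gaD S]^*\Theta\gaD P$ term to a single cross product ``(boundary part)$\times D(\tau)$.'' In the form-theoretic route this term actually assembles from \emph{three} sources of the same order: the two boundary cross terms $-\tau\langle\Theta\gaD u^{(1)},\gaD g\rangle$ and $-\tau\langle\Theta\gaD f,\gaD v^{(1)}\rangle$, and the Dirichlet term $\langle\nabla u^{(1)},\nabla v^{(1)}\rangle$ (nonzero since the first-order correction $u^{(1)}=[\gaD S]^*\Theta\gaD f$ is not constant), plus corrections from $U(\tau)^{\pm 1}$ at second order. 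These must be shown to combine into a \emph{single} cubic term, not two or three; the paper does this by an explicit $A_1+A_2+A_3+A_4$ decomposition followed by a residue computation in which several $[\gaD(S^2)]^*$ contributions cancel. Your sketch asserts the bookkeeping closes but does not show it, and because the cancellations are delicate (the $(-\zeta)^{-1}S^2$ and $(-\zeta)^{-2}R(\zeta,0)$ pieces conspire), this is exactly where the proof lives. Without carrying that out, the proposal does not yet establish \eqref{pulup}.
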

\begin{proof} We will split the proof into several steps.

{\em Step 1.}\, We first claim the following asymptotic relations for $\zeta\in\gamma$:
\begin{align}
\label{rp}
R&(\zeta, \tau)P=(-\zeta)^{-1}P+\tau (-\zeta)^{-1}[\gaD   R(\zeta, 0)^*]^*\Theta\gaD P+\mathrm{o}(\tau)_u,\\
PR&(\zeta, \tau)
  =(-\zeta)^{-1}P+\tau (-\zeta)^{-1}[\gaD  P]^*\Theta\gaD R(\zeta, 0)+\mathrm{o}(\tau)_u,\label{eq1.63}\\
PR&(\zeta, \tau)P=(-\zeta)^{-1}P+\tau (-\zeta)^{-2}[\gaD   P]^*\Theta\gaD P-\tau^2(-\zeta)^{-2}PV(0)P\nonumber\\&\quad+\tau^2(-\zeta)^{-2}[\gaD   P]^*\Theta\gaD[\gaD   R(\zeta, 0)^*]^*\Theta\gaD  P+\mathrm{o}(\tau^{2})_u, \label{prp}\\
(I-P)R&(\zeta, \tau)P=\tau  (-\zeta)^{-1}(I-P)[\gaD   R(\zeta, 0)^*]^*\Theta\gaD P+\mathrm{o}(\tau)_u.\label{7.44.2}
\end{align}
Here and below we write $\mathrm{o}(\tau^\alpha)_u$ to indicate a term which is $\mathrm{o}(\tau^\alpha)$ as $\tau\to0$ uniformly for $\zeta\in\gamma$.
To prove \eqref{rp} we note that $R(\zeta,0)P=(-\zeta)^{-1}P$ by \eqref{decomp} and use \eqref{resolvent}. Similarly, $P[\gaD R(\zeta,0)^*]^*=[\gaD(PR(\zeta,0))^*]^*=(-\zeta)^{-1}[\gaD P]^*$ and \eqref{resolvent} yield \eqref{eq1.63} and
\eqref{prp}. Also, \eqref{7.44.2} follows immediately from \eqref{rp}.

{\em Step 2.}\, We claim the following asymptotic relations for the Riesz projections:
\begin{align}\label{ptp}
P(\tau)P&=P+\tau[\gaD S]^*\Theta\gaD P+\mathrm{o}(\tau)_u,\\
\label{ppt}
PP(\tau)&=P+\tau[\gaD  P]^*\Theta\gaD S+\mathrm{o}(\tau)_u,\\
\label{ppp}
PP(\tau)P&=P-\tau^2[\gaD   P]^*\Theta\gaD[\gaD   (S^2)]^*\Theta\gaD P+\mathrm{o}(\tau^{2})_u.
\end{align}
These follow from \eqref{dfnRPr} and \eqref{decomp1} by applying integration  $-\frac{1}{2\pi i}\int_\gamma(\cdot)\,d\zeta$ to \eqref{rp}, \eqref{eq1.63} and \eqref{prp} respectively.

{\em Step 3.}\, We next claim the following asymptotic relations for the transformation operators defined in 
\eqref{dfnUUinv}:
\begin{align}
U(\tau)&=I+\tau ([\gaD S]^*\Theta\gaD P-[\gaD  P]^*\Theta\gaD S)+\mathrm{o}(\tau)_u,\lb{eq1.72}\\
U(\tau)^{-1}&=I+\tau ([\gaD  P]^*\Theta\gaD S-[\gaD S]^*\Theta\gaD P)+\mathrm{o}(\tau)_u,\label{ut-1}\\
PU(\tau)P&=P-\frac{1}{2}\tau^2[\gaD   P]^*\Theta\gaD[\gaD   (S^2)]^*\Theta\gaD P+\mathrm{o}(\tau^2)_u,\label{pup}\\
\label{u-1}
PU(\tau)^{-1}P&=P-\frac{1}{2}\tau^2[\gaD   P]^*\Theta\gaD[\gaD   (S^2)]^*\Theta\gaD P+\mathrm{o}(\tau^2)_u,\\
PU(\tau)^{-1}(I-P)&=\tau[\gaD  P]^*\Theta\gaD S+\mathrm{o}(\tau)_u.\label{7.44.1}
\end{align}
Indeed, recalling that $D(\tau) = P(\tau) - P$ and using \eqref{ptp} and \eqref{ppt} yields
\begin{align}\label{qq}
D^2(\tau)&=(P(\tau)-P)(P(\tau)-P)=P(\tau)+P-P(\tau)P-PP(\tau)\\
&=P(\tau)-P+(P-P(\tau)P)+(P-PP(\tau))=D(\tau)-\tau P^{(1)}+\mathrm{o}(\tau)_u,
\no
\end{align}
where we have defined $P^{(1)}=[\gaD S]^*\Theta\gaD P+[\gaD  P]^*\Theta\gaD S$. Hence,
\begin{align}\label{qqq}
(I-D(\tau))(D(\tau)-\tau P^{(1)})=\tau  D(\tau)P^{(1)}+\mathrm{o}(\tau)_u=\mathrm{o}(\tau)_u,
\end{align}
and therefore $D(\tau)=\tau  P^{(1)}+(I-D(\tau))^{-1}\mathrm{o}(\tau)_u$, yielding
\begin{align}\label{qqqq}
D(\tau)=\tau  P^{(1)}+\mathrm{o}(\tau)_u.
\end{align}
Since $D(\tau)=O(\tau)_u$, formula \eqref{dfnUUinv} yields
$U(\tau)=I-P-P(\tau)+2P(\tau)P+O(\tau^2)_u$.
Applying \eqref{ptp} and then \eqref{qqqq}, we obtain
\eqref{eq1.72}. Similarly,
$U(\tau)^{-1}=I-P-P(\tau)+2PP(\tau)+O(\tau^2)_u$ yields \eqref{ut-1}.
Formula \eqref{pup} follows from the calculation
\begin{equation*}
\begin{split}
PU(\tau)P&=P(I-D^2(\tau))^{-1/2}P(\tau)P
=PP(\tau)P+\frac{1}{2}PD(\tau)^2P+O(\tau^3)_u\\
&=P-\tau^2[\gaD   P]^*\Theta\gaD[\gaD   (S^2)]^*\Theta\gaD P+\frac{1}{2}\tau^2P(P^{(1)})^2P+\mathrm{o}(\tau^2)_u\\
&=P-\tau^2[\gaD   P]^*\Theta\gaD[\gaD   (S^2)]^*\Theta\gaD P\\&\qquad\qquad+\frac{1}{2}\tau^2([\gaD  P]^*\Theta\gaD S[\gaD S]^*\Theta\gaD P)+\mathrm{o}(\tau^2)_u\\
&=P-\frac{1}{2}\tau^2[\gaD   P]^*\Theta\gaD[\gaD   (S^2)]^*\Theta\gaD P+\mathrm{o}(\tau^2)_u,
\end{split}
\end{equation*}
and a similar argument yields \eqref{u-1}. Also, \eqref{7.44.1} follows using \eqref{ut-1}.

{\em Step 4.}\, We now claim the following asymptotic relations for the resolvent:
\begin{equation}\lb{finstep}
\begin{split}
PU(\tau)&^{-1}R(\zeta, \tau)U(\tau)P=(-\zeta)^{-1}P+(-\zeta)^{-2}\tau[\gaD   P]^*\Theta\gaD P\\&-(-\zeta)^{-2}\tau^2PV(0)P-(-\zeta)^{-1}\tau^2[\gaD   P]^*\Theta\gaD[\gaD   (S^2)]^*\Theta\gaD P\\
&\quad+(-\zeta)^{-2}\tau^2[\gaD   P]^*\Theta\gaD[\gaD   [-(-\zeta)S^2\\&\qquad+R(\zeta, 0)(I+2(-\zeta) S+(-\zeta)^2 S^2)]^*]^*\Theta\gaD  P+\mathrm{o}(\tau^2)_u.
\end{split}
\end{equation}
Indeed, let us consider the operator
\[ PU(\tau)^{-1}R(\zeta, \tau)U(\tau)P=A_1+A_2+A_3+A_4, \]
where we denote
\begin{align*}
\begin{split}
&A_1=PU(\tau)^{-1}PR(\zeta, \tau)PU(\tau)P,\\&
\quad A_2=PU(\tau)^{-1}(I-P)R(\zeta, \tau)PU(\tau)P,\\
&\qquad A_3=PU(\tau)^{-1}PR(\zeta, \tau)(I-P)U(\tau)P,
\\&\quad\qquad
 A_4=PU(\tau)^{-1}(I-P)R(\zeta, \tau)(I-P)U(\tau)P.
\end{split}
\end{align*}
Using \eqref{u-1}, \eqref{prp} and \eqref{pup},  we obtain
\begin{align*}
\begin{split}
A_1=&(P-\frac{1}{2}\tau^2[\gaD   P]^*\Theta\gaD[\gaD   (S^2)]^*\Theta\gaD P+\mathrm{o}(\tau^2)_u)\\
&\times((-\zeta)^{-1}P+(-\zeta)^{-2}\tau[\gaD   P]^*\Theta\gaD P-(-\zeta)^{-2}\tau^2PV(0)P\\&+(-\zeta)^{-2}\tau^2[\gaD   P]^*\Theta\gaD[\gaD   R(\zeta, 0)^*]^*\Theta\gaD  P+\mathrm{o}(\tau^{2})_u)\\
&\times(P-\frac{1}{2}\tau^2[\gaD   P]^*\Theta\gaD[\gaD   (S^2)]^*\Theta\gaD P+\mathrm{o}(\tau^2)_u)\\
=&(-\zeta)^{-1}P+(-\zeta)^{-2}\tau[\gaD   P]^*\Theta\gaD P-(-\zeta)^{-2}\tau^2PV(0)P\\&+(-\zeta)^{-2}\tau^2[\gaD   P]^*\Theta\gaD[\gaD   R(\zeta, 0)^*]^*\Theta\gaD  P\\&-(-\zeta)^{-1}\tau^2[\gaD   P]^*\Theta\gaD[\gaD   (S^2)]^*\Theta\gaD P+\mathrm{o}(\tau^{2})_u.
\end{split}
\end{align*}
Also, it follows from \eqref{7.44.1} and \eqref{7.44.2} that 
\begin{align*}
\begin{split}
A_2=&(\tau[\gaD  P]^*\Theta\gaD S+\mathrm{o}(\tau)_s)
(\tau(-\zeta)^{-1}(I-P)[\gaD   R(\zeta, 0)^*]^*\Theta\gaD P+\mathrm{o}(\tau)_u)\\
&\times(P-\frac{1}{2}\tau^2[\gaD   P]^*\Theta\gaD[\gaD   (S^2)]^*\Theta\gaD P+\mathrm{o}(\tau^2)_u)\\
=&(-\zeta)^{-1}\tau^2[\gaD  P]^*\Theta\gaD S[\gaD   R(\zeta, 0)^*]^*\Theta\gaD P+\mathrm{o}(\tau^2)_u.
\end{split}
\end{align*}
Similarly,
\begin{align*}
\begin{split}
A_3=&PU(\tau)^{-1}PR(\zeta, \tau)(I-P)U(\tau)P\\
=&(P-\frac{1}{2}\tau^2[\gaD   P]^*\Theta\gaD[\gaD   (S^2)]^*\Theta\gaD P+\mathrm{o}(\tau^2)_u)\\
&\times(\tau(-\zeta)^{-1}[\gaD  P]^*\Theta\gaD R(\zeta, 0)+\mathrm{o}(\tau)_s)(\tau[\gaD S]^*\Theta\gaD P+\mathrm{o}(\tau)_s)\\
=&(-\zeta)^{-1}\tau^2[\gaD  P]^*\Theta\gaD R(\zeta, 0)[\gaD S]^*\Theta\gaD P+\mathrm{o}(\tau^2)_u,\\
A_4=&PU(\tau)^{-1}(I-P)R(\zeta, \tau)(I-P)U(\tau)P\\
=&\tau^2[\gaD  P]^*\Theta\gaD SR(\zeta, 0)[\gaD S]^*\Theta\gaD P+\mathrm{o}(\tau^2)_u.
\end{split}
\end{align*}
Collecting all these terms, we obtain \eqref{finstep}:
\begin{align*}
\begin{split}
PU(\tau)^{-1}R&(\zeta, \tau)U(\tau)P\\
=&(-\zeta)^{-1}P+(-\zeta)^{-2}\tau[\gaD   P]^*\Theta\gaD P-(-\zeta)^{-2}\tau^2PV(0)P\\&+(-\zeta)^{-2}\tau^2[\gaD   P]^*\Theta\gaD[\gaD   R(\zeta, 0)^*]^*\Theta\gaD  P\\&-(-\zeta)^{-1}\tau^2[\gaD   P]^*\Theta\gaD[\gaD   (S^2)]^*\Theta\gaD P+\mathrm{o}(\tau^{2})\\
&+(-\zeta)^{-1}\tau^2[\gaD  P]^*\Theta\gaD S[\gaD   R(\zeta, 0)^*]^*\Theta\gaD P+\mathrm{o}(\tau^2)_u\\
&+(-\zeta)^{-1}\tau^2[\gaD  P]^*\Theta\gaD R(\zeta, 0)[\gaD S]^*\Theta\gaD P+\mathrm{o}(\tau^2)_u\\
&+\tau^2[\gaD  P]^*\Theta\gaD SR(\zeta, 0)[\gaD S]^*\Theta\gaD P+\mathrm{o}(\tau^2)_u\\
=&(-\zeta)^{-1}P+(-\zeta)^{-2}\tau[\gaD   P]^*\Theta\gaD P-(-\zeta)^{-2}\tau^2PV(0)P\\&-(-\zeta)^{-1}\tau^2[\gaD   P]^*\Theta\gaD[\gaD   (S^2)]^*\Theta\gaD P\\
&+(-\zeta)^{-2}\tau^2[\gaD   P]^*\Theta\gaD[\gaD   [-(-\zeta)S^2\\&+R(\zeta, 0)(I+2(-\zeta) S+(-\zeta)^2 S^2)]^*]^*\Theta\gaD  P+\mathrm{o}(\tau^2)_u.
\end{split}
\end{align*}
{\em Step 5.}\, We are ready to finish the proof of the lemma. Using the standard relation from  \cite[Equation (III.6.24)]{Kato} we have
\[L_{0,\cG}(\tau)P(\tau)=-\frac{1}{2\pi i}\int_\gamma \zeta R(\zeta,\tau)\,d\zeta,\]
 and applying integration $-\frac{1}{2\pi i}\int_\gamma\zeta(\cdot)\,d\zeta$ in \eqref{finstep} yields \eqref{pulup}.
\end{proof}
\subsection{The Neumann-based case}
 We are ready to present the proof of Theorem \ref{tim:Nbased}, relating the Morse index of the operator $L_{\cG}$ with a Neumann-based subspace $\cG$, the Maslov index of the path $\Upsilon|_{\Sigma_3}$, the Morse index of a matrix associated with the boundary operator $\Theta$, and the Morse index of the potential matrix $V(0)$. 
 
 \begin{proof}[Proof of Theorem \ref{tim:Nbased}]  We begin exactly as in the proof of Theorem \ref{tim:Dbased}:
 since the boundary $\Gamma$ of the square $[-\Lambda,0]\times[\tau,1]$ can be contracted to a point, the Maslov index of $\Upsilon|_\Sigma$ is equal to zero.  Using Lemmas \ref{lem:Gamma4} and \ref{lambdaMon}, we arrive at the identity 
 $\mor(L_{\cG})=-\mas(\Upsilon|_{\Sigma_2})+\mor(L_{0,\cG}(\tau))$.
 So, it remains to show that under the assumptions in the theorem 
 one has the identity \[\mor(L_{0,\cG}(\tau))=\mor(-B)+\mor(Q_0V(0)Q_0)\,\text{ for $\tau$ near zero.}\] By Remark \ref{rem:LPt}, it suffices to count the negative eigenvalues of the finite-dimensional operator $L_{0,\cG}(\tau)P(\tau)$. By Lemma \ref{lem:simile}, it is enough to obtain an asymptotic formula for the eigenvalues of the operator $T(\tau)=
PU(\tau)^{-1}L_{0,\cG}(\tau)P(\tau)U(\tau)P$, where
\[
	T(\tau)=T+\tau T^{(1)}+\tau^2T^{(2)}+\mathrm{o}(\tau^2)
	\text{ as $\tau\to0$}
\]
and we denote
\begin{align}
T&=0,\quad T^{(1)}=-[\gaD   P]^*\Theta\gaD P,\quad 
T^{(2)}=T^{(2)}_1+T^{(2)}_2,\nonumber\\
T^{(2)}_1&=PV(0)P,\quad T^{(2)}_2=-[\gaD   P]^*\Theta\gaD
[\gaD   S]^*\Theta\gaD.\lb{dfnT22}\end{align}
These operators act on the $N$-dimensional space $\ran(P)=\ker(L_{0,\cG}(0))$ consisting of constant vector-valued functions on $\Om$. We will apply a well known finite-dimensional perturbation result \cite[Theorem II.5.1]{Kato} to the family $\{T(\tau)\}$ for $\tau$ near zero. For this we will need some more notations and preliminaries.

Let $\lambda^{(1)}_j$ denote the eigenvalues of the 
operator $T^{(1)}$, let $m_j$ denote their multiplicities, and let $P^{(1)}_j$ denote the respective orthogonal Riesz spectral projections, $j=1,\dots,m$. We recall the notation $\mathfrak{b}(p,q)=\langle\Theta p,q\rangle_{1/2}$ for the form on the $N$-dimensional subspace $\cS$ in $H^{1/2}(\dOm)$ consisting of the constant (that is, $x$-independent) vector-valued functions $p,q$ on $\dOm$. We also recall that $B$ is the operator on $\cS$ associated with the form $\mathfrak{b}$ so that $\langle Bp,q\rangle_\cS=\mathfrak{b}(p,q)$. We note that the Dirichlet trace $\gaD$ maps $\ran(P)$ onto $\cS$ isomorphically, and use the same notation $p,q$ for the elements of $\ran(P)$. The quadratic form on $\ran(P)$ associated with $T^{(1)}$ is given by 
\begin{align*}
\langle T^{(1)}p,q\rangle_{\ran(P)}&=-\langle [\gaD   P]^*\Theta\gaD Pp,q\rangle_{\ran(P)}\\&=_{H^{-1/2}}\langle\Theta \gaD Pp,\gaD Pq\rangle_{1/2}=-\langle\Theta p,q\rangle_{1/2}
=-\mathfrak{b}(p,q).
\end{align*}
Thus, the number of positive eigenvalues among $\lambda^{(1)}_j$ is given by $n_-(\mathfrak{b})$, the number of negative eigenvalues among $\lambda^{(1)}_j$ is given by $n_+(\mathfrak{b})$, and number of zero eigenvalues among $\lambda^{(1)}_j$ is given by $n_0(\mathfrak{b})$. Let us suppose that $\{\lambda^{(1)}_j\}_{j=0}^{n_0(\mathfrak{b})}$ are the zero eigenvalues. For the corresponding spectral projections $P^{(1)}_j$ one then has the identity $\ker(B)=\gaD(\bigoplus_{j=1}^{n_0(\mathfrak{b})}\ran(P^{(1)}_j))$.
For the operator $T^{(2)}_2$ from \eqref{dfnT22} and the spectral projections corresponding to the zero eigenvalues one has
\beq\lb{zeroev}
T^{(2)}_2P^{(1)}_j=0,\quad j=1,\dots,n_0(\mathfrak{b}),
\enq
because $\Theta\gaD Pp=\Theta p=0$ provided $p\in\bigoplus_{j=1}^{n_0(\mathfrak{b})} \ran(P^{(1)}_j)$.

Following \cite[Section II.5]{Kato}, we let $\lambda^{(2)}_{jk}$, $k=1,\dots,m_j$, $j=1,\dots,m$, denote the eigenvalues of the operator 
$P^{(1)}_jT^{(2)}P^{(1)}_j$ in $\ran(P^{(1)}_j)$ (recall that in our case the unperturbed operator is just $T=0$ and thus its reduced resolvent is zero and $P^{(1)}_j\widetilde{T}^{(2)}P^{(1)}_j=P^{(1)}_jT^{(2)}P^{(1)}_j$ using the notations from \cite[Section II.5]{Kato}). Taking into account \eqref{zeroev}, we observe that the eigenvalues 
$\lambda^{(2)}_{jk}$ for $j=1,\dots,n_0(\mathfrak{b})$ corresponding to the zero eigenvalues of $T^{(1)}$ are in fact eigenvalues of the operator $P^{(1)}_jT^{(2)}_1P^{(1)}_j$. By the assumption in the theorem we know that the form
\[\mathfrak{v}(p,q)=\langle V(0)p,q\rangle_\cS
\,\text{ on }\, \ran(Q_0)= \ker(B)=\gaD
\Big(\bigoplus_{j=1}^{n_0(\mathfrak{b})}\ran(P^{(1)}_j) \Big)\]
is nondegenerate. Thus, the number of negative, respectively, positive
eigenvalues among $\lambda^{(2)}_{jk}$, $k=1,\dots,m_j$, $j=1,\dots,n_0(\mathfrak{b})$, is equal to $n_-(\mathfrak{v})$, respectively, $n_+(\mathfrak{v})$.

By \cite[Theorem II.5.11]{Kato} the eigenvalues $\lambda_{jk}(\tau)$ of the operator $T(\tau)$ are given by the formula
\beq\lb{eq1.84}
\lambda_{jk}(\tau)=\tau  \lambda^{(1)}_j+\tau^2\lambda^{(2)}_{jk}+\mathrm{o}(\tau^2)
\,\text{ as $\tau\to0$}, \, k=1,\dots,m_j,\,j=1,\dots,m.
\enq
This implies that the number of the negative eigenvalues of the operator  $L_{0,\cG}(\tau)P(\tau)$ for $\tau$ near zero is equal to $n_+(\mathfrak{b})+n_-(\mathfrak{v})$, and thus completes the proof of the theorem.
 \end{proof}


\end{document}